\let\OLDthebibliography\thebibliography
\renewcommand\thebibliography[1]{
	\OLDthebibliography{#1}
	\setlength{\parskip}{0pt}
	\setlength{\itemsep}{2pt} 
}
\newtheoremstyle{introTheorems}
  {\topsep}
  {\topsep}
  {\itshape}
  {0pt}
  {\bfseries}
  {}
  { }
  {\thmname{#1}
  \textnormal{\thmnote{#3}.}
  }
\theoremstyle{introTheorems}
\newtheorem{introTheorem}{Theorem}
\DeclareFontFamily{U}{rcjhbltx}{}
\DeclareFontShape{U}{rcjhbltx}{m}{n}{<->s*[1.15]rcjhbltx}{}   
\DeclareSymbolFont{hebrewletters}{U}{rcjhbltx}{m}{n}
\DeclareMathSymbol{\lamed}{\mathord}{hebrewletters}{108}
\DeclareMathSymbol{\mem}{\mathord}{hebrewletters}{109}
\DeclareMathSymbol{\ayin}{\mathord}{hebrewletters}{96}
\DeclareMathSymbol{\tsadi}{\mathord}{hebrewletters}{118}
\DeclareMathSymbol{\qof}{\mathord}{hebrewletters}{113}
\DeclareMathSymbol{\shin}{\mathord}{hebrewletters}{152}
\theoremstyle{definition}
\newtheorem{df}{Definition}[section]
\newtheorem{rem}[df]{Remark}
\newtheorem{ass}[df]{Assumption}
\newtheorem{cv}[df]{Convention}
\theoremstyle{plain}
\newtheorem{thm}[df]{Theorem}
\newtheorem{pp}[df]{Proposition}
\newtheorem{co}[df]{Corollary}
\newtheorem{lm}[df]{Lemma}
\newcommand{\fk}{\mathfrak}
\newcommand{\mc}{\mathcal}
\newcommand{\wtd}{\widetilde}
\newcommand{\wht}{\widehat}
\newcommand{\wch}{\widecheck}
\newcommand{\ovl}{\overline}
\newcommand{\tr}{\mathrm{t}} 
\newcommand{\End}{\mathrm{End}} 
\newcommand{\idt}{\mathbf{1}}
\newcommand{\Hom}{\mathrm{Hom}}
\newcommand{\Res}{\mathrm{Res}}
\newcommand{\opp}{\mathrm{op}}
\newcommand{\SV}{\mathscr{V}}
\newcommand{\Span}{\mathrm{Span}}
\newcommand{\Vect}{\mathcal Vect}
\newcommand{\scr}{\mathscr}
\newcommand{\yk}{\mathfrak y}
\newcommand{\im}{\mathbf{i}}
\newcommand{\sgm}{\varsigma}
\newcommand{\SX}{{S_{\fk X}}}
\newcommand{\mbb}{\mathbb}
\newcommand{\blt}{\bullet}
\newcommand{\Vbb}{\mathbb V}
\newcommand{\Xbb}{\mathbb X}
\newcommand{\Ybb}{\mathbb Y}
\newcommand{\Wbb}{\mathbb W}
\newcommand{\Mbb}{\mathbb M}
\newcommand{\Cbb}{\mathbb C}
\newcommand{\Nbb}{\mathbb N}
\newcommand{\Zbb}{\mathbb Z}
\newcommand{\Pbb}{\mathbb P}
\newcommand{\Rbb}{\mathbb R}
\newcommand{\Ebb}{\mathbb E}
\newcommand{\cbf}{\mathbf c}
\newcommand{\Ker}{\mathrm{Ker}}
\newcommand{\pr}{\mathrm {pr}}
\newcommand{\SXtd}{S_{\wtd{\fk X}}}
\newcommand{\srm}{\mathrm{s}}
\newcommand{\nrm}{\mathrm{n}}
\newcommand{\MO}{\mathcal{O}}
\newcommand{\MU}{\mathcal{U}}
\newcommand{\MC}{\mathcal{C}}
\newcommand{\MB}{\mathcal{B}}
\newcommand{\fx}{\mathfrak{X}}
\newcommand{\ST}{\mathscr{T}}
\newcommand{\SW}{\mathscr{W}}
\newcommand{\MD}{\mathcal{D}}
\newcommand{\MS}{\mathcal{S}}
\newcommand{\mk}{\mathfrak m}
\newcommand{\bk}[1]{\langle {#1}\rangle}
\newcommand{\bigbk}[1]{\big\langle {#1}\big\rangle}
\newcommand{\bbs}{\boxbackslash}
\newcommand{\fq}{{\mathfrak Q}}
\newcommand{\Mod}{\mathrm{Mod}}
\newcommand{\id}{\mathrm{id}}
\newcommand{\eps}{\varepsilon}
\newcommand{\fp}{\mathfrak{P}}
\newcommand{\fn}{\mathfrak{N}}
\newcommand{\fy}{\mathfrak{Y}}
\newcommand{\MCtd}{{\widetilde{\mathcal C}}}
\newcommand{\ff}{\mathfrak{F}}
\newcommand{\fg}{\mathfrak{G}}
\newcommand{\Imag}{\mathrm{Im}}
\newcommand{\Fbb}{\mathbb{F}}
\newcommand{\fxtd}{{\widetilde{\mathfrak X}}}
\newcommand{\kabla}{\nabla^{\varkappa}}
\newcommand{\ssp}{\mathrm{s}}
\newcommand{\nil}{\mathrm{n}}
\newcommand{\Irr}{\mathrm{Irr}}
\newcommand{\fz}{\mathfrak{Z}}
\newcommand{\fh}{\mathfrak{H}}
\newcommand{\Lex}{{\mathcal Lex}}
\newcommand{\tipar}{\text{\textrtailr}}
\newcommand{\tipaz}{\text{\textctyogh}}
\newcommand{\tipxphi}{\text{\textqplig}}
\numberwithin{equation}{section}
\title{Analytic Conformal Blocks of $C_2$-cofinite Vertex Operator Algebras III: The Sewing-Factorization Theorems}
\author{{\sc Bin Gui, Hao Zhang}
}
\date{}
\begin{document}\sloppy 
	\pagenumbering{arabic}
	\setcounter{section}{-1}

	\maketitle

\newcommand\blfootnote[1]{%
	\begingroup
	\renewcommand\thefootnote{}\footnote{#1}%
	\addtocounter{footnote}{-1}%
	\endgroup
}



\begin{abstract}
Let $\Vbb=\bigoplus_{n\in\Nbb}\Vbb(n)$ be a $C_2$-cofinite vertex operator algebra, not necessarily rational or self-dual. In this paper, we establish various versions of the sewing-factorization (SF) theorems for conformal blocks associated to grading-restricted generalized modules of $\Vbb^{\otimes N}$ (where $N\in\Nbb$). In addition to the versions announced in the Introduction of \cite{GZ1}, we prove the following coend version of the SF theorem:

Let $\ff$ be a compact Riemann surface with $N$ incoming and $R$ outgoing marked points, and let $\fg$ be another compact Riemann surface with $K$ incoming and $R$ outgoing marked points. Assign $\Wbb\in\Mod(\Vbb^{\otimes N})$ and $\Xbb\in\Mod(\Vbb^{\otimes K})$ to the incoming marked points of $\ff$ and $\fg$ respectively. For each $\Mbb\in\Mod(\Vbb^{\otimes R})$, assign $\Mbb$ and its contragredient $\Mbb'$ to the outgoing marked points of $\ff$ and $\fg$ respectively. Denote the corresponding spaces of conformal blocks by $\ST^*_\ff(\Mbb\otimes\Wbb)$ and $\ST^*_\fg(\Mbb'\otimes\Xbb)$. Let $\fx$ be the $(N+K)$-pointed surface obtained by sewing $\ff,\fg$ along their outgoing marked points. Then the sewing of conformal blocks---proved to be convergent in \cite{GZ2}---yields an isomorphism of vector spaces
\begin{align*}
\int^{\Mbb\in\Mod(\Vbb^{\otimes R})}\ST^*_\ff\big(\Mbb\otimes\Wbb\big)\otimes_\Cbb\ST^*_\fg\big(\Mbb'\otimes\Xbb\big)\simeq\ST_\fx^*\big(\Wbb\otimes \Xbb\big)
\end{align*}

We also discuss the relationship between conformal blocks and the modular functors defined using Lyubashenko's coend/construction.
\end{abstract}

\tableofcontents





	
	

	

\section{Introduction}

\nocite{HLZ1,HLZ2,HLZ3,HLZ4,HLZ5,HLZ6,HLZ7,HLZ8}

\subsection{Sewing-factorization (SF) theorems: from rational to irrational VOAs}

In this final part of our three-part series, we prove the \textbf{sewing-factorization (SF) theorems} for conformal blocks of a $C_2$-cofinite  vertex operator algebra (VOA) $\Vbb=\bigoplus_{n\in\Nbb}\Vbb(n)$, several versions of which were originally announced in the Introduction of \cite{GZ1}. Roughly speaking, such a theorem asserts that sewing conformal blocks establishes an equivalence between the spaces, sheaves, or functors of conformal blocks associated with compact Riemann surfaces before sewing and those associated with the surfaces after sewing. 

In the literature on rational VOAs, the proof of the sewing-factorization (SF) theorem for rational  VOAs holds a central position \cite{TUY,Zhu-modular-invariance,Hua-tensor-4,Hua-differential-genus-0,NT-P1_conformal_blocks,Hua-differential-genus-1}. A recent breakthrough in this area is the proof of the factorization formulation and a ``formal sewing" theorem for rational $C_2$-cofinite VOAs in \cite{DGT2}. This result was later translated into the analytic setting, leading to a proof of the SF theorem for rational $C_2$-cofinite VOAs; see \cite[Thm. 12.1]{Gui-sewingconvergence}. 

However, even in the case of rational VOAs, there are discrepancies in the formulation of the SF theorem: for instance, whether to involve nodal curves; whether to formulate the theory in terms of coinvariants or conformal blocks; and whether to treat Riemann surfaces of arbitrary genus directly, or to first study lower-genus cases and then reduce higher-genus cases to lower-genus cases via pants decomposition. Unfortunately, the literature does not seem to provide a clear and detailed account of how one version of the rational SF theorem can be translated into another.

The situation becomes even more intricate when considering $C_2$-cofinite VOAs that are not necessarily rational. Here, we focus on the complex-analytic setting. (For an algebro-geometric perspective, see \cite{DGK-presentations, DGK2}.) The theory of vertex tensor categories developed by Huang-Lepowsky-Zhang in \cite{HLZ1}, \cite{HLZ2}-\cite{HLZ8}, particularly their construction of associativity isomorphisms in the category $\Mod(\Vbb)$ of grading-restricted generalized $\Vbb$-modules, can be regarded as a genus-zero version of the SF theorem. (The precise relationship between the associativity isomorphisms and the SF theorems proved in this paper will be given in Subsec. \ref{lb55}.) In particular, the product and iterate of (logarithmic) intertwining operators may be interpreted as instances of sewing conformal blocks—that is, as constructing conformal blocks via contraction.

However, as shown in \cite{Miy-modular-invariance}, the usual sewing of conformal blocks corresponding to the self-sewing of a sphere is insufficient to produce all genus-one conformal blocks. To recover all torus conformal blocks from genus-zero data, one must instead employ \textbf{pseudo-$q$-traces}, cf. \cite{Miy-modular-invariance,AN-pseudo-trace,Hua-modular-C2}. The core issue lies in \textbf{self-sewing}, that is, sewing a compact Riemann surface along a pair of points on the same connected component. As emphasized in the introductions of \cite{GZ1, GZ2}, the distinction between self-sewing and disjoint sewing is crucial: while the usual sewing of conformal blocks (defined via contraction) is sufficient in the case of disjoint sewing, it is generally inadequate for self-sewing. 

Besides pseudo-$q$-traces, \textbf{left exact coends} offer another approach to addressing the challenge of self-sewing conformal blocks. This method was originally introduced by Lyubashenko in \cite{Lyu96-Ribbon} in the context of topological modular functors. (See also \cite{FS-coends-CFT, HR24-MF} for discussions.) To our knowledge, the precise relationship between pseudo-$q$-traces and left exact coends remains unclear. Nevertheless, the coend version of the SF theorem presented in this paper can be more directly connected to pseudo-$q$-traces. 

Before stating our version of the SF theorem, we begin with a brief overview of the left exact coend approach in the VOA setting.

\subsection{Self-sewing and left exact coends}\label{lb56}

Coends naturally generalize the construction of ``direct sums of a complete set of irreducible objects" in the semisimple case. Let $N\in\Nbb$, and let $\scr D$ be a category.  Recall that if $F:\Mod(\Vbb^{\otimes N})\times \Mod(\Vbb^{\otimes N})\rightarrow\scr D$ is a covariant bifunctor and $A\in\scr D$,  then a family of morphisms
\begin{align}\label{eq114}
\varphi_\Wbb:F(\Wbb', \Wbb)\rightarrow A
\end{align}
for all $\Wbb\in\Mod(\Vbb^{\otimes N})$ (with contragredient module $\Wbb'$) is called \textbf{dinatural} if for any $\Mbb\in\Mod(\Vbb^{\otimes N})$ and $T\in\Hom_{\Vbb^{\otimes N}}(\Mbb,\Wbb)$ (with transpose $T^\tr$), the following diagram commutes:
\begin{equation*}
\begin{tikzcd}[row sep=large, column sep=huge]
F(\Wbb',\Mbb) \arrow[r,"{F(T^\tr,\id_\Mbb)}"] \arrow[d,"{F(\id_{\Wbb'},T)}"'] & F(\Mbb',\Mbb) \arrow[d,"\varphi_\Mbb"] \\
F(\Wbb',\Wbb) \arrow[r,"\varphi_\Wbb"]           & A         
\end{tikzcd}
\end{equation*}
A dinatural transformation \eqref{eq114} is called a \textbf{coend} (in $\scr D$) if it satisfies the universal property that for any $B\in\scr D$ and dinatural transformation $\psi_\Wbb:F(\Wbb',\Wbb)\rightarrow B$ (for all $\Wbb\in\Mod(\Vbb^{\otimes N})$) there is a unique $\Phi\in\Hom_{\scr D}(A,B)$ such that $\psi_\Wbb=\Phi\circ\varphi_\Wbb$ holds for all $\Wbb$. In that case, we write
\begin{align*}
A=\int^{\Wbb\in\Mod(\Vbb^{\otimes N})}F(\Wbb',\Wbb)
\end{align*}

In the theory of VOAs, the sewing of conformal blocks provides a fundamental class of dinatural transformations. We begin by recalling some notation. Let
\begin{align}\label{eq116}
\wtd\fx=(\MCtd|\sgm_1,\dots,\sgm_N\Vert \sgm',\sgm'')
\end{align}
be an $(N+2)$-pointed compact Riemann surface, that is, $\MCtd$ is a (not necessarily connected) compact Riemann surface (without boundary), and $\sgm_1,\dots,\sgm_N,\sgm',\sgm''$ are distinct points of $\MCtd$. Assume that each component of $\MCtd$ contains one of $\sgm_1,\dots,\sgm_N$. We associate a local coordinate $\eta_i$ to $\sgm_i$.\footnote{In other words, $\eta_i$ is a univalent (=injective holomorphic) function on a neighborhood of $\sgm_i$ satisfying $\eta_i(\sgm_i)=0$.} Similarly, let $\xi$ and $\varpi$ be local coordinates of $\sgm'$ and $\sgm''$ respectively. Assume that $\xi$ (resp. $\varpi$) is defined on the neighborhood $V'$ (resp. $V''$) of $\sgm'$ (resp. $\sgm''$), such that $V',V'',\sgm_1,\cdots,\sgm_N$ are mutually disjoint. For $p\in\Cbb^\times$ with reasonably large $|p|$, we obtain a new compact Riemann surface
\begin{align*}
\fx_p=(\MC_p|\sgm_1,\dots,\sgm_N)
\end{align*}
(with local coordinates $\eta_1,\dots,\eta_N$) by \textbf{sewing $\wtd\fx$ along the pair of points $\sgm',\sgm''$ with moduli $p$}. More precisely, $\fx_p$ is constructed by removing a closed disk $D'\subset V'$ centered at $\sgm'$, removing another closed disk $D''\subset V''$ centered at $\sgm''$, and then gluing the resulting surface by identifying $x\in V'-D'$ with $y\in V''-D''$ whenever $\xi(x)\cdot\varpi(y)=p$.

Fix $\Wbb\in\Mod(\Vbb^{\otimes N})$, and associate $\Wbb$ to the ordered marked points $\sgm_1,\dots,\sgm_N$ of $\fxtd$ and $\fx_p$. For each $\Mbb\in\Mod(\Vbb)$, associate $\Mbb$  and its contragredient $\Mbb'$ to $\sgm',\sgm''$ respectively. Let $\ST^*_{\fx_p}(\Wbb)$ be the (finite-dimensional) space of conformal blocks associated to $\Wbb$ and $\fx_p$.\footnote{Thus, each element of $\ST^*_{\fx_p}(\Wbb)$ is a linear functional $\Wbb\rightarrow\Cbb$ satisfying a suitable invariance property. See Subsec. \ref{lb57} for details.} Let $\ST^*_\fxtd(\Wbb\otimes\Mbb\otimes\Mbb')$ be the space of conformal blocks associated to $\Wbb\otimes\Mbb\otimes\Mbb'$ and $\fxtd$. We have proved in \cite{GZ2} that for each $\uppsi\in \ST^*_\fxtd(\Wbb\otimes\Mbb\otimes\Mbb')$, the contraction
\begin{gather*}
\MS\uppsi:\Wbb\rightarrow\Cbb\{q\}[\log q]\\
w\mapsto \wick{\uppsi(w\otimes q^{L(0)}\c1-\otimes\c1-)}:=\sum_{\lambda\in\Cbb}\sum_{\alpha\in\fk A_\lambda}\uppsi(w\otimes q^{L(0)}e_\lambda(\alpha)\otimes \wch e_\lambda(\alpha))
\end{gather*}
converges absolutely, where $(e_\lambda(\alpha))_{\alpha\in\fk A_\lambda}$ is a (finite) basis of $\Mbb_{[\lambda]}$, the generalized eigenspace of $L(0)$ on $\Mbb$ with eigenvalue $\lambda$, and $(\wch e_\lambda(\alpha))_{\alpha\in\fk A_\lambda}$ is the dual basis. Moreover, \cite{GZ2} shows that for each $p$, we have $\MS\uppsi|_p\in\ST^*_{\fx_p}(\Wbb)$. Therefore, by letting $\Mbb\in\Mod(\Vbb)$ vary, we obtain a family of linear maps
\begin{align}\label{eq115}
\ST^*_{\wtd\fx}(\Wbb\otimes\Mbb\otimes\Mbb')\rightarrow\ST^*_{\fx_p}(\Wbb)\qquad\uppsi\mapsto\MS\uppsi|_p
\end{align}
which is clearly dinatural.

In the case of self-sewing, the dinatural transformation \eqref{eq115} is generally not a coend. Indeed, by the universal property of coends, any dinatural transformation into 
\begin{align*}
\Vect:=\text{ the category of finite-dimensional $\Cbb$-linear spaces}
\end{align*}
must be surjective in order to be a coend. However, Miyamoto’s work \cite{Miy-modular-invariance} demonstrates that when the surface \( \MCtd \) in \eqref{eq116} is taken to be \( \mathbb{P}^1 \) and $N=1,\Wbb=\Vbb$, the span of the images of \eqref{eq115} over all \( \Mbb \in \Mod(\Vbb) \) fails to be surjective when \( \Vbb \) is not rational.

Lyubashenko's work \cite{Lyu96-Ribbon} suggests a method for obtaining a coend from \eqref{eq115}. (See also \cite[Prop. 9]{FS-coends-CFT} or \cite[Prop. 4.8]{HR24-MF}.) Instead of fixing $\Wbb$, we let $\Wbb\in\Mod(\Vbb^{\otimes N})$ vary. Then, noting that the conformal block functor is a left exact contravariant functor (cf. Thm. \ref{lb11}), the dinatural transformation \eqref{eq115} gives rise to a family of morphisms
\begin{align}\label{eq117}
\ST^*_\fxtd(-\otimes\Mbb\otimes\Mbb')\rightarrow\ST^*_{\fx_p}(-)
\end{align}
in the category $\Lex(\Mod(\Vbb^{\otimes N}),\Vect)$ of left exact contravariant functors from $\Mod(\Vbb^{\otimes N})$ to $\Vect$. It is expected that, at least when $\Vbb$ is \textbf{strongly finite}---that is, $\Vbb$ is $C_2$-cofinite, self-dual, and satisfies $\dim\Vbb(0)=1$---the family of morphisms \eqref{eq117} is a \textbf{left exact coend} (i.e., a coend in $\Lex(\Mod(\Vbb^{\otimes N}),\Vect)$). Following the convention in the literature of denoting left exact coends by $\oint$, our expectation is that \eqref{eq117} induces an equivalence of left exact functors
\begin{align*}
\oint\nolimits^{\Mbb\in\Mod(\Vbb)}\ST^*_\fxtd(-\otimes\Mbb\otimes\Mbb')\simeq\ST^*_{\fx_p}(-)
\end{align*}

At present, it is unclear to us how to prove the above statement about left exact coends. Moreover, since Lyubashenko's approach is based on (not necessarily semisimple) modular categories---which are, in particular, rigid---and since $\Mod(\Vbb)$ is not necessarily rigid but only a Grothendieck-Verdier category when the $C_2$-cofinite VOA $\Vbb$ is not self-dual (cf. \cite{ALSW21}), it remains uncertain whether the above result on left exact coends is expected to hold in the non-self-dual case.

On the other hand, the SF theorem(s) established in this paper will indicate that, in the case of disjoint sewing of compact Riemann surfaces along multiple pairs of points, the sewing of conformal blocks is indeed a coend in $\Vect$. Moreover, in \cite{GZ2}, we showed that pseudo-$q$-traces arise naturally within our framework. A more detailed discussion of the relationship between our SF theorems and pseudo-$q$-traces will be provided in an upcoming work.

\subsection{Disjoint sewing and coends in $\Vect$}

We now introduce our SF theorems. Let
\begin{gather}\label{eq122}
\ff=(x_1',\dots,x_R'|C_1|x_1,\dots,x_N)\qquad \fg=(y_1',\dots,y_R'|C_2|y_1,\dots,y_K)
\end{gather}
be $(R,N)$-pointed and $(R,K)$-pointed compact Riemann surfaces. That is, $\ff$ is an $(R+N)$-pointed compact Riemann surface, where the marked points are divided into $N$ incoming marked points $x_1,\dots,x_N$ and $R$ outgoing marked points. $\fg$ is understood in a similar way. Moreover, we fix local coordinates at these marked points, and we assume that each component of $C_1$ contains one of $x_1,\dots,x_N$, and each component of $C_2$ contains one of $y_1,\dots,y_K,y_1',\dots,y_R'$.

Choose $p_\blt=(p_1,\dots,p_R)\in(\Cbb^\times)^R$ where $|p_1|,\dots,|p_R|$ can be reasonably large. The sewing of $\ff$ and $\fg$ along the $R$ pairs of points $(x_1',y_1'),\dots,(x_R',y_R')$ (using their local coordinates) with moduli $p_\blt$ is denoted by
\begin{align*}
\fx_{p_\blt}=\MS(\ff\sqcup\fg)_{p_\blt}=(\MC_{p_\blt}|x_1,\dots,x_N,y_1,\dots,y_K)
\end{align*}
Fix $\Wbb\in\Mod(\Vbb^{\otimes N})$ and $\Xbb\in\Mod(\Vbb^{\otimes K})$, and associate them to the ordered marked points $x_1,\dots,x_N$ and $y_1,\dots,y_K$ respectively. Similar to \eqref{eq115}, for each $\Mbb\in\Mod(\Vbb^{\otimes R})$ we have a sewing map
\begin{gather}\label{eq118}
\begin{gathered}
\ST^*_\ff(\Mbb\otimes\Wbb)\otimes_\Cbb\ST^*_\fg(\Mbb'\otimes\Xbb)\longrightarrow \ST^*_{\fx_{p_\blt}}(\Wbb\otimes\Xbb)\\
\uppsi\otimes\upchi\mapsto \MS(\uppsi\otimes\upchi)\big|_{p_\blt}
\end{gathered}
\end{gather}
defined by contracting the $\Mbb$-component of $\uppsi:\Mbb\otimes\Wbb\rightarrow\Cbb$ with the $\Mbb'$-component of $\upchi:\Mbb'\otimes\Xbb\rightarrow\Cbb$. (See Def. \ref{lb42} for the rigorous definition.) 

The following is one version of our SF theorems, which may be regarded as the VOA analog of \cite[Cor. 4.9]{HR24-MF}. In the genus-$0$ case, this theorem was obtained in \cite{Moriwaki22-CB}.

\begin{introTheorem}[\ref{lb54}]
Assume that $\Vbb$ is $C_2$-cofinite. Then, as $\Mbb\in\Mod(\Vbb^{\otimes R})$ varies, the family of linear maps \eqref{eq118} is a coend in $\Vect$. In short, the sewing of conformal blocks yields a linear isomorphism
\begin{align*}
\int^{\Mbb\in\Mod(\Vbb^{\otimes R})}\ST^*_\ff\big(\Mbb\otimes\Wbb\big)\otimes_\Cbb\ST^*_\fg\big(\Mbb'\otimes\Xbb\big)\simeq\ST_{\fx_{p_\blt}}^*\big(\Wbb\otimes \Xbb\big)
\end{align*}
\end{introTheorem}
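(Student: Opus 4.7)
The plan is to derive this coend version from a more basic Hom-space form of the SF theorem (which should be among the earlier versions established in the paper) combined with the density (co-Yoneda) formula.

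First I would verify that the sewing map \eqref{eq118} is dinatural in $\Mbb$. For $T:\Mbb\to\Nbb$ with transpose $T^\tr:\Nbb'\to\Mbb'$, bases $(e_\alpha), (f_\beta)$ of the generalized $L(0)$-eigenspaces of $\Mbb,\Nbb$ with dual bases satisfy the formal identity
$$\sum\nolimits_\alpha T(e_\alpha)\otimes\check e_\alpha=\sum\nolimits_\beta f_\beta\otimes T^\tr(\check f_\beta)$$
in $\Nbb\otimes\Mbb'$; combined with the termwise absolute convergence proven in \cite{GZ2}, this gives $\MS(T^*\uppsi\otimes\upchi)|_{p_\blt}=\MS(\uppsi\otimes(T^\tr)^*\upchi)|_{p_\blt}$, so \eqref{eq118} is indeed dinatural. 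Since $C_2$-cofiniteness makes $\Mod(\Vbb^{\otimes R})$ a finite abelian category with projective generator $\Pbb$, the coend on the left-hand side exists in $\Vect$ as the quotient of $\ST^*_\ff(\Pbb\otimes\Wbb)\otimes_\Cbb\ST^*_\fg(\Pbb'\otimes\Xbb)$ by the dinaturality relations coming from $\End(\Pbb)$, and \eqref{eq118} factors through it.

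Next I would invoke (or establish alongside) representability statements for the conformal block functors. Using the left exactness from Thm \ref{lb11} and the finite abelian structure, the contravariant functor $\Mbb\mapsto\ST^*_\ff(\Mbb\otimes\Wbb)$ should be representable as $\Hom_{\Vbb^{\otimes R}}(\Mbb,\mbb{A}_{\ff,\Wbb})$, and the covariant functor $\Mbb\mapsto\ST^*_\fg(\Mbb'\otimes\Xbb)$ should be corepresentable as $\Hom_{\Vbb^{\otimes R}}(\mbb{B}_{\fg,\Xbb},\Mbb)$, for suitable modules $\mbb{A}_{\ff,\Wbb},\mbb{B}_{\fg,\Xbb}\in\Mod(\Vbb^{\otimes R})$. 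The accompanying Hom-space SF theorem should then read $\ST^*_{\fx_{p_\blt}}(\Wbb\otimes\Xbb)\simeq\Hom_{\Vbb^{\otimes R}}(\mbb{B}_{\fg,\Xbb},\mbb{A}_{\ff,\Wbb})$, again realized by sewing applied to the universal conformal blocks. Granting these, the classical density (co-Yoneda) formula
$$\int^{\Mbb}\Hom(\Mbb,A)\otimes_\Cbb\Hom(B,\Mbb)\xrightarrow{\sim}\Hom(B,A),\qquad f\otimes g\mapsto f\circ g,$$
valid in any finite abelian category, applied with $A=\mbb{A}_{\ff,\Wbb}$ and $B=\mbb{B}_{\fg,\Xbb}$, yields the desired coend isomorphism; an unwinding identifies the universal arrow as the sewing map \eqref{eq118}.

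The principal obstacle will be the representability/Hom-space SF step: one must construct, for each datum $(\ff,\Wbb)$, a single $\Vbb^{\otimes R}$-module that universally encodes all conformal blocks on $\ff$, and prove that sewing of such universal blocks realizes $\Hom_{\Vbb^{\otimes R}}(\mbb{B}_{\fg,\Xbb},\mbb{A}_{\ff,\Wbb})\simeq\ST^*_{\fx_{p_\blt}}(\Wbb\otimes\Xbb)$. Concretely, this amounts to showing that sewing against a projective generator $\Pbb\in\Mod(\Vbb^{\otimes R})$ already produces all conformal blocks on the sewn surface (surjectivity), and that the induced pairing is nondegenerate modulo the expected $\End(\Pbb)$-relations (injectivity)---both resting on the convergence and monodromy analysis from Parts I--II together with the finite abelian structure afforded by $C_2$-cofiniteness. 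Once this Hom-space form is in place, the coend formula is an essentially formal consequence of co-Yoneda.
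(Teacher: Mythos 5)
Your proposal matches the paper's proof in all essentials: the paper likewise notes dinaturality, represents the contravariant functor $\Mbb\mapsto\ST^*_\ff(\Mbb\otimes\Wbb)$ by the dual fusion product $\bbs_\ff(\Wbb)$, applies the co-Yoneda/density coend (Lem.~\ref{lb53}, via \cite[Prop.~4]{FS-coends-CFT}), and then transports the coend through the Hom-space SF isomorphism (Thm.~\ref{lb43}) onto $\ST^*_{\MS(\ff\sqcup\fg)_{p_\blt}}(\Wbb\otimes\Xbb)$ --- exactly your ``representability $+$ co-Yoneda $+$ hard analytic SF step'' decomposition, with the analytic step carried by Thms.~\ref{lb23}, \ref{lb40} and \ref{lb43}. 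The one divergence is that you corepresent \emph{both} sides and use the two-sided density formula $\int^{\Mbb}\Hom(\Mbb,A)\otimes_\Cbb\Hom(B,\Mbb)\simeq\Hom(B,A)$, whereas the paper represents only the $\ff$-side and keeps $\Mbb\mapsto\ST^*_\fg(\Mbb'\otimes\Xbb)$ as a bare functor; this is a genuine (if minor) simplification on the paper's part, since under the stated hypotheses a component of $C_2$ may contain only outgoing marked points, so the corepresenting object $\mbb{B}_{\fg,\Xbb}=\boxtimes_\fg(\Xbb)$ is not directly furnished by the fusion-product construction and the paper's asymmetric route (Thm.~\ref{lb43}, deduced from Thm.~\ref{lb40} by adjoining auxiliary incoming points) sidesteps having to produce it.
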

Since $\ST^*_\ff\big(\Mbb\otimes\Wbb\big)\otimes_\Cbb\ST^*_\fg\big(\Mbb'\otimes\Xbb\big)$ is canonically isomorphic to $\ST^*_{\ff\sqcup\fg}(\Mbb\otimes\Wbb\otimes\Mbb'\otimes\Xbb)$ (cf. Thm. \ref{lb16}), Thm. \ref{lb54} can thus be illustrated by Fig. \ref{fig3}. Moreover, by the parameter theorem for (co)ends \cite[Sec. IX.7]{MacLane-Cat}, as $\Wbb$ and $\Xbb$ vary, \eqref{eq118} yields a coend in $\mc Fun(\Mod(\Vbb^{\otimes N})\times\Mod(\Vbb^{\otimes K}),\Vect)$, the category of contravariant functors from $\Mod(\Vbb^{\otimes N})\times\Mod(\Vbb^{\otimes K})$ to $\Vect$. Furthermore, by Thm. \ref{lb11}, this coend is left exact.

\newcommand{\figa}{
&\scalebox{1.2}{$\displaystyle\int$}^{\mathbb M\in\mathrm{Mod}(\mathbb V^{\otimes R})}~\ST^*\Bigg(
\vcenter{\hbox{\scalebox{0.7}{

\tikzset{every picture/.style={line width=0.75pt}} 

\begin{tikzpicture}[x=0.75pt,y=0.75pt,yscale=-1,xscale=1]

\draw   (169.78,99.48) .. controls (190.64,86.63) and (206.07,99.9) .. (223.9,107.6) .. controls (241.72,115.29) and (252.44,113.53) .. (260.54,113.68) .. controls (268.63,113.84) and (273.3,110.7) .. (273.63,119.17) .. controls (273.96,127.65) and (252.96,121.23) .. (252.96,127.9) .. controls (252.96,134.57) and (274.46,124.4) .. (275.71,133.9) .. controls (276.96,143.4) and (254.96,137.9) .. (254.96,142.9) .. controls (254.96,147.9) and (276.46,139.4) .. (276.46,149.4) .. controls (276.46,159.4) and (254.79,154.15) .. (247.57,153.59) .. controls (240.35,153.03) and (241.37,149.53) .. (221.64,152.91) .. controls (201.91,156.29) and (201.35,169.82) .. (174.85,162.38) .. controls (148.36,154.94) and (148.92,112.33) .. (169.78,99.48) -- cycle ;
\draw   (302.66,118.15) .. controls (302.41,107.9) and (333.91,118.4) .. (343.16,116.9) .. controls (352.41,115.4) and (353.94,114.57) .. (360.05,111.61) .. controls (366.16,108.65) and (382,96.49) .. (397.98,108.4) .. controls (413.96,120.31) and (416.98,155.4) .. (395.25,163.95) .. controls (373.53,172.49) and (360.75,157.45) .. (348.75,151.45) .. controls (336.75,145.45) and (308.16,163.65) .. (305.66,151.4) .. controls (303.16,139.15) and (318.16,145.65) .. (317.91,140.15) .. controls (317.66,134.65) and (299.91,144.65) .. (299.16,134.15) .. controls (298.41,123.65) and (317.16,132.9) .. (317.91,127.65) .. controls (318.66,122.4) and (302.91,128.4) .. (302.66,118.15) -- cycle ;
\draw    (169.87,121.96) .. controls (180.25,112.69) and (188.45,115.16) .. (195.55,123.19) ;
\draw    (173.15,120.1) .. controls (179.16,125.05) and (185.72,124.43) .. (191.73,120.1) ;
\draw    (189.37,143.96) .. controls (199.75,134.69) and (207.95,137.16) .. (215.05,145.19) ;
\draw    (192.65,142.1) .. controls (198.66,147.05) and (205.22,146.43) .. (211.23,142.1) ;
\draw    (367.07,135.46) .. controls (377.45,126.19) and (385.65,128.66) .. (392.75,136.69) ;
\draw    (370.35,133.6) .. controls (376.36,138.55) and (382.92,137.93) .. (388.93,133.6) ;
\draw  [fill={rgb, 255:red, 0; green, 0; blue, 0 }  ,fill opacity=1 ] (158.5,125.38) .. controls (158.5,124.48) and (159.23,123.74) .. (160.13,123.74) .. controls (161.04,123.74) and (161.77,124.48) .. (161.77,125.38) .. controls (161.77,126.28) and (161.04,127.02) .. (160.13,127.02) .. controls (159.23,127.02) and (158.5,126.28) .. (158.5,125.38) -- cycle ;
\draw  [fill={rgb, 255:red, 0; green, 0; blue, 0 }  ,fill opacity=1 ] (267.5,117.93) .. controls (267.5,117.03) and (268.23,116.29) .. (269.13,116.29) .. controls (270.04,116.29) and (270.77,117.03) .. (270.77,117.93) .. controls (270.77,118.83) and (270.04,119.57) .. (269.13,119.57) .. controls (268.23,119.57) and (267.5,118.83) .. (267.5,117.93) -- cycle ;
\draw  [fill={rgb, 255:red, 0; green, 0; blue, 0 }  ,fill opacity=1 ] (269.5,134.54) .. controls (269.5,133.63) and (270.23,132.9) .. (271.13,132.9) .. controls (272.04,132.9) and (272.77,133.63) .. (272.77,134.54) .. controls (272.77,135.44) and (272.04,136.17) .. (271.13,136.17) .. controls (270.23,136.17) and (269.5,135.44) .. (269.5,134.54) -- cycle ;
\draw  [fill={rgb, 255:red, 0; green, 0; blue, 0 }  ,fill opacity=1 ] (269.5,149.54) .. controls (269.5,148.63) and (270.23,147.9) .. (271.13,147.9) .. controls (272.04,147.9) and (272.77,148.63) .. (272.77,149.54) .. controls (272.77,150.44) and (272.04,151.17) .. (271.13,151.17) .. controls (270.23,151.17) and (269.5,150.44) .. (269.5,149.54) -- cycle ;
\draw  [fill={rgb, 255:red, 0; green, 0; blue, 0 }  ,fill opacity=1 ] (306.2,119.29) .. controls (306.2,118.38) and (306.93,117.65) .. (307.83,117.65) .. controls (308.74,117.65) and (309.47,118.38) .. (309.47,119.29) .. controls (309.47,120.19) and (308.74,120.92) .. (307.83,120.92) .. controls (306.93,120.92) and (306.2,120.19) .. (306.2,119.29) -- cycle ;
\draw  [fill={rgb, 255:red, 0; green, 0; blue, 0 }  ,fill opacity=1 ] (302.95,133.79) .. controls (302.95,132.88) and (303.68,132.15) .. (304.58,132.15) .. controls (305.49,132.15) and (306.22,132.88) .. (306.22,133.79) .. controls (306.22,134.69) and (305.49,135.42) .. (304.58,135.42) .. controls (303.68,135.42) and (302.95,134.69) .. (302.95,133.79) -- cycle ;
\draw  [fill={rgb, 255:red, 0; green, 0; blue, 0 }  ,fill opacity=1 ] (309.45,149.79) .. controls (309.45,148.88) and (310.18,148.15) .. (311.08,148.15) .. controls (311.99,148.15) and (312.72,148.88) .. (312.72,149.79) .. controls (312.72,150.69) and (311.99,151.42) .. (311.08,151.42) .. controls (310.18,151.42) and (309.45,150.69) .. (309.45,149.79) -- cycle ;
\draw  [fill={rgb, 255:red, 0; green, 0; blue, 0 }  ,fill opacity=1 ] (163.7,144.38) .. controls (163.7,143.48) and (164.43,142.74) .. (165.33,142.74) .. controls (166.24,142.74) and (166.97,143.48) .. (166.97,144.38) .. controls (166.97,145.28) and (166.24,146.02) .. (165.33,146.02) .. controls (164.43,146.02) and (163.7,145.28) .. (163.7,144.38) -- cycle ;
\draw  [fill={rgb, 255:red, 0; green, 0; blue, 0 }  ,fill opacity=1 ] (393.4,115.78) .. controls (393.4,114.88) and (394.13,114.14) .. (395.03,114.14) .. controls (395.94,114.14) and (396.67,114.88) .. (396.67,115.78) .. controls (396.67,116.68) and (395.94,117.42) .. (395.03,117.42) .. controls (394.13,117.42) and (393.4,116.68) .. (393.4,115.78) -- cycle ;
\draw  [fill={rgb, 255:red, 0; green, 0; blue, 0 }  ,fill opacity=1 ] (399.2,148.68) .. controls (399.2,147.78) and (399.93,147.04) .. (400.83,147.04) .. controls (401.74,147.04) and (402.47,147.78) .. (402.47,148.68) .. controls (402.47,149.58) and (401.74,150.32) .. (400.83,150.32) .. controls (399.93,150.32) and (399.2,149.58) .. (399.2,148.68) -- cycle ;
\draw  [fill={rgb, 255:red, 0; green, 0; blue, 0 }  ,fill opacity=1 ] (399.3,131.78) .. controls (399.3,130.88) and (400.03,130.14) .. (400.93,130.14) .. controls (401.84,130.14) and (402.57,130.88) .. (402.57,131.78) .. controls (402.57,132.68) and (401.84,133.42) .. (400.93,133.42) .. controls (400.03,133.42) and (399.3,132.68) .. (399.3,131.78) -- cycle ;

\draw (252.8,90.4) node [anchor=north west][inner sep=0.75pt]    {\scalebox{1.2}{$\mathbb M'$}};
\draw (294.4,90.4) node [anchor=north west][inner sep=0.75pt]    {\scalebox{1.2}{$\mathbb M$}};
\draw (416,122.4) node [anchor=north west][inner sep=0.75pt]    {\scalebox{1.2}{$\mathbb W$}};
\draw (132.8,130.4) node [anchor=north west][inner sep=0.75pt]    {\scalebox{1.2}{$\mathbb X$}};
\end{tikzpicture}
}}}\Bigg)\\
\scalebox{1.2}{$\simeq$}~&\ST^*\Bigg(
\vcenter{\hbox{\scalebox{0.65}{

\tikzset{every picture/.style={line width=0.8pt}} 

\begin{tikzpicture}[x=0.75pt,y=0.75pt,yscale=-1,xscale=1]

\draw   (180.55,223.29) .. controls (206.15,208.89) and (234.8,231.63) .. (271.2,230.83) .. controls (307.6,230.03) and (319.88,209.29) .. (350.55,221.95) .. controls (381.21,234.62) and (381.21,281.95) .. (349.21,287.95) .. controls (317.21,293.95) and (312.38,281.62) .. (271.88,280.62) .. controls (231.38,279.62) and (201.6,302.03) .. (179.2,286.83) .. controls (156.8,271.63) and (154.95,237.69) .. (180.55,223.29) -- cycle ;
\draw    (180.27,249.16) .. controls (190.65,239.89) and (198.85,242.36) .. (205.95,250.39) ;
\draw    (183.55,247.3) .. controls (189.56,252.25) and (196.12,251.63) .. (202.13,247.3) ;
\draw    (205.07,269.16) .. controls (215.45,259.89) and (223.65,262.36) .. (230.75,270.39) ;
\draw    (208.35,267.3) .. controls (214.36,272.25) and (220.92,271.63) .. (226.93,267.3) ;
\draw    (327.47,257.96) .. controls (337.85,248.69) and (346.05,251.16) .. (353.15,259.19) ;
\draw    (330.75,256.1) .. controls (336.76,261.05) and (343.32,260.43) .. (349.33,256.1) ;
\draw    (258.67,246.76) .. controls (269.05,237.49) and (277.25,239.96) .. (284.35,247.99) ;
\draw    (261.95,244.9) .. controls (267.96,249.85) and (274.52,249.23) .. (280.53,244.9) ;
\draw    (257.87,266.76) .. controls (268.25,257.49) and (276.45,259.96) .. (283.55,267.99) ;
\draw    (261.15,264.9) .. controls (267.16,269.85) and (273.72,269.23) .. (279.73,264.9) ;
\draw  [fill={rgb, 255:red, 0; green, 0; blue, 0 }  ,fill opacity=1 ] (167.16,255.38) .. controls (167.16,254.48) and (167.9,253.74) .. (168.8,253.74) .. controls (169.7,253.74) and (170.44,254.48) .. (170.44,255.38) .. controls (170.44,256.28) and (169.7,257.02) .. (168.8,257.02) .. controls (167.9,257.02) and (167.16,256.28) .. (167.16,255.38) -- cycle ;
\draw  [fill={rgb, 255:red, 0; green, 0; blue, 0 }  ,fill opacity=1 ] (175.03,274.38) .. controls (175.03,273.48) and (175.76,272.74) .. (176.67,272.74) .. controls (177.57,272.74) and (178.3,273.48) .. (178.3,274.38) .. controls (178.3,275.28) and (177.57,276.02) .. (176.67,276.02) .. controls (175.76,276.02) and (175.03,275.28) .. (175.03,274.38) -- cycle ;
\draw  [fill={rgb, 255:red, 0; green, 0; blue, 0 }  ,fill opacity=1 ] (354.06,234.45) .. controls (354.06,233.54) and (354.8,232.81) .. (355.7,232.81) .. controls (356.6,232.81) and (357.34,233.54) .. (357.34,234.45) .. controls (357.34,235.35) and (356.6,236.08) .. (355.7,236.08) .. controls (354.8,236.08) and (354.06,235.35) .. (354.06,234.45) -- cycle ;
\draw  [fill={rgb, 255:red, 0; green, 0; blue, 0 }  ,fill opacity=1 ] (359.2,272.68) .. controls (359.2,271.78) and (359.93,271.04) .. (360.83,271.04) .. controls (361.74,271.04) and (362.47,271.78) .. (362.47,272.68) .. controls (362.47,273.58) and (361.74,274.32) .. (360.83,274.32) .. controls (359.93,274.32) and (359.2,273.58) .. (359.2,272.68) -- cycle ;
\draw  [fill={rgb, 255:red, 0; green, 0; blue, 0 }  ,fill opacity=1 ] (361.3,252.45) .. controls (361.3,251.54) and (362.03,250.81) .. (362.93,250.81) .. controls (363.84,250.81) and (364.57,251.54) .. (364.57,252.45) .. controls (364.57,253.35) and (363.84,254.08) .. (362.93,254.08) .. controls (362.03,254.08) and (361.3,253.35) .. (361.3,252.45) -- cycle ;
\draw  [color={rgb, 255:red, 126; green, 211; blue, 33 }  ,draw opacity=1 ] (269,235.5) .. controls (269,232.92) and (270.06,230.83) .. (271.36,230.83) .. controls (272.67,230.83) and (273.73,232.92) .. (273.73,235.5) .. controls (273.73,238.09) and (272.67,240.18) .. (271.36,240.18) .. controls (270.06,240.18) and (269,238.09) .. (269,235.5) -- cycle ;
\draw  [color={rgb, 255:red, 126; green, 211; blue, 33 }  ,draw opacity=1 ] (267.9,254.38) .. controls (267.9,251.07) and (269.18,248.38) .. (270.76,248.38) .. controls (272.34,248.38) and (273.63,251.07) .. (273.63,254.38) .. controls (273.63,257.7) and (272.34,260.38) .. (270.76,260.38) .. controls (269.18,260.38) and (267.9,257.7) .. (267.9,254.38) -- cycle ;
\draw  [color={rgb, 255:red, 126; green, 211; blue, 33 }  ,draw opacity=1 ] (269.01,274.62) .. controls (269.01,271.31) and (270.3,268.62) .. (271.88,268.62) .. controls (273.46,268.62) and (274.74,271.31) .. (274.74,274.62) .. controls (274.74,277.93) and (273.46,280.62) .. (271.88,280.62) .. controls (270.3,280.62) and (269.01,277.93) .. (269.01,274.62) -- cycle ;

\draw (378,246.4) node [anchor=north west][inner sep=0.75pt]    {\scalebox{1.3}{$\mathbb W$}};
\draw (142.13,262.4) node [anchor=north west][inner sep=0.75pt]    {\scalebox{1.3}{$\mathbb X$}};
\end{tikzpicture}
}}}
\Bigg)
}

\begin{figure}[h]

	\centering

\begin{align*}
\figa
\end{align*}

	\caption{. The pictorial illustration of Thm. \ref{lb54}.}
	\label{fig3}
\end{figure}


\subsection{The SF theorem in terms of (dual) fusion products}\label{lb60}

In this paper, we will prove Thm. \ref{lb54} by first proving an equivalent version of the SF theorem. Fix $\Wbb$ and $\Xbb$. Since the conformal block functors are left exact (cf. Thm. \ref{lb11}), and since any left exact linear functor from a finite $\Cbb$-linear category to $\Vect$ is representable \cite[Cor. 1.10]{DSPS19-balanced}, there exists $\bbs_\ff(\Wbb)\in\Mod(\Vbb^{\otimes R})$ such that we have an equivalence of contravariant functors
\begin{align}\label{eq119}
\Mbb\mapsto\Hom_{\Vbb^{\otimes R}}(\Mbb,\bbs_\ff(\Wbb))\qquad\simeq\qquad\Mbb\mapsto\ST^*_\ff(\Mbb\otimes\Wbb)
\end{align}
The element $\gimel\in\ST^*_\ff(\bbs_\ff(\Wbb)\otimes\Wbb)$ corresponding to $\id\in\Hom_{\Vbb^{\otimes R}}(\bbs_\ff(\Wbb),\bbs_\ff(\Wbb))$ via the above equivalence is called the \textbf{canonical conformal block}, and the pair $(\bbs_\ff(\Wbb),\gimel)$ is referred to as a \textbf{dual fusion product}. The contragredient
\begin{align*}
\boxtimes_\ff(\Wbb):=(\bbs_\ff(\Wbb))'
\end{align*}
is called a \textbf{fusion product} of $\Wbb$ along $\ff$.

By \cite[Prop. 4]{FS-coends-CFT}, the family of linear maps
\begin{align*}
\begin{gathered}
\Hom_{\Vbb^{{\otimes R}}}\big(\Mbb,\bbs_\ff(\Wbb)\big)\otimes_\Cbb\ST^*_\fg\big(\Mbb'\otimes\Xbb\big)\longrightarrow \ST^*_\fg\big(\boxtimes_\ff(\Wbb)\otimes\Xbb\big)\\
T\otimes\upchi\mapsto \upchi\circ(T^\tr\otimes\id_\Xbb)
\end{gathered}
\end{align*}
for all $\Mbb\in\Mod(\Vbb^{\otimes R})$ is a coend. (See Lem. \ref{lb53} and its proof for more explanations.) Therefore, the family of linear maps
\begin{align}\label{eq120}
\begin{gathered}
\ST^*_\ff\big(\Mbb\otimes\Wbb\big)\otimes_\Cbb\ST^*_\fg\big(\Mbb'\otimes\Xbb\big)\longrightarrow\ST^*_\fg\big(\boxtimes_\ff(\Wbb)\otimes\Xbb\big)\\ \uppsi\otimes\upchi\mapsto \upchi\circ(T_\uppsi^\tr\otimes\id_\Xbb)
\end{gathered}
\end{align}
(for all $\Mbb$) is a coend, where $T_\uppsi$ is the unique element of $\Hom_{\Vbb^{\otimes R}}(\Mbb,\bbs_\ff(\Wbb))$ corresponding to $\uppsi$ through the equivalence \eqref{eq119}.

Since \eqref{eq118} is dinatural, the universal property of the coend \eqref{eq120} guarantees a unique linear map $\Phi:\ST^*_\fg\big(\boxtimes_\ff(\Wbb)\otimes\Xbb\big)\rightarrow\ST^*_{\fx_{p_\blt}}(\Wbb\otimes\Xbb)$ such that $\Phi\circ\eqref{eq120}=\eqref{eq118}$ holds for all $\Mbb$. One checks easily that the following linear map satisfies this condition (cf. the proof of Thm. \ref{lb54}):
\begin{align}\label{eq121}
\begin{gathered}
\ST_{\fg}^*\big(\boxtimes_\ff(\Wbb)\otimes \Xbb\big)\longrightarrow \ST_{\fx_{p_\blt}}^*\big(\Wbb\otimes \Xbb\big)\\
\upchi \mapsto \MS(\gimel\otimes \upchi)\big|_{p_\blt}
\end{gathered}
\end{align}
Therefore, proving Theorem \ref{lb54} is equivalent to establishing the following version of the SF theorem, which was originally announced in the Introduction of \cite{GZ1}.

\begin{introTheorem}[\ref{lb43}]
Assume that $\Vbb$ is $C_2$-cofinite. Then \eqref{eq121} is a linear isomorphism.
\end{introTheorem}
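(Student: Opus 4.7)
My plan is to prove that \eqref{eq121} is a linear isomorphism by establishing injectivity and surjectivity separately. The key tools are the absolute convergence of the sewing series from \cite{GZ2}, the universal property \eqref{eq119} characterizing the dual fusion product pair $(\bbs_\ff(\Wbb),\gimel)$, the left-exactness of conformal block functors (Thm.\ \ref{lb11}), and the disjoint-union factorization (Thm.\ \ref{lb16}). The sewn block $\MS(\gimel\otimes\upchi)|_{p_\blt}$ will be treated as the specialization of a convergent logarithmic Laurent series in $p_\blt$, so analyticity in $p_\blt$ reduces both injectivity and surjectivity to a coefficient-by-coefficient analysis on the nodal surface.

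\textbf{Injectivity.} Suppose $\MS(\gimel\otimes\upchi)|_{p_\blt}=0$. By the absolute convergence from \cite{GZ2}, analyticity in $p_\blt$ yields the vanishing of every coefficient in the logarithmic Laurent expansion. Fix a homogeneous basis $(e_\lambda(\alpha))_\alpha$ of each generalized $L(0)$-eigenspace $(\bbs_\ff(\Wbb))_{[\lambda]}$ with dual basis $(\check e_\lambda(\alpha))_\alpha$ of $(\boxtimes_\ff(\Wbb))_{[\lambda]}$; the vanishing coefficient at weight $\lambda$ reads
\begin{align*}
\sum_\alpha \gimel\bigl(e_\lambda(\alpha)\otimes w\bigr)\,\upchi\bigl(\check e_\lambda(\alpha)\otimes x\bigr)=0 \qquad\text{for all }w\in\Wbb,\ x\in\Xbb.
\end{align*}
For each fixed $x$, the vector $v_x:=\sum_\alpha\upchi(\check e_\lambda(\alpha)\otimes x)\,e_\lambda(\alpha)\in(\bbs_\ff(\Wbb))_{[\lambda]}$ satisfies $\gimel(v_x\otimes w)=0$ for every $w$. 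Conformal-block invariance of $\gimel$ propagates this vanishing to all of $\Mbb_x\otimes\Wbb$, where $\Mbb_x\subseteq\bbs_\ff(\Wbb)$ is the $\Vbb^{\otimes R}$-submodule generated by $v_x$. By the universal property \eqref{eq119} applied to the inclusion $\iota:\Mbb_x\hookrightarrow\bbs_\ff(\Wbb)$, the vanishing of $\gimel\circ(\iota\otimes\id_\Wbb)$ forces $\iota=0$, hence $v_x=0$. Linear independence of $(e_\lambda(\alpha))_\alpha$ yields $\upchi(\check e_\lambda(\alpha)\otimes x)=0$ for every $\alpha$; letting $\lambda,\alpha,x$ vary gives $\upchi=0$.

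\textbf{Surjectivity.} Given $\uppsi\in\ST^*_{\fx_{p_\blt}}(\Wbb\otimes\Xbb)$, I construct $\upchi$ by inverting the sewing coefficient-by-coefficient through the nodal degeneration $\fx_{p_\blt}\rightsquigarrow \ff\cup\fg$. By a formal-sewing analysis (dual to \cite{GZ2}), the logarithmic Laurent coefficients of $\uppsi$ in $p_\blt$ organize into conformal blocks on the nodal curve with modules inserted along the outgoing points; by Thm.\ \ref{lb16}, each such coefficient factors as an element of $\ST^*_\ff(\Mbb\otimes\Wbb)\otimes_\Cbb\ST^*_\fg(\Mbb'\otimes\Xbb)$ for an appropriate $\Mbb$. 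The equivalence \eqref{eq119} then rewrites each $\ff$-component as $\gimel\circ(T\otimes\id_\Wbb)$ for a unique $T:\Mbb\to\bbs_\ff(\Wbb)$; feeding the transposes $T^\tr:\boxtimes_\ff(\Wbb)\to\Mbb'$ into the $\fg$-components, the collection of coefficients assembles into a single functional $\upchi\in\ST^*_\fg(\boxtimes_\ff(\Wbb)\otimes\Xbb)$. The identity $\MS(\gimel\otimes\upchi)|_{p_\blt}=\uppsi$ follows by comparing $p_\blt$-expansions, using the convergence of \cite{GZ2} to pass from the formal matching back to the analytic identity at the given $p_\blt$.

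\textbf{Main obstacle.} The most delicate step is the surjectivity construction, where three sub-issues require care. First, one must show that the extracted Laurent coefficients of $\uppsi$ decompose across eigenspaces indexed by module data compatible with $\bbs_\ff(\Wbb)$, which leverages $C_2$-cofiniteness and the finite-dimensionality plus left-exactness from Thm.\ \ref{lb11} to restrict attention to a controlled set of generalized $L(0)$-eigenvalues and to ensure the assembled $\upchi$ defines a functional on the actual module $\boxtimes_\ff(\Wbb)$ rather than on a completion. Second, one must verify that $\upchi$ satisfies the full conformal-block invariance on $\fg$ with $\boxtimes_\ff(\Wbb)$ inserted, which is obtained by propagating the Ward identities satisfied by $\uppsi$ on $\fx_{p_\blt}$ through the nodal degeneration together with the universality of $\gimel$. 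Third, one must verify that the formal coefficient-matching converts into the analytic identity $\MS(\gimel\otimes\upchi)|_{p_\blt}=\uppsi$ at the fixed value of $p_\blt$, which is precisely what the convergence theorem of \cite{GZ2} delivers.
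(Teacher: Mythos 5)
Your proposal correctly lists the relevant ingredients (convergence of sewing from \cite{GZ2}, the universal property of $(\bbs_\ff(\Wbb),\gimel)$, Thm.~\ref{lb11}, Thm.~\ref{lb16}), but both halves of the argument have genuine gaps. For injectivity: from $\MS(\gimel\otimes\upchi)|_{p_\blt}=0$ at the \emph{single} point $p_\blt$ you cannot conclude that every coefficient of the logarithmic Laurent expansion vanishes --- an analytic function vanishing at one point is not identically zero. The missing ingredient is that $\MS(\gimel\otimes\upchi)$ is a \emph{parallel} section of $\ST^*_{\MS(\ff\sqcup\fg)}(\Wbb\otimes\Xbb)$ for a suitable flat connection (\cite[Thm.~4.11]{GZ2}), so vanishing at one point forces identical vanishing; only then does the uniqueness statement Prop.~\ref{geometry11} kill the coefficients, after which partial injectivity of $\gimel$ (Rem.~\ref{lb15}) finishes the job. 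Your detour through the submodule $\Mbb_x$ generated by $v_x$ is also shakier than needed: the claim that ``conformal-block invariance propagates'' $\gimel(v_x\otimes w)=0$ to the whole submodule is not immediate, whereas partial injectivity gives $v_x=0$ directly.

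For surjectivity, the sentence ``the logarithmic Laurent coefficients of $\uppsi$ organize into conformal blocks on the nodal curve \dots\ each such coefficient factors as an element of $\ST^*_\ff(\Mbb\otimes\Wbb)\otimes_\Cbb\ST^*_\fg(\Mbb'\otimes\Xbb)$ for an appropriate $\Mbb$'' assumes essentially the whole theorem. First, a conformal block on a single fiber has no canonical Laurent expansion in $p_\blt$: one must extend $\uppsi$ to a multivalued section parallel under the shifted connection $\nabla^{\varkappa}$ and invoke the regular-singular-point theory to obtain $\uppsi=\sum_{n,l}\uppsi_{n,l}q^n(\log q)^l$ (Lem.~\ref{lb21}). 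Second --- and this is the crux of the non-rational case --- the coefficients $\uppsi_{n,l}$ are a priori only elements of the full dual space; there is in general \emph{no} auxiliary module $\Mbb$ through which they ``factor'' (the failure of such naive factorization is exactly Miyamoto's obstruction for irrational $\Vbb$). What must be proved is that each $\uppsi_{n,l}$ lies in the specific subspace $\bbs_\ff(\Wbb)\subset\Wbb^*$ (Lem.~\ref{lb25}), satisfies the two-sided $\Vbb$-invariance (Prop.~\ref{lb34}), and obeys $q\partial_q\uppsi=L_\pm(0)\uppsi$ (Prop.~\ref{lb27}, Cor.~\ref{lb31}); this is what makes $\upchi=\sum_n\uppsi_{n,0}$ a finite sum on each vector and an actual conformal block on $\fg$. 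These steps rest on the classification of truncated $q$-expansions of global sections of $\SV_\fx\otimes\omega_{\MC/\MB}(\blt S_\fx)$ (Thm.~\ref{geometry1}, Prop.~\ref{geometry10}), which in turn needs a base change theorem in complex analytic geometry; your sketch neither supplies nor replaces any of this. Finally, the reduction of the general $(R,K)$ statement to the key two-point case --- iterated sewing against the sphere $\fn$, the module $\boxtimes_\fq(\Vbb)$, Thm.~\ref{lb16} for disjoint unions, and propagation of conformal blocks to handle components of $C_2$ meeting only outgoing points --- is also absent.
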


\subsection{Main idea of the proof}

Theorem \ref{lb43} can be proved in full generality once it is established for the special case where $R=2,K=0$ and $\fg$ is the $(2,0)$-pointed sphere
\begin{align*}
\fn=(\infty,0|\Pbb^1)
\end{align*}
with local coordinates $1/\zeta$ and $\zeta$, where $\zeta$ denotes the standard coordinate of $\Cbb$. We now outline the main idea of the proof of Theorem \ref{lb43} in this key special case. Since $\Xbb\in\Mod(\Vbb^{\otimes 0})=\Vect$, we may assume that $\Xbb$ is the scalar field $\Cbb$.

It is not hard to prove that \eqref{eq121} is injective; the difficulty lies in proving the surjectivity. As we explain below, our proof of surjectivity bears a structural resemblance to the proofs of modular invariance in \cite{Miy-modular-invariance} and \cite{Hua-modular-C2}.

For each $r>0$, we let $\MD_r^\times=\{z\in\Cbb:0<|z|<r\}$. Then, by varying $p_\blt=(p_1,p_2)$, we obtain a family of surfaces over the base $\MD_r^\times\times\MD_{\rho}^\times$ for some $r,\rho>0$, which arises as the pullback of another family $\fx$ (with base manifold $\MD^\times_{r\rho}$) along the map $\MD^\times_r\times\MD^\times_\rho\rightarrow\MD^\times_{r\rho}$ sending $(p_1,p_2)$ to $p=p_1p_2$. Indeed, $\fx$ is the family obtained by sewing $\fg$ along the two outgoing marked points $x_1',x_2'$.

By what we have proved in \cite{GZ2}, the spaces of conformal blocks over the fibers of $\fx$ assemble into a vector bundle $\ST^*_\fx(\Wbb)$ over $\MD_{r\rho}^\times$, equipped with an (automatically flat) connection under which any section defined via sewing (as in \eqref{eq115}) is parallel. Consequently, any element $\uppsi_p\in\ST^*_{\fx_p}(\Wbb)$, for fixed $p$, extends to a multivalued parallel section $\uppsi$ of $\ST^*_\fx(\Wbb)$. It remains to show that $\uppsi|_{q=q_1q_2}=\MS(\gimel\otimes\upchi)$ for some $\upchi\in\ST^*_\fn(\boxtimes_\ff(\Wbb))$. (More precisely, we want to prove that the multivalued section $(q_1,q_2)\mapsto\uppsi|_{q_1q_2}$ coincides with $(q_1,q_2)\mapsto\MS(\gimel\otimes\upchi)|_{q_1,q_2}$.)

By the basic theory of linear differential equations with simple poles, the multivalued section $q\in\MD_{r\rho}^\times\mapsto\uppsi|_q$ admits a formal expansion (cf. Lem. \ref{lb21}) of the form
\begin{align*}
\uppsi=\sum_{l=0}^L\sum_{n\in\Cbb}\uppsi_{n,l}q^n(\log q)^l\qquad \text{where each }\uppsi_{n,l}:\Wbb\rightarrow\Cbb\text{ is a linear functional}
\end{align*}
Here, $L\in\Nbb$, and there exists a finite set $E\subset\Cbb$ such that $\uppsi_{n,l}=0$ whenever $n\notin E+\Nbb$. The remainder of the proof is divided into the following three steps:
\begin{enumerate}[label=(\arabic*)]
\item Show that each $\uppsi_{n,l}$ belongs to $\bbs_\ff(\Wbb)$. (Note that $\bbs_\ff(\Wbb)$ was explicitly constructed in \cite{GZ1} as a linear subspace of the full dual space $\Wbb^*$ of $\Wbb$.) This is established in Lem. \ref{lb25}.
\item Show that the formal series $\uppsi=\sum_l\sum_n\uppsi_{n,l}q^n(\log q)^l\in\bbs_\ff(\Wbb)\{q\}[\log q]$, viewed as a linear map $\boxtimes_\ff(\Wbb)\rightarrow\Cbb\{q\}[\log q]$, is invariant under the left and right actions of $\Vbb$. (Recall that $R=2$, so $\boxtimes_\ff(\Wbb)\in\Mod(\Vbb^{\otimes 2})$.) This is proved in Prop. \ref{lb34}, equivalently, Cor. \ref{lb29}.
\item Show that $q\partial_q\uppsi$ corresponds to the action of the zero-mode Virasoro operators on $\uppsi$. See Prop. \ref{lb27}, or equivalently, Cor. \ref{lb31}.
\end{enumerate}
Our proofs of all these three steps rely on the analysis of global meromorphic sections of the sheaves of VOA, specifically Thm. \ref{geometry1} and Prop. \ref{geometry10}.

Once these three steps have been proved, we define $\upchi$, as a linear functional $\boxtimes_\ff(\Wbb)\rightarrow\Cbb$, to be $\upchi=\sum_{n\in\Cbb}\uppsi_{n,0}$. By Step (3), this sum is finite when evaluated on any element of $\boxtimes_\ff(\Wbb)$. Step (2) then implies that $\upchi\in\ST^*_\fn(\boxtimes_\ff(\Wbb))$. Finally, using (3), one verifies $\uppsi|_{q=q_1q_2}=\MS(\gimel\otimes\upchi)$, finishing the proof of Thm. \ref{lb43}.  (Note that Step (1) is essential, as both (2) and (3) rely on the conclusion that each $\uppsi_{n,l}\in\bbs_\ff(\Wbb)$.)

To see how the above proof strategy parallels those of \cite{Miy-modular-invariance} and \cite{Hua-modular-C2}, consider the case where $\ff$ is the $(2,1)$-pointed sphere
\begin{align}
\fq=(\infty,0|\Pbb^1|1)
\end{align} 
equipped with the local coordinates $1/\zeta,\zeta,\zeta-1$ (where $\zeta$ is the standard coordinate of $\Cbb$). Then $\boxtimes_\fq(\Wbb)$ plays a role analogous to that of the $\wtd A^N(\Vbb)$-bimodules $\wtd A^N(\Wbb)$ (for $N\in\Nbb$) considered in \cite{Hua-associative,Hua22-Ass-IO,Hua-modular-C2}. (When $\Wbb=\Vbb$, $\boxtimes_\fq(\Vbb)$ also serves a similar function to the higher-level Zhu algebras introduced in \cite{DLM-Zhu} and used in the proof of modular invariance in \cite{Miy-modular-invariance}.) In this analogy: 
\begin{itemize}
\item Step (1) corresponds to proving that each \( \psi^N_{S;k,j} \) in \cite[Thm. 4.3]{Hua-modular-C2}, initially just a linear functional on \( U^N(\Wbb) \), descends to a linear functional on \( \wtd A^N(\Wbb) \). 
\item Step (2) resembles the argument that each \( \psi^N_{S;k,j} \) is a symmetric linear functional on $\wtd A^N(\Wbb)$ (see the formula preceding (4.9) in \cite[Thm. 4.3]{Hua-modular-C2}).
\item Step (3) reflects a compatibility condition with the Virasoro action, akin to Eq. (4.9) of \cite[Thm. 4.3]{Hua-modular-C2}.
\end{itemize}

\subsection{Relationship between conformal blocks and topological modular functors}

We close this Introduction with a brief discussion of the relationship between conformal blocks and the (topological) modular functors introduced by Lyubashenko \cite{Lyu96-Ribbon}, illustrating the significance of our SF theorems. We continue to assume that $\Vbb$ is $C_2$-cofinite.

Assume that $\fg$ in \eqref{eq122} is an $(R,K)$-pointed sphere (i.e., $C_2=\Pbb^1$), written as
\begin{align*}
\fg=(y_1',\dots,y_R'|\Pbb^1|y_1,\dots,y_K)
\end{align*}
Let $\fk T$ be a $(1,1)$-pointed torus. Let $\ff$ in \eqref{eq122} be $\fk T^{\sqcup R}$, a disjoint union of $R$ copies of  $\fk T$, which is $(R,R)$-pointed. By sewing $\ff$ and $\fg$ along their outgoing marked points using some moduli $p_\blt=(p_1,\dots,p_R)$, we obtain an $(K+R)$-pointed genus-$R$ surface 
\begin{align*}
\fx_{p_\blt}=(\MC_{p_\blt}|x_1,\dots,x_R,y_1,\dots,y_K)
\end{align*}
See Fig. \ref{fig4}.

\newcommand{\figb}{\scalebox{0.8}
{

\tikzset{every picture/.style={line width=0.75pt}} 

\begin{tikzpicture}[x=0.75pt,y=0.75pt,yscale=-1,xscale=1]

\draw   (130.53,216.78) .. controls (130.53,194.19) and (148.85,175.87) .. (171.44,175.87) .. controls (194.04,175.87) and (212.35,194.19) .. (212.35,216.78) .. controls (212.35,239.37) and (194.04,257.69) .. (171.44,257.69) .. controls (148.85,257.69) and (130.53,239.37) .. (130.53,216.78) -- cycle ;
\draw  [color={rgb, 255:red, 0; green, 0; blue, 0 }  ,draw opacity=1 ][fill={rgb, 255:red, 0; green, 0; blue, 0 }  ,fill opacity=1 ] (143.06,195.31) .. controls (143.06,194.41) and (143.8,193.67) .. (144.7,193.67) .. controls (145.6,193.67) and (146.34,194.41) .. (146.34,195.31) .. controls (146.34,196.21) and (145.6,196.95) .. (144.7,196.95) .. controls (143.8,196.95) and (143.06,196.21) .. (143.06,195.31) -- cycle ;
\draw  [color={rgb, 255:red, 0; green, 0; blue, 0 }  ,draw opacity=1 ][fill={rgb, 255:red, 0; green, 0; blue, 0 }  ,fill opacity=1 ] (137.06,212.81) .. controls (137.06,211.91) and (137.8,211.17) .. (138.7,211.17) .. controls (139.6,211.17) and (140.34,211.91) .. (140.34,212.81) .. controls (140.34,213.71) and (139.6,214.45) .. (138.7,214.45) .. controls (137.8,214.45) and (137.06,213.71) .. (137.06,212.81) -- cycle ;
\draw  [color={rgb, 255:red, 0; green, 0; blue, 0 }  ,draw opacity=1 ][fill={rgb, 255:red, 0; green, 0; blue, 0 }  ,fill opacity=1 ] (147.06,240.81) .. controls (147.06,239.91) and (147.8,239.17) .. (148.7,239.17) .. controls (149.6,239.17) and (150.34,239.91) .. (150.34,240.81) .. controls (150.34,241.71) and (149.6,242.45) .. (148.7,242.45) .. controls (147.8,242.45) and (147.06,241.71) .. (147.06,240.81) -- cycle ;
\draw  [color={rgb, 255:red, 0; green, 0; blue, 0 }  ,draw opacity=1 ][fill={rgb, 255:red, 0; green, 0; blue, 0 }  ,fill opacity=1 ] (193.56,192.81) .. controls (193.56,191.91) and (194.3,191.17) .. (195.2,191.17) .. controls (196.1,191.17) and (196.84,191.91) .. (196.84,192.81) .. controls (196.84,193.71) and (196.1,194.45) .. (195.2,194.45) .. controls (194.3,194.45) and (193.56,193.71) .. (193.56,192.81) -- cycle ;
\draw  [color={rgb, 255:red, 0; green, 0; blue, 0 }  ,draw opacity=1 ][fill={rgb, 255:red, 0; green, 0; blue, 0 }  ,fill opacity=1 ] (201.56,218.81) .. controls (201.56,217.91) and (202.3,217.17) .. (203.2,217.17) .. controls (204.1,217.17) and (204.84,217.91) .. (204.84,218.81) .. controls (204.84,219.71) and (204.1,220.45) .. (203.2,220.45) .. controls (202.3,220.45) and (201.56,219.71) .. (201.56,218.81) -- cycle ;
\draw  [color={rgb, 255:red, 0; green, 0; blue, 0 }  ,draw opacity=1 ][fill={rgb, 255:red, 0; green, 0; blue, 0 }  ,fill opacity=1 ] (192.06,242.81) .. controls (192.06,241.91) and (192.8,241.17) .. (193.7,241.17) .. controls (194.6,241.17) and (195.34,241.91) .. (195.34,242.81) .. controls (195.34,243.71) and (194.6,244.45) .. (193.7,244.45) .. controls (192.8,244.45) and (192.06,243.71) .. (192.06,242.81) -- cycle ;
\draw   (229.04,175.66) .. controls (225.63,168.53) and (233.45,157.69) .. (246.51,151.45) .. controls (259.56,145.21) and (272.91,145.93) .. (276.32,153.06) .. controls (279.73,160.19) and (271.91,171.03) .. (258.85,177.27) .. controls (245.79,183.52) and (232.44,182.79) .. (229.04,175.66) -- cycle ;
\draw    (240.82,171.39) .. controls (248.2,160.29) and (257.14,158.04) .. (266.76,160) ;
\draw    (243.49,168.6) .. controls (251.04,169.03) and (257.37,165.5) .. (261.97,159.77) ;
\draw  [color={rgb, 255:red, 0; green, 0; blue, 0 }  ,draw opacity=1 ][fill={rgb, 255:red, 0; green, 0; blue, 0 }  ,fill opacity=1 ] (231.8,174.28) .. controls (231.41,173.47) and (231.76,172.49) .. (232.57,172.1) .. controls (233.39,171.71) and (234.36,172.06) .. (234.75,172.87) .. controls (235.14,173.69) and (234.8,174.66) .. (233.98,175.05) .. controls (233.17,175.44) and (232.19,175.1) .. (231.8,174.28) -- cycle ;
\draw  [color={rgb, 255:red, 0; green, 0; blue, 0 }  ,draw opacity=1 ][fill={rgb, 255:red, 0; green, 0; blue, 0 }  ,fill opacity=1 ] (269.03,155.93) .. controls (268.64,155.12) and (268.98,154.14) .. (269.8,153.75) .. controls (270.61,153.36) and (271.59,153.71) .. (271.98,154.52) .. controls (272.37,155.34) and (272.02,156.31) .. (271.21,156.7) .. controls (270.39,157.09) and (269.42,156.75) .. (269.03,155.93) -- cycle ;
\draw   (238.66,215.02) .. controls (237.71,207.17) and (248.59,199.4) .. (262.95,197.66) .. controls (277.32,195.91) and (289.74,200.86) .. (290.69,208.7) .. controls (291.64,216.55) and (280.77,224.32) .. (266.4,226.07) .. controls (252.03,227.81) and (239.62,222.87) .. (238.66,215.02) -- cycle ;
\draw    (251.2,214.74) .. controls (261.74,206.57) and (270.93,207.29) .. (279.42,212.23) ;
\draw    (254.62,212.95) .. controls (261.64,215.76) and (268.76,214.44) .. (274.96,210.48) ;
\draw  [color={rgb, 255:red, 0; green, 0; blue, 0 }  ,draw opacity=1 ][fill={rgb, 255:red, 0; green, 0; blue, 0 }  ,fill opacity=1 ] (242.28,215.03) .. controls (242.17,214.13) and (242.81,213.32) .. (243.71,213.21) .. controls (244.61,213.1) and (245.42,213.74) .. (245.53,214.64) .. controls (245.64,215.53) and (245,216.35) .. (244.1,216.46) .. controls (243.21,216.57) and (242.39,215.93) .. (242.28,215.03) -- cycle ;
\draw  [color={rgb, 255:red, 0; green, 0; blue, 0 }  ,draw opacity=1 ][fill={rgb, 255:red, 0; green, 0; blue, 0 }  ,fill opacity=1 ] (282.86,209.1) .. controls (282.75,208.2) and (283.39,207.39) .. (284.29,207.28) .. controls (285.19,207.17) and (286,207.81) .. (286.11,208.7) .. controls (286.22,209.6) and (285.58,210.42) .. (284.68,210.53) .. controls (283.79,210.63) and (282.97,210) .. (282.86,209.1) -- cycle ;
\draw   (224.66,252.92) .. controls (226.45,245.22) and (239.33,241.64) .. (253.42,244.92) .. controls (267.52,248.21) and (277.49,257.11) .. (275.7,264.81) .. controls (273.91,272.5) and (261.03,276.08) .. (246.93,272.8) .. controls (232.84,269.52) and (222.86,260.62) .. (224.66,252.92) -- cycle ;
\draw    (236.53,256.95) .. controls (249.23,252.89) and (257.61,256.71) .. (263.9,264.26) ;
\draw    (240.36,256.43) .. controls (245.99,261.49) and (253.13,262.68) .. (260.31,261.08) ;
\draw  [color={rgb, 255:red, 0; green, 0; blue, 0 }  ,draw opacity=1 ][fill={rgb, 255:red, 0; green, 0; blue, 0 }  ,fill opacity=1 ] (228.05,254.17) .. controls (228.26,253.29) and (229.14,252.74) .. (230.02,252.95) .. controls (230.9,253.15) and (231.44,254.03) .. (231.24,254.91) .. controls (231.03,255.79) and (230.15,256.34) .. (229.27,256.13) .. controls (228.39,255.93) and (227.85,255.05) .. (228.05,254.17) -- cycle ;
\draw  [color={rgb, 255:red, 0; green, 0; blue, 0 }  ,draw opacity=1 ][fill={rgb, 255:red, 0; green, 0; blue, 0 }  ,fill opacity=1 ] (268.21,262.49) .. controls (268.41,261.61) and (269.29,261.07) .. (270.17,261.27) .. controls (271.05,261.48) and (271.6,262.36) .. (271.4,263.24) .. controls (271.19,264.12) and (270.31,264.66) .. (269.43,264.46) .. controls (268.55,264.25) and (268,263.37) .. (268.21,262.49) -- cycle ;
\draw [color={rgb, 255:red, 74; green, 144; blue, 226 }  ,draw opacity=1 ] [dash pattern={on 1.5pt off 1.5pt}]  (206.3,187.32) -- (224.14,179.44) ;
\draw [shift={(225.97,178.63)}, rotate = 156.16] [color={rgb, 255:red, 74; green, 144; blue, 226 }  ,draw opacity=1 ][line width=0.75]    (6.56,-1.97) .. controls (4.17,-0.84) and (1.99,-0.18) .. (0,0) .. controls (1.99,0.18) and (4.17,0.84) .. (6.56,1.97)   ;
\draw [shift={(204.47,188.13)}, rotate = 336.16] [color={rgb, 255:red, 74; green, 144; blue, 226 }  ,draw opacity=1 ][line width=0.75]    (6.56,-1.97) .. controls (4.17,-0.84) and (1.99,-0.18) .. (0,0) .. controls (1.99,0.18) and (4.17,0.84) .. (6.56,1.97)   ;
\draw [color={rgb, 255:red, 74; green, 144; blue, 226 }  ,draw opacity=1 ] [dash pattern={on 1.5pt off 1.5pt}]  (216.33,218.9) -- (234.09,216.45) ;
\draw [shift={(236.07,216.17)}, rotate = 172.12] [color={rgb, 255:red, 74; green, 144; blue, 226 }  ,draw opacity=1 ][line width=0.75]    (6.56,-1.97) .. controls (4.17,-0.84) and (1.99,-0.18) .. (0,0) .. controls (1.99,0.18) and (4.17,0.84) .. (6.56,1.97)   ;
\draw [shift={(214.35,219.18)}, rotate = 352.12] [color={rgb, 255:red, 74; green, 144; blue, 226 }  ,draw opacity=1 ][line width=0.75]    (6.56,-1.97) .. controls (4.17,-0.84) and (1.99,-0.18) .. (0,0) .. controls (1.99,0.18) and (4.17,0.84) .. (6.56,1.97)   ;
\draw [color={rgb, 255:red, 74; green, 144; blue, 226 }  ,draw opacity=1 ] [dash pattern={on 1.5pt off 1.5pt}]  (202.57,247.67) -- (220.15,252.81) ;
\draw [shift={(222.07,253.37)}, rotate = 196.3] [color={rgb, 255:red, 74; green, 144; blue, 226 }  ,draw opacity=1 ][line width=0.75]    (6.56,-1.97) .. controls (4.17,-0.84) and (1.99,-0.18) .. (0,0) .. controls (1.99,0.18) and (4.17,0.84) .. (6.56,1.97)   ;
\draw [shift={(200.65,247.11)}, rotate = 16.3] [color={rgb, 255:red, 74; green, 144; blue, 226 }  ,draw opacity=1 ][line width=0.75]    (6.56,-1.97) .. controls (4.17,-0.84) and (1.99,-0.18) .. (0,0) .. controls (1.99,0.18) and (4.17,0.84) .. (6.56,1.97)   ;
\draw  [color={rgb, 255:red, 0; green, 0; blue, 0 }  ,draw opacity=1 ][fill={rgb, 255:red, 0; green, 0; blue, 0 }  ,fill opacity=1 ] (139.73,229.48) .. controls (139.73,228.57) and (140.46,227.84) .. (141.37,227.84) .. controls (142.27,227.84) and (143,228.57) .. (143,229.48) .. controls (143,230.38) and (142.27,231.11) .. (141.37,231.11) .. controls (140.46,231.11) and (139.73,230.38) .. (139.73,229.48) -- cycle ;

\draw (117.2,184.08) node [anchor=north west][inner sep=0.75pt]    {$y_1$};
\draw (109.2,208.58) node [anchor=north west][inner sep=0.75pt]    {$y_2$};
\draw (120,246.58) node [anchor=north west][inner sep=0.75pt]    {$y_K$};
\draw (154.98,268.08) node [anchor=north west][inner sep=0.75pt]    {\scalebox{1.25}{$\fg$}};
\draw (236.98,280.25) node [anchor=north west][inner sep=0.75pt]    {\scalebox{1.25}{$\ff$}};
\draw (280.58,140.88) node [anchor=north west][inner sep=0.75pt]    {$x_1$};
\draw (294.58,199.48) node [anchor=north west][inner sep=0.75pt]    {$x_2$};
\draw (278.18,255.41) node [anchor=north west][inner sep=0.75pt]    {$x_R$};

\end{tikzpicture}
}
}

\begin{figure}[h]

	\centering

\figb

	\caption{. Sewing $\ff$ with $\fg$.}
	\label{fig4}
\end{figure} 

Let $\Xbb_1,\dots,\Xbb_K\in\Mod(\Vbb)$, and associate $\Xbb_1\otimes\cdots\otimes\Xbb_K\in\Mod(\Vbb^{\otimes K})$ to the ordered marked points $y_1,\dots,y_K$. Associate $\Vbb^{\otimes R}$ to $x_1,\dots,x_R$. Let
\begin{align*}
\fk Y_{p_\blt}=(\MC_{p_\blt}|y_1,\dots,y_K)
\end{align*}
By the propagation of conformal blocks (cf. \cite[Cor. 2.44]{GZ1}), we have a linear isomorphism of spaces of conformal blocks
\begin{align*}
\ST^*_{\fx_{p_\blt}}(\Vbb^{\otimes R}\otimes\Xbb_1\otimes\cdots\otimes\Xbb_K)\simeq\ST^*_{\fy_{p_\blt}}(\Xbb_1\otimes\cdots\otimes\Xbb_K)
\end{align*}
defined by inserting the vacuum vector $\idt$ into each tensor component $\Vbb$. Therefore, by Thm. \ref{lb43}, the sewing map induces a linear isomorphism
\begin{gather*}
\ST^*_{\fy_{p_\blt}}(\Xbb_1\otimes\cdots\otimes\Xbb_K)\simeq\ST^*_\fg\big(\boxtimes_\ff(\Vbb^{\otimes R})\otimes\Xbb_1\otimes\cdots\otimes\Xbb_K\big)
\end{gather*}
Since the fusion product of a disjoint union is the tensor product of the fusion products on each component (cf. Thm. \ref{lb16}), we have a canonical equivalence $\boxtimes_\ff(\Vbb^{\otimes R})\simeq (\boxtimes_{\fk T}\Vbb)^{\otimes R}$. Therefore, the above equivalence becomes
\begin{align}\label{eq123}
\ST^*_{\fy_{p_\blt}}(\Xbb_1\otimes\cdots\otimes\Xbb_K)\simeq\ST^*_\fg\big((\boxtimes_{\fk T}\Vbb)^{\otimes R}\otimes\Xbb_1\otimes\cdots\otimes\Xbb_K\big)
\end{align}

Let us express the RHS of \eqref{eq123} in terms of the monoidal structure of $\Mod(\Vbb)$. Define $(1,2)$-pointed and $(2,1)$-pointed spheres
\begin{align*}
\fp=(1|\Pbb^1|\infty,0)\qquad\fq=(\infty,0|\Pbb^1|1)
\end{align*}
Then $\boxtimes_\fp$ gives the \textbf{tensor bifunctor \pmb{$\boxdot$} of \pmb{$\Mod(\Vbb)$}}, i.e., for each $\Wbb_1,\Wbb_2\in\Mod(\Vbb)$,
\begin{align*}
\Wbb_1\boxdot\Wbb_2=\boxtimes_\fp(\Wbb_1\otimes\Wbb_2)
\end{align*}
Therefore, \eqref{eq123} can be rewritten as
\begin{align}\label{eq125}
\ST^*_{\fy_{p_\blt}}(\Xbb_1\otimes\cdots\otimes\Xbb_K)\simeq\Hom_\Vbb\big((\boxtimes_{\fk T}\Vbb)^{\boxdot R}\boxdot\Xbb_1\boxdot\cdots\boxdot\Xbb_K,\Vbb'\big)
\end{align}

To proceed, we give a categorical description of $\boxtimes_{\fk T}\Vbb$. By \cite{McR-deligne}, the bi-exact functor
\begin{align*}
(\Wbb_1,\Wbb_2)\in\Mod(\Vbb)\times\Mod(\Vbb)\rightarrow \Wbb_1\otimes_\Cbb\Wbb_2\in\Mod(\Vbb^{\otimes 2})
\end{align*} 
is a Deligne product of two copies of $\Mod(\Vbb)$. Therefore, since $\boxtimes_\fp:\Mod(\Vbb^{\otimes2})\rightarrow\Mod(\Vbb)$ is right exact (Thm. \ref{lb11}), it is the unique lift of the (right exact) bifunctor $\boxdot:\Mod(\Vbb)\times\Mod(\Vbb)\rightarrow\Mod(\Vbb)$ to $\Mod(\Vbb^{\otimes2})$. In short, we have
\begin{align*}
\boxdot=\boxtimes_\fp\qquad\text{on }\Mod(\Vbb^{\otimes2})
\end{align*} 
Therefore, by the transitivity of fusion products (Thm. \ref{lb51}), another version of the SF theorem, we have the following canonical isomorphism, also illustrated in Fig. \ref{fig5}.

\begin{align}\label{eq129}
\boxtimes_{\fk T}\Vbb\simeq \boxtimes_\fp(\boxtimes_\fq\Vbb)=\boxdot(\boxtimes_\fq\Vbb)
\end{align}

\newcommand{\figc}{
\vcenter{\hbox{\scalebox{0.8}{

\tikzset{every picture/.style={line width=0.75pt}} 

\begin{tikzpicture}[x=0.75pt,y=0.75pt,yscale=-1,xscale=1]

\draw    (90.97,270.88) .. controls (127.64,269.55) and (116.44,247.55) .. (147.64,247.55) .. controls (178.84,247.55) and (164.97,270.88) .. (198.97,271.55) ;
\draw    (90.3,289.55) .. controls (126.97,288.21) and (115.1,308.88) .. (146.3,308.88) .. controls (177.5,308.88) and (164.3,289.55) .. (198.3,290.21) ;
\draw   (84.11,279.86) .. controls (84.11,274.52) and (86.56,270.18) .. (89.58,270.18) .. controls (92.6,270.18) and (95.05,274.52) .. (95.05,279.86) .. controls (95.05,285.21) and (92.6,289.55) .. (89.58,289.55) .. controls (86.56,289.55) and (84.11,285.21) .. (84.11,279.86) -- cycle ;
\draw   (192.84,280.53) .. controls (192.84,275.18) and (195.28,270.85) .. (198.3,270.85) .. controls (201.32,270.85) and (203.77,275.18) .. (203.77,280.53) .. controls (203.77,285.88) and (201.32,290.21) .. (198.3,290.21) .. controls (195.28,290.21) and (192.84,285.88) .. (192.84,280.53) -- cycle ;
\draw    (118.73,281.9) .. controls (140.56,262.92) and (157.79,267.98) .. (172.73,284.43) ;
\draw    (125.62,278.1) .. controls (139.41,289.49) and (150.9,286.96) .. (164.68,278.1) ;
\draw [color={rgb, 255:red, 126; green, 211; blue, 33 }  ,draw opacity=1 ]   (231.73,280.68) -- (212.73,280.68) ;
\draw [shift={(210.73,280.68)}, rotate = 360] [color={rgb, 255:red, 126; green, 211; blue, 33 }  ,draw opacity=1 ][line width=0.75]    (6.56,-1.97) .. controls (4.17,-0.84) and (1.99,-0.18) .. (0,0) .. controls (1.99,0.18) and (4.17,0.84) .. (6.56,1.97)   ;
\draw [color={rgb, 255:red, 126; green, 211; blue, 33 }  ,draw opacity=1 ]   (78.23,280.18) -- (59.23,280.18) ;
\draw [shift={(57.23,280.18)}, rotate = 360] [color={rgb, 255:red, 126; green, 211; blue, 33 }  ,draw opacity=1 ][line width=0.75]    (6.56,-1.97) .. controls (4.17,-0.84) and (1.99,-0.18) .. (0,0) .. controls (1.99,0.18) and (4.17,0.84) .. (6.56,1.97)   ;

\draw (235.5,272) node [anchor=north west][inner sep=0.75pt]    {\scalebox{1.2}{$\Vbb$}};
\draw (15,272) node [anchor=north west][inner sep=0.75pt]    {\scalebox{1.2}{$\boxtimes_{\fk T}\Vbb$}};

\end{tikzpicture}
}}}
}

\newcommand{\figd}{
\vcenter{\hbox{\scalebox{0.8}{
\tikzset{every picture/.style={line width=0.75pt}} 

\begin{tikzpicture}[x=0.75pt,y=0.75pt,yscale=-1,xscale=1]

\draw   (520.34,278.03) .. controls (520.34,272.68) and (522.78,268.35) .. (525.8,268.35) .. controls (528.82,268.35) and (531.27,272.68) .. (531.27,278.03) .. controls (531.27,283.38) and (528.82,287.71) .. (525.8,287.71) .. controls (522.78,287.71) and (520.34,283.38) .. (520.34,278.03) -- cycle ;
\draw    (470.14,247.68) .. controls (502.14,248.68) and (491.3,268.35) .. (525.8,268.35) ;
\draw    (472.14,309.68) .. controls (505.14,308.68) and (491.3,287.71) .. (525.8,287.71) ;
\draw   (365.34,279.03) .. controls (365.34,273.68) and (367.78,269.35) .. (370.8,269.35) .. controls (373.82,269.35) and (376.27,273.68) .. (376.27,279.03) .. controls (376.27,284.38) and (373.82,288.71) .. (370.8,288.71) .. controls (367.78,288.71) and (365.34,284.38) .. (365.34,279.03) -- cycle ;
\draw    (421.64,248.68) .. controls (389.64,248.68) and (402.14,269.18) .. (368.3,269.85) ;
\draw    (423.14,310.18) .. controls (391.14,310.18) and (404.64,288.05) .. (370.8,288.71) ;
\draw   (416.17,258.36) .. controls (416.17,253.02) and (418.62,248.68) .. (421.64,248.68) .. controls (424.66,248.68) and (427.1,253.02) .. (427.1,258.36) .. controls (427.1,263.71) and (424.66,268.05) .. (421.64,268.05) .. controls (418.62,268.05) and (416.17,263.71) .. (416.17,258.36) -- cycle ;
\draw   (417.67,300.5) .. controls (417.67,295.15) and (420.12,290.82) .. (423.14,290.82) .. controls (426.16,290.82) and (428.6,295.15) .. (428.6,300.5) .. controls (428.6,305.85) and (426.16,310.18) .. (423.14,310.18) .. controls (420.12,310.18) and (417.67,305.85) .. (417.67,300.5) -- cycle ;
\draw   (464.67,257.36) .. controls (464.67,252.02) and (467.12,247.68) .. (470.14,247.68) .. controls (473.16,247.68) and (475.6,252.02) .. (475.6,257.36) .. controls (475.6,262.71) and (473.16,267.05) .. (470.14,267.05) .. controls (467.12,267.05) and (464.67,262.71) .. (464.67,257.36) -- cycle ;
\draw   (466.67,300) .. controls (466.67,294.65) and (469.12,290.32) .. (472.14,290.32) .. controls (475.16,290.32) and (477.6,294.65) .. (477.6,300) .. controls (477.6,305.35) and (475.16,309.68) .. (472.14,309.68) .. controls (469.12,309.68) and (466.67,305.35) .. (466.67,300) -- cycle ;
\draw    (423.14,290.82) .. controls (396.14,290.18) and (394.64,267.18) .. (421.64,268.05) ;
\draw    (470.14,267.05) .. controls (496.64,267.18) and (496.64,289.18) .. (472.14,290.32) ;
\draw [color={rgb, 255:red, 126; green, 211; blue, 33 }  ,draw opacity=1 ]   (558.73,278.68) -- (539.73,278.68) ;
\draw [shift={(537.73,278.68)}, rotate = 360] [color={rgb, 255:red, 126; green, 211; blue, 33 }  ,draw opacity=1 ][line width=0.75]    (6.56,-1.97) .. controls (4.17,-0.84) and (1.99,-0.18) .. (0,0) .. controls (1.99,0.18) and (4.17,0.84) .. (6.56,1.97)   ;
\draw [color={rgb, 255:red, 126; green, 211; blue, 33 }  ,draw opacity=1 ]   (456.73,260.18) -- (437.73,260.18) ;
\draw [shift={(435.73,260.18)}, rotate = 360] [color={rgb, 255:red, 126; green, 211; blue, 33 }  ,draw opacity=1 ][line width=0.75]    (6.56,-1.97) .. controls (4.17,-0.84) and (1.99,-0.18) .. (0,0) .. controls (1.99,0.18) and (4.17,0.84) .. (6.56,1.97)   ;
\draw [color={rgb, 255:red, 126; green, 211; blue, 33 }  ,draw opacity=1 ]   (456.73,304.18) -- (437.73,304.18) ;
\draw [shift={(435.73,304.18)}, rotate = 360] [color={rgb, 255:red, 126; green, 211; blue, 33 }  ,draw opacity=1 ][line width=0.75]    (6.56,-1.97) .. controls (4.17,-0.84) and (1.99,-0.18) .. (0,0) .. controls (1.99,0.18) and (4.17,0.84) .. (6.56,1.97)   ;
\draw [color={rgb, 255:red, 126; green, 211; blue, 33 }  ,draw opacity=1 ]   (359.73,279.68) -- (340.73,279.68) ;
\draw [shift={(338.73,279.68)}, rotate = 360] [color={rgb, 255:red, 126; green, 211; blue, 33 }  ,draw opacity=1 ][line width=0.75]    (6.56,-1.97) .. controls (4.17,-0.84) and (1.99,-0.18) .. (0,0) .. controls (1.99,0.18) and (4.17,0.84) .. (6.56,1.97)   ;

\draw (560,271) node [anchor=north west][inner sep=0.75pt]    {\scalebox{1.2}{$\Vbb$}};
\draw (430,228) node [anchor=north west][inner sep=0.75pt]    {\scalebox{1.2}{$\boxtimes_\fq\Vbb$}};
\draw (255,269.64) node [anchor=north west][inner sep=0.75pt]    {\scalebox{1.2}{$\boxtimes_\fp(\boxtimes_\fq\Vbb)$}};

\end{tikzpicture}
}}}}
\begin{figure}[h]

	\centering
\begin{gather*}
\figc\\
\qquad\rotatebox{-90}{\scalebox{1.3}{$\simeq$}~~}\\
\figd
\end{gather*}

	\caption{. The transitivity of fusion products $\boxtimes_{\fk T}\Vbb\simeq\boxtimes_\fp(\boxtimes_\fq\Vbb)$.}
	\label{fig5}
\end{figure} 

It remains to give a categorical description of $\boxtimes_\fq\Vbb$. By Sec. \ref{lb60}, $\bbs_\fq\Vbb$ is the unique object in $\Mod(\Vbb^{\otimes2})$ implementing the natural equivalence 
\begin{align}
\Hom_{\Vbb^{\otimes 2}}(\Mbb,\bbs_\fq\Vbb)\simeq \ST^*_\fq(\Mbb\otimes\Vbb)
\end{align}
for all $\Mbb\in\Mod(\Vbb^{\otimes2})$. Equivalently,
\begin{align}
\Hom_{\Vbb^{\otimes 2}}(\boxtimes_\fq\Vbb,\Mbb)\simeq \ST^*_\fq(\Mbb'\otimes\Vbb)
\end{align}
If $\Mbb=\Xbb\otimes \Ybb'$ where $\Xbb,\Ybb\in\Mod(\Vbb)$, by the propagation of conformal blocks (cf. \cite[Cor. 2.44]{GZ1}), we have
\begin{align*}
\ST^*_\fq(\Xbb'\otimes\Ybb\otimes\Vbb)\simeq\Hom_\Vbb(\Ybb,\Xbb)
\end{align*}
and hence
\begin{align}\label{eq128}
\Hom_{\Vbb^{\otimes 2}}(\boxtimes_\fq\Vbb,\Xbb\otimes\Ybb')\simeq \Hom_\Vbb(\Ybb,\Xbb)
\end{align}
That is, $\boxtimes_\fq\Vbb$ is the unique object in the Deligne product of two copies of $\Mod(\Vbb)$ representing the left exact profunctor $(\Xbb,\Ybb)\mapsto\Hom_\Vbb(\Ybb,\Xbb)$. Therefore:
\begin{thm}\label{lb61}
For each $\Xbb\in\Mod(\Vbb)$, let $\pi_\Xbb:\boxtimes_\fq\Vbb\rightarrow\Xbb\otimes\Xbb'$ be the morphism representing the identity map $\id_\Xbb\in\Hom_\Vbb(\Xbb,\Xbb)$ in the sense of \eqref{eq128}. Then the dinatural transformation $(\pi_\Xbb)_{\Xbb\in\Mod(\Vbb)}$ is an end in $\Mod(\Vbb^{\otimes2})$. In short, we have
\begin{align*}
\boxtimes_\fq\Vbb\simeq\int_{\Xbb\in\Mod(\Vbb)}\Xbb\otimes_\Cbb\Xbb'
\end{align*}
\end{thm}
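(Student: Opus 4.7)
The plan is to verify that $(\boxtimes_\fq\Vbb,(\pi_\Xbb)_{\Xbb\in\Mod(\Vbb)})$ satisfies the universal property of an end, which amounts to two claims: that the family $(\pi_\Xbb)$ is a wedge, and that every wedge from any object $\Mbb\in\Mod(\Vbb^{\otimes2})$ factors uniquely through it. Both facts will be derived from the representability \eqref{eq128} together with the Deligne product identification $\Mod(\Vbb^{\otimes2})\simeq\Mod(\Vbb)\boxtimes\Mod(\Vbb)$ already invoked in the discussion preceding the theorem.

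For the wedge property, let $T:\Xbb\to\Ybb$ be a morphism in $\Mod(\Vbb)$. I would compare the two elements
\[(T\otimes\id_{\Xbb'})\circ\pi_\Xbb,\qquad(\id_\Ybb\otimes T^\tr)\circ\pi_\Ybb\]
of $\Hom_{\Vbb^{\otimes2}}(\boxtimes_\fq\Vbb,\Ybb\otimes\Xbb')$ by transferring them to $\Hom_\Vbb(\Xbb,\Ybb)$ via \eqref{eq128}. By the naturality of \eqref{eq128} in its first argument (applied covariantly via $T$), the first element maps to $T\circ\id_\Xbb=T$; by the naturality in the second argument (applied contravariantly via $T$), the second maps to $\id_\Ybb\circ T=T$. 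Since the bijection \eqref{eq128} is injective, the two elements coincide.

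For universality, fix $\Mbb\in\Mod(\Vbb^{\otimes2})$ equipped with a wedge $(\rho_\Xbb:\Mbb\to\Xbb\otimes\Xbb')_\Xbb$. The wedge property allows one to define, for each $\Xbb,\Ybb\in\Mod(\Vbb)$ and each $T\in\Hom_\Vbb(\Ybb,\Xbb)$, a morphism
\[\rho_T:=(T\otimes\id_{\Ybb'})\circ\rho_\Ybb=(\id_\Xbb\otimes T^\tr)\circ\rho_\Xbb:\Mbb\longrightarrow\Xbb\otimes\Ybb',\]
the two expressions agreeing by the wedge property. These assignments yield a natural transformation of bifunctors
\[\Hom_\Vbb(\Ybb,\Xbb)\longrightarrow\Hom_{\Vbb^{\otimes2}}(\Mbb,\Xbb\otimes\Ybb'),\qquad T\mapsto\rho_T,\]
in $(\Xbb,\Ybb)$. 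Composing with the inverse of \eqref{eq128} produces a natural transformation
\[\Hom_{\Vbb^{\otimes2}}(\boxtimes_\fq\Vbb,\Xbb\otimes\Ybb')\Longrightarrow\Hom_{\Vbb^{\otimes2}}(\Mbb,\Xbb\otimes\Ybb')\]
on the full subcategory of objects of the form $\Xbb\otimes\Ybb'$. Since $\Mod(\Vbb^{\otimes2})\simeq\Mod(\Vbb)\boxtimes\Mod(\Vbb)$ is generated by such bifactor objects, a Yoneda-density argument then produces a unique morphism $\Phi:\Mbb\to\boxtimes_\fq\Vbb$ inducing this natural transformation. Specializing to $\Ybb=\Xbb$ and $T=\id_\Xbb$ gives $\pi_\Xbb\circ\Phi=\rho_\Xbb$ for all $\Xbb$, as required.

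The main obstacle I anticipate is the final Yoneda-density step: rigorously showing that a natural transformation of contravariant functors on $\Mod(\Vbb^{\otimes2})$, specified only on the generating subcategory of objects $\Xbb\otimes\Ybb'$, extends uniquely to all of $\Mod(\Vbb^{\otimes2})$. This is expected on general grounds from the Deligne product presentation together with the left exactness of the representable Hom-functors, but writing it out carefully --- including the compatibility with kernels and cokernels of morphisms between bifactor objects --- would be the technically most involved ingredient. All other steps (wedge property, construction of the extended family $(\rho_T)$, and identification at $T=\id_\Xbb$) are immediate consequences of the naturality and representability recorded in \eqref{eq128}.
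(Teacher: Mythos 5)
Your argument is correct in outline but takes a genuinely different route from the paper. The paper's proof is a two-line reduction: it identifies $\Xbb'$ with $\Xbb^\opp$, observes that the statement is exactly the assertion that the end $\int_{\Xbb}\Xbb\otimes^{\mathrm{Del}}\Xbb^\opp$ represents the profunctor $(\Xbb,\Ybb)\mapsto\Hom_\Vbb(\Ybb,\Xbb)$, replaces $\Mod(\Vbb)$ by $\Mod(A)$ for a finite-dimensional algebra $A$, and cites \cite[Cor.\ 2.9]{FSS20}. You instead verify the universal property of the end directly from the representability \eqref{eq128}. Your wedge verification and the construction of the extended family $(\rho_T)$ are correct, and the overall plan does close; what you gain is a self-contained argument that makes visible exactly which formal properties of \eqref{eq128} are used, at the cost of re-proving a known categorical fact.

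The one place where your write-up stops short of a proof is the step you yourself flag: the ``Yoneda-density'' extension. Three things need to be supplied there. First, since $\Hom_{\Vbb^{\otimes2}}(A,-)$ is covariant left exact, what you need is not that bifactor objects \emph{generate} $\Mod(\Vbb^{\otimes2})$ but that they \emph{cogenerate} it: every $\Nbb$ must admit an exact sequence $0\to\Nbb\to\Ebb_0\otimes\Fbb_0\to\Ebb_1\otimes\Fbb_1$. This follows from \cite[Prop.\ 3.2]{McR-deligne} by applying the contragredient (the contragredient of a bifactor object is again a bifactor object), but it is the dual statement to the one you invoke. Second, your natural transformation on bifactor objects is a priori only natural with respect to decomposable morphisms $S\otimes R$; you need naturality with respect to \emph{all} morphisms between bifactor objects, which holds because $\Hom_{\Vbb^{\otimes2}}(\Xbb\otimes\Ybb',\Xbb_1\otimes\Ybb_1')\simeq\Hom_\Vbb(\Xbb,\Xbb_1)\otimes_\Cbb\Hom_{\Vbb}(\Ybb',\Ybb_1')$ in the Deligne product, so every such morphism is a finite sum of decomposable ones. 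Third, uniqueness of $\Phi$ is not addressed; the cleanest route is to embed $\boxtimes_\fq\Vbb$ into a bifactor object $\Xbb\otimes\Ybb'$, note that by \eqref{eq128} and naturality this embedding factors as $(T\otimes\id)\circ\pi_\Ybb$ for some $T$, and conclude that the $\pi_\Xbb$ are jointly monic. None of these is an obstruction, but together they constitute the actual content of the theorem in your approach --- which is precisely the content the paper outsources to \cite{FSS20}.
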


An alternative proof of this theorem will be given in \cite{GZ5}.

\begin{proof}
We identify $\Mod(\Vbb)$ with its opposite category $\Mod(\Vbb)^\opp$ by identifying $\Xbb'$ with $\Xbb^\opp$. Therefore, it suffices to prove the claim that the end $\End:=\int_{\Xbb\in\Mod(\Vbb)}\Xbb\otimes^{\mathrm{Del}}\Xbb^\opp$ represents the profunctor $(\Xbb,\Ybb)\mapsto\Hom_\Vbb(\Ybb,\Xbb)$, where $\otimes^{\mathrm{Del}}$ denotes the Deligne product. To prove this, one may assume that $\Mod(\Vbb)=\Mod(A)$ where $A$ is a unital finite dimensional $\Cbb$-algebra. In this case, the claim follows from \cite[Cor. 2.9]{FSS20}.
\end{proof}

It follows from \eqref{eq129} and Thm. \ref{lb61} that
\begin{subequations}\label{eq130}
\begin{align}
\boxtimes_{\fk T}(\Vbb)\simeq \mbb L
\end{align}
where $\mbb L$ is the \textbf{Lyubashenko construction}
\begin{align}
\mbb L:=\boxdot\Big(\int_{\Xbb\in\Mod(\Vbb)}\Xbb\otimes\Xbb'\Big)
\end{align}
\end{subequations}
defined in the context of Grothendieck-Verdier categories in \cite{MW23-cyclic-framed,BW22-MFFH}. In the case of finite tensor categories, this construction is introduced by Lyubashenko \cite{Lyu95-Invariants,Lyu96-Ribbon} in the form of the \textbf{Lyubashenko coend}
\begin{align*}
\mbb L=\int^{\Xbb\in\Mod(\Vbb)}\Xbb'\boxdot\Xbb
\end{align*}
(When $\Vbb$ is strongly finite, and if the conjectured rigidity of $\Mod(\Vbb)$ holds, then $\Mod(\Vbb)$ is modular by \cite{McR21-rational}. In that case, $\int_{\Xbb\in\Mod(\Vbb)}\Xbb\otimes_\Cbb\Xbb'$ is self-dual due to \cite{Shi-unimodular}, and hence $\int_{\Xbb\in\Mod(\Vbb)}\Xbb\otimes\Xbb'\simeq\int^{\Xbb\in\Mod(\Vbb)}\Xbb'\otimes\Xbb$. Since $\boxdot$ is a left adjoint---specifically, it is the left adjoint of $\Xbb\in\Mod(\Vbb)\rightarrow\bbs_\fq(\Xbb')\in\Mod(\Vbb^{\otimes2})$---the functor $\boxdot$ commutes with coends. Therefore, the above two definitions of $\mbb L$ are equivalent.)

Thanks to \eqref{eq130}, the isomorphism \eqref{eq125} becomes
\begin{align}\label{eq126}
\ST^*_{\fy_{p_\blt}}(\Xbb_1\otimes\cdots\otimes\Xbb_K)\simeq\Hom_\Vbb\big(\mbb L^{\boxdot R}\boxdot\Xbb_1\boxdot\cdots\boxdot\Xbb_K,\Vbb'\big)
\end{align}
where the LHS is the space of genus-$R$ conformal blocks (with input modules $\Xbb_1,\dots,\Xbb_K$), and the RHS coincides with Lyubashenko's construction of modular functors (cf. \cite[Sec. 8.2]{Lyu96-Ribbon} or \cite[Cor. 8.1]{BW22-MFFH}).

\subsection*{Acknowledgment}

We would like to thank Chiara Damiolini, Jishen Du, J\"urgen Fuchs, Yi-Zhi Huang, Liang Kong, Robert McRae, Yuto Moriwaki, Ingo Runkel, Christoph Schweigert, Yilong Wang, Lukas Woike, and Hao Zheng for helpful discussions. We are grateful to the referees for their valuable comments and reports on the manuscript.

The first author is grateful to Zhipeng Yang for his warm hospitality and the support of Yunnan Key Laboratory of Modern Analytical Mathematics and Applications. He is supported by NSFC Grant 12401159. The second author would like to thank Wenjun Niu for valuable discussions and for his hospitality at the Perimeter Institute for Theoretical Physics.

\section{Preliminaries}

\subsection{Notation}\label{lb1}

In this paper, we continue to use the notations listed in \cite[Sec. 1.1]{GZ2}. In addition to these notations, we also adopt the following notations and conventions.

If $W$ is a vector space, and $q_1,\dots,q_R$ are (mutually commuting) formal variables, then $\pmb{W\{q_\blt\}}=W\{q_1,\dots,q_R\}$ denotes the set of all
\begin{align*}
\sum_{n_\blt\in E+\Cbb^R}w_{n_\blt}q_1^{n_1}\cdots q_R^{n_R}
\end{align*}
where $E$ is a finite subset of $\Cbb^R$, and each $w_{n_\blt}\in W$.

Throughout this paper, we fix an ($\Nbb$-graded) $C_2$-cofinite vertex operator algebra (VOA) $\Vbb=\bigoplus_{n\in \Nbb}\Vbb(n)$ with conformal vector $\cbf$. For each $N\in\Nbb$,  we let $\pmb{\Mod(\Vbb^{\otimes N})}$ be the category of grading-restricted (generalized) $\Vbb^{\otimes N}$-module, which is an abelian category by \cite{Hua-projectivecover}. 

For each $r\in(0,+\infty]$, we let 
\begin{subequations}\label{eq127}
\begin{gather}
\MD_r=\{z\in\Cbb:|z|<r\}\qquad\MD_r^\times=\MD_r\setminus\{0\}\qquad\wht\MD_r^\times=\text{the universal cover of }\MD_r^\times
\end{gather}
For each $r_1,\dots,r_N\in(0,+\infty]$, we let
\begin{gather}
\MD_{r_\blt}=\MD_{r_1}\times\cdots\times\MD_{r_N} \qquad \MD_{r_\blt}^\times=\MD_{r_1}^\times\times\cdots\times\MD_{r_N}^\times \qquad \wht\MD_{r_\blt}^\times=\wht\MD_{r_1}^\times\times\cdots\times\wht\MD_{r_N}^\times
\end{gather}
\end{subequations}

Recall from \cite[Sec. 1.1]{GZ2} that if $\Wbb\in\Mod(\Vbb^{\otimes N})$ and $v\in\Vbb$, the $i$-th vertex operator is $Y_i(v,z)=\sum_{n\in\Zbb}Y_i(v)_nz^{-n-1}$ is $Y(\idt\otimes\cdots\otimes v\otimes\cdots\otimes\idt,z)$ where $v$ is at the $i$-th component, and $L_i(n)=Y_i(\cbf)_{n-1}$. We also write
\begin{subequations}\label{eq48}
\begin{align}
Y_i'(v,z)=Y_i(\MU(\upgamma_z)v,z^{-1})
\end{align}
where $\MU(\upgamma_z)=e^{zL(1)}(-z^{-2})^{L(0)}$. In particular, $\MU(\upgamma_1)=e^{L(1)}(-1)^{L(0)}$. Clearly $\MU(\upgamma_z)^{-1}=\MU(\upgamma_{1/z})$, and hence
\begin{align}
Y_i(v,z)=Y_i'(\MU(\upgamma_z)v,z^{-1})
\end{align}
\end{subequations}
For each $k\in\Zbb$, let
\begin{gather}\label{eq50}
Y_i(v)_{(k)}=\Res_{z=0}Y_i(z^{k+L(0)-1}v,z)dz\qquad Y_i'(v)_{(k)}=\Res_{z=0}Y'_i(z^{k+L(0)-1}v,z)dz
\end{gather}

If $\Wbb\in\Mod(\Vbb^{\otimes N})$ and $\lambda_1,\dots,\lambda_N\in\Cbb$, then $\Wbb_{[\lambda_\blt]}$ is the subspace of all $w\in\Wbb$ such that for all $1\leq i\leq N$, $w$ is a generalized eigenvector of $L_i(0)$ with eigenvalue $\lambda_i$. The finite-dimensional subspace $\Wbb_{[\leq\lambda_\blt]}$ is defined to be the direct sum of all $\Wbb_{[\mu_\blt]}$ where $\Re(\mu_i)\leq \Re(\lambda_i)$ for all $1\leq i\leq N$. Then the contragredient $\Vbb^{\otimes N}$-module of $\Wbb$, as a vector space, is
\begin{align*}
\Wbb'=\bigoplus_{\lambda_\blt\in\Cbb^N}(\Wbb_{[\lambda_\blt]})^*
\end{align*}
Then for each $w\in\Wbb,w'\in\Wbb$ we clearly have
\begin{align}\label{eq49}
\bk{Y_i(v,z)w,w'}=\bk{w,Y_i'(v,z)w'}
\end{align}
The algebraic completion of $\Wbb$ is 
\begin{align*}
\ovl\Wbb=(\Wbb')^*=\prod_{\lambda_\blt\in\Cbb^N}\Wbb_{[\lambda_\blt]}
\end{align*}
We let $P_{\lambda_\blt}$ and $P_{\leq\lambda_\blt}$ be the projections of $\ovl\Wbb$ onto $\Wbb_{[\lambda_\blt]}$ and $\Wbb_{\leq[\lambda_\blt]}$ respectively.

If $\Wbb$ is a vector space, we let $\bk{\cdot,\cdot}$ be the evaluation pairing between $\Wbb$ and $\Wbb^*$. In other words, for each $w\in\Wbb,\uppsi\in\Wbb^*$, we write
\begin{align*}
\uppsi(w)=\bk{\uppsi,w}=\bk{w,\uppsi}
\end{align*}

If $A,B$ are sets, then both $A-B$ and $A\setminus B$ denote $\{a\in A:a\notin B\}$. Note that if $A,B$ are hypersurfaces in a complex manifold, the notation $A-B$, which denotes a divisor, has a different meaning. (Cf. the proof of Prop. \ref{geometry10}.)

For any complex manifold $X$, recall that $\Theta_X$ is the sheaf of (germs) of holomorphic tangent fields of $X$, i.e., the holomorphic tangent bundle of $X$. We let $\omega_X$ be its dual sheaf $\Theta_X^*$, the holomorphic cotangent bundle of $X$. In particular, if $X$ is a Riemann surface, then $\omega_X$ is the sheaf of (germs of) holomorphic $1$-forms on $X$.

\subsubsection{Series with logarithmic terms}


\begin{df}\label{lb19}
Let $X$ be a complex manifold. Let $R\in\Zbb_+$, and let $\Gamma\subset\Cbb^R$ be locally compact. Let $D$ be a finite subset of $\Cbb^R$. Let $L_1,\dots,L_R\in\Nbb$. We say that  the formal series
\begin{align}\label{eq43}
f(z_\blt):=        \sum_{n_\blt\in D}\sum_{0\leq l_\blt\leq L_\blt}c_{n_\blt,l_\blt} z_1^{n_1}\cdots z_R^{n_R} (\log z_1)^{l_1}\cdots(\log z_R)^{l_R}
    \end{align}
(where each $c_{n_\blt,l_\blt}\in \MO(X)$) \textbf{converges a.l.u.} on $X\times\Gamma$ if, for each $0\leq l_\blt\leq L_\blt$ (i.e. $0\leq l_i\leq L_i$ for all $1\leq i\leq R$) and each compact $K\subset X\times\Gamma$  we have
\begin{align}\label{eq29}
\sup_{(x,z_\blt)\in K}\sum_{n_\blt\in D}\big|c_{n_\blt,l_\blt}(x) z_1^{n_1}\cdots z_R^{n_R}\big|<+\infty
\end{align}
Suppose that $\Wbb$ is a vector space and $\uppsi:\Wbb\rightarrow\MO(X)\{z_\blt\}[\log z_\blt]$ is a linear map such that for each $w\in\Wbb$, the series $\uppsi(w)$ is of the form \eqref{eq43}. We say that $\uppsi$ \textbf{converges a.l.u.} on $X\times\Gamma$ if for each $w\in\Wbb$, the series $\uppsi(w)$ converges a.l.u. on $\Gamma$.
\end{df}

Now, we let $D=E+\Nbb=\{\alpha+\beta:\alpha\in E,\beta\in \Nbb\}$ where $E$ is a finite subset of $\Cbb$. Let $L\in\Nbb$. We recall the following result from \cite[Prop. 2.1]{Huang-applicability} and provide an alternative proof with a slightly different flavor.

\begin{pp}\label{geometry11}
Suppose that there exists $\eps>0$ such that \eqref{eq29} converges a.l.u. (in the sense of Def. \ref{lb19}) on the real interval $(0,\eps)$ to $0$. Then $c_{n,l}=0$ for all $n\in D$ and $0\leq l\leq L$.
\end{pp}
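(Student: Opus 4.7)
The plan is to reduce to the scalar case (fixed $x\in X$), then use monodromy on the universal cover of the punctured disk. First I would reduce to the case that $E$ has elements lying in pairwise distinct classes modulo $\Zbb$---otherwise replace duplicate cosets by a single representative and absorb the integer shifts into $\Nbb$---so that $D=\bigsqcup_{\alpha\in E}(\alpha+\Nbb)$. Grouping terms accordingly, for each fixed $x\in X$,
\[
f(z)=\sum_{\alpha\in E}z^\alpha\sum_{l=0}^L P_{l,\alpha}(z)(\log z)^l,\qquad P_{l,\alpha}(z):=\sum_{m\in\Nbb}c_{\alpha+m,l}(x)\,z^m,
\]
where each $P_{l,\alpha}$ is now a single-valued holomorphic function on $\MD_\eps$.

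Next I would extend $f$ to a holomorphic function on $X\times\wht\MD_\eps^\times$ that is identically zero. The crucial point is that the finiteness of $E$ forces $\{\Im n:n\in D\}$ to be bounded; on any compact subset of $\wht\MD_\eps^\times$ the estimate $|z^n|=|z|^{\Re n}e^{-\Im n\cdot\arg z}$ then shows that $\sum_n|c_{n,l}(x)z^n|$ is dominated by its value on a real point of $(0,\eps)$ up to a bounded multiplicative factor. Hence the a.l.u.\ hypothesis on the real interval upgrades to a.l.u.\ convergence on $X\times\wht\MD_\eps^\times$, and since $f(x,\cdot)\equiv0$ on the sheet containing $(0,\eps)$, the identity theorem gives $f(x,\cdot)\equiv0$ on $\wht\MD_\eps^\times$.

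Then I would use the monodromy $\tau$ (the deck transformation $\log z\mapsto\log z+2\pi\im$) to separate the $\alpha$-components. Let $V_\alpha$ be the space of multivalued functions of the form $z^\alpha h(z)Q(\log z)$ with $h\in\MO(\MD_\eps)$ and $Q\in\Cbb[\Lambda]_{\le L}$; on $V_\alpha$ the operator $\tau$ equals $e^{2\pi\im\alpha}$ times a unipotent operator, so its only eigenvalue on $V_\alpha$ is $e^{2\pi\im\alpha}$. Because the $\alpha\in E$ lie in distinct classes mod $\Zbb$, the numbers $e^{2\pi\im\alpha}$ are distinct, so the operator $\prod_{\beta\in E,\,\beta\ne\alpha}(\tau-e^{2\pi\im\beta})^{L+1}$ is invertible on $V_\alpha$ and annihilates every $V_\beta$ with $\beta\ne\alpha$. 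Writing $f=\sum_\alpha f_\alpha$ with $f_\alpha\in V_\alpha$ and applying this operator to $f=0$ gives $f_\alpha=0$ individually, i.e.\ $\sum_{l=0}^L P_{l,\alpha}(z)(\log z)^l\equiv0$ on $\wht\MD_\eps^\times$ for each $\alpha\in E$.

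Finally, for a fixed $z\in\MD_\eps^\times$ the values of $\log z$ across the sheets form the infinite set $\{\log_0 z+2\pi\im k:k\in\Zbb\}$, so the polynomial $\Lambda\mapsto\sum_l P_{l,\alpha}(z)\Lambda^l$ of degree at most $L$ vanishes at infinitely many points and must be identically zero. Thus $P_{l,\alpha}(z)=0$ for every $l$ and every $z\in\MD_\eps^\times$, yielding $c_{\alpha+m,l}(x)=0$ for all $\alpha,m,l$. Since $x\in X$ was arbitrary and each $c_{n,l}$ is holomorphic, we conclude $c_{n,l}=0$ in $\MO(X)$. The main obstacle is verifying that the a.l.u.\ convergence on the real interval actually propagates to all of the universal cover; this is precisely where the finiteness assumption $D=E+\Nbb$ is used essentially, and the remainder of the argument is a clean application of the Jordan-block structure of the monodromy action on functions with prescribed branching.
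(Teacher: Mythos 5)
Your proof is correct, and it takes a genuinely different route from the paper's. The paper argues entirely on the real interval: assuming some $c_{n,l}\neq 0$, it isolates the terms with minimal $\Re(n)$ and maximal $\log$-power, normalizes by $z^{-r}(\log z)^{-\lambda}$, and observes that after substituting $z=e^t$ the remainder and all its $t$-derivatives vanish as $t\to-\infty$; what survives is a finite sum $\sum_j c_j e^{\im s_j t}$ of purely oscillatory exponentials whose derivatives of every order tend to $0$, and an invertible Vandermonde matrix in the distinct frequencies $s_j$ then forces each $c_j=0$, a contradiction. You instead first upgrade the hypothesis: using that $\Im n$ takes only finitely many values on $D=E+\Nbb$ and that $|z^n|=|z|^{\Re n}e^{-\Im n\arg z}$ is controlled on compacta of $\wht\MD_\eps^\times$ by its value at a real point, you extend $f$ holomorphically to the universal cover, kill it there by the identity theorem, and then separate the $E$-components by the deck transformation $\tau$, exploiting that $\tau$ acts on each block $V_\alpha$ as $e^{2\pi\im\alpha}$ times a unipotent operator; a degree-$L$ polynomial in $\log z$ vanishing on the infinite fiber $\{\log_0 z+2\pi\im k\}$ finishes the job. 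Your route is less elementary (it needs the continuation step, which you correctly flag as the point where finiteness of $E$ enters) but more structural, and it yields the stronger intermediate fact that the series converges and vanishes on all of $\wht\MD_\eps^\times$, with the separation of exponents handled by monodromy eigenvalues rather than asymptotics. Two small points to tidy: the hypothesis gives vanishing only on one lift of the interval $(0,\eps)$, not a priori on a whole sheet, but the identity theorem applies all the same since the interval has accumulation points in the connected cover; and in the final step one should note that $e^{\alpha\log z}\neq 0$ before cancelling it to reduce to the polynomial in $\log z$.
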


\begin{proof}
Suppose that not all $c_{n,l}$ are zero. Let $r\in\Rbb$ be the smallest number such that $c_{n,l}\neq0$ for some $0\leq l\leq L$ and some $n\in D$ satisfying $\Re(n)=r$. Let $\lambda\in\{0,\dots,L\}$ be the largest number such that $c_{n,\lambda}\neq 0$ for some $n\in D$ satisfying $\Re(n)=r$. Then we can write
\begin{align*}
z^{-r}(\log z)^{-\lambda}f(z)=c_1z^{\im s_1}+\cdots+c_kz^{\im s_k}+g(z)+h(z)
\end{align*}
Here, $c_1,\dots,c_2\in\Cbb$ are non-zero, and $s_1,\dots,s_k\in\Rbb$ are mutually distinct.  Moreover,
\begin{align*}
h(z)=z^{\gamma_1}h_1(z)(\log z)^{\sigma_1}+\cdots+z^{\gamma_m}h_m(z)(\log z)^{\sigma_m}
\end{align*} 
for some $h_1,\dots,h_m\in\Cbb[[z]]$ converging on a neighborhood of $0$, and $\sigma_1,\dots,\sigma_m\in\Zbb$, and the real parts of $\gamma_1\dots,\gamma_m$ are $>0$.  Moreover,
\begin{align*}
g(z)=g_1(z)(\log z)^{-1}+\cdots+g_\lambda(z)(\log z)^{-\lambda}
\end{align*}
where $g_1,\dots,g_\lambda\in\Span_\Cbb\{z^{\im s}:s\in\Rbb\}$. (We let $g(z)=0$ if $\lambda=0$.)

By induction on $j\in\Nbb$, one easily sees that
\begin{align*}
\lim_{t\rightarrow-\infty}\partial^j_tg(e^t)=\lim_{t\rightarrow-\infty}\partial^j_th(e^t)=0
\end{align*}
Therefore, since $f=0$, we have
\begin{align*}
\lim_{t\rightarrow-\infty} (c_1s_1^je^{\im s_1t}+\cdots+c_ks_k^je^{\im s_kt})=0
\end{align*}
for all $j\in\Nbb$. Let $A$ be the $k\times k$ complex matrix $(s_i^{j-1})_{1\leq i,j\leq k}$. Then the above limit implies
\begin{align*}
\lim_{t\rightarrow-\infty}(c_1e^{\im s_1t},\dots,c_ke^{\im s_kt})A=0
\end{align*}
Since $s_1,\dots,s_k$ are mutually distinct, the Vandermonde matrix $A$ is invertible. Therefore, we have $\lim_{t\rightarrow-\infty} c_ie^{\im s_it}=0$ for all $1\leq i\leq k$, and hence $c_i=0$. This is impossible.
\end{proof}

\subsection{Review of basic concepts}\label{lb10}

\subsubsection{The family $\fx=\MS\wtd\fx$ obtained by sewing $\wtd\fx$}\label{lb9}
Let $N\in\Zbb_+$ and $R\in\Nbb$. In this chapter, we assume the setting in \cite[Subsec. 1.2.1]{GZ2}, along with the condition that $\wtd \MB$ is a single point (i.e. a connected $0$-dimensional complex manifold). Namely,
\begin{align}
    \wtd \fx
    =(\wtd \MC\big|\sgm_1,\cdots,\sgm_N;\eta_1,\dots,\eta_N\big\Vert \sgm_1',\cdots,\sgm_R',\sgm_1'',\cdots,\sgm_R'';\xi_1,\dots,\xi_R,\varpi_1,\dots,\varpi_R)
\end{align}
is an $(N+2R)$-pointed compact Riemann surface with local coordinates. More precisely, $\sgm_\blt,\sgm_\blt',\sgm_\blt''$ are distinct points of the compact Riemann surface $\MCtd$. Each $\eta_i$ is a local coordinate at $\sgm_i$, i.e., a univalent function on a neighborhood of $\sgm_i$ sending $\sgm_i$ to $0$. Similarly, each $\xi_j$ is a local coordinate at $\sgm_j'$ and each $\varpi_j$ is a local coordinate at $\sgm_j''$. The $N$-pointed family
\begin{align}
   \fx\equiv\MS\wtd\fx=(\pi:\MC\rightarrow \MB\big|\sgm_1,\cdots,\sgm_N;\eta_1,\dots,\eta_N)
\end{align}
is the obtained by \textbf{sewing $\wtd\fx$ along the pairs of points} $(\sgm_j',\sgm_j'')$ (for all $1\leq j\leq R$) \textbf{via the local coordinates $\xi_j,\varpi_j$}. 

Here, for each $1\leq j\leq R$, we assume that $\xi_j$ is defined on a neighborhood $V_j'$ of $\sgm_j'$, and that $\varpi_j$ is defined on a neighborhood $V_j''$ of $\sgm_j''$. We assume that $V_1',V_1'',\dots,V_R',V_R'',\sgm_1,\dots,\sgm_N$ are mutually disjoint, and 
\begin{align}\label{eq1}
\xi_j(V_j')=\MD_{r_j}\qquad\varpi_j(V_j'')=\MD_{\rho_j}
\end{align}
where $r_j,\rho_j\in(0,+\infty]$ are called the \textbf{sewing radii}. Then
\begin{align}
    \MB=\MD_{r_\blt \rho_\blt}=\MD_{r_1\rho_1}\times \cdots \times \MD_{r_R\rho_R}
\end{align}

For each $1\leq i\leq N$, note that the marked point $\sgm_i\in\wtd\MC$ is extended constantly to a section $\sgm_i:\MB\rightarrow\MC$ of $\fx$. The local coordinate $\eta_i$ of $\wtd\fx$ at $\sgm_i$ is extended constantly to a local coordinate $\eta_i$ of $\fx$ at $\sgm_i(\MB)$ (i.e., a holomorphic function $\eta_i$ on a neighborhood $U_i$ of $\sgm_i(\MB)$ that is univalent on $U_i\cap\pi^{-1}(b)$ for each $b\in\MB$, and which sends $\sgm_i(\MB)$ to $0$). Then $\sgm_1(\MB),\dots,\sgm_N(\MB)$ are mutually disjoint. Let
\begin{align}
    S_\fx=\bigcup_{i=1}^N\sgm_i(\MB)\qquad
    S_{\wtd \fx}=\big\{\sgm_i,\sgm_j',\sgm_j'':1\leq i\leq N,1\leq j\leq R\big\}
\end{align}
which are $1$-codimensional closed submanifolds of $\MC$ and $\wtd\MC$ respectively.

Let $\Delta$ be the set of all $b\in \MB$ such that $\MC_b$ is not smooth. Namely,
\begin{align*}
\MB-\Delta=\mc D_{r_\blt\rho_\blt}^\times:=\MD_{r_1\rho_1}^\times\times \cdots \times \MD_{r_R\rho_R}^\times  
\end{align*}
For each open or closed complex submanifold $T$ of $\MB-\Delta$, we let
\begin{align*}
\fx_T=\big(\pi:\MC_T\rightarrow T\big|\sgm_\blt|_T\big)
\end{align*}
be the restriction of $\fx$ to $T$, where
\begin{align*}
\MC_T=\pi^{-1}(T)
\end{align*}
In particular, for each $b\in \MB-\Delta$, we have the $b$-fiber
\begin{align*}
    \fx_b:=(\MC_b\big| \sgm_\blt(b))=(\MC_b=\pi^{-1}(b)\big|\sgm_1(b),\cdots,\sgm_N(b))
\end{align*}
The restriction of $\eta_1,\dots,\eta_N$ to $\MC_b$ defines local coordinates $\eta_1|_{\MC_b},\dots,\eta_N|_{\MC_b}$ of $\fx_b$. Let
\begin{align}\label{eq8}
\Sigma=\{x\in\MC:\pi\text{ is not a submersion at }x\}
\end{align}

\begin{ass}\label{lb2}
We always assume that each connected component of $\wtd\MC$ intersects  $S_{\wtd\fx}$, and each connected component of each smooth fiber $\fx_b$ of $\fx$ (where $b\in\MB-\Delta$) intersects $\SX$.
\end{ass}

We refer the reader to \cite[Sec. 2.3]{GZ1} or \cite[Sec. 1.2]{GZ2} for more details about the sewing construction. See also the next subsection for a detailed description when $R=1$.

\subsubsection{The case $R=1$}\label{lb3}

Let us describe the construction of $\fx=(\pi:\MC\rightarrow\MB|\sgm_\blt)$ in more detail when $R=1$. We write $\xi_1,\varpi_1,V_1',V_1'',\sgm_1',\sgm_1'',r_1,\rho_1$ as $\xi,\varpi,V',V'',\sgm',\sgm'',r,\rho$ for simplicity. So $\MB=\MD_{r\rho}$. In view of \eqref{eq1}, we make identifications
\begin{gather}\label{eq6}
V'=\MD_r\quad(\text{via }\xi)\qquad\qquad V''=\MD_\rho\quad (\text{via }\varpi)
\end{gather}
so that $\xi:\MD_r\rightarrow\MD_r$ and $\varpi:\MD_\rho\rightarrow\MD_\rho$ become the identity maps. Define open set $W$ and its open subsets $W',W''$ by
\begin{gather}
W=\MD_r\times\MD_\rho\qquad W'=\MD_r^\times\times\MD_\rho\qquad W''=\MD_r\times\MD_\rho^\times
\end{gather}
Extending $\xi,\varpi$ constantly, we can define coordinates
\begin{subequations}\label{eq2}
\begin{gather}
    \xi:W\rightarrow \MD_r  \qquad (z,w,*)\mapsto z\\
    \varpi:W\rightarrow \MD_\rho\qquad (z,w,*)\mapsto w\\
q:W\rightarrow \MD_{r\rho} \qquad (z,w,*)\mapsto zw
\end{gather}
\end{subequations}
so that $q=\xi\varpi$. Then we have open holomorphic embeddings
\begin{subequations}\label{eq11}
\begin{gather}
(\xi,\varpi):W\xrightarrow{=} \MD_r\times \MD_\rho  \label{eq3}\\
(\xi,q):W'\rightarrow \MD_r\times \MD_{r\rho} \label{eq4}\\
(\varpi,q):W''\rightarrow \MD_\rho\times \MD_{r\rho} \label{eq5}
\end{gather}
\end{subequations}
The image of \eqref{eq4} resp. \eqref{eq5} is precisely the subset of all $(z,p)\in \MD_r\times \MD_{r\rho}$ resp. $(w,p)\in \MD_\rho\times \MD_{r\rho}$ satisfying 
\begin{align}\label{eq24}
\frac{\vert p\vert }\rho<\vert z\vert <r \qquad \text{resp.}\qquad \frac{\vert p\vert }r<\vert w\vert <\rho
\end{align}
So closed subsets $F'\subset \MD_r\times \MD_{r\rho}$ and $F''\subset \MD_\rho\times \MD_{r\rho}$ can be chosen such that we have biholomorphisms
\begin{subequations}
\begin{gather}
(\xi,q):W'\xrightarrow{\simeq} \MD_r\times \MD_{r\rho}-F'\\
 (\varpi,q):W''\xrightarrow{\simeq} \MD_\rho\times \MD_{r\rho}-F''
\end{gather}
\end{subequations}
By the identifications \eqref{eq6}, we can write the above maps as
\begin{subequations}\label{eq7}
\begin{gather}
(\xi,q):W'\xrightarrow{\simeq} V'\times \MD_{r\rho}-F'\qquad \subset\wtd \MC\times\mc D_{r\rho}\\
 (\varpi,q):W''\xrightarrow{\simeq} V''\times \MD_{r\rho}-F''\qquad \subset \wtd \MC\times\mc D_{r\rho}
\end{gather}
\end{subequations}
In particular, we view $F'$ and $F''$ as disjoint closed subsets of $\wtd\MC\times \mc D_{r\rho}$.

The complex manifold $\MC$ is defined by 
\begin{gather}\label{eq16}
\MC=W\bigsqcup \big(\wtd\MC\times \MD_{r\rho}-F'-F''\big)\Big/\sim
\end{gather}
Here, the equivalence $\sim$ is defined by identifying each subsets $W',W''$ of $W$ with the corresponding open subsets of $\wtd\MC\times \MD_{r\rho}-F'-F''$ via the biholomorphisms \eqref{eq7}.

The map $\pi:\MC\rightarrow \MB$ is defined as follows. The projection
\begin{align*}
    \wtd\MC\times \MD_{r\rho}\longrightarrow \MD_{r\rho}=\MB
\end{align*}
agrees with
\begin{align*}
q=\xi\varpi:\qquad W=\MD_r\times \MD_\rho\longrightarrow \MD_{r\rho}=\MB
\end{align*}
when restricted to $W'\cup W''$. These two maps give a well-defined surjective holomorphic map $\pi:\MC\rightarrow \MB$. 

\begin{cv}\label{lb4}
We let 
\begin{align*}
q:\MB\rightarrow\Cbb\text{\quad be the standard coordinate of }\MB=\MD_{r\rho}
\end{align*}
Namely, $q(p)=p$ for $p\in\MD_{r\rho}$. If $U\subset\MC$ is open, noting that $\MB=\MD_{r\rho}\subset\Cbb$, we let
\begin{align*}
q:U\rightarrow\Cbb \text{\quad be the extension of }\pi|_U:U\rightarrow\MB\text{ to }U\rightarrow\Cbb
\end{align*}
This convention is compatible with the definition of $q:W\rightarrow\Cbb$ in \eqref{eq2}.
\end{cv}

Therefore, if $\eta\in\MO(U)$ is \textbf{univalent on every fiber} (i.e., for each fiber $U_b=U\cap\pi^{-1}(b)$ of $U$, the restriction $\mu|_{U_b}$ is injective), then $(\eta,q)$ is a set of coordinates of $U$, i.e., it is an open embedding of $U$ into an open subset of $\Cbb^m$ for some $m$.

Clearly $\Sigma=\eqref{eq8}$ is the subset of $W=\MD_r\times\MD_\rho$ described by
\begin{align}\label{eq13}
\Sigma=\{(0,0)\}\qquad\subset\MD_r\times\MD_\rho
\end{align}
and $\Delta=\{0\}$.

For each $1\leq i\leq N$, the marked point $\sgm_i\in\wtd\MC$ is extended constantly $p\in\MD_{r\rho}\mapsto(\sgm_i,p)\wtd\MC\times\MD_{r\rho}$. Since its range it disjoint from $V',V''$ and hence disjoint from $F',F''$, it gives rise to a section $\sgm_i:\MB\rightarrow\MC$.

We also choose a neighborhood $\wtd U_i$ of $\sgm_i$ on which the local coordinate $\eta_i$ is defined. We assume that $\wtd U_1,\dots,\wtd U_N$ are mutually disjoint and are also disjoint from $V',V''$. Then
\begin{align}\label{eq15}
U_i:=\wtd U_i\times\MD_{r\rho}
\end{align}
can be viewed as an open subset of $\MC$ containing $\sgm_i(\MD_{r\rho})$, and $\eta_i$ is extended constantly to a holomorphic map $(x,p)\in \wtd U_i\times\MD_{r\rho}\mapsto \eta_i(x)$ which, by abuse of notation, is also denoted by $\eta_i$. This gives the local coordinate
\begin{align}
\eta_i\in\MO(U_i)
\end{align}
of $\fx$ at $\sgm_i(\MB)$.

\subsubsection{The sheaves $\Theta_\MC(-\log\MC_\Delta)$, $\Theta_\MB(-\log\Delta)$, and $\omega_{\MC/\MB}$}

We recall several sheaves related to the differential geometry of $\pi:\MC\rightarrow\MB$ for $R\leq 1$. See \cite[Subsec. 1.2.2]{GZ2} for details. 

When $R=0$ and hence $\MC=\wtd\MC$ is a compact Riemann surface, then $\Theta_\MC$ and $\omega_\MC$ are defined in Sec. \ref{lb1}.

Now we assume $R=1$. Then $\Theta_\MB(-\log\Delta)$ is the (automatically free) $\MO_\MB$-submodule of $\Theta_\MB$ generated by $q\partial_q$, i.e.
\begin{align}
\Theta_\MB(-\log\Delta)=\MO_\MB\cdot q\partial_q
\end{align}
In particular, $\MO_\MB(-\log\Delta)$ equals $\Theta_\MB$ outside $\Delta=\{0\}$.

The sheaf $\Theta_\MC(-\log\MC_\Delta)$ equals $\Theta_\MC$ outside $\Sigma$. To describe $\Theta_\MC(-\log\MC_\Delta)$ near $\Sigma$, by \eqref{eq13}, it suffices to describe $\Theta_\MC(-\log\MC_\Delta)|_W$. Indeed, the later is the (automatically free) $\MO_W$-submodule of $\Theta|_{W-\Sigma}$ generated 
\begin{align}\label{eq34}
\xi\partial_\xi,\varpi\partial_\varpi
\end{align}
where $\partial_\xi,\partial_\varpi$ are defined under the coordinate $(\xi,\varpi)$ of $W$.

One can define an $\MO_\MC$-module morphism
\begin{align}\label{eq14}
d\pi:\Theta_\MC(-\log\MC_\Delta)\rightarrow\pi^*\Theta_\MB(-\log\Delta)
\end{align}
to be the unique one that restricts to the usual differential map (of tangent vectors) outside $\Sigma$. Therefore, on $W$, we have
\begin{align}\label{eq73}
d\pi(\xi\partial_\xi)=d\pi(\varpi\partial_\varpi)=q\partial_q
\end{align}
where $q\partial_q$ is the abbreviation of $\pi^*(q\partial_q)$. 

Clearly \eqref{eq14} is an epimorphism, and its kernel is denoted by $\Theta_{\MC/\MB}$ and called the \textbf{relative tangent sheaf}. This is a line bundle, i.e., a locally-free $\MO_\MC$-module of rank $1$. Its dual sheaf $\omega_{\MC/\MB}$ is called the \textbf{relative dualizing sheaf}. Therefore, for each open $U\subset \MC-\Sigma$ and each $\eta(U)$ univalent on every fiber of $U$, we have a free generator $d\eta$ of $\omega_{\MC/\MB}$. If $\mu\in\MO(U)$ is also univalent on every fiber of $U$, then
\begin{align}
d\eta=(\partial_\mu\eta)\cdot d\mu
\end{align} 
where the partial derivative is defined with respect to the set of coordinates $(\mu,q\circ\pi)$ of $U$. This gives an explicit description of $\omega_{\MC/\MB}$ outside $\Sigma$. On the other hand,  $\omega_{\MC/\MB}|_W$ can be viewed as the $\MO_\MC$-submodule of $\omega_{\MC/\MB}|_{W-\Sigma}$ generated by the unique element of $H^0(W-\Sigma,\omega_{\MC/\MB}|_{W-\Sigma})$ whose restriction to $W'$ and $W''$ are
\begin{align}\label{eq18}
\xi^{-1}d\xi\qquad\text{resp.}\qquad -\varpi^{-1}d\varpi
\end{align}

\subsubsection{The sheaf $\scr V_\fx$}

We recall the definition of sheaf of VOA
\begin{align*}
\scr V_\fx:=\varinjlim_{n\in\Nbb}\scr V_\fx^{\leq n}=\bigcup_{n\in\Nbb}\scr V_\fx^{\leq n}
\end{align*}
which relies only on $\Vbb$ and on the map $\pi:\MC\rightarrow\MB$ but not on the marked point or the local coordinates of $\fx$. See \cite[Subsec. 1.3.1]{GZ2} and the reference therein for details.

$\scr V^{\leq n}_\fx$ is a locally free $\MO_\MC$-module. The restriction $\scr V^{\leq n}|_{\MC-\Sigma}$ is described as follows. For each open $U\subset\MC-\Sigma$ and each $\eta\in \MO(U)$ univalent on every fiber $U_b$ of $U$, we have a trivialization, i.e., an $\MO_U$-module isomorphism
\begin{align}\label{eq9}
\mc U_\varrho(\eta):\scr V_\fx^{\leq n}|_U\xlongrightarrow{\simeq}\Vbb^{\leq n}\otimes_\Cbb\MO_U
\end{align}
If $\mu\in\MO(U)$ is also univalent on each fiber, the transition function $\mc U_\varrho(\eta)\mc U_\varrho(\mu)^{-1}$, which is an automorphism of $\Vbb^{\leq n}\otimes_\Cbb\MO_U$, is defined using Huang's change of coordinate formula in \cite{Hua97}. The explicit formula of $\mc U_\varrho(\eta)\mc U_\varrho(\mu)^{-1}$, which is not needed in this paper, can be found in Subsec. 1.3.1. But note that these $\mc U_\varrho(\eta)$ are compatible for different $n$ so that $\scr V_\fx^{\leq n}|_{\MC-\Sigma}$ is naturally an $\MO_{\MC-\Sigma}$-submodule of $\scr V_\fx^{\leq n+1}|_{\MC-\Sigma}$.

With abuse of notations, we also denote the tensor product of \eqref{eq9} and the identity map of $\omega_{\MC/\MB}$ by
\begin{align}\label{eq10}
\mc U_\varrho(\eta):\scr V^{\leq n}_\fx\otimes \omega_{\mc C/\mc B}|_U\xlongrightarrow{\simeq}\Vbb^{\leq n}\otimes_\Cbb\mc O_U\otimes_\Cbb d\eta
\end{align}
Namely, it sends $v\otimes d\eta$ to $\mc U_\varrho(\eta)v\otimes_\Cbb d\eta$.

We have finished the definition of $\scr V_\fx$ outside $\Sigma$. In particular, $\scr V_\fx$ is defined when $R=0$ (and hence $\fx=\wtd\fx$). We now describe $\scr V_\fx$ near $\Sigma$ when $R=1$:
\begin{df}\label{lb5}
The restriction $\scr V_\fx^{\leq n}|_W$ is the (automatically free) $\MO_W$-submodule of $\scr V_{\fx-\Sigma}^{\leq n}|_{W-\Sigma}$ generated by the sections whose restrictions to $W'$ and $W''$ are 
\begin{align}\label{eq12}
    \MU_\varrho(\xi)^{-1}\big(\xi^{L(0)}v\big) \qquad \text{resp.}\qquad \MU_\varrho(\varpi)^{-1}\big(\varpi^{L(0)}\MU(\upgamma_1)v\big)
\end{align}
where $\xi\in\MO(W')$ and $\varpi\in\MO(W'')$ are defined by \eqref{eq11} and $v\in \Vbb^{\leq n}$. This is well-defined, i.e., the two expressions in \eqref{eq12} agrees on $W'\cap W''$. 
\end{df}
Again, $\scr V_\fx^{\leq n}$ is naturally an $\MO_\MC$-submodule of $\scr V_\fx^{\leq n+1}$. The description of $\scr V_\fx$ for $R\leq 1$ is complete. The description for general $R$ is similar but not needed and hence is omitted. 

Finally, we note that the description of $\scr V_\fx$ for $R=0$ also applies and defines the sheaf $\scr V_{\fx_b}$ associated to the fiber $\fx_b$.

\subsubsection{Conformal blocks}\label{lb57}

Fix $\Wbb\in\Mod(\Vbb^{\otimes N})$ associated to the ordered marked points $\sgm_1(\wtd\MB),\dots,\sgm_N(\wtd\MB)$ of $\wtd\fx$ and the ordered marked points $\sgm_1(\MB),\dots,\sgm_N(\MB)$ of $\fx$. Note that the orders of the marked points are important. We recall the basic properties about conformal blocks. See \cite[Subsec. 1.3.2]{GZ2} and the reference therein for details. 

Since the local coordinates $\eta_\blt$ of $\fx$ are fixed, we can set
\begin{align*}
\SW_\fx(\Wbb)=\Wbb\otimes_\Cbb\MO_\MB  
\end{align*}
The sheaf of coinvariant $\ST_\fx(\Wbb)$ is a quotient sheaf of $\SW_\fx(\Wbb)$, and its dual sheaf $\ST_\fx^*(\Wbb)$ is called the sheaf of conformal blocks.

Choose any connected open $V\subset\MB$. Recall \eqref{eq15} for the meaning of $U_i$. For each $n\in\Nbb$ and each $\sigma\in H^0(U_i\cap \pi^{-1}(V),\scr V_\fx^{\leq n}\otimes\omega_{\MC/\MB}(\blt\SX))$ and $w\in\Wbb\otimes_\Cbb\MO(V)$, we define the \textbf{$i$-th residue action}
\begin{subequations}\label{eq33}
\begin{align}
\sigma*_i w=\Res_{\eta_i=0}Y_i(\MU_\varrho(\eta_i)\sigma,\eta_i)w\qquad\in\Wbb\otimes_\Cbb\MO(V)
\end{align}
More precisely, note that
the element $\mc U_\varrho(\eta_i)\sigma\in H^0(U_i,\Vbb^{\leq n}\otimes_\Cbb\omega_{\MC/\MB}(\blt\SX))$ is a finite sum $\sum_l v_l\otimes f_ld\eta_i$ where $v_l\in\Vbb^{\leq n}$ and $f_l\in H^0(U_i,\MO_{U_i}(\blt\SX))$. Then for each $b\in V$, noting that $f_l|_{U_i\cap\pi^{-1}(b)}$ can be viewed as an element of $\Cbb((\eta_i))$, we have
\begin{align*}
\sigma*_iw\big|_b=\sum_l\Res_{\eta_i=0} Y_i(v_l,\eta_i)w(b)\cdot f_l|_{U_i\cap\pi^{-1}(b)}\cdot d\eta_i
\end{align*}
where the RHS is the residue of an element of $\Wbb((\eta_i))d\eta_i$.

Now, we define the \textbf{residue action} of each $\sigma\in H^0\big(V,\pi_*\big(\SV_{\fx}\otimes \omega_{\MC/\MB}(\blt S_\fx)\big)\big)=H^0(\MC_V,\SV_{\fx}\otimes \omega_{\MC/\MB}(\blt S_\fx))$ on each $H^0(V,\scr W_\fx(\Wbb))=\Wbb\otimes\MO(V)$ to be
\begin{align}
\sigma\cdot w=\sum_{i=1}^N\sigma*_i w
\end{align}
\end{subequations}

\begin{df}\label{lb48}
The \textbf{sheaf of coinvariants} associated to $\fx$ and $\Wbb$ is defined to be
\begin{align}\label{eq41}
\scr T_\fx(\Wbb)=\frac{\SW_\fx(\Wbb)}{\pi_*\big(\SV_{\fx}\otimes \omega_{\MC/\MB}(\blt S_\fx)\big)\cdot \SW_\fx(\Wbb)}
\end{align}
and is locally free (\cite[Thm. 3.13]{GZ2}), i.e., it is a (finite-rank) holomorphic vector bundle on $\MB$. Its dual bundle is denoted by $\pmb{\scr T^*_\fx(\Wbb)}$ and called the \textbf{sheaf of conformal blocks} (or \textbf{conformal block bundle}) associated to $\fx$ and $\Wbb$.
\end{df}

Note that when $R=0$, then $\fx=\wtd\fx$ is a single surface. Then $\scr W_\fx(\Wbb)=\Wbb$, and $\scr T_\fx(\Wbb)$ is a quotient space of $\Wbb$ that is finite-dimensional, and $\scr T^*_\fx(\Wbb)$ is the dual space of $\scr T_\fx(\Wbb)$.

Therefore, for general $R\in\Nbb$ and $b\in\MB-\Delta$, the vector spaces $\scr W_{\fx_b}(\Wbb),\scr T_{\fx_b}(\Wbb),\scr T^*_{\fx_b}(\Wbb)$ can be defined. The elements of  $\scr T^*_{\fx_b}(\Wbb)$, which are linear functionals $\Wbb\rightarrow\Cbb$ satisfying certain invariant condition, is called a \textbf{conformal block} associated to $\Wbb$ and $\fx_b$ (together with its local coordinates $\eta_\blt|_{\MC_b}$).

By \cite[Remark 3.1]{GZ2}, for each open $V\subset\MB-\Delta$, an element $\uppsi\in H^0\big(V,\ST_\fx^*(\Wbb)\big)$ is equivalently a linear map $\uppsi:\Wbb\rightarrow \MO(V)$ such that for each $b\in V$, the restriction $\uppsi(\cdot)|_b:\Wbb\rightarrow\Cbb$ belongs to $\ST^*_{\fx_b}(\Wbb)$. Such $\uppsi$ is called a \textbf{conformal block} associated to $\Wbb$ and the restricted family $\fx|_V$.

\begin{rem}\label{lb49}
Let $b\in \MB-\Delta$. By \cite[Thm 3.13]{GZ2}, the map
\begin{align*}
\ST^*_\fx(\Wbb)_b\longrightarrow\ST^*_{\fx_b}(\Wbb)\qquad\upphi\mapsto\upphi(\cdot)|_b
\end{align*}
(where $\ST^*_\fx(\Wbb)_b$ is the stalk of $\ST^*_\fx(\Wbb)$ at $b$) descends to a linear isomorphism
\begin{align}\label{eq104}
\frac{\ST^*_\fx(\Wbb)_b}{~\mk_{\MB,b}\ST^*_\fx(\Wbb)_b~}\longrightarrow\ST^*_{\fx_b}(\Wbb)
\end{align}
In other words, the fiber of the vector bundle $\ST_\fx^*(\Wbb)$ at $b$ is canonically isomorphic to the space of conformal blocks $\ST^*_{\fx_b}(\Wbb)$.
\end{rem}

\subsection{Truncated $q$-expansions of global sections of $\SV_\fx\otimes \omega_{\MC/\MB}(\blt S_\fx)$}

In this section, we assume that $R=1$. 

\subsubsection{$q$-expansions and their truncated expansions}\label{lb18}

Let $0<\eps<r\rho$ and $v\in H^0(\pi^{-1}(\MD_\eps),\scr V_\fx\otimes\omega_{\MC/\MB}(\blt\SX))$. We shall expand $v$ into a power series. (See also the proof of \cite[Prop. 4.8]{GZ2}.)

Suppose that $\wtd U$ is a precompact open subset of $\wtd\MC-\SXtd$. Choose small enough $0<\delta<\eps$ such that $\wtd U\times \MD_\delta$ is an open subset of $\wtd \MC\times \MD_{r\rho}-F'-F''$. (Recall Subsec. \ref{lb3} for the notations.) Then $\pi:\MC\rightarrow \MB$, when restricted to $\wtd U\times \MD_\delta$, becomes the projection $\pr:\wtd U\times \MD_\delta\rightarrow\MD_\delta$. 

Thus, if $\eta\in\MO(\wtd U)$ is univalent, and if we denote its constant extension $(x,p)\in\wtd U\times\MD_\delta\mapsto \eta(x)$ also by $\eta$, then noting \eqref{eq10}, we can write
\begin{align*}
\mc U_\varrho(\eta)v\big|_{\wtd U\times\MD_\delta}=\sum_{n\in\Nbb} \sigma_n q^n\cdot d\eta\qquad\text{where }\sigma_n\in \Vbb\otimes_\Cbb\MO(\wtd U)
\end{align*}
(Recall Conv. \ref{lb4} for the meaning of $q$.) Then
\begin{align}\label{eq17}
v_n\big|_{\wtd U}=\mc U_\varrho(\eta)^{-1}\sigma_n\cdot d\eta
\end{align}
This definition of $v_n$ is independent of the choice of $\eta$ and $\delta$. Therefore, we obtain $v_n\in H^0(\wtd\MC-\SXtd,\scr V_{\wtd\fx}\otimes\omega_{\wtd\MC})$ whose local expression is given by \eqref{eq17}.

Each $v_n$ clearly has finite poles at $\sgm_1,\dots,\sgm_N$. By the description of $\SV_\fx|_W$ and $\omega_{\MC/\MB}|_W$ in Def. \ref{lb5} and Eq. \eqref{eq18}, there exist a family $(f^\alpha)_{\alpha\in\fk A}$ in $\MO(W)$ and a linearly independent family $(u^\alpha)_{\alpha\in\fk A}$ of vectors of $\Vbb$, indexed by the same finite set $\fk A$, such that
\begin{subequations}\label{eq19}
\begin{gather}
\MU_\varrho(\xi)v\big|_{W'}=\sum_{\alpha\in\fk A}f^\alpha(\xi,q/\xi)\xi^{L(0)}u^\alpha\cdot\frac{d\xi}\xi\\
\MU_\varrho(\xi)v\big|_{W''}=-\sum_{\alpha\in\fk A} f^\alpha(q/\varpi,\varpi)\varpi^{L(0)}\MU(\upgamma_1)u^\alpha\cdot \frac{d\varpi}\varpi
\end{gather}
\end{subequations}
(Recall from \eqref{eq2} that $(\xi,\varpi)$ is the standard set of coordinates of $W=\MD_r\times\MD_\rho$.) Expanding \eqref{eq19} into power series, we see that the term before $q^n$ has poles of orders at most $n+1$ at $\xi=0$ and at $\varpi=0$. Thus the same can be said about $v_n$. So $v_n$ also has finite poles at $\sgm',\sgm''$. This proves that
\begin{align}
v_n\in H^0\big(\wtd\MC,\scr V_{\wtd\fx}\otimes\omega_{\wtd\MC}(\blt\SXtd)\big)
\end{align}

\begin{df}\label{lb20}
We write
\begin{align}\label{eq20}
v=\sum_{n\in\Nbb}v_nq^n
\end{align}
and call \eqref{eq20} it the \textbf{power series expansion} of $v$ (or simply the \textbf{\pmb{$q$}-expansion} of $v$). For each $\hbar\in\Nbb$, we say that $v_0+v_1q+\cdots+v_\hbar q^\hbar$ is the \textbf{\pmb{$\hbar$}-th truncated \pmb{$q$}-expansion} (or simply the \textbf{\pmb{$\hbar$}-th truncated expansion}) of $v$. 
\end{df}

\subsubsection{Classification of truncated $q$-expansions}

Fix $\hbar\in \Nbb$, and choose $v_0,\dots,v_\hbar\in H^0\big(\wtd \MC,\SV_{\wtd \fx}\otimes \omega_{\wtd\MC}(\blt S_{\wtd \fx})\big)$. Recall \eqref{eq10}. Then for each $0\leq k\leq\hbar$, we have
\begin{subequations}\label{eq23}
\begin{gather}
\MU_\varrho(\xi)v_k\big|_{V'-\{\sgm'\}}\in\Vbb\otimes_\Cbb\MO(V'-\{\sgm'\})d\xi\\ \MU_\varrho(\varpi)v_k\big|_{V''-\{\sgm''\}}\in\Vbb\otimes_\Cbb\MO(V''-\{\sgm''\})d\varpi
\end{gather}
\end{subequations}
Therefore, we can take Laurent series expansions of these two sections at $\sgm'$ and $\sgm''$ respectively. Namely, they can be viewed as elements of $\Vbb((\xi))d\xi$ and $\Vbb((\varpi))d\varpi$ respectively.

Let $\Nbb_{>\hbar}=\{n\in\Nbb:n>\hbar\}$. Then $\Nbb^2\setminus\Nbb^2_{>\hbar}$ is the set of all $(m,n)\in\Nbb\times\Nbb$ such that at least one of $m,n$ is $\leq\hbar$.
\begin{df}\label{lb7}
We say that the element $v_0+v_1q+\dots+v_\hbar q^\hbar$ of $H^0\big(\wtd \MC,\SV_{\wtd \fx}\otimes \omega_{\wtd\MC}(\blt S_{\wtd \fx})\big)[q]$ is \textbf{\pmb{$\hbar$}-compatible} if the following property holds: There exists a linearly independent family $(u^\alpha)_{\alpha\in\fk A}$ of vectors of $\Vbb$ with \textit{finite} index set $\fk A$, together with a family
\begin{align}\label{eq83}
\big(c^\alpha_{m,n}:\alpha\in\fk A\text{ and }(m,n)\in\Nbb^2\setminus\Nbb_{>\hbar}^2\big)
\end{align}
in $\Cbb$ such that for each $0\leq k\leq\hbar$, the Laurent series expansions of the two sections in \eqref{eq23} take the form
\begin{subequations}\label{eq22}
\begin{gather}
\MU_\varrho(\xi)v_k\big|_{V'-\{\sgm'\}}=\sum_{\alpha\in\fk A}\sum_{l= -k}^{+\infty}c_{l+k,k}^\alpha\xi^{l+L(0)-1}u^\alpha d\xi \label{geometry7}\\[0.5ex]
\MU_\varrho(\varpi)v_k\big|_{V''-\{\sgm''\}}=-\sum_{\alpha\in\fk A}\sum_{l=-k}^{+\infty}c_{k,l+k}^\alpha\varpi^{l+L(0)-1}\MU(\upgamma_1)u^\alpha d\varpi\label{geometry8}
\end{gather}
\end{subequations}
\end{df}

\begin{lm}\label{lb8}
In Def. \ref{lb7}, for each $\alpha\in\fk A$ and $0\leq k\leq\hbar$, we have
\begin{gather}\label{eq28}
\sum_{m\in\Nbb}|c^\alpha_{m,k}|\cdot r^m<+\infty\qquad \sum_{n\in\Nbb}|c^\alpha_{k,n}|\cdot\rho^n<+\infty
\end{gather}
\end{lm}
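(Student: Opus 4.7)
The two bounds are symmetric---the second follows from \eqref{geometry8} by the same argument applied to the family $(\MU(\upgamma_1)u^\alpha)_{\alpha\in\fk A}$, which remains linearly independent since $\MU(\upgamma_1)$ is invertible---so my plan is to focus on the first. The idea is to exhibit the formal series $f_\alpha(\xi):=\sum_{m\geq 0}c^\alpha_{m,k}\xi^m$ as the Taylor expansion of a genuinely holomorphic function on $\MD_r$, from which the summability assertion will follow.

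To this end I would first collect the homogeneous components of the $u^\alpha$'s: writing $u^\alpha=\sum_n u^\alpha_n$ with $u^\alpha_n\in\Vbb(n)$ (a finite sum, since $u^\alpha\in\Vbb$), let $\Vbb_0\subset\Vbb$ be the finite-dimensional subspace spanned by all $u^\alpha_n$. Since $\Vbb_0$ is $L(0)$-stable with non-negative integer eigenvalues, the operator $\xi^{k+1-L(0)}$ is a well-defined linear operator on $\Vbb_0$ for every $\xi\in\Cbb^\times$. The crucial manipulation is then to multiply \eqref{geometry7} through by $\xi^{k+1-L(0)}$: using the identification $V'\simeq\MD_r$ via $\xi$ from \eqref{eq6}, and the identity $\xi^{k+1-L(0)}\cdot\xi^{l+L(0)-1}=\xi^{k+l}$ on $\Vbb_0$, one computes
\begin{align*}
F(\xi)\;:=\;\xi^{k+1-L(0)}\cdot\frac{\MU_\varrho(\xi)v_k|_{V'-\{\sgm'\}}}{d\xi}\;=\;\sum_{\alpha\in\fk A}u^\alpha\sum_{l\geq-k}c^\alpha_{l+k,k}\xi^{l+k}\;=\;\sum_{\alpha\in\fk A}u^\alpha f_\alpha(\xi).
\end{align*}
The function $F$ is a priori $\Vbb_0$-valued holomorphic on $\MD_r^\times$, but the displayed right-hand side contains only non-negative powers of $\xi$, so $F$ extends holomorphically across $\xi=0$ to a map $F:\MD_r\to\Vbb_0$.

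Finally, by the linear independence of $(u^\alpha)_{\alpha\in\fk A}$ I can select functionals $P_\alpha\in\Vbb_0^*$ with $P_\alpha(u^\beta)=\delta_{\alpha\beta}$; applying $P_\alpha$ to the identity above gives $f_\alpha=P_\alpha\circ F$, a holomorphic function on $\MD_r$ whose Taylor coefficients are precisely the $c^\alpha_{m,k}$. This immediately yields $\sum_m|c^\alpha_{m,k}|\rho^m<+\infty$ for every $\rho<r$. The one delicate point I anticipate is convergence at $\rho=r$ itself: this should follow from the paper's sewing convention, under which the local coordinate $\xi$ is defined on an open set strictly containing $\overline{\MD_r}$ in $\wtd\MC$, so that $F$ extends holomorphically across $|\xi|=r$ and the Taylor series converges absolutely on the closed disk. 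Pinning down this extension is the main---and essentially only non-routine---step of the argument.
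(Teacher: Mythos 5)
Your core argument is correct and is essentially the paper's proof in different clothing: both handle the non\-homogeneity of the $u^\alpha$ by splitting into homogeneous components and then recognizing $\sum_m c^\alpha_{m,k}\xi^m$ as the Taylor series of a function holomorphic on $\MD_r$. The paper does this by passing to a homogeneous basis of $\Vbb^{\leq M}$ via an invertible change-of-basis matrix; you do it by multiplying by $\xi^{k+1-L(0)}$ and applying dual functionals $P_\alpha$. The two are interchangeable. Two small points you should make explicit: the section $\MU_\varrho(\xi)v_k|_{V'-\{\sgm'\}}/d\xi$ does take values in $\Vbb_0$ (its Laurent coefficients lie there and $\Vbb_0$ is finite-dimensional, hence closed), and $\xi^{k+1-L(0)}|_{\Vbb_0}$ is a \emph{finite} operator-valued Laurent polynomial in $\xi$, so multiplying the convergent Laurent series term by term is legitimate; with these in place, your removal-of-singularity step at $\xi=0$ is exactly what the paper uses implicitly.

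The problem is your last paragraph. The convention \eqref{eq1} is that $\xi(V')=\MD_r$ \emph{exactly} --- $r$ is the sewing radius and $V'$ is the open coordinate disk --- not that $\xi$ is defined on an open set containing $\ovl{\MD_r}$. So there is no holomorphic extension of $F$ across $|\xi|=r$ to appeal to, and your argument only delivers $\sum_m|c^\alpha_{m,k}|s^m<+\infty$ for every $s<r$, i.e.\ $f_\alpha\in\MO(\MD_r)$. You were right to single this out as the delicate point, but the resolution you propose is not available; indeed the boundary inequality can genuinely fail as stated (for instance, nothing in the setup prevents another marked point from lying on $\partial V'$, in which case the Laurent expansion of $v_k$ at $\sgm'$ has radius of convergence exactly $r$ and its coefficients need not be absolutely summable against $r^m$). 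For what it is worth, the paper's own proof asserts rather than proves the $r^m$-bound (it calls the homogeneous case ``obvious''), and only the interior bound is used downstream: Step 2 of the proof of Thm.~\ref{geometry1} needs precisely $g^\alpha\in\MO(\MD_r\times\MD_\rho)$ and nothing more. So your proof establishes everything that is actually needed, but you should state the conclusion with $s<r$ rather than manufacture an extension of the coordinate chart that the construction does not provide.
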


\begin{proof}
By expanding $(u^\alpha)_{\alpha\in\fk A}$, we can assume that $(u^\alpha)_{\alpha\in\fk A}$ is a basis of $\Vbb^{\leq M}$ for some $M\in\Nbb$. The first of \eqref{eq28} is obvious when each $u^\alpha$ is homogeneous. Now, we do not assume that $u^\alpha$ is homogeneous. Let $(w^\beta)_{\beta\in\fk A}$ be a homogeneous basis of $\Vbb^{\leq M}$. Then there is an invertible matrix $\Lambda=(\lambda_{\alpha,\beta})_{\alpha,\beta\in\fk A}$ such that $w^\beta=\sum_\alpha \lambda_{\alpha,\beta}u^\alpha$. Using the inverse matrix of $\Lambda$, one can write \eqref{geometry7} as a formal power series
\begin{align*}
\MU_\varrho(\xi)v_k\big|_{V'-\{\sgm'\}}=\sum_{\beta\in\fk A}\sum_{l= -k}^{+\infty}d_{l+k,k}^\beta\xi^{l+L(0)-1}w^\beta d\xi
\end{align*}
where $c^\alpha_{l+k,k}=\sum_\beta \lambda_{\alpha,\beta}d^\beta_{l+k,k}$. Since each $w^\beta$ is homogeneous, for all $0\leq k\leq\hbar$ we have
\begin{align*}
\sum_{m\in\Nbb}|d^\beta_{m,k}|\cdot r^m<+\infty
\end{align*}
This immediately implies the first of \eqref{eq28}. The second of \eqref{eq28} can be proved in a similar way by reducing it to the special case that each $\mc U(\upgamma_1)u^\alpha$ is homogeneous.
\end{proof}

\begin{thm}\label{geometry1}
The following are equivalent.
\begin{enumerate}[label=(\alph*)]
\item There exist $0<\eps<r\rho$ and an element $v\in H^0(\pi^{-1}(\MD_\eps),\scr V_\fx\otimes\omega_{\MC/\MB}(\blt\SX))$ whose $\hbar$-th truncated expansion is $v_0+v_1q+\cdots+ v_\hbar q^\hbar$.
\item $v_0+v_1q+\cdots +v_\hbar q^\hbar$ is $\hbar$-compatible.
\end{enumerate}
\end{thm}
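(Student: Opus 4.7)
The direction (a) $\Rightarrow$ (b) is a direct calculation. For $v \in H^0(\pi^{-1}(\MD_\eps), \SV_\fx \otimes \omega_{\MC/\MB}(\blt \SX))$, the local description of $\SV_\fx \otimes \omega_{\MC/\MB}|_W$ in Def.~\ref{lb5} produces a family $(f^\alpha)_{\alpha\in\fk A}$ in $\MO(W)$ as in \eqref{eq19}, and I declare $c^\alpha_{m,n}$ to be the Taylor coefficients of $f^\alpha$ at the origin. Substituting $\varpi = q/\xi$ on $W'$ (resp. $\xi = q/\varpi$ on $W''$) in \eqref{eq19}, collecting by powers of $q$, and extracting the Laurent series at $\sgm'$ (resp. $\sgm''$), we recover \eqref{geometry7} (resp. \eqref{geometry8}) for every $k \geq 0$; restricting to $0 \leq k \leq \hbar$ gives the $\hbar$-compatibility.

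For the substantive direction (b) $\Rightarrow$ (a), my plan is a two-step construction: first produce $v$ on the neighborhood $W$ of the node, then extend to a neighborhood of the central fiber in $\MC$. Set $c^\alpha_{m,n} := 0$ for $(m,n) \in \Nbb^2_{>\hbar}$ and define
\[
f^\alpha(\xi, \varpi) := \sum_{m,n \geq 0} c^\alpha_{m,n}\xi^m \varpi^n.
\]
Writing this as
\[
\sum_{k=0}^{\hbar} \varpi^k\Big(\sum_{m \geq 0} c^\alpha_{m,k}\xi^m\Big) + \sum_{k=0}^{\hbar} \xi^k\Big(\sum_{n \geq 0} c^\alpha_{k,n}\varpi^n\Big) - \sum_{m,n=0}^{\hbar} c^\alpha_{m,n}\xi^m\varpi^n,
\]
Lem.~\ref{lb8} implies $f^\alpha \in \MO(W)$. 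Let $\sigma^\alpha$ be the generator of $\SV_\fx \otimes \omega_{\MC/\MB}|_W$ attached to $u^\alpha$ via Def.~\ref{lb5}, and set $v^W := \sum_\alpha f^\alpha \sigma^\alpha \in H^0(W, \SV_\fx \otimes \omega_{\MC/\MB})$. Substituting $\varpi = q/\xi$ on $W'$, a direct calculation shows that the $q^k$-coefficient of $v^W|_{W'}$ equals $v_k|_{V'-\{\sgm'\}}$ for $0 \leq k \leq \hbar$ (by the $\hbar$-compatibility), and equals the purely polar Laurent polynomial $\MU_\varrho(\xi)^{-1}\big(\sum_\alpha\sum_{m=0}^{\hbar} c^\alpha_{m,k}\xi^{m-k+L(0)-1}u^\alpha\big) d\xi$ on $V' - \{\sgm'\}$ for $k > \hbar$, with symmetric statements on $W''$.

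The heart of the argument is to extend $v^W$ from $W$ to a section $v \in H^0(\pi^{-1}(\MD_\eps), \SV_\fx \otimes \omega_{\MC/\MB}(\blt \SX))$ for some small $\eps > 0$ whose $\hbar$-th truncated expansion is $v_0 + v_1q + \cdots + v_\hbar q^\hbar$. One attacks this by choosing $\tilde v_n \in H^0(\wtd\MC, \SV_{\wtd\fx} \otimes \omega_{\wtd\MC}(\blt \SXtd))$ for $n > \hbar$ with specified polar parts at $\sgm'$ and $\sgm''$, setting $\tilde v_n := v_n$ for $n \leq \hbar$, and arranging that $\sum_n \tilde v_n q^n$ converges to a section of $\SV_\fx \otimes \omega_{\MC/\MB}(\blt\SX)$ on $(\wtd\MC \times \MD_\eps) - F' - F''$ that matches $v^W$ on $W' \cup W''$. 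Existence of $\tilde v_n$ with prescribed polar parts (of bounded pole order at $\sgm', \sgm''$) follows from $H^1(\wtd\MC, \SV_{\wtd\fx}^{\leq M} \otimes \omega_{\wtd\MC}(\blt \SXtd)) = 0$, which in turn follows from Cartan's Theorem B applied to the Stein Riemann surface $\wtd\MC - \SXtd$ (Assumption \ref{lb2}) together with direct limits in $M$. The still-free coefficients $c^\alpha_{m,n}$ for $(m,n) \in \Nbb^2_{>\hbar}$ are then to be redefined to absorb the Taylor parts of the $\tilde v_n$ at $\sgm'$ and $\sgm''$, so that $f^\alpha$ agrees with the series $\sum_n \tilde v_n q^n$ on $W' \cup W''$.

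The main obstacle will be making these choices of $\tilde v_n$ simultaneously consistent across the two points $\sgm'$ and $\sgm''$ and absolutely convergent on $\pi^{-1}(\MD_\eps)$ for small $\eps > 0$. Consistency requires that the Laurent data of $\tilde v_n$ at $\sgm'$ and of $\tilde v_m$ at $\sgm''$ produce matching values for the free $c^\alpha_{m,n}$ whenever $m, n > \hbar$, a cross-compatibility that should be handled by an inductive construction using sheaf cohomology vanishing at each step; convergence follows from uniform growth bounds on the $\tilde v_n$ inherited from Lem.~\ref{lb8}. An alternative route is to reformulate the entire extension problem as the vanishing of $H^1(\pi^{-1}(\MD_\eps), \SV_\fx \otimes \omega_{\MC/\MB}(\blt \SX))$, provable via Cartan's Theorem B once one establishes Stein-ness of the open complex surface $\pi^{-1}(\MD_\eps) \setminus \SX$, which itself requires care near the node on the central fiber.
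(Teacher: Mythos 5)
Your direction (a)$\Rightarrow$(b) matches the paper's argument, and the first half of your (b)$\Rightarrow$(a) --- completing the family $(c^\alpha_{m,n})$ by zero on $\Nbb^2_{>\hbar}$, using Lem.~\ref{lb8} to sum the resulting $f^\alpha$ to an element of $\MO(W)$, and producing the section $v^W$ near the node --- is exactly the paper's construction of the section $\chi$ in Step 2 of its proof. The gap is in the extension step. You propose to build the global section as a genuinely convergent series $\sum_n\wtd v_n q^n$ by choosing $\wtd v_n$ for $n>\hbar$ with prescribed principal parts at $\sgm',\sgm''$. Two problems are left unresolved. First, the cross-consistency you flag is real and not addressed: for $m\neq n$ both $>\hbar$, the coefficient $c^\alpha_{m,n}$ is read off both from the Taylor part of $\wtd v_n$ at $\sgm'$ and from the Taylor part of $\wtd v_m$ at $\sgm''$, and these come from two independent solutions of a Mittag--Leffler problem; no scheme is given that makes them agree. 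Second, and more fatally, the claim that convergence of $\sum_n\wtd v_n q^n$ follows from ``uniform growth bounds inherited from Lem.~\ref{lb8}'' is unfounded: that lemma bounds only the prescribed data $c^\alpha_{m,k}$ with $k\leq\hbar$, whereas the $\wtd v_n$ for $n>\hbar$ are produced by an abstract $H^1$-vanishing argument carrying no quantitative control on their size. Supplying such bounds would amount to proving a quantitative comparison between formal and convergent solutions, i.e.\ the hard analytic content of Grauert's comparison theorem, which your sketch does not reproduce. Your alternative route via Stein-ness of $\pi^{-1}(\MD_\eps)\setminus\SX$ does not obviously help either: the relevant issue is not a cohomology vanishing on a fixed open set but the surjectivity of $H^0(\scr E)\to H^0\big(\scr E/\mk_{\MB,0}^{\hbar+1}\scr E\big)$ over shrinking neighborhoods of the central fiber, and sections of $\SV_\fx\otimes\omega_{\MC/\MB}(\blt\SX)$ are not the same as sections over the complement of $\SX$.

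The paper avoids all of this by observing that only the $\hbar$-th truncated expansion of $v$ needs to be prescribed, so the coefficients beyond order $\hbar$ need never be constructed. It glues $\chi$ (on $W$) with $\vartheta=v_0+v_1q+\cdots+v_\hbar q^\hbar$ (on $\Omega=\wtd\Omega\times\MD_\eps$, away from the node) into a section $\sigma$ of the quotient sheaf $\scr E/\mk_{\MB,0}^{\hbar+1}\scr E$ --- the two pieces agree modulo $q^{\hbar+1}$ precisely because of $\hbar$-compatibility, cf.\ \eqref{eq27} --- and then lifts $\sigma$ to an honest $v\in H^0(\pi^{-1}(\MD_\eps),\scr E)$ by the base change Theorem~\ref{lb6}, whose hypothesis $H^1(\MC_0,\scr E|_{\MC_0})=0$ is verified on the nodal central fiber for a suitable twist $t$. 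To repair your argument you should replace the infinite series by this finite-order gluing and invoke Theorem~\ref{lb6} (or an equivalent form of Grauert's theorem) for the lift.
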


\begin{proof}[Proof of (a)$\Rightarrow$(b)]
Assume (a). Then we can assume that $v$ satisfies \eqref{eq19}. Expand the function $f^\alpha$ in \eqref{eq19} into power series
\begin{align*}
f^\alpha(\xi,\varpi)=\sum_{m,n\in\Nbb}c^\alpha_{m,n}\xi^m\varpi^n
\end{align*}
where $c^\alpha_{m,n}\in\Cbb$. Then we can expand \eqref{eq19} into power series
\begin{gather*}
\MU_\varrho(\xi)v\big|_{W'}=\sum_{\alpha\in\fk A}\sum_{k\in\Nbb}\sum_{l\geq -k}c_{l+k,k}^\alpha\xi^{l+L(0)-1}u^\alpha d\xi\cdot q^k  \\[0.5ex]
\MU_\varrho(\varpi)v\big|_{W''}=-\sum_{\alpha\in\fk A}\sum_{k\in\Nbb}\sum_{l\geq -k}c_{k,l+k}^\alpha\varpi^{l+L(0)-1}\MU(\upgamma_1)u^\alpha d\varpi \cdot q^k
\end{gather*}
Comparing this with \eqref{eq22}, we see that (b) holds.
\end{proof}

\begin{proof}[Proof of (b)$\Rightarrow$(a)]
Step 1. Assume (b). Choose $M\in\Nbb$ such that $u^\alpha\in\Vbb^{\leq M}$ for all $\alpha\in\fk A$. We shall apply the base change Theorem \ref{lb6} to $\pi:\MC\rightarrow\MB$ and
\begin{align*}
\scr E=\scr V^{\leq M}_\fx\otimes\omega_{\MC/\MB}(t\SX)
\end{align*}
for some $t\in\Nbb$. Recall that $\Delta=\{0\}$, and note that the complex analytic space $\MC_0$ is a nodal curve. By \cite[Thm. 2.3]{Gui-sewingconvergence}, we have $H^1(\MC_0,\scr V^{\leq M}_\fx\otimes\omega_{\MC/\MB}(t\SX)|_{\MC_0})=0$ for sufficiently large $t$. For such $t$, the assumptions in Thm. \ref{lb6} are satisfied. Let us fix such a $t$ that also ensures that $v_0,\dots,v_\hbar$ have poles of order at most $t$ at $\SXtd$.\\[-1ex]

Step 2. The goal of this step is to define sections $\chi$ and $\vartheta$ on open subsets of $\MC$ covering $\pi^{-1}(0)$. First,  Lem. \ref{lb8} implies that for each $\alpha\in\fk A$, the series
\begin{align*}
g^\alpha(\xi,\varpi)=\sum_{(m,n)\in\Nbb^2\setminus\Nbb_{>\hbar}^2} c^\alpha_{m,n}\xi^m\varpi^n
\end{align*}
converges a.l.u. to an element of $\MO(W)=\MO(\MD_r\times\MD_\rho)$, also denoted by $g^\alpha$. By Def. \ref{lb5} and Eq. \eqref{eq18}, there exists an element 
\begin{align*}
\chi\in H^0(W,\scr E)=H^0(W,\scr V_\fx^{\leq M}\otimes\omega_{\MC/\MB})
\end{align*}
such that
\begin{subequations}\label{eq25}
\begin{gather}
\MU_\varrho(\xi)\chi\big|_{W'}=\sum_{\alpha\in\fk A}g^\alpha(\xi,q/\xi)\xi^{L(0)}u^\alpha\cdot\frac{d\xi}\xi\\
\MU_\varrho(\varpi)\chi\big|_{W''}=-\sum_{\alpha\in\fk A} g^\alpha(q/\varpi,\varpi)\varpi^{L(0)}\MU(\upgamma_1)u^\alpha\cdot \frac{d\varpi}\varpi
\end{gather}
\end{subequations}

Next, we choose $r',\rho'$ such that $0<r'<r$ and $0<\rho'<\rho$. Let
\begin{align*}
\wtd\Omega=\wtd\MC-\xi^{-1}(\ovl\MD_{r'})-\varpi^{-1}(\ovl\MD_{\rho'})\qquad \Omega=\wtd\Omega\times\MD_\eps
\end{align*}
In view of \eqref{eq1}, the set $\wtd \Omega$ is an open subset of $\wtd\MC$. (In the construction of $\fx$ from $\wtd\fx$, we have assumed the identification \eqref{eq6}. However, here, we do not omit $\xi^{-1}$ and $\varpi^{-1}$, because we want to stress that $\xi^{-1}(\ovl\MD_{r'})$ is in $V'$ and $\varpi^{-1}(\ovl\MD_{\rho'})$ is in $V''$.) Then, for sufficiently small $\eps>0$, one can view $\Omega$ as an open subset of $\MC$ such that $\pi:\MC\rightarrow\MB$ restricts to the projection $\wtd\Omega\times\MD_\eps\rightarrow\MD_\eps$.   (In fact, any $\eps$ satisfying $0<\eps<r'\rho'$ works.) 
Thus
\begin{align*}
\vartheta:=v_0+v_1q+\cdots+v_\hbar q^\hbar\qquad\text{is an element of }H^0(\Omega,\scr E)
\end{align*}

Step 3. Define open subsets $W_0',W_0''$ of $\wtd\MC\times\MD_\eps\subset\MC$ by
\begin{align*}
W_0'=(V'-\xi^{-1}(\ovl\MD_{r'}))\times\MD_\eps\qquad W_0''=(V''-\varpi^{-1}(\ovl\MD_{\rho'}))\times\MD_\eps
\end{align*}
Due to the gluing maps $(\xi,q)$ and $(\varpi,q)$ in \eqref{eq7} defining the equivalence relation $\sim$ in \eqref{eq16}, one can also view $W_0',W_0''$ as disjoint open subsets of $W',W''$ respectively. Now, we can use \eqref{eq25} and \eqref{eq22} to compute that
\begin{subequations}\label{eq27}
\begin{gather}
\mc U_\varrho(\eta)(\chi|_{W_0'}-\vartheta|_{W_0'})=\sum_{\alpha\in\fk A}\sum_{k=\hbar+1}^{+\infty}\sum_{l= -k}^{\hbar-k}c_{l+k,k}^\alpha\xi^{l+L(0)-1}u^\alpha d\xi\cdot q^k\\[0.5ex]
\mc U_\varrho(\varpi)(\chi|_{W_0''}-\vartheta|_{W_0''})=-\sum_{\alpha\in\fk A}\sum_{k=\hbar+1}^{+\infty}\sum_{l= -k}^{\hbar-k}c_{k,l+k}^\alpha\varpi^{l+L(0)-1}\MU(\upgamma_1)u^\alpha d\varpi \cdot q^k
\end{gather}
\end{subequations}
Since $\mk_{\MB,0}^{\hbar+1}=q^{\hbar+1}\MO_\MB$, and since $\Omega\cap W=W_0'\cup W_0''$ clearly holds, \eqref{eq27} implies that
\begin{align}\label{eq26}
\chi|_{\Omega\cap W}-\vartheta|_{\Omega\cap W}\in H^0\big(\Omega\cap W,\mk_{\MB,0}^{\hbar+1}\scr E\big)
\end{align}

Clearly $\Omega\cup W$ contains $\pi^{-1}(0)$. Thus, since $\pi$ is proper, we can make $\eps$ smaller such that $\Omega\cup W$ covers $\pi^{-1}(\MD_\eps)$. Therefore, by \eqref{eq26}, there is an element
\begin{align*}
\sigma\in H^0\big(\pi^{-1}(\MD_\eps),\scr E/\mk_{\MB,0}^{\hbar+1}\scr E\big)
\end{align*}
whose restriction to $W\cap \pi^{-1}(\MD_\eps)$ is represented by $\chi$, and whose restriction to $\Omega\cap\pi^{-1}(\MD_\eps)$ is represented by $\vartheta$. Hence, by Step 1, we can apply Thm. \ref{lb6}, which says that after further shrinking $\eps$, there exists $v\in H^0(\pi^{-1}(\MD_\eps),\scr E)$ that is sent by the canonical quotient map to $\sigma$. Clearly $v$ has $\hbar$-th truncated expansion $v_0+v_1q+\cdots+v_\hbar q^\hbar$. This proves (a).
\end{proof}

\subsubsection{A base change theorem in complex analytic geometry}

In the proof of Thm. \ref{geometry1} we have used the following base change theorem.

\begin{thm}\label{lb6}
Let $\varphi:X\rightarrow Y$ be a holomorphic map of complex manifolds. Assume that $\varphi$ is proper and open. Let $\scr E$ be a locally-free $\MO_X$-module. 

Let $y\in Y$. Assume that $H^1(X_y,\scr E|_{X_y})=0$. Then for each $\hbar\in\Nbb$, the canonical map of stalks
\begin{align}
(\varphi_*\scr E)_y\longrightarrow \big(\varphi_*\big(\scr E/\mk_{Y,y}^{\hbar+1}\scr E\big)\big)_y
\end{align}
is an epimorphism of $\MO_{Y,y}$-modules. In other words, for each neighborhood $V$ of $y$ and each $\sigma\in H^0\big(\varphi^{-1}(V),\scr E/\mk_{Y,y}^{\hbar+1}\scr E\big)$, there exist a smaller neighborhood $V_0$ of $y$ and some $\wht\sigma\in H^0(\varphi^{-1}(V_0),\scr E)$ that is sent by the quotient map $\scr E\rightarrow\scr E/\mk_{Y,y}^{\hbar+1}\scr E$ to $\sigma|_{\varphi^{-1}(V_0)}$.
\end{thm}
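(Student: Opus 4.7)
The plan is to deduce the statement from Grauert's direct image theorem, Grauert's base change theorem, and Nakayama's lemma. Since $\varphi$ is proper, both $\scr E$ and (on a small enough neighborhood $V$ of $y$, where $\mk_{Y,y}^{\hbar+1}$ is generated by finitely many functions in $\MO_Y(V)$) the coherent sheaf $\scr E/\mk_{Y,y}^{\hbar+1}\scr E$ have coherent direct images in a neighborhood of $y$. Write $M:=(\varphi_*\scr E)_y$ and $N:=\bigl(\varphi_*(\scr E/\mk_{Y,y}^{\hbar+1}\scr E)\bigr)_y$; both are finitely generated $\MO_{Y,y}$-modules, and $N$ is annihilated by $\mk_{Y,y}^{\hbar+1}$.

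Next, I will extract base change isomorphisms from the vanishing hypothesis. By the upper semi-continuity of $y'\mapsto \dim H^1(X_{y'},\scr E|_{X_{y'}})$, the assumed vanishing at $y$ persists on a neighborhood of $y$, so Grauert's base change theorem gives $R^1\varphi_*\scr E=0$ near $y$ together with a canonical isomorphism
\begin{align*}
M\otimes_{\MO_{Y,y}}\Cbb(y)\;\xrightarrow{\;\simeq\;}\;H^0(X_y,\scr E|_{X_y}).
\end{align*}
Observing that $(\scr E/\mk_{Y,y}^{\hbar+1}\scr E)|_{X_y}$ is canonically $\scr E|_{X_y}$ (since $\mk_{Y,y}$ already acts trivially on $X_y$), the same theorem applied to the coherent sheaf $\scr E/\mk_{Y,y}^{\hbar+1}\scr E$ yields $N\otimes_{\MO_{Y,y}}\Cbb(y)\simeq H^0(X_y,\scr E|_{X_y})$. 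Naturality of the base change map with respect to the quotient morphism $\scr E\to\scr E/\mk_{Y,y}^{\hbar+1}\scr E$ makes the two isomorphisms compatible with $M\to N$, which is therefore the identity modulo $\mk_{Y,y}$.

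The final step is Nakayama's lemma. Let $Q:=\coker(M\to N)$, a finitely generated $\MO_{Y,y}$-module annihilated by $\mk_{Y,y}^{\hbar+1}$, with $Q/\mk_{Y,y}Q=0$ by the previous paragraph. Nakayama forces $Q=0$, so $M\to N$ is surjective. To translate back to sections: for $\sigma\in H^0(\varphi^{-1}(V),\scr E/\mk_{Y,y}^{\hbar+1}\scr E)$, its germ in $N$ lifts to some germ in $M$, represented by $\wht\sigma\in H^0(\varphi^{-1}(V_0),\scr E)$ for a smaller $V_0\ni y$; after shrinking $V_0$ further if necessary so that $\wht\sigma$ and a chosen lift of $\sigma|_{\varphi^{-1}(V_0)}$ agree in $H^0(\varphi^{-1}(V_0),\scr E/\mk_{Y,y}^{\hbar+1}\scr E)$, we are done.

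The main obstacle is really just the correct invocation of the machinery of Grauert's theorems in the complex analytic setting — namely, coherence of $R^i\varphi_*$, semi-continuity of fiber cohomology, and the cohomology-and-base-change statement that simultaneously gives the vanishing of $R^1\varphi_*\scr E$ near $y$ and the base change isomorphism for $R^0$. Once these are granted, the rest reduces to a formal Nakayama argument, and no genuinely new analysis is needed.
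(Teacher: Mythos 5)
Your overall strategy differs from the paper's: the paper simply verifies that $\scr E$ is $\varphi$-flat (this is where the openness of $\varphi$ enters, via the last Corollary in Sec.~3.20 of \cite{Fis76}) and then cites Cor.~3.5 in Ch.~III of \cite{BaSt}, whereas you attempt to reprove that comparison result from Grauert's coherence, semicontinuity, and base-change theorems plus Nakayama. That is a legitimate plan in principle --- it is essentially the standard proof of the cited result --- but as written it has a genuine gap. The step ``the same theorem applied to the coherent sheaf $\scr E/\mk_{Y,y}^{\hbar+1}\scr E$ yields $N\otimes_{\MO_{Y,y}}\Cbb(y)\simeq H^0(X_y,\scr E|_{X_y})$'' is not a valid application of Grauert's base-change theorem: that theorem requires the sheaf to be flat over $Y$, and $\scr E/\mk_{Y,y}^{\hbar+1}\scr E$ is a torsion sheaf supported on $\varphi^{-1}(y)$, hence is never $\varphi$-flat (for the same reason, the semicontinuity statement you invoke is vacuous for it). The identification of $N\otimes\Cbb(y)$ with $H^0(X_y,\scr E|_{X_y})$ --- equivalently, the injectivity of $N/\mk_{Y,y}N\to H^0(X_y,\scr E|_{X_y})$ --- is precisely the nontrivial content here, and without it your Nakayama argument only shows that $N/\mk_{Y,y}N\to H^0(X_y,\scr E|_{X_y})$ is surjective, not that $M\to N/\mk_{Y,y}N$ is.

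There are two ways to close the gap. One is dévissage: filter $\scr E/\mk_{Y,y}^{\hbar+1}\scr E$ by the subsheaves $\mk_{Y,y}^{j}\scr E/\mk_{Y,y}^{\hbar+1}\scr E$; since $\scr E$ is $\varphi$-flat, each graded piece $\mk_{Y,y}^{j}\scr E/\mk_{Y,y}^{j+1}\scr E$ is isomorphic to $(\scr E|_{X_y})\otimes_\Cbb(\mk_{Y,y}^{j}/\mk_{Y,y}^{j+1})$, so the hypothesis $H^1(X_y,\scr E|_{X_y})=0$ propagates up the filtration and gives surjectivity of all the transition maps $H^0(X,\scr E/\mk_{Y,y}^{j+1}\scr E)\to H^0(X,\scr E/\mk_{Y,y}^{j}\scr E)$, which is what your Nakayama step actually needs. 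The other is to invoke the base-change theorem in its strong form (formation of $\varphi_*\scr E$ commutes with \emph{arbitrary} base change near $y$ once $H^1$ of the fiber vanishes and $\scr E$ is $\varphi$-flat), applied to the closed immersion of the infinitesimal neighborhood $(\{y\},\MO_{Y,y}/\mk_{Y,y}^{\hbar+1})\hookrightarrow Y$ --- but at that point you are quoting essentially the same result as the paper. Separately, you should state explicitly that $\scr E$ is $\varphi$-flat and where that comes from: all of the coherence, semicontinuity, and base-change inputs you use require it, and it is the only place the hypothesis that $\varphi$ is open is used.
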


Some of the notations are explained as follows. Recall that $\mk_{Y,y}$ is the maximal ideal of the local ring $\MO_{Y,y}$, understood also as the ideal sheaf of sections of $\MO_Y$ vanishing at $y$. Then $\mk_{Y,y}^{\hbar+1}$ is its $(\hbar+1)$-power, i.e., it is the ideal sheaf generated by $\{g_{i_0}\cdots g_{i_\hbar}:1\leq i_0,\dots,i_\hbar\leq n\}$ if $\mk_{Y,y}$ is generated by $g_1,\dots,g_n\in \MO(Y)$. So $\mk_{Y,y}^{\hbar+1}\scr E$ is the $\MO_X$-submodule of $\scr E$ generated by
\begin{align*}
\{\beta\cdot s\equiv (\varphi^*\beta)\cdot s\text{ where }\beta\in \mk_{Y,y}^{\hbar+1}\text{ and }s\in\scr E\}
\end{align*}
The complex analytic space $X_y$ is defined to be the fiber of $X$ at $y$. It equals $\varphi^{-1}(y)$ as a (Hausdorff) topological space, and its structure sheaf is $(\MO_X/\mk_{Y,y}\MO_X)\scalebox{1.2}{$\upharpoonright$}_{\varphi^{-1}(y)}$, the inverse image of $\MO_X/\mk_{Y,y}\MO_X$ under the inclusion map $\varphi^{-1}(y)\hookrightarrow X$. (See \cite{Fis76} or \cite{GR84} for the basic notions of complex analytic spaces.)

\begin{proof}
This theorem remains valid under a more general assumption, where the first paragraph is replaced by the weaker condition that $\varphi:X\rightarrow Y$ is a proper holomorphic map of complex analytic spaces, that $\scr E$ is a coherent $\MO_X$-module, and that $\scr E$ is $\varphi$-flat (i.e., for each $x\in X$, viewing the stalk $\scr E_x$ as an $\MO_{Y,\varphi(x)}$-module, the functor $\scr E_x\otimes_{\MO_{Y,\varphi(x)}}-$ on the abelian category of $\MO_{Y,\varphi(x)}$-modules is exact). Under this assumption, the theorem follows from Cor. 3.5 in Ch. III of \cite{BaSt}.

We now explain why this assumption is indeed weaker. If the original conditions stated in the theorem's first paragraph hold, then $\varphi$ is clearly proper. Since $\varphi$ is open and since $X,Y$ are complex manifolds, by the last Corollary in Sec. 3.20 of \cite{Fis76}, the structure sheaf $\MO_X$ is $\varphi$-flat. Since $\scr E$ is locally-free, it is also $\varphi$-flat.
\end{proof}

\subsubsection{The geometric meaning of $\hbar$-compatibility}

From the proof of Thm. \ref{geometry1}, readers familiar with complex analytic spaces (as treated in \cite{Fis76,GR84,GPR94}, for example) can easily recognize the geometric meaning of $\hbar$-compatibility. To illustrate this, let $\MB^{\hbar}$ be the closed analytic subspace of $\MB$ associated to $\MO_\MB/q^{\hbar+1}\MO_\MB$. Namely, $\MB^\hbar$ is the topological space $\{0\}$, equipped with the structure sheaf $\MO_\MB/q^{\hbar+1}\MO_B\scalebox{1.2}{$\upharpoonright$}_{\{0\}}$. Pulling back $\fx$ along the inclusion $\MB^\hbar\hookrightarrow\MB$ gives a subfamily $\fx^\hbar$ with map $\pi:\MC^\hbar\rightarrow\MB^\hbar$. Here, $\MC^\hbar$ is the (Hausdorff) topological space $\pi^{-1}(0)$ together with the structure sheaf $\MO_\MC/q^{\hbar+1}\MO_\MC\scalebox{1.2}{$\upharpoonright$}_{\pi^{-1}(0)}$. (In other words, $\pi:\MC^\hbar\rightarrow\MB^\hbar$ is the $\hbar$-th order infinitesimal deformation of the nodal curve $\MC_0$.)

Restricting $\scr V_\fx$ and $\omega_{\MC/\MB}$ to $\MC^\hbar$ yields the sheaves $\scr V_{\fx^\hbar}$ and $\omega_{\MC^\hbar/\MB^\hbar}$, where $\scr V_{\fx^\hbar}$ is the \textbf{sheaf of VOA associated to the \pmb{$\hbar$}-th order infinitesimal deformation} $\fx^\hbar$. In the special case $\hbar=0$, then $\scr V_{\fx^\hbar}$ is the sheaf of VOA associated to the nodal curve $\MC_0$.

The restriction of $\pi:\MC^\hbar\rightarrow\MB^\hbar$ to $\MC^\hbar-\Sigma$ can be regarded as the projection $(\wtd \MC-\sgm'-\sgm'')\times \MB^\hbar\rightarrow\MB^\hbar$. Therefore, each element of $H^0(\MC^\hbar,\scr V_{\fx^\hbar}\otimes\omega_{\MC^\hbar/\MB^\hbar}(\blt\SX))$  can be faithfully presented as $v_0+v_1q+\cdots+v_\hbar q^\hbar$ where $v_0,\dots,v_\hbar\in H^0(\wtd\MC,\scr V_{\wtd\fx}\otimes\omega_{\wtd\MC}(\blt\SXtd))$.

By adapting Steps 2 and 3 of the proof of (b)$\Rightarrow$(a) in Thm. \ref{geometry1}, one can verify the following remark, which provides the geometric meaning of $\hbar$-compatibility.

\begin{rem}
Let $v_0,\dots,v_\hbar\in H^0(\wtd\MC,\scr V_{\wtd\fx}\otimes\omega_{\wtd\MC}(\blt\SXtd))$. Then $v_0+v_1q+\cdots+v_\hbar q^\hbar$ corresponds to some element of $H^0(\MC^\hbar,\scr V_{\fx^\hbar}\otimes\omega_{\MC^\hbar/\MB^\hbar}(\blt\SX))$ if and only if $v_0+v_1q+\cdots+v_\hbar q^\hbar$ is $\hbar$-compatible.
\end{rem}

\subsection{Truncated $q$-expansions with prescribed Laurent coefficients at $\sgm',\sgm''$}

We continue to assume $R=1$. We shall show that any finitely many elements of the family $(c^\alpha_{m,n})$ in Def. \ref{lb7} can be assigned prescribed numbers. Note that if $0\leq\hbar\leq Q$ are integers, then $\Nbb_{\leq Q}^2\setminus\Nbb_{>\hbar}^2$ is the set of all $(m,n)\in\Nbb$ such that $m,n\leq Q$, and that at least one of $m,n$ is $\leq\hbar$.

\begin{pp}\label{geometry10}
Assume that each connected component of $\wtd\MC$ contains one of $\sgm_1,\dots,\sgm_N$. Choose $M\in\Nbb$ and $u\in\Vbb^{\leq M}$. Choose integers $0\leq\hbar\leq Q$.  Choose a family
\begin{align*}
\big(c_{m,n}:(m,n)\in\Nbb_{\leq Q}^2\setminus\Nbb_{>\hbar}^2\big)
\end{align*}
in $\Cbb$. Then for each $0\leq k\leq\hbar$, there exists $v_k\in H^0\big(\wtd \MC,\SV_{\wtd\fx}^{\leq M}\otimes \omega_{\wtd \MC}(\blt S_{\wtd \fx})\big)$ whose Laurent series expansions at $\sgm'$ and $\sgm''$ are respectively
\begin{subequations}\label{eq45}
\begin{gather}
\MU_\varrho(\xi)v_k\big|_{V'-\{\sgm'\}}=\sum_{t=-k}^{Q-k}c_{t+k,k}\xi^{t+L(0)-1}ud\xi \qquad\mathrm{mod~} \Vbb^{\leq M}\otimes_\Cbb\MO(V')\xi^{Q+M}d\xi\\
\MU_\varrho(\varpi)v_k\big|_{V''-\{\sgm''\}}=-\sum_{t=-k}^{Q-k}c_{k,t+k}\varpi^{t+L(0)-1}\MU(\upgamma_1)u d\varpi\qquad\mathrm{mod~} \Vbb^{\leq M}\otimes_\Cbb\MO(V'')\varpi^{Q+M}d\varpi
\end{gather}
\end{subequations}
\end{pp}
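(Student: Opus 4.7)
Since the conditions in \eqref{eq45} constrain different $v_k$'s independently, I fix $k\in\{0,\dots,\hbar\}$ and construct $v_k$ alone. Let $\scr E:=\SV_{\wtd\fx}^{\leq M}\otimes\omega_{\wtd\MC}$, a locally free $\MO_{\wtd\MC}$-module of finite rank, and set $A:=\sgm_1+\cdots+\sgm_N$. Transporting the right-hand sides of \eqref{eq45} via $\MU_\varrho(\xi)^{-1}$ and $\MU_\varrho(\varpi)^{-1}$ produces local meromorphic sections $\nu'\in H^0(V',\scr E((k+1)\sgm'))$ and $\nu''\in H^0(V'',\scr E((k+1)\sgm''))$; the pole-order bound $k+1$ reflects the minimum $\xi$-power $-k-1$ appearing in the prescribed Laurent polynomial. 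With $\scr E':=\scr E((k+1)(\sgm'+\sgm''))$ and $D:=(Q+M+k+1)(\sgm'+\sgm'')$, the pair $(\nu',\nu'')$ determines a global section of the skyscraper quotient $\scr E'/\scr E'(-D)$ supported on $\{\sgm',\sgm''\}$.

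The desired $v_k$ is then a lift of this section along the right-hand map in the short exact sequence
\begin{equation*}
0\longrightarrow\scr E'(-D)(\blt A)\longrightarrow\scr E'(\blt A)\longrightarrow\scr E'/\scr E'(-D)\longrightarrow 0
\end{equation*}
on $\wtd\MC$, where the quotient is unaffected by the $\blt A$ twist because $D$ and $A$ have disjoint supports. The lifting obstruction lies in
\begin{equation*}
H^1\bigl(\wtd\MC,\,\scr E'(-D)(\blt A)\bigr)\;=\;H^1\bigl(\wtd\MC,\,\scr E(-(Q+M)(\sgm'+\sgm''))(\blt A)\bigr).
\end{equation*}
Since $\wtd\MC$ is compact, cohomology commutes with directed colimits, so this equals $\varinjlim_t H^1(\wtd\MC,\scr E(tA-(Q+M)(\sgm'+\sgm'')))$. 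The assumption that every component of $\wtd\MC$ contains some $\sgm_i$ guarantees that $A$ has positive degree on each component; hence, by choosing any filtration of $\scr E$ by line bundles and applying the twist, one obtains for large $t$ a filtration of $\scr E(tA-(Q+M)(\sgm'+\sgm''))$ whose line-bundle quotients have degree exceeding $2g_j-2$ on each component $\wtd\MC_j$ of genus $g_j$. Serre duality applied component-wise, together with induction along the filtration, yields the required vanishing, and the colimit vanishes accordingly.

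The main technical obstacle is precisely this cohomology vanishing, and it depends essentially on the geometric hypothesis that $A$ meets every component of $\wtd\MC$: a component disjoint from $A$ would remain compact with no points at which to absorb poles, obstructing any Mittag-Leffler-type construction. Once the vanishing is in place, the compatibility of the lifted $v_k$ with \eqref{eq45} is automatic from the construction of $(\nu',\nu'')$.
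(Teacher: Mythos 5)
Your proposal is correct and follows essentially the same route as the paper: both realize the prescribed Laurent data as a section of a skyscraper quotient supported at $\{\sgm',\sgm''\}$ and lift it through the long exact sequence, using Serre vanishing of $H^1$ of the kernel sheaf, which is available precisely because every component of $\wtd\MC$ meets $\{\sgm_1,\dots,\sgm_N\}$. The only cosmetic difference is that the paper twists by a fixed large multiple $AZ_{\wtd\fx}$ rather than by $\blt A$ and a colimit, which sidesteps the (harmless) issue of cohomology commuting with directed colimits.
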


\begin{proof}
    Write $Z_{\wtd\fx}=\{\sgm_1,\dots,\sgm_N\}$. Let $A\in\Nbb$, and consider the short exact sequence 
    \begin{gather*}
        0\rightarrow \SV_{\wtd\fx}^{\leq M}\otimes \omega_{\wtd\MC}\big(AZ_{\wtd\fx}-(Q+M)\sgm'-(Q+M)\sgm''\big)\\
        \rightarrow \SV_{\wtd\fx}^{\leq M}\otimes \omega_{\wtd\MC}\big(AZ_{\wtd\fx}+(k+1)\sgm'+(k+1)\sgm''\big)\rightarrow \scr P\rightarrow 0.
    \end{gather*}
    where $\scr P$ is the quotient of the previous two sheaves. Abbreviate this exact sequence to $0\rightarrow\scr P''\rightarrow\scr P'\rightarrow\scr P\rightarrow0$. Since each component of $\wtd\MC$ intersects $Z_{\wtd\fx}$, by Serre's vanishing theorem (or by the proof of \cite[Thm. 2.3]{Gui-sewingconvergence}), we have $H^1(\wtd\MC,\scr P'')=0$ for sufficiently large $A$. Fix such an $A$. Then, we obtain a long exact sequence
    \begin{align*}
        &0\rightarrow H^0(\wtd\MC,\scr P'')\rightarrow H^0(\wtd\MC,\scr P')\rightarrow H^0(\wtd\MC,\scr P)\rightarrow 0
\end{align*}
Define $\sigma\in H^0(\wtd \MC,\scr P)$ as follows. Let $\mc U_\varrho(\xi)\sigma|_{V'}$ and $\MU_\varrho(\varpi)\sigma|_{V''}$ be represented by
\begin{gather*}
\sum_{t=-k}^{Q-k}c_{t+k,k}\xi^{t+L(0)-1}ud\xi\qquad -\sum_{t=-k}^{Q-k}c_{k,t+k}\varpi^{t+L(0)-1}\MU(\upgamma_1)u d\varpi
\end{gather*}
respectively. On $\wtd \MC-\{\sgm',\sgm''\}$, we set $\sigma=0$. Then, by the above exact sequence, $\sigma$ has a lift $v_k\in H^0(\wtd\MC,\scr P')$. Clearly $v_k$ satisfied the desired property.
\end{proof}

\begin{pp}\label{lb33}
Under the assumptions of Prop. \ref{geometry10}, assume that $v_0,\dots,v_\hbar\in H^0\big(\wtd \MC,\SV_{\wtd\fx}^{\leq M}\otimes \omega_{\wtd \MC}(\blt S_{\wtd \fx})\big)$ satisfy \eqref{eq45}. Then $v_0+v_1q+\cdots+v_\hbar q^\hbar$ is $\hbar$-compatible. 
\end{pp}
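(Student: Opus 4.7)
The plan is to read off a linearly independent family $(u^\alpha)$ and coefficients $c^\alpha_{m,n}$ directly from the Laurent expansions of the given $v_k$'s, and then to verify that the prescriptions arising from the $\sgm'$- and $\sgm''$-expansions agree on their overlap. Since $L(0)$ is semisimple on $\Vbb$ with non-negative integer eigenvalues, I fix a homogeneous basis $(u^\alpha)_{\alpha\in\fk A}$ of $\Vbb^{\leq M}$ with $u^\alpha$ of weight $h_\alpha$, write $u=\sum_\alpha\lambda^\alpha u^\alpha$, and let $\mu=(\mu_{\alpha,\beta})$ be the invertible matrix defined by $\MU(\upgamma_1)u^\alpha=\sum_\beta\mu_{\alpha,\beta}u^\beta$. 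Then $\MU(\upgamma_1)u=\sum_\beta\tilde\lambda^\beta u^\beta$ with $\tilde\lambda=\mu^T\lambda$, which is the change-of-basis identity that will drive the compatibility check.

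Next I would expand the Laurent series of $v_k$ in the basis $(u^\alpha)$ as $\MU_\varrho(\xi)v_k|_{V'-\{\sgm'\}}=\sum_\alpha\sum_j X^\alpha_{k,j}u^\alpha\xi^j d\xi$, and similarly with coefficients $\hat Y^\alpha_{k,j}$ at $\sgm''$. Using $\xi^{L(0)}u^\alpha=\xi^{h_\alpha}u^\alpha$ and $\varpi^{L(0)}u^\alpha=\varpi^{h_\alpha}u^\alpha$, the hypothesis \eqref{eq45} pins these down for every $j<Q+M$: on the window $h_\alpha-k-1\leq j\leq h_\alpha+Q-k-1$ they equal $\lambda^\alpha c_{j+1+k-h_\alpha,k}$ and $-\tilde\lambda^\alpha c_{k,j+1+k-h_\alpha}$ respectively, and they vanish outside this window. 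A crucial by-product is that $X^\alpha_{k,j}=\hat Y^\alpha_{k,j}=0$ for all $j<h_\alpha-k-1$, since $h_\alpha\leq M$ forces $h_\alpha-k-1<Q+M$; this pole control is exactly what is needed to rewrite the two Laurent expansions in the forms \eqref{geometry7} and \eqref{geometry8} with $l\geq -k$. I would then define
\[
c^\alpha_{m,n}=X^\alpha_{n,\,m-n+h_\alpha-1}\quad\text{for}\quad n\leq\hbar,
\qquad
c^\alpha_{m,n}=-\sum_\beta(\mu^{-T})^{\alpha,\beta}\,\hat Y^\beta_{m,\,n-m+h_\beta-1}\quad\text{for}\quad m\leq\hbar.
\]
The first formula comes by reading off \eqref{geometry7} for $v_n$; the second arises by substituting $\MU(\upgamma_1)u^\alpha=\sum_\beta\mu_{\alpha,\beta}u^\beta$ into \eqref{geometry8} for $v_m$ and inverting the resulting triangular linear system.

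The main obstacle, and the whole content of the proposition, is the compatibility of these two prescriptions on the overlap $0\leq m,n\leq\hbar$: this is the only region where the $\sgm'$-data and $\sgm''$-data interact, and a priori the two formulas have no reason to yield the same scalar. Because $\hbar\leq Q$, both indices fall inside the prescribed window, and the two formulas evaluate respectively to $\lambda^\alpha c_{m,n}$ and $\sum_\beta(\mu^{-T})^{\alpha,\beta}\tilde\lambda^\beta c_{m,n}$; they coincide precisely because $\mu^{-T}\tilde\lambda=\mu^{-T}\mu^T\lambda=\lambda$. Geometrically, the $\MU(\upgamma_1)$-twist built into \eqref{geometry8} is exactly absorbed by the $\mu$-change of basis used to define the row formula, leaving the shared scalar $c_{m,n}$ of \eqref{eq45} on both sides. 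Combined with the pole control derived above, this gluing delivers the desired $\hbar$-compatibility.
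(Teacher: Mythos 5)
Your proof is correct and follows essentially the same route as the paper: read off the Laurent coefficients that \eqref{eq45} prescribes below order $Q+M$, use $h_\alpha\leq M$ to control the pole orders so that the expansions can be rewritten in the forms \eqref{geometry7}--\eqref{geometry8}, and check that the $\sgm'$- and $\sgm''$-prescriptions agree on the overlap $m,n\leq\hbar\leq Q$. The only difference is that the paper sidesteps your $\mu^{-T}\mu^{T}\lambda=\lambda$ cancellation by choosing the basis $(u^\alpha)$ of $\Vbb^{\leq M}$ so that $u$ is a scalar multiple of one basis vector, which makes the agreement on the overlap immediate; your homogeneous-basis bookkeeping with the invertible matrix of $\MU(\upgamma_1)$ is equally valid.
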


\begin{proof}
Choose a basis $(u^\alpha)_{\alpha\in\fk A}$ of $\Vbb^{\leq M}$ such that the element $u$ in Prop. \ref{geometry10} equals $\epsilon\cdot u^\beta$ for some $\beta\in\fk A$ and $\epsilon\in\Cbb$. By \eqref{eq45}, 
\begin{gather*}\label{eq82}
\MU_\varrho(\xi)v_k\big|_{V'-\{\sgm'\}}-\sum_{t=-k}^{Q-k}c_{t+k,k}\xi^{t+L(0)-1}u d\xi\tag{$\star$}
\end{gather*}
is a (finite) $\Cbb[[\xi]]$-linear combination of $(\xi^{Q+M}u^\alpha)_{\alpha\in\fk A}$. Since
\begin{align*}
\xi^{Q+M}u^\alpha=\xi^{Q+L(0)}\cdot \xi^{M-L(0)}u^\alpha
\end{align*}
and since $\xi^{M-L(0)}u^\alpha$ is a $\Cbb[\xi]$-linear combination of $(u^\alpha)_{\alpha\in\fk A}$, we conclude that \eqref{eq82} is a $\Cbb[[\xi]]$-linear combination of $(\xi^{Q+L(0)}u^\alpha)_{\alpha\in\fk A}$. Therefore, for each $\alpha\in\fk A,0\leq k\leq\hbar$, there exists a family $(f^\alpha_{t+k,k})_{t\in\Nbb_{\geq Q+1}}$ in $\Cbb$ such that $\MU_\varrho(\xi)v_k\big|_{V'-\{\sgm'\}}$ has power series expansion
\begin{subequations}\label{eq84}
\begin{gather}
\sum_{t=-k}^{Q-k}c_{t+k,k}\xi^{t+L(0)-1}u d\xi+\sum_{\alpha\in\fk A}\sum_{t=Q+1}^{+\infty}f^\alpha_{t+k,k}\xi^{t+L(0)-1}u^\alpha d\xi
\end{gather}
Similarly, we can expand $\MU_\varrho(\varpi)v_k\big|_{V''-\{\sgm''\}}$ into the power series
\begin{align}
-\sum_{t=-k}^{Q-k}c_{k,t+k}\varpi^{t+L(0)-1}\MU(\upgamma_1)u d\varpi-\sum_{\alpha\in\fk A}\sum_{t=Q+1}^{+\infty} g^\alpha_{k,t+k}\varpi^{t+L(0)-1}\MU(\upgamma_1)u^\alpha d\varpi
\end{align}
\end{subequations}
where $g^\alpha_{k,t+k}\in\Cbb$. Define the family $(c^\alpha_{m,n})$ (where $\alpha\in\fk A$ and $(m,n)\in\Nbb^2\setminus\Nbb_{>\hbar}^2$) by
\begin{align*}
c^\alpha_{m,n}=\left\{\begin{array}{ll}
\delta_{\alpha,\beta}\cdot\epsilon\cdot c_{m,n}&\text{ if }(m,n)\in\Nbb_{\leq Q}^2\setminus\Nbb_{>\hbar}^2\\[0.5ex]
f^\alpha_{m,n}&\text{ if }n\leq\hbar\text{ and }m>Q+n\\[0.5ex]
g^\alpha_{m,n}&\text{ if }m\leq\hbar\text{ and }n>Q+m\\[0.5ex]
0&\text{ otherwise}
\end{array}\right.
\end{align*}
Then \eqref{eq22} is satisfied.
\end{proof}

\subsection{Fusion product $\boxtimes_\ff(\Wbb)$ and dual fusion product $\bbs_\ff(\Wbb)$}

In this section, we let $N,R\in\Nbb$.

\begin{df}\label{lb12}
An \textbf{\pmb{$(R,N)$}-pointed compact Riemann surface with local coordinates} denotes a data of the form
\begin{align*}
\fk F=\big(x_1',\dots,x_R'; \theta_1',\dots,\theta_R'\big|C\big|x_1,\dots,x_N;\theta_1,\dots,\theta_N\big)
\end{align*}
Here $C$ is a compact Riemann surface, $x_1,\dots,x_N,x_1',\dots,x_R'$ are distinct marked points of $C$. Each $x_i$ has local coordinate $\theta_i$ (i.e., $\theta_i$ is a univalent holomorphic function on a neighborhood of $x_i$ sending $x_i$ to $0$), and each $x_j'$ has a local coordinate $\theta_j'$. We call $x_1,\dots,x_N$ the \textbf{incoming marked points} (or simply \textbf{inputs}) of $\fk F$, and we call $x_1',\dots,x_R'$ the \textbf{outgoing marked points} (or simply \textbf{outputs}) of $\fk F$.
\end{df}

\subsubsection{Definition and explicit construction}

Let $\ff$ be as in Def. \ref{lb12}. Associate $\Wbb\in\Mod(\Vbb^{\otimes N})$ to the ordered marked points $x_1,\dots,x_N$. 

\begin{df}\label{lb13}
Assume that
\begin{align}\label{eq30}
\text{each connected component of $C$ contains one of $x_1,\dots,x_N$}
\end{align}
A \textbf{dual fusion product} of $\Wbb$ along $\fk F$ denotes a pair $(\bbs_\fk F(\Wbb),\gimel)$ where $\bbs_\fk F(\Wbb)\in\Mod(\Vbb^{\otimes R})$ is associated to $x_1',\dots,x_R'$ and  $\gimel\in\ST_\fk F^*(\bbs_\fk F(\Wbb)\otimes\Wbb)$ satisfies the universal property:
\begin{itemize}
\item For each $\Mbb\in\Mod(\Vbb^{\otimes R})$ associated to $x_1',\dots,x_R'$, the map
\begin{align}\label{eq31}
\Hom_{\Vbb^{\otimes R}}(\Mbb,\bbs_\fk F(\Wbb))\rightarrow \scr T_\fk F^*(\Mbb\otimes\Wbb)\qquad T\mapsto \gimel\circ(T\otimes\id_\Wbb)
\end{align}
is a linear isomorphism.
\end{itemize}
We abbreviate $(\bbs_\fk F(\Wbb),\gimel)$ to $\bbs_\fk F(\Wbb)$ when no confusion arises. The contragredient $\Vbb^{\otimes R}$-module of $\bbs_\ff(\Wbb)$ is denoted by $\boxtimes_\ff(\Wbb)$ and called the \textbf{fusion product} of $\Wbb$ along $\ff$. We call $\gimel$ the \textbf{canonical conformal block}.
\end{df}

Note that $\Mbb\otimes\Wbb$ and $\bbs_\fk F(\Wbb)\otimes\Wbb$ are grading-restricted $\Vbb^{\otimes{(N+R)}}$-modules, both associated to the ordered marked points $x_1',\dots,x_N',x_1,\dots,x_R$.

Dual fusion products are clearly unique up to unique isomorphisms. The existence of dual fusion products was proved in \cite[Thm. 3.31]{GZ1}. Note that when $R=0$, then  $\bbs_\ff(\Wbb)$ is the space of conformal blocks $\scr T_\ff^*(\Wbb)$, and $\gimel:\bbs_\ff(\Wbb)\otimes\Wbb\rightarrow\Cbb$ is given by
\begin{align*}
\gimel:\ST^*_\ff(\Wbb)\otimes\Wbb\rightarrow\Cbb\qquad \upphi\otimes w\mapsto\upphi(w)
\end{align*}
Hence $\boxtimes_{\fk F}(\Wbb)$ is the space of coinvariant $\scr T_{\fk F}(\Wbb)$. See \cite[Sec. 3.4]{GZ2} for more explanations.

\begin{rem}\label{lb15}
The map \eqref{eq31} is clearly injective if $\gimel$ is \textbf{partially injective} in the sense that
\begin{align*}
\{\xi\in\bbs_\ff(\Wbb):\gimel(\xi\otimes w)=0)\text{ for all }w\in\Wbb\}
\end{align*}
is zero.  Conversely, if $(\bbs_\ff(\Wbb),\gimel)$ is a dual fusion product, then its explicit construction in Thm. \ref{lb14} clearly indicates that $\gimel$ is partially injective.
\end{rem}

We need to recall the explicit construction of (dual) fusion products. Let
\begin{align}\label{eq36}
Z_\ff=\{x_1,\dots,x_N\}\qquad Z'_\ff=\{x_1',\dots,x_R'\}
\end{align}
be divisors. In the remaining part of this section, we always assume \eqref{eq30}.

\begin{df}\label{lb22}
For each $a_1,\cdots,a_R\in \Nbb$ and $n\in\Nbb$, define \index{VX@$\SV_{\ff,a_1,\cdots,a_R}^{\leq n}=\SV_{\ff,a_\star}^{\leq n}$}
\begin{gather*}
\SV_{\ff,a_1,\cdots,a_R}^{\leq n}\equiv\SV_{\ff,a_\star}^{\leq n}:=\SV_{\ff}^{\leq n}\big(-(L(0)Z'_\ff+a_1x_1'+\cdots+a_Rx_R')\big)\\
\SV_{\ff,a_1,\cdots,a_R}\equiv\SV_{\ff,a_\star}:=\varinjlim_{n\in \Nbb}\SV_{\ff,a_1,\cdots,a_R}^{\leq n}
\end{gather*}
using the data of $\ff,\Vbb$. More precisely, $\SV_{\ff,a_\star}^{\leq n}$ is a locally free $\MO_{C}$-submodule of $\SV_\ff^{\leq n}$ described as follows. Outside $x_1',\cdots,x_R'$, it is exactly $\SV_\ff^{\leq n}$; for each $1\leq j\leq R$, if $\Omega_j$ is a neighborhood of $x_j'$ on which $\theta_j'$ is defined (and univalent), and if $\Omega_j\cap\{x_1',\dots,x_R'\}=\{x_j'\}$, then $\SV_{\ff,a_\star}^{\leq n}\vert_{\Omega_j}$ is generated by 
\begin{gather}
\MU_\varrho(\theta_j')^{-1}(\theta_j')^{a_j+L(0)}v   
\end{gather}
for homogeneous vectors $v\in\Vbb^{\leq n}$.
\end{df}

\begin{df}
Define a vector space
\begin{align}
\ST_{\ff,a_1,\cdots,a_R}(\Wbb)\equiv\ST_{\ff,a_\star}(\Wbb)=\frac\Wbb{H^0\big(C,\SV_{\ff,a_1,\cdots,a_R}\otimes \omega_{C}(\blt Z_\ff)\big)\cdot\Wbb}
\end{align}
where the linear action of $H^0\big(C,\SV_{\ff,a_\star}\otimes\omega_C(\blt Z_\ff))$ on $\Wbb$ is defined to be the restriction of the residue action of $H^0(C,\scr V_\ff\otimes\omega_{C}(\blt Z_\ff))$. Its dual space is denoted by
\begin{align*}
\pmb{\scr T^*_{\ff,a_1,\dots,a_R}(\Wbb)}\equiv\scr T_{\ff,a_\star}^*(\Wbb)
\end{align*}
\end{df}

\begin{thm}\label{lb14}
Let
\begin{gather*}
\bbs_{\ff}(\Wbb):=\varinjlim_{a_1,\cdots,a_R\in \Nbb}\ST_{\ff,a_1,\cdots,a_R}^*(\Wbb)\equiv \bigcup_{a_1,\dots,a_R\in\Nbb}\ST_{\ff,a_1,\cdots,a_R}^*(\Wbb)
\end{gather*}
which is naturally a linear subspace of $\Wbb^*$. Moreover, $\bbs_\ff(\Wbb)$ has a canonical grading-restricted $\Vbb^{\otimes R}$-module structure. If we restrict the evaluation pairing $\Wbb^*\otimes\Wbb\rightarrow\Cbb$ to the linear functional $\gimel:\bbs_\ff(\Wbb)\otimes\Wbb\rightarrow\Cbb$, then $(\bbs_\ff(\Wbb),\gimel)$ is a dual fusion product of $\Wbb$ along $\ff$.
\end{thm}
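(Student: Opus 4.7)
The plan is to prove the theorem in two stages: (1) endow $\bbs_\ff(\Wbb)$ with a canonical $\Vbb^{\otimes R}$-module structure and verify grading-restrictedness; (2) establish the universal property exhibiting $(\bbs_\ff(\Wbb),\gimel)$ as a dual fusion product. Partial injectivity of $\gimel$ in the sense of Rem.~\ref{lb15} is automatic, because $\bbs_\ff(\Wbb)$ is by construction a subspace of $\Wbb^*$, on which evaluation against $\Wbb$ is non-degenerate.

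For stage (1), given $\uppsi\in\ST^*_{\ff,a_\star}(\Wbb)$, $v\in\Vbb$, $j\in\{1,\dots,R\}$, and $k\in\Zbb$, the action $Y_{\bbs_\ff(\Wbb),j}(v)_{(k)}\uppsi$ will be defined by first producing---via a Riemann-Roch / cohomology-vanishing argument analogous to Prop.~\ref{geometry10} but adapted to $\ff$ itself (Serre vanishing for $\SV_\ff^{\leq n}\otimes\omega_C$ twisted by sufficient divisors supported on the marked points)---a global section $\sigma\in H^0(C,\SV_\ff\otimes\omega_C(\blt Z_\ff+\blt x_j'))$ whose Laurent expansion at $x_j'$ reproduces $\MU_\varrho(\theta_j')^{-1}(\theta_j'^{k+L(0)-1}v)\,d\theta_j'$ modulo terms vanishing to order $\geq a_j$, and which vanishes to order $\geq a_i$ at every other output $x_i'$. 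Then set
\begin{align*}
\big(Y_{\bbs_\ff(\Wbb),j}(v)_{(k)}\,\uppsi\big)(w)\;:=\;-\sum_{i=1}^N\uppsi(\sigma*_iw).
\end{align*}
Two admissible choices of $\sigma$ differ by an element of $H^0(C,\SV_{\ff,a_\star}\otimes\omega_C(\blt Z_\ff))$, which $\uppsi$ annihilates, so the definition is unambiguous; the resulting functional lies in $\ST^*_{\ff,a_\star'}(\Wbb)$ for a suitably enlarged $a_\star'$, hence in $\bbs_\ff(\Wbb)$. The module axioms (Jacobi, $L(-1)$-derivative, vacuum) reduce to residue computations: the Jacobi identity, for instance, follows by choosing compatible pairs of global sections for $Y_j(v)_{(k)}$ and $Y_{j'}(v')_{(k')}$ and invoking the global residue theorem together with the OPE identities internal to $\SV_\ff$.

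For stage (2), the forward map $T\mapsto\gimel\circ(T\otimes\id_\Wbb)$ sends $T$ to $(m,w)\mapsto T(m)(w)$. Its image lies in $\ST^*_\ff(\Mbb\otimes\Wbb)$ because, for any $\tau\in H^0(C,\SV_\ff\otimes\omega_C(\blt(Z_\ff\cup Z_\ff')))$, one can decompose $\tau=\sigma_1+\cdots+\sigma_R+\tau_0$ with each $\sigma_j$ of the type used in stage (1) and $\tau_0\in H^0(C,\SV_{\ff,a_\star}\otimes\omega_C(\blt Z_\ff))$; the $\sigma_j$-parts collapse by the very definition of the action on $\bbs_\ff(\Wbb)$, and the $\tau_0$-part vanishes because $T(m)\in\ST^*_{\ff,a_\star}(\Wbb)$. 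Conversely, given $\uppsi\in\ST^*_\ff(\Mbb\otimes\Wbb)$, set $T_\uppsi(m)(w):=\uppsi(m\otimes w)$. Since $m$ lies in a finite-dimensional subspace of $\Mbb$, lower-truncation of vertex operators on grading-restricted modules provides $a_\star$ for which sections in $\SV_{\ff,a_\star}\otimes\omega_C(\blt Z_\ff)$ have vanishing residue action on $m$; invariance of $\uppsi$ then forces $T_\uppsi(m)\in\ST^*_{\ff,a_\star}(\Wbb)\subset\bbs_\ff(\Wbb)$. The $\Vbb^{\otimes R}$-linearity of $T_\uppsi$ comes from applying invariance of $\uppsi$ to the same global sections $\sigma$ used to define the action on $\bbs_\ff(\Wbb)$, and the two constructions are manifestly mutually inverse.

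The main obstacle will be grading-restrictedness of $\bbs_\ff(\Wbb)$: finite-dimensionality of each generalized joint $L_\bullet(0)$-eigenspace together with boundedness below of the joint spectrum. This is where $C_2$-cofiniteness enters essentially. The plan is to identify, for each weight datum $\lambda_\bullet\in\Cbb^R$, the corresponding generalized eigenspace of $\bbs_\ff(\Wbb)$ with a subspace of some conformal block space $\ST^*_\ff(\Mbb_0\otimes\Wbb)$, where $\Mbb_0$ is a finite-dimensional $\Vbb^{\otimes R}$-module obtained by truncating a universal object at the outputs; finite-dimensionality of such a conformal block space follows from the local freeness of $\ST_\ff$ (Def.~\ref{lb48}), established via $C_2$-cofiniteness in \cite[Thm.~3.13]{GZ2}. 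Boundedness below of the joint spectrum will come from the pole-order structure of the Laurent expansions at the outputs.
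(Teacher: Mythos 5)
The paper does not actually reprove this theorem: its proof is a citation to \cite[Ch. 3]{GZ1} (Thm. 3.31 there), together with the remark that the $\Vbb^{\otimes R}$-module structure is defined via propagation of partial conformal blocks and is \emph{equivalently} described by the residue action of elements of $H^0(C,\SV_\ff\otimes\omega_C(\blt Z_\ff+\blt Z'_\ff))$. You take that second description as the definition, which is a legitimate alternative route. Your stages (1) and (2) --- well-definedness of $Y_j(v)_{(k)}\uppsi$ modulo $H^0(C,\SV_{\ff,a_\star}\otimes\omega_C(\blt Z_\ff))$, lower truncation coming from the $L(0)$-twist in Def. \ref{lb22}, the decomposition $\tau=\sigma_1+\cdots+\sigma_R+\tau_0$ for the forward map, the choice of $a_\star$ from grading-restrictedness of $\Mbb$ for the inverse map, and the observation that partial injectivity (Rem. \ref{lb15}) is automatic --- form the correct and standard skeleton. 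What your route buys is a self-contained construction; what it costs is that the module axioms (in particular the Jacobi identity) must be verified by hand from the residue theorem and the commutator structure of the residue actions, which is nontrivial and is precisely what the propagation approach of \cite{GZ1} is designed to avoid.

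The genuine gap is in your stage (3). The proposed identification of each generalized $L_\blt(0)$-eigenspace of $\bbs_\ff(\Wbb)$ with a subspace of $\ST^*_\ff(\Mbb_0\otimes\Wbb)$ for ``a finite-dimensional $\Vbb^{\otimes R}$-module $\Mbb_0$'' does not parse: a $C_2$-cofinite VOA need not admit any nonzero finite-dimensional grading-restricted module, and the conformal block spaces in this paper are only defined for objects of $\Mod(\Vbb^{\otimes R})$. What is actually needed, and what $C_2$-cofiniteness supplies, is: (i) finite-dimensionality of each twisted coinvariant space $\ST_{\ff,a_\star}(\Wbb)$ (the same filtration argument as for \cite[Thm. 3.13]{GZ2}, since $\SV_{\ff,a_\star}$ differs from $\SV_\ff$ only by conditions at the finitely many outputs); (ii) a stabilization statement saying that, for fixed $\lambda_\blt$, the generalized $\lambda_\blt$-eigenspace of $\ST^*_{\ff,a_\star}(\Wbb)$ stops growing once each $a_j$ is large relative to $\Re(\lambda_j)$, so that each eigenspace of the union $\bbs_\ff(\Wbb)$ is finite-dimensional; and (iii) a lower bound on the weights occurring in $\ST^*_{\ff,a_\star}(\Wbb)$ that is uniform in $a_\star$. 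Items (ii) and (iii) are exactly the pole-order bookkeeping that occupies most of \cite[Ch. 3]{GZ1}; neither follows formally from the finite-dimensionality of the individual spaces $\ST^*_{\ff,a_\star}(\Wbb)$, and your one-sentence plan does not yet contain an argument for either.
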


\begin{proof}
See \cite[Ch. 3]{GZ1}. In particular, the $\Vbb^{\otimes R}$-module structure on $\bbs_\ff(\Wbb)$ can be defined in terms of the propagation of partial conformal blocks (cf. \cite[Sec. 3.1]{GZ1}). It can also be described by the residue action of elements of $H^0(C,\scr V_\ff\otimes\omega_{C}(\blt Z_\ff+\blt Z'_\ff))$ (cf. \cite[Prop. 3.19]{GZ1}).
\end{proof}

\subsubsection{$\boxtimes$ is right exact and $\bbs$ is left exact}

The following theorem will be used in the proof of Thm. \ref{lb16}.

\begin{thm}\label{lb11}
The covariant functor $\Wbb\in\Mod(\Vbb^{\otimes N})\mapsto\boxtimes_\ff(\Wbb)\in\Mod(\Vbb^{\otimes R})$ is right exact. Equivalently, the contravariant functor $\Wbb\in\Mod(\Vbb^{\otimes N})\mapsto\bbs_\ff(\Wbb)\in\Mod(\Vbb^{\otimes R})$ is left exact.
\end{thm}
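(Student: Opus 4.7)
The two statements of the theorem are equivalent via contragredience: the functor $\Wbb\mapsto\Wbb'$ is an exact contravariant involution on $\Mod(\Vbb^{\otimes R})$ (since each generalized weight space $\Wbb_{[\lambda_\blt]}$ is finite-dimensional, graded duality is exact on each homogeneous piece), and $\boxtimes_\ff=(-)'\circ\bbs_\ff$. Hence it suffices to prove $\bbs_\ff$ is left exact. My plan is to exploit the explicit description in Thm.~\ref{lb14}, where $\bbs_\ff(\Wbb)=\varinjlim_{a_\blt\in\Nbb^R}\ST^*_{\ff,a_\blt}(\Wbb)$ is a filtered colimit of ordinary linear duals of quotients of $\Wbb$, and then chain together the standard exactness properties of quotients, linear duals, and filtered colimits.

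The main step is to show, for each fixed $a_\blt\in\Nbb^R$, that the covariant functor $\Wbb\mapsto \ST_{\ff,a_\blt}(\Wbb)$ from $\Mod(\Vbb^{\otimes N})$ to $\Vect$ is right exact. Given an exact sequence $\Wbb_1\xrightarrow{\iota}\Wbb_2\xrightarrow{\pi}\Wbb_3\to 0$, write $V_i\subset\Wbb_i$ for the subspace spanned by the residue actions \eqref{eq33} of $H^0(C,\SV_{\ff,a_\blt}\otimes\omega_C(\blt Z_\ff))$. Naturality of the residue action in $\Wbb$ yields $\iota(V_1)\subset V_2$ and $\pi(V_2)\subset V_3$, and surjectivity of $\pi$ lets me lift any generator $\sigma\cdot w_3\in V_3$ to $\sigma\cdot w_2\in V_2$ with $\pi(\sigma\cdot w_2)=\sigma\cdot w_3$, giving $\pi(V_2)=V_3$. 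Consequently $\pi^{-1}(V_3)=V_2+\iota(\Wbb_1)$, so
\begin{align*}
\ker\bigl(\Wbb_2/V_2\to\Wbb_3/V_3\bigr)=(V_2+\iota(\Wbb_1))/V_2=\mathrm{im}(\Wbb_1/V_1\to\Wbb_2/V_2),
\end{align*}
establishing exactness in the middle; surjectivity onto $\ST_{\ff,a_\blt}(\Wbb_3)$ is immediate from the surjectivity of $\pi$.

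The rest is formal. Dualizing over $\Cbb$ converts right-exact sequences of vector spaces into left-exact ones, so $\Wbb\mapsto\ST^*_{\ff,a_\blt}(\Wbb)$ is a left-exact contravariant $\Vect$-valued functor. As $a_\blt$ increases coordinatewise, $\SV_{\ff,a_\blt}$ shrinks (by Def.~\ref{lb22}, one imposes higher vanishing at $x_j'$), hence $\ST^*_{\ff,a_\blt}(\Wbb)\subset\Wbb^*$ grows; the inclusions are natural in $\Wbb$ since a morphism $f:\Wbb_1\to\Wbb_2$ acts at every level as the restriction of $f^*:\Wbb_2^*\to\Wbb_1^*$. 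Because filtered colimits in $\Vect$ are exact (and thus preserve left exactness), the union $\bbs_\ff(\Wbb)=\bigcup_{a_\blt}\ST^*_{\ff,a_\blt}(\Wbb)$ is left exact as a $\Vect$-valued functor. Finally, the forgetful functor $\Mod(\Vbb^{\otimes R})\to\Vect$ is exact and reflects exact sequences, while the transition maps $f^*|_{\bbs_\ff(\Wbb_2)}$ are $\Vbb^{\otimes R}$-linear by naturality of the residue action on $\Wbb^*$ used in Thm.~\ref{lb14} to endow $\bbs_\ff(\Wbb)$ with its module structure; so left exactness lifts to $\Mod(\Vbb^{\otimes R})$.

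I expect no substantial obstacle. The only non-formal ingredient is the identity $\pi(V_2)=V_3$ in Step~1, and even this is entirely mechanical once the residue action is written explicitly; all other steps are standard categorical diagram chases, dualization, and filtered-colimit manipulations.
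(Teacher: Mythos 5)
Your proposal is correct and follows essentially the same route as the paper: first establish right exactness of the fixed-level functor $\Wbb\mapsto\ST_{\ff,a_\blt}(\Wbb)$ (the paper does this via the zig-zag lemma on the short exact sequence of complexes $0\to J\Wbb_\blt\to\Wbb_\blt\to\ST_{\ff,a_\blt}(\Wbb_\blt)\to0$, you do the equivalent direct computation $\pi^{-1}(V_3)=V_2+\iota(\Wbb_1)$, both hinging on the same fact that the residue action commutes with module maps), then dualize, then pass to the filtered union over $a_\blt$. The only cosmetic differences are your explicit appeal to exactness of filtered colimits in $\Vect$ where the paper verifies the union's exactness by hand, and your spelled-out remarks on contragredience and on lifting exactness from $\Vect$ to $\Mod(\Vbb^{\otimes R})$, which the paper leaves implicit.
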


\begin{proof}
We first fix $a_1,\dots,a_R$ and prove that $\Wbb\mapsto \ST_{\ff,a_\star}(\Wbb)$ is right exact. Write $J=H^0\big(C,\SV_{\ff,a_\star}\otimes \omega_{C}(\blt Z_\ff)\big)$ so that $\ST_{\ff,a_\star}(\Wbb)=\Wbb/J\Wbb$.  Let $\Wbb_3\rightarrow\Wbb_2\rightarrow\Wbb_1\rightarrow 0$ be an exact sequence in $\Mod(\Vbb^{\otimes N})$. Then we have an exact sequence of chain complexes $0\rightarrow J\Wbb_\blt\rightarrow \Wbb_\blt\rightarrow \ST_{\ff,a_\star}(\Wbb_\blt)\rightarrow0$. Namely, we have the commutative diagram Fig. \ref{fig1}, where the rows are exact, and the composition of any two consecutive vertical arrows is zero.
\begin{figure}[h]
	\centering
\begin{equation*}
\begin{tikzcd}
0 \arrow[r] & J\Wbb_3 \arrow[r] \arrow[d] & \Wbb_3 \arrow[r] \arrow[d,"\varphi"] & \ST_{\ff,a_\star}(\Wbb_3) \arrow[r] \arrow[d,"\alpha"] & 0 \\
0 \arrow[r] & J\Wbb_2 \arrow[r] \arrow[d,"\gamma"] & \Wbb_2 \arrow[r] \arrow[d,"\psi"] & \ST_{\ff,a_\star}(\Wbb_2) \arrow[r] \arrow[d,"\beta"] & 0 \\
0 \arrow[r] & J\Wbb_1 \arrow[r] \arrow[d,"\delta"] & \Wbb_1 \arrow[r] \arrow[d] & \ST_{\ff,a_\star}(\Wbb_1) \arrow[r] \arrow[d] & 0 \\
            & 0                     & 0                     & 0                     &  
\end{tikzcd}
\end{equation*}
	\caption{. The exact sequence of chain complexes in Thm. \ref{lb11}.}
	\label{fig1}
\end{figure} 

We use the notations in Fig. \ref{fig1}. Then, the zig-zag lemma yields an exact sequence
\begin{align*}
\Ker\psi/\Imag\varphi\rightarrow \Ker\beta/\Imag\alpha\rightarrow\Ker\delta/\Imag\gamma
\end{align*}
By assumption, we have $\Ker\psi/\Imag\varphi=0$. Note that if we view $\Wbb_1$ as the quotient module $\Wbb_2/\Wbb_3$, then the action of $J$ on $\Wbb_2$ descends to that of $\Wbb_1$. Therefore $\gamma$ is surjective, and hence $\Ker\delta/\Imag\gamma=0$. Thus $\Ker\beta=\Imag\alpha$. A similar argument shows that $\beta$ is surjective. Therefore, we have an exact sequence $\ST_{\ff,a_\star}(\Wbb_3)\xlongrightarrow{\alpha}\ST_{\ff,a_\star}(\Wbb_2)\xlongrightarrow{\beta}\ST_{\ff,a_\star}(\Wbb_1)\rightarrow0$. This finishes the proof that $\ST_{\ff,a_\star}(-)$ is right exact. Equivalently, $\ST^*_{\ff,a_\star}(-)$ is left exact.

Now, consider the sequence
\begin{align*}
0\rightarrow\bigcup_{a_\star}\ST^*_{\ff,a_\star}(\Wbb_1)\xlongrightarrow{\beta^\tr}\bigcup_{a_\star}\ST^*_{\ff,a_\star}(\Wbb_2)\xlongrightarrow{\alpha^\tr} \bigcup_{a_\star}\ST^*_{\ff,a_\star}(\Wbb_3)
\end{align*}
By the left exactness of $\ST^*_{\ff,a_\star}(-)$, the restriction of $\beta^\tr$ to each $\ST^*_{\ff,a_\star}(\Wbb_1)$ is injective, and the map $\beta^\tr$ sends each $\ST^*_{\ff,a_\star}(\Wbb_1)$ onto $\Ker(\alpha^\tr)\big|_{\ST^*_{\ff,a_\star}(\Wbb_2)}$. Therefore, the above sequence must be exact. This proves that $\bbs_\ff(-)$ is left exact, and hence $\boxtimes_\ff(-)$ is right exact.
\end{proof}

\subsubsection{Fusion product along $\fk F\sqcup\fk G$}\label{lb41}

Let $N,K\in\Zbb_+$ and $R,Q\in\Nbb$. In this subsection, we let 
\begin{gather*}
    \fk F=\big(x_1',\dots,x_R'; \theta_1',\dots,\theta_R'\big|C_1\big|x_1,\dots,x_N;\theta_1,\dots,\theta_N\big)\\
    \fk G=\big(y_1',\dots,y_Q';\mu_1',\dots,\mu_Q'\big|C_2\big| y_1,\dots,y_K;\mu_1,\dots,\mu_K \big)
    \end{gather*}
be respectively $(R,N)$-pointed and $(Q,K)$-pointed compact Riemann surface with local coordinates. We assume that each component of $C_1$ contains one of $x_1,\dots,x_N$, and each component of $C_2$ contains one of $y_1,\dots,y_K$.

Define the \textbf{disjoint union} $\ff\sqcup\fg$ to be
\begin{align}\label{eq86}
\begin{aligned}
    \fk F\sqcup \fk G=\big(&x_1',\dots,x_R',y_1',\dots,y_Q'; \theta_1',\dots,\theta_R',\mu_1',\dots,\mu_Q'\big|C_1\sqcup C_2
    \big|\\
    &x_1,\dots,x_N, y_1,\dots,y_K;\theta_1,\dots,\theta_N,\mu_1,\dots,\mu_K\big)
\end{aligned}
\end{align}
which is an $(R+Q,N+K)$-pointed compact Riemann surface with local coordinates. Associate $\Wbb\in\Mod(\Vbb^{\otimes N})$ to the ordered marked points $x_1,\dots,x_N$. Associate  $\Xbb\in\Mod(\Vbb^{\otimes K})$ to the ordered marked points $y_1,\dots,y_K$. Thus we have dual fusion products
\begin{align*}
(\bbs_\ff(\Wbb),\gimel)\qquad (\bbs_\fg(\Xbb),\daleth)
\end{align*}
Associate the tensor product module $\Wbb\otimes \Xbb\in\Mod(\Vbb^{\otimes(N+K)})$ to the ordered marked points $x_1,\dots,x_N,y_1,\dots,y_K$ of $\ff\sqcup\fg$.

The following lemma is a special case of Thm. \ref{lb16}.

\begin{lm}\label{lb17}
Assume that $R=Q=0$. Then each $\upomega\in\ST^*_{\ff\sqcup\fg}(\Wbb\otimes\Xbb)$ is can be written as a finite sum $\upomega=\sum_i \upphi_i\otimes\uppsi_i$ where $\upphi_i\in\ST^*_\ff(\Wbb)$ and $\uppsi_i\in\ST^*_\fg(\Xbb)$.
\end{lm}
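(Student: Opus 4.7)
The plan is to exploit the fact that since $C_1\sqcup C_2$ is disconnected, the space of global sections controlling invariance splits as a direct sum, which decouples the invariance conditions imposed on $\upomega$.

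First, I would observe the natural decomposition
\begin{align*}
H^0\big(C_1\sqcup C_2,\SV_{\ff\sqcup\fg}\otimes\omega_{C_1\sqcup C_2}(\blt Z_{\ff\sqcup\fg})\big)=H^0\big(C_1,\SV_\ff\otimes\omega_{C_1}(\blt Z_\ff)\big)\oplus H^0\big(C_2,\SV_\fg\otimes\omega_{C_2}(\blt Z_\fg)\big)
\end{align*}
and note that under the residue action \eqref{eq33} on $\Wbb\otimes\Xbb$, the first summand acts only on the $\Wbb$-tensor factor (through the operators $*_i$ for $1\leq i\leq N$), while the second acts only on the $\Xbb$-factor.

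Next, for each $x\in\Xbb$ I would define a linear functional $\upomega_x\in\Wbb^*$ by $\upomega_x(w)=\upomega(w\otimes x)$. The invariance of $\upomega$ under the first summand immediately gives $\upomega_x\in\ST^*_\ff(\Wbb)$. Since $\ST^*_\ff(\Wbb)$ is finite dimensional (Def.~\ref{lb48}, which invokes the $C_2$-cofiniteness of $\Vbb$), I would fix a basis $\upphi_1,\dots,\upphi_n$ of $\ST^*_\ff(\Wbb)$ and uniquely write $\upomega_x=\sum_{i=1}^n\uppsi_i(x)\upphi_i$. The coefficients $\uppsi_i:\Xbb\to\Cbb$ are linear by the linearity of $\upomega$ in $x$, and hence define elements of $\Xbb^*$.

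Finally, I would verify $\uppsi_i\in\ST^*_\fg(\Xbb)$: for any $\tau$ in the second summand above and any $x\in\Xbb$, the invariance of $\upomega$ forces $\upomega_{\tau\cdot x}=0$, whence $\sum_i\uppsi_i(\tau\cdot x)\upphi_i=0$; the linear independence of the $\upphi_i$ then yields $\uppsi_i(\tau\cdot x)=0$. Unwinding the construction gives $\upomega(w\otimes x)=\sum_i\upphi_i(w)\uppsi_i(x)$, i.e., the desired decomposition $\upomega=\sum_i\upphi_i\otimes\uppsi_i$. No serious obstacle arises; the argument reduces entirely to the splitting of the invariance condition combined with the finite-dimensionality of $\ST^*_\ff(\Wbb)$, both of which are already available.
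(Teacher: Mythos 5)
Your proposal is correct and follows essentially the same route as the paper's proof: both split the invariance condition over the two connected components (the paper phrases this as extension-by-zero of sections from $C_1$ or $C_2$, you as a direct-sum decomposition of global sections), expand $w\mapsto\upomega(w\otimes x)$ in a finite basis of $\ST^*_\ff(\Wbb)$, and then check the coefficient functionals $\uppsi_i$ are invariant under the $C_2$-part of the action. No gaps.
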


\begin{proof}
Since $\upomega$ is a conformal block, each $\sigma\in H^0(C_1,\SV_\ff\otimes\omega_{C_1}(\blt Z_\ff))$ can be extended by zero to an element of $H^0(C_1\sqcup C_2,\SV_{\ff\sqcup\fg}\otimes \omega_{C_1\sqcup C_2}(\blt Z_\ff+\blt Z_\fg))$. Therefore $\upomega((\sigma\cdot w)\otimes u)=0$ for all $w\in\Wbb,u\in\Xbb$. 

Now, let $(\upphi_i)_{i\in I}$ be a (finite) basis of $\ST^*_\ff(\Wbb)$. By the first paragraph, for each $u\in\Xbb$, the linear functional $w\in\Wbb\mapsto \upomega(w\otimes u)$ is a conformal block associated to $\ff$ and $\Wbb$, and hence can be written uniquely as a linear combination of $(\upphi_i)_{i\in I}$. For each $u\in\Xbb$,  We write it as $\sum_i \uppsi_i(u)\upphi_i$ where $\uppsi_i(u)\in\Cbb$. Similar to the first paragraph, for each $\sigma\in H^0(C_2,\SV_\fg\otimes\omega_{C_2}(\blt Z_\fg))$ we have $\upomega(w\otimes (\sigma\cdot u))=0$, and hence $\uppsi_i(\sigma\cdot u)=0$. Therefore each $\uppsi_i:\Xbb\rightarrow\Cbb$ belongs to $\ST^*_\fg(\Xbb)$. Clearly $\upomega=\sum_i \upphi_i\otimes\uppsi_i$.
\end{proof}

We now arrive at the main result of this section, which is needed in the proof of the sewing-factorization Thm. \ref{lb40}. Roughly speaking, this result says that
\begin{gather*}
\bbs_{\ff\sqcup\fg}(\Wbb\otimes\Xbb)\simeq\bbs_\ff(\Wbb)\otimes\bbs_\fg(\Xbb)\qquad\boxtimes_{\ff\sqcup\fg}(\Wbb\otimes\Xbb)\simeq\boxtimes_\ff(\Wbb)\otimes\boxtimes_\fg(\Xbb)
\end{gather*}
In the special case that $R=Q=0$, the above isomorphisms become
\begin{gather*}
\ST^*_{\ff\sqcup\fg}(\Wbb\otimes\Xbb)\simeq\ST^*_\ff(\Wbb)\otimes\ST^*_\fg(\Xbb)\qquad\ST_{\ff\sqcup\fg}(\Wbb\otimes\Xbb)\simeq\ST_\ff(\Wbb)\otimes\ST_\fg(\Xbb)
\end{gather*}

\begin{thm}\label{lb16}
Define the linear functional
\begin{gather*}
\gimel\otimes\daleth: \bbs_{\fk F}(\Wbb)\otimes \bbs_{\fk G}(\Xbb)\otimes\Wbb\otimes \Xbb\rightarrow \Cbb\\
\upphi\otimes\uppsi\otimes w\otimes u\mapsto \gimel(\upphi\otimes w)\cdot\daleth(\uppsi\otimes u)
\end{gather*}
Then $(\bbs_{\fk F}(\Wbb)\otimes \bbs_{\fk G}(\Xbb),\gimel\otimes\daleth)$ is a dual fusion product of $\Wbb\otimes\Xbb$ along $\ff\sqcup\fg$.
\end{thm}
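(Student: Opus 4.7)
The plan is to verify the universal property of Def.~\ref{lb13} for the pair $(\bbs_\ff(\Wbb) \otimes \bbs_\fg(\Xbb), \gimel \otimes \daleth)$. First I would check that $\gimel \otimes \daleth$ is a conformal block in $\ST^*_{\ff \sqcup \fg}(\bbs_\ff(\Wbb) \otimes \bbs_\fg(\Xbb) \otimes \Wbb \otimes \Xbb)$: since $C_1 \sqcup C_2$ is disconnected, every global section of $\scr V_{\ff \sqcup \fg} \otimes \omega_{C_1 \sqcup C_2}(\blt(Z_\ff + Z_\fg + Z'_\ff + Z'_\fg))$ splits uniquely into summands supported on $C_1$ and $C_2$, and the corresponding residue actions are localised to the marked points of those components; the invariance of $\gimel$ as a block in $\ST^*_\ff(\bbs_\ff(\Wbb) \otimes \Wbb)$ and of $\daleth$ as a block in $\ST^*_\fg(\bbs_\fg(\Xbb) \otimes \Xbb)$ then delivers the vanishing directly, generalising Lem.~\ref{lb17}.

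For the universal property, write $\Phi: T \mapsto (\gimel \otimes \daleth) \circ (T \otimes \id)$ for the map from $\Hom_{\Vbb^{\otimes (R+Q)}}(\Mbb, \bbs_\ff(\Wbb) \otimes \bbs_\fg(\Xbb))$ to $\ST^*_{\ff \sqcup \fg}(\Mbb \otimes \Wbb \otimes \Xbb)$. Injectivity follows from the partial injectivity of $\gimel$ and $\daleth$ (Rem.~\ref{lb15}): expanding $T(m) = \sum_k \upphi_k \otimes \uppsi_k$ with $\{\uppsi_k\}$ linearly independent, $\Phi(T) = 0$ forces $\sum_k \gimel(\upphi_k \otimes w)\,\daleth(\uppsi_k \otimes u) = 0$ for all $w,u$; partial injectivity of $\daleth$ then yields $\sum_k \gimel(\upphi_k \otimes w) \uppsi_k = 0$, whence $\gimel(\upphi_k \otimes w) = 0$ for all $k, w$, and partial injectivity of $\gimel$ gives $\upphi_k = 0$.

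Surjectivity is the main step. Given $\upomega \in \ST^*_{\ff \sqcup \fg}(\Mbb \otimes \Wbb \otimes \Xbb)$ and $m \in \Mbb$, put $\upomega_m(w \otimes u) := \upomega(m \otimes w \otimes u)$ and $\mu_{m,u}(w) := \upomega_m(w \otimes u)$. For $\sigma \in H^0(C_1, \scr V_{\ff, a_\star} \otimes \omega_{C_1}(\blt Z_\ff))$, extend $\sigma$ by zero on $C_2$ to produce $\wtd\sigma$, a global section of the sheaf controlling the conformal block condition on $\ff \sqcup \fg$. The residue actions of $\wtd\sigma$ at marked points of $C_2$ vanish; and the vanishing condition $\theta_j'^{a_j + L(0)}$ defining $\scr V_{\ff, a_\star}$ at $x'_j$ forces the residue action at $x'_j$ on $\Mbb$ to involve only modes of sufficiently large index, which annihilate the fixed $m$ once $a_\star$ is chosen large enough (depending only on $m$, by grading-restriction of $\Mbb$). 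The conformal block condition for $\upomega$ then reduces to $\upomega(m \otimes (\sigma \cdot w) \otimes u) = 0$, so $\mu_{m,u} \in \ST^*_{\ff, a_\star}(\Wbb)$ for $a_\star$ independent of $u$; symmetrically, some $b_\star$ satisfies $\nu_{m,w}(u) := \upomega_m(w \otimes u) \in \ST^*_{\fg, b_\star}(\Xbb)$ for every $w$. Since $\ST^*_{\ff, a_\star}(\Wbb)$ is finite-dimensional (by the construction of \cite{GZ1}), a basis $\upphi_1, \ldots, \upphi_s$ and vectors $w_1, \ldots, w_s \in \Wbb$ with $\upphi_i(w_j) = \delta_{ij}$ yield the expansion $\upomega_m = \sum_i \upphi_i \otimes c_i$ with $c_i := \nu_{m, w_i} \in \ST^*_{\fg, b_\star}(\Xbb)$, placing $\upomega_m$ in the algebraic tensor product $\ST^*_{\ff, a_\star}(\Wbb) \otimes \ST^*_{\fg, b_\star}(\Xbb) \subset \bbs_\ff(\Wbb) \otimes \bbs_\fg(\Xbb)$. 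Setting $T(m) := \upomega_m$ gives a linear map with $\Phi(T) = \upomega$.

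It remains to verify that $T$ is a $\Vbb^{\otimes (R+Q)}$-module morphism. For $j \leq R$ the action of a mode $Y_j(v)_k$ on $\bbs_\ff(\Wbb)$ is implemented (via \cite[Prop.~3.19]{GZ1}) by the residue action of a local section $\alpha$ of $\scr V_\ff \otimes \omega_{C_1}(\blt Z_\ff + \blt Z'_\ff)$ whose Laurent tail at $x'_j$ realises $Y_j(v)_k$ and which has no poles at the other outputs of $\ff$. Extending $\alpha$ by zero to $C_1 \sqcup C_2$ and applying the conformal block condition for $\upomega$ equates $\upomega((Y_j(v)_k m) \otimes w \otimes u)$ with $-\sum_i \upomega(m \otimes (\alpha *_i w) \otimes u)$; expanding $T(m) = \sum_l \upphi_l \otimes c_l$ and using the defining property of $\alpha$ identifies this with $(Y_j(v)_k T(m))(w \otimes u)$, and the case $j > R$ is symmetric. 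The main obstacle I foresee lies in the surjectivity step, specifically in guaranteeing that $\upomega_m$ sits inside the \emph{algebraic} tensor product $\bbs_\ff(\Wbb) \otimes \bbs_\fg(\Xbb)$ rather than only in a larger completion of $(\Wbb \otimes \Xbb)^*$; this depends essentially on the finite-dimensionality of each $\ST^*_{\ff, a_\star}(\Wbb)$, which is a consequence of $C_2$-cofiniteness via the constructions of the first part of the series.
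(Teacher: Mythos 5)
Your argument is correct in outline, but it takes a genuinely different route from the paper on the hard direction (surjectivity). The paper never works with a general $\Mbb$ directly: it first treats $\Mbb=\Ebb\otimes\Fbb$, where Lem.~\ref{lb17} splits any block on $\ff\sqcup\fg$ into $\sum_i\upphi_i\otimes\uppsi_i$ and the two separate universal properties of $(\bbs_\ff(\Wbb),\gimel)$ and $(\bbs_\fg(\Xbb),\daleth)$ finish the job; it then reaches arbitrary $\Mbb$ by resolving it as $\wtd\Ebb\otimes\wtd\Fbb\to\Ebb\otimes\Fbb\to\Mbb\to0$ (McRae's Deligne-product result) and applying the five lemma, using the left exactness of both $\Hom(-,\bbs_\ff(\Wbb)\otimes\bbs_\fg(\Xbb))$ and of the conformal-block functor (Thm.~\ref{lb11}). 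You instead fix $m\in\Mbb$, use the explicit realization $\bbs_\ff(\Wbb)=\bigcup_{a_\star}\ST^*_{\ff,a_\star}(\Wbb)\subset\Wbb^*$ of Thm.~\ref{lb14} to show $\upomega(m\otimes-\otimes-)$ lands in $\ST^*_{\ff,a_\star}(\Wbb)\otimes\ST^*_{\fg,b_\star}(\Xbb)$ with $a_\star,b_\star$ depending only on $m$, and then verify by hand that $m\mapsto\upomega_m$ intertwines the module actions. Your route buys independence from the categorical input (Deligne products, the resolution, Thm.~\ref{lb11}) and is closer in spirit to how \cite{GZ1} establishes the universal property of a single $\bbs_\ff(\Wbb)$; the paper's route buys a shorter argument that never reopens the explicit construction and reuses exactness results it needs anyway. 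Your injectivity argument is also a slight variant (linear independence on the $\daleth$ side rather than constructing dual vectors on the $\gimel$ side), and it works.

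Two points in your surjectivity/module-map step deserve more care than you give them. First, the uniformity of $a_\star$ in $u$ and the finite-dimensionality of $\ST^*_{\ff,a_\star}(\Wbb)$ are both genuine inputs from \cite{GZ1} (the latter via $C_2$-cofiniteness); you flag this correctly, but the whole argument collapses without it. Second, your section $\alpha$ ``whose Laurent tail at $x_j'$ realises $Y_j(v)_k$ and which has no poles at the other outputs'' does not exist on the nose in general: one only gets a global section of $\SV_\ff\otimes\omega_{C_1}(\blt Z_\ff+\blt Z'_\ff)$ whose Laurent expansion at $x_j'$ agrees with the prescribed tail modulo terms vanishing to high order (Serre vanishing, as in Prop.~\ref{geometry10}), and one must then argue that the high-order corrections annihilate the fixed vectors $m$ and $\upphi_l$. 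This is exactly the mechanism of \cite[Prop.~3.19]{GZ1}, so the gap is reparable, but as written the existence claim is too strong.
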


Here, $\bbs_{\fk F}(\Wbb)\otimes \bbs_{\fk G}(\Xbb)\otimes\Wbb\otimes \Xbb$ is associated to the marked points of $\ff\sqcup\fg$ in the order $x_1',\dots,x_R',y_1',\dots,y_Q',x_1,\dots,x_N,y_1,\dots,y_K$.

\begin{proof}
Clearly $\gimel\otimes\daleth$ is a conformal block associated to $\ff\sqcup\fg$. We need to check the universal property in Def. \ref{lb13}: for each $\Mbb\in\Mod(\Vbb^{\otimes(R+Q)})$, the linear map
\begin{align*}
\Psi_\Mbb:\Hom_{\Vbb^{\otimes (R+Q)}}(\Mbb,\bbs_{\fk F}(\Wbb)\otimes \bbs_{\fk G}(\Xbb))\rightarrow\ST^*_{\ff\sqcup\fg}(\Mbb\otimes \Wbb\otimes\Xbb)\qquad T\mapsto \Psi_\Mbb(T)
\end{align*}
is bijective, where
\begin{align*}
\Psi_\Mbb(T):\Mbb\otimes\Wbb\otimes\Xbb\rightarrow\Cbb\qquad m\otimes w\otimes u\mapsto (\gimel\otimes\daleth)(T(m)\otimes w\otimes u)
\end{align*}

Step 1. Let us prove that $\Psi_\Mbb$ is injective. Suppose that $\Psi_\Mbb(T)=0$. Choose any $m\in\Mbb$, and write $T(m)=\sum_{i\in I} \upphi_i\otimes\uppsi_i$ where $\upphi_i\in\bbs_\ff(\Wbb)$ and $\uppsi_i\in\bbs_\fg(\Xbb)$, and $(\upphi_i)_{i\in I}$ is linearly independent. Then for all $w\in\Wbb,u\in\Xbb$ we have $(\gimel\otimes\daleth)(T(m)\otimes w\otimes u)=0$, which means
\begin{align*}
\sum_i \gimel (\upphi_i\otimes w)\cdot \daleth(\uppsi_i\otimes u)=0
\end{align*}
Since $\bbs_\ff(\Wbb)$ is grading-restricted, we can find $a_1,\dots,a_R\in\Rbb$ such that $\bbs_\ff(\Wbb)_{[\leq a_\blt]}$ contains all $\upphi_i$.  The linear map
\begin{align*}
w\in\Wbb\quad\mapsto \quad \gimel(-\otimes w)\in(\bbs_\ff(\Wbb)_{[\leq a_\blt]})^*
\end{align*}
is surjective; otherwise, we can find a nonzero $\upphi\in \bbs_\ff(\Wbb)_{[\leq a_\blt]}$ such that $\gimel(\upphi\otimes w)=0$ for all $w\in\Wbb$, contradicting the partial injectivity of $\gimel$ (cf. Rem. \ref{lb15}). Therefore, since $(\upphi_i)_{i\in I}$ is linearly independent, we can find a collection $(w_i)_{i\in I}$ in $\Wbb$ such that $\gimel(\upphi_i\otimes w_j)=\delta_{i,j}$ for all $i,j\in I$. Thus $\daleth(\uppsi_i\otimes u)=0$ for all $i\in I$ and all $u\in \Xbb$. Since $\daleth$ is partially injective, we have $\uppsi_i=0$. This proves $T(m)=0$. Thus $T=0$.\\[-1ex]

Step 2. Let us prove that $\Psi_\Mbb$ is bijective. We first consider the special case that $\Mbb=\Ebb\otimes\Fbb$ where $\Ebb\in\Mod(\Vbb^{\otimes R})$ and $\Fbb\in\Mod(\Vbb^{\otimes Q})$. By Lem. \ref{lb17}, it suffices to prove that $\Upomega\otimes\Uptheta$ belongs to the range of $\Psi_\Mbb$ if $\Upomega\in\ST^*_\ff(\Ebb\otimes\Wbb)$ and $\Uptheta\in\ST^*_\fg(\Fbb\otimes\Xbb)$. By the universal property for $(\bbs_\ff(\Wbb),\gimel)$ and $(\bbs_\fg(\Xbb),\daleth)$, there exist $F\in\Hom_{\Vbb^{\otimes R}}(\Ebb,\bbs_\ff(\Wbb))$ and $G\in\Hom_{\Vbb^{\otimes Q}}(\Fbb,\bbs_\fg(\Xbb))$ such that $\Upomega=\gimel\circ (F\otimes\id_\Wbb)$ and $\Uptheta=\daleth\circ (G\otimes \id_\Xbb)$. Then $\Psi_{\Ebb\otimes\Fbb}(F\otimes G)=\Upomega\otimes\Uptheta$. Thus $\Psi_{\Ebb\otimes\Fbb}$ is surjective, and hence is bijective.

Next, we consider the general case. By \cite[Prop. 3.2]{McR-deligne}, there is an epimorphism $\beta:\Ebb\otimes\Fbb\rightarrow\Mbb$ where $\Ebb\in\Mod(\Vbb^{\otimes R})$ and $\Fbb\in\Mod(\Vbb^{\otimes Q})$. Applying the same result to $\Ker\beta$, we obtain an exact sequence
\begin{align}\label{eq32}
\wtd\Ebb\otimes\wtd\Fbb\xlongrightarrow{\alpha}\Ebb\otimes \Fbb\xlongrightarrow{\beta} \Mbb\rightarrow0
\end{align}
Then \eqref{eq32} induces the commutative diagram Fig. \ref{fig2}, where the vertical arrows above the last two rows are defined by composition with $\alpha$ and $\beta$.
\begin{figure}[h]
	\centering
\begin{equation*}
\begin{tikzcd}
0 \arrow[d]           & 0 \arrow[d] \\
\Hom_{\Vbb^{\otimes (R+Q)}}(\Mbb,\bbs_{\fk F}(\Wbb)\otimes \bbs_{\fk G}(\Xbb)) \arrow[d] \arrow[r,"\Psi_\Mbb"] & \ST^*_{\ff\sqcup\fg}(\Mbb\otimes \Wbb\otimes\Xbb) \arrow[d] \\
\Hom_{\Vbb^{\otimes (R+Q)}}(\Ebb\otimes\Fbb,\bbs_{\fk F}(\Wbb)\otimes \bbs_{\fk G}(\Xbb)) \arrow[d] \arrow[r,"\Psi_{\Ebb\otimes\Fbb}"] & \ST^*_{\ff\sqcup\fg}(\Ebb\otimes\Fbb\otimes \Wbb\otimes\Xbb) \arrow[d] \\
\Hom_{\Vbb^{\otimes (R+Q)}}(\wtd\Ebb\otimes\wtd\Fbb,\bbs_{\fk F}(\Wbb)\otimes \bbs_{\fk G}(\Xbb)) \arrow[r,"\Psi_{\wtd\Ebb\otimes\wtd\Fbb}"]           & \ST^*_{\ff\sqcup\fg}(\wtd\Ebb\otimes\wtd\Fbb\otimes \Wbb\otimes\Xbb)          
\end{tikzcd}
\end{equation*}
	\caption{. The commutative diagram induced by \eqref{eq32}.}
	\label{fig2}
\end{figure} 

The left column in Fig. \ref{fig2} is exact because $\Hom_{\Vbb^{\otimes(R+Q)}}(-,\bbs_\ff(\Wbb)\otimes\bbs_\fg(\Xbb))$ is left exact. The right column is exact due to Thm. \ref{lb11}. By the previously proved special case, $\Psi_{\Ebb\otimes\Fbb}$ and $\Psi_{\wtd\Ebb\otimes\wtd\Fbb}$ are isomorphisms. Therefore, by the five lemma, the linear map $\Psi_\Mbb$ is an isomorphism.
\end{proof}

\section{Sewing-factorization theorem: A key special case}

In this chapter, we assume the setting in Subsec. \ref{lb3} and use freely the notations in that subsection. In particular, we let $R=1$. Moreover, we assume that each connected component of $\wtd\MC$ contains one of $\sgm_1,\dots,\sgm_N$. Then Asmp. \ref{lb2} is satisfied. Compatible with \eqref{eq36}, we let
\begin{align}
Z_\fxtd=\{\sgm_1,\dots,\sgm_N\}\qquad Z'_\fxtd=\{\sgm',\sgm''\}
\end{align}
Hence $\SXtd=Z_\fxtd\cup Z'_\fxtd$.

\subsection{The SF theorem}

\subsubsection{The setting}

Define a $(2,N)$-pointed compact Riemann surface with local coordinates
\begin{gather}
\ff=\big(\sgm',\sgm'';\xi,\varpi\big|C\big|\sgm_1,\dots,\sgm_N;\eta_1,\dots,\eta_N \big)
\end{gather}
In other words, $\ff$ is almost the same as $\fxtd$, except that the marked points $\sgm',\sgm'$ of $\fxtd$ are viewed as outgoing points of $\ff$. Let $(\bbs_\ff(\Wbb),\gimel)$ be the dual fusion product associated to $\ff$ and $\Wbb$. Thus $\gimel:\bbs_\ff(\Wbb)\otimes\Wbb\rightarrow\Cbb$ is the canonical conformal block, and $\bbs_\ff(\Wbb)\in\Mod(\Vbb^{\otimes 2})$ is associated to $\sgm',\sgm''$. In particular, for each $v\in\Vbb$, the vertex operator $Y_1(v,z)=Y(v\otimes 1,z)$ is for $\sgm'$, and $Y_2(v,z)=Y(1\otimes v,z)$ is for $\sgm''$. We write
\begin{align*}
Y_+=Y_1\qquad Y_-=Y_2
\end{align*}

Define a $(2,0)$-pointed sphere with local coordinates
\begin{align}\label{eq87}
\fn=\big(\infty,0;1/\zeta,\zeta\big|\Pbb^1\big)
\end{align}
where $\zeta$ denotes the standard coordinate of $\Cbb$. 

\begin{rem}\label{lb39}
Associate $\Mbb\in\Mod(\Vbb^{\otimes 2})$ to the ordered pair of marked points $(\infty,0)$. So $Y_+=Y_1$ is associated to $\infty$, and $Y_-=Y_2$ is associated to $0$. Note that $\ST^*_\fn(\Mbb)\subset\Mbb^*$. Let $\upchi\in\Mbb^*$. It is easy to see that $H^0(\Pbb^1,\SV_\fn\otimes\omega_{\Pbb^1}(\blt\infty+\blt 0))$ is spanned by all $\MU_\varrho(z)^{-1}(uz^ndz)$ where $u\in\Vbb$ and $n\in\Zbb$. Combining this observation with the fact that $(\MU_\varrho(1/\zeta)\MU_\varrho(\zeta)^{-1})_z=\MU(\upgamma_z)$ for all $z\in\Cbb^\times$ (cf. \cite[Exp. 1.15]{GZ1}), we see that $\upchi$ belongs to $\ST^*_\fn(\Mbb)$ if and only if for all $u\in\Vbb,m\in\Mbb$, the relation
\begin{subequations}\label{eq67}
\begin{align}\label{eq67a}
\bigbk{\upchi,Y_+'(u,z)m}=\bigbk{\upchi,Y_-(u,z)m}
\end{align}
holds in $\Cbb[[z^{\pm1}]]$. Due to \eqref{eq48} and \eqref{eq49}, condition \eqref{eq67a} is equivalent to 
\begin{align}
\bigbk{\upchi,Y_+(u,z)m}=\bigbk{\upchi,Y'_-(u,z)m}
\end{align}
\end{subequations}
A similar description holds for conformal blocks associated to a disjoint union of several pieces of $\fn$.
\end{rem}

Associate $\boxtimes_\ff(\Wbb)$ to $(\infty,0)$. In particular, $Y_+=Y_1$ is for $\infty$ and $Y_-=Y_2$ is for $0$. Let
\begin{gather}
\begin{gathered}
\MS(\ff\sqcup\fn)\text{ be the sewing of $\ff\sqcup\fn$ along the pair}\\
\text{$(\sgm',\infty)$ with sewing radii $r,1$, and along $(\sgm'',0)$ with sewing radii $\rho,1$}
\end{gathered}
\end{gather}
(See Subsec. \ref{lb9} for more details.) Then $\MS(\ff\sqcup\fn)$ has base manifold $\MD_r\times\MD_\rho$. Moreover, the pullback of $\fx|_{\MD_{r\rho}^\times}$ along the map $(q_1,q_2)\in\MD_r^\times\times\MD_\rho^\times\mapsto q_1q_2\in\MD_{r\rho}^\times$ is canonically equivalent to $\MS(\ff\sqcup\fn)|_{\MD_r^\times\times\MD_\rho^\times}$. In particular, for each $q_1\in\MD_r^\times,q_2\in\MD_\rho^\times$, we have a canonical equivalence of fibers
\begin{align}\label{eq61}
\fx_{q_1q_2}\simeq \MS(\ff\sqcup\fn)_{q_1,q_2}
\end{align}

For each $\upchi\in\ST_\fn^*(\boxtimes_\ff(\Wbb))$, noting that $\gimel:\bbs_\ff(\Wbb)\otimes\Wbb\rightarrow\Cbb$ and $\upchi:\boxtimes_\ff(\Wbb)\rightarrow\Cbb$ are linear functional, define
\begin{subequations}\label{eq63}
\begin{gather}\label{eq63a}
\begin{gathered}
\MS(\gimel\otimes\upchi):\Wbb\rightarrow \Cbb\{q_1,q_2\}[\log q_1,\log q_2]\\[0.5ex]
\bigbk{\MS(\gimel\otimes\upchi),w}= \wick{\sum_{\lambda_1,\lambda_2\in\Cbb}\gimel(P_{\lambda_\blt}\c1 -\otimes w )\cdot \upchi\big(q_1^{L_+(0)}q_2^{L_-(0)}P_{\lambda_\blt}\c1-\big)}
\end{gathered}
\end{gather}
where, for each $\lambda_\blt$, the contraction is taken using a (finite) basis  $(e_{\lambda_\blt}(\alpha))_{\alpha\in\fk A_{\lambda_\blt}}$ of $\bbs_\ff(\Wbb)_{[\lambda_\blt]}$ and its dual basis $(\wch e_{\lambda_\blt}(\alpha))_{\alpha\in\fk A_{\lambda_\blt}}$ in $\boxtimes_\ff(\Wbb)_{[\lambda_\blt]}$. More precisely, 
\begin{align}\label{eq64a}
\bk{\MS(\gimel\otimes\upchi),w}=\sum_{\lambda_1,\lambda_2\in\Cbb}\sum_{\alpha\in\fk A_{\lambda_\blt}}\gimel(\wch e_{\lambda_\blt}(\alpha)\otimes w)\cdot\bigbk{\upchi,q_1^{L_+(0)}q_2^{L_-(0)}e_{\lambda_\blt}(\alpha)}
\end{align}
\end{subequations}
See \cite[Sec. 4.1]{GZ2} for more discussions. 

By \cite[Thm. 4.9]{GZ2}, $\MS(\gimel\otimes\upchi)$ converges a.l.u. on $\MD^\times_r\times\MD^\times_\rho$ in the sense of Def. \ref{lb19}, and for each $p_1\in\MD_r^\times,p_2\in\MD_\rho^\times$ with chosen arguments, the linear functional $\MS(\gimel\otimes\upchi)_{p_1,p_2}:\Wbb\rightarrow\Cbb$ is a conformal block associated to $\fx_{p_1p_2}=\MS(\ff\sqcup\fn)_{p_1,p_2}$ and $\Wbb$. Therefore
\begin{align}\label{eq65}
\MS(\gimel\otimes\upchi)\in H^0\big(\wht\MD_r^\times\times\wht\MD_\rho^\times,\ST^*_{\MS(\ff\sqcup\fn)}(\Wbb)\big)
\end{align}
(Recall \eqref{eq127} for the meanings of $\wht\MD_r^\times$ and $\wht\MD_\rho^\times$.)

\subsubsection{The sewing-factorization theorem}

The goal of this chapter, which will be achieved in Sec. \ref{lb36}, is to prove the following version of the sewing-factorization theorem.

\begin{thm}\label{lb23}
Let $p_1\in\MD_r^\times,p_2\in\MD_\rho^\times$ with fixed $\arg p_1,\arg p_2$. Then
\begin{gather}\label{eq60}
\ST^*_\fn(\boxtimes_\ff(\Wbb))\rightarrow \ST^*_{\MS(\ff\sqcup\fn)_{p_1,p_2}}(\Wbb)\qquad \chi\mapsto \MS(\gimel\otimes\upchi)\big|_{p_1,p_2}
\end{gather}
is a linear isomorphism.
\end{thm}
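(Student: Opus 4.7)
\textbf{Proof plan for Theorem \ref{lb23}.} The plan is to first dispose of injectivity, then spend the bulk of the argument on surjectivity via an analysis of formal $q$-expansions, following the three-step outline sketched in the Introduction.

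For injectivity, suppose $\MS(\gimel\otimes\upchi)|_{p_1,p_2}=0$. By \cite{GZ2}, $\MS(\gimel\otimes\upchi)$ extends to a parallel section of the flat conformal block bundle, so vanishing at one point forces the formal series \eqref{eq64a} to vanish identically in $q_1,q_2$. Comparing coefficients in the expansion gives $\bk{\upchi, q_1^{L_+(0)}q_2^{L_-(0)}e_{\lambda_\blt}(\alpha)}\cdot \gimel(\wch e_{\lambda_\blt}(\alpha)\otimes w)=0$ summed over $\alpha$ for each $w$ and each generalized weight $\lambda_\blt$. The partial injectivity of $\gimel$ (Rem. \ref{lb15}) applied to the dual basis then forces $\upchi$ to vanish on every $\bbs_\ff(\Wbb)_{[\lambda_\blt]}$, so $\upchi=0$.

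For surjectivity, given $\uppsi_{p_1,p_2}\in\ST^*_{\fx_{p_1p_2}}(\Wbb)$, the strategy is to produce $\upchi\in\ST^*_\fn(\boxtimes_\ff(\Wbb))$ term-by-term. First, using the regular-singular flat connection on $\ST^*_\fx(\Wbb)$, I extend $\uppsi_{p_1,p_2}$ to a multivalued flat section $\uppsi$ on $\wht\MD^\times_{r\rho}$ (identified with the locus $q=q_1q_2$). Standard theory of linear ODEs with simple poles gives a formal expansion
\begin{align*}
\uppsi=\sum_{l=0}^L\sum_{n\in E+\Nbb}\uppsi_{n,l}\,q^n(\log q)^l
\end{align*}
with $\uppsi_{n,l}:\Wbb\rightarrow\Cbb$ linear and $E\subset\Cbb$ finite. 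The proof then consists of three steps, each of which exploits residue-action arguments together with the results of Thm.~\ref{geometry1} and Prop.~\ref{geometry10} on global meromorphic sections of $\scr V_\fxtd\otimes\omega_\fxtd$ with prescribed Laurent tails at $\sgm',\sgm''$:
\begin{enumerate}[label=(\arabic*)]
\item Show each $\uppsi_{n,l}\in\bbs_\ff(\Wbb)\subset\Wbb^*$. Concretely, starting from the invariance of $\uppsi$ under the action of $\pi_*(\scr V_\fx\otimes\omega_{\MC/\MB}(\blt \SX))$, extract the coefficient of $q^n(\log q)^l$ using Prop.~\ref{geometry11}, and use Prop.~\ref{geometry10} to realize the required invariance under sections of $\scr V_\fxtd\otimes\omega_\fxtd(\blt Z_\fxtd+a_1\sgm'+a_2\sgm'')$ coming from integrating the $q$-expansion, thereby identifying $\uppsi_{n,l}$ as an element of some $\ST^*_{\ff,a_1,a_2}(\Wbb)$.
\item Show that $\uppsi$, viewed as a linear map $\boxtimes_\ff(\Wbb)\rightarrow\Cbb\{q\}[\log q]$, is invariant under the two commuting $\Vbb$-actions at $\sgm',\sgm''$. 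Again this reduces to computing the action of $Y_\pm(v)_{(k)}$ on the coefficients using global sections of $\SV_\fx\otimes\omega_{\MC/\MB}$ whose truncated $q$-expansions have prescribed Laurent tails at $\sgm',\sgm''$, supplied by Prop.~\ref{geometry10}.
\item Show that $q\partial_q\uppsi$ equals the action of $L_+(0)+L_-(0)$ via the Virasoro element at $\sgm',\sgm''$, by applying the previous mechanism to a meromorphic section representing $q\partial_q$.
\end{enumerate}

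Once these are in place, I define $\upchi:\boxtimes_\ff(\Wbb)\rightarrow\Cbb$ by $\upchi=\sum_{n\in\Cbb}\uppsi_{n,0}$; by Step~(3), only finitely many terms contribute on each homogeneous component, so $\upchi$ is well-defined, and Step~(2) together with Rem.~\ref{lb39} identifies $\upchi$ as an element of $\ST^*_\fn(\boxtimes_\ff(\Wbb))$. Comparing formal expansions and using Step~(3) to reconstruct the $(\log q)^l$ and non-integer $q^n$ contributions from applying $q_1^{L_+(0)}q_2^{L_-(0)}$ to $\upchi$ yields $\MS(\gimel\otimes\upchi)=\uppsi$ as multivalued sections on $\wht\MD_r^\times\times\wht\MD_\rho^\times$; specializing at $(p_1,p_2)$ gives the required preimage. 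The principal obstacle is Step~(1): converting the invariance of $\uppsi$ under $\pi_*(\SV_\fx\otimes\omega_{\MC/\MB}(\blt\SX))$ into the invariance of each formal coefficient $\uppsi_{n,l}$ under the sheaf $\SV_{\ff,a_\star}\otimes\omega_C(\blt Z_\ff)$ on the unsewn surface. This is exactly where the geometric content of Thm.~\ref{geometry1} (realizability of $\hbar$-compatible truncated expansions as genuine global sections over $\pi^{-1}(\MD_\eps)$) and Prop.~\ref{geometry10} (prescribability of Laurent tails) are essential.
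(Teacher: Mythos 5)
Your plan is essentially the paper's own proof: injectivity via flatness of the sewn section together with the partial injectivity of $\gimel$, and surjectivity via the $\kabla$-parallel extension, the logarithmic $q$-expansion, and the same three steps (membership of each $\uppsi_{n,l}$ in $\bbs_\ff(\Wbb)$, invariance under the two $\Vbb$-actions at $\sgm',\sgm''$, and the $q\partial_q$/Virasoro identity), all resting on Thm.~\ref{geometry1} and Prop.~\ref{geometry10} exactly as you indicate. One formula in your Step (3) needs correcting: the identity is $q\partial_q\uppsi=L_+(0)\uppsi=L_-(0)\uppsi$ --- each zero mode \emph{separately} equals $q\partial_q$, because the lift of $q\partial_q$ splits near the node as $a\,\xi\partial_\xi+b\,\varpi\partial_\varpi$ with $a+b=1$ and Step (2) converts the $\sgm''$-contribution into a $\sgm'$-contribution --- not $q\partial_q\uppsi=(L_+(0)+L_-(0))\uppsi$. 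This matters for your final verification, since computing $q_1^{L_+(0)}q_2^{L_-(0)}\uppsi_{n,0}=(q_1q_2)^{L_+(0)}\uppsi_{n,0}$ requires knowing how $L_+(0)$ and $L_-(0)$ act individually on the coefficients, not merely their sum.
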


\begin{rem}
Thm. \ref{lb23} was originally suggested by Kong and Zheng \cite{KZ-conformal-block} in a slightly different form. In their formulation, they consider self-sewing $\wtd\fx$ rather than sewing $\ff$ with $\fn$, and---due to the absence of $\fn$---they interpret elements of $\ST^*_\fn(\boxtimes_\ff(\Wbb))$ as invariant linear functionals on $\boxtimes_\ff(\Wbb)$ rather than as conformal blocks associated to $\boxtimes_\ff(\Wbb)$ and $\fn$. Our formulation, which is based on the disjoint sewing of conformal blocks, allows us to view Theorem \ref{lb23} as a special (yet significant) case of the general sewing-factorization Thm. \ref{lb43}, the latter being more useful in applications.
\end{rem}

In the next chapter, we will need a variant of Thm. \ref{lb23} to prove the general versions of the sewing-factorization theorems. Let
\begin{align}\label{eq89}
\fq=\big(\infty,0;1/\zeta,\zeta\big|\Pbb^1\big|1;\zeta-1\big)
\end{align}
Again, $\zeta$ denotes the standard coordinate of $\Cbb$. Associate $\Vbb$ to the incoming marked point $1$, which gives a dual fusion product $(\bbs_\fq(\Vbb),\aleph)$ where $\aleph:\bbs_\fq(\Vbb)\otimes\Vbb\rightarrow\Cbb$ is the canonical conformal block. Recall that $\idt\in\Vbb$ is the vacuum vector. It is easy to check that
\begin{gather}\label{eq88}
\upomega=\aleph(-\otimes\idt):\bbs_\fq(\Vbb)\rightarrow\Cbb
\end{gather}  
is an element of $\ST^*_\fn(\bbs_\fq(\Vbb))$.

\begin{co}\label{lb32}
Let $p_1\in\MD_r^\times,p_2\in\MD_\rho^\times$. Then the linear map
\begin{gather}\label{eq44}
\ST^*_{\ff}(\boxtimes_\fq(\Vbb)\otimes\Wbb)\rightarrow \ST^*_{\MS(\ff\sqcup\fn)_{p_1,p_2}}(\Wbb)\qquad\upphi\mapsto \MS(\upphi\otimes\upomega)\big|_{p_1,p_2}
\end{gather}
is an isomorphism.
\end{co}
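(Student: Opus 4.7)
The plan is to reduce Corollary~\ref{lb32} to Theorem~\ref{lb23} by constructing a canonical isomorphism $\Psi\colon\ST^*_\fn(\boxtimes_\ff(\Wbb))\xrightarrow{\simeq}\ST^*_\ff(\boxtimes_\fq(\Vbb)\otimes\Wbb)$ such that $\eqref{eq44}\circ\Psi$ agrees with \eqref{eq60}. Since \eqref{eq60} is an isomorphism by Theorem~\ref{lb23}, this forces \eqref{eq44} to be one as well.

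To build $\Psi$, I would compose four natural isomorphisms. First, propagation of the vacuum $\idt$ at the incoming marked point $1$ of $\fq$ (cf.~\cite[Cor.~2.44]{GZ1}) gives $\ST^*_\fn(\boxtimes_\ff(\Wbb))\simeq\ST^*_\fq(\boxtimes_\ff(\Wbb)\otimes\Vbb)$. Second, the universal property of the dual fusion product $(\bbs_\fq(\Vbb),\aleph)$ identifies this with $\Hom_{\Vbb^{\otimes2}}(\boxtimes_\ff(\Wbb),\bbs_\fq(\Vbb))$. Third, taking transposes (which is a bijection because $\bbs_\fq(\Vbb)'=\boxtimes_\fq(\Vbb)$ and $\bbs_\ff(\Wbb)'=\boxtimes_\ff(\Wbb)$) yields $\Hom_{\Vbb^{\otimes2}}(\boxtimes_\fq(\Vbb),\bbs_\ff(\Wbb))$. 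Finally, the universal property of $(\bbs_\ff(\Wbb),\gimel)$ gives $\ST^*_\ff(\boxtimes_\fq(\Vbb)\otimes\Wbb)$. Tracing through: given $\upchi\in\ST^*_\fn(\boxtimes_\ff(\Wbb))$, there is a unique $T\in\Hom_{\Vbb^{\otimes2}}(\boxtimes_\ff(\Wbb),\bbs_\fq(\Vbb))$ satisfying $\upchi(m)=\aleph(T(m)\otimes\idt)=\upomega(T(m))$ for every $m$, and $\Psi(\upchi)=\gimel\circ(T^{\tr}\otimes\id_\Wbb)$ with $T^{\tr}\colon\boxtimes_\fq(\Vbb)\to\bbs_\ff(\Wbb)$.

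The substantive step is to verify the identity
\begin{equation*}
\MS(\gimel\otimes\upchi)\big|_{p_1,p_2}=\MS(\Psi(\upchi)\otimes\upomega)\big|_{p_1,p_2}
\end{equation*}
as elements of $\ST^*_{\MS(\ff\sqcup\fn)_{p_1,p_2}}(\Wbb)$. Expanding both sides by the sewing formula \eqref{eq64a}, and using that $T$ (hence $T^{\tr}$) is $\Vbb^{\otimes2}$-linear and therefore commutes with $q_1^{L_+(0)}q_2^{L_-(0)}$, both expansions split into sums over the bigradings $\lambda=(\lambda_+,\lambda_-)$. Fixing $\lambda$, the termwise identity reduces to the elementary linear-algebraic fact that the matrix of $T^{\tr}$ with respect to a basis of $\boxtimes_\fq(\Vbb)_{[\lambda]}$ and the dual basis of $\bbs_\ff(\Wbb)_{[\lambda]}$ is the transpose of the matrix of $T$ with respect to a basis of $\boxtimes_\ff(\Wbb)_{[\lambda]}$ and the dual basis of $\bbs_\fq(\Vbb)_{[\lambda]}$; after this identification, both sides become the same bilinear expression in $\gimel$, $\upomega$, and the matrix entries of $T$. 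Convergence of both sewings on $\wht\MD_r^\times\times\wht\MD_\rho^\times$ is guaranteed by \cite[Thm.~4.9]{GZ2}, so the verification is purely algebraic.

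The main obstacle is this last bookkeeping step, which requires carefully tracking dual-basis contractions at the two outgoing points simultaneously. No deeper new input beyond Theorem~\ref{lb23} and the propagation of the vacuum is needed; in particular, the sewing here is disjoint, so none of the self-sewing difficulties emphasized in Subsection~\ref{lb56} enter the argument.
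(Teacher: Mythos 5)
Your proposal is correct and is essentially the paper's own argument read in the opposite direction: the paper starts from $\upphi\in\ST^*_\ff(\boxtimes_\fq(\Vbb)\otimes\Wbb)$, writes $\upphi=\gimel\circ(T\otimes\id_\Wbb)$ via the universal property of $\gimel$, uses the transpose identity $\MS(\upphi\otimes\upomega)=\MS(\gimel\otimes(\upomega\circ T^\tr))$, and then invokes Theorem \ref{lb23} together with the universal property of $(\bbs_\fq(\Vbb),\aleph)$ and propagation of conformal blocks — exactly the four isomorphisms you compose to build $\Psi$. Your dual-basis verification of $\MS(\gimel\otimes\upchi)=\MS(\Psi(\upchi)\otimes\upomega)$ supplies the bookkeeping that the paper leaves implicit in the displayed equality $\MS\big((\gimel\circ(T\otimes\id_\Wbb))\otimes\upomega\big)=\MS\big(\gimel\otimes(\upomega\circ T^\tr)\big)$, and is correct.
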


Here, $\MS(\upphi\otimes\upomega)$ is defined in a similar way to \eqref{eq63a}. (See also Def. \ref{lb42}.)

\begin{proof}
By the universal property for $\gimel$ (cf. Def. \ref{lb13}), the linear map
\begin{align*}
\Hom_{\Vbb^{\otimes 2}}(\boxtimes_\fq(\Vbb),\bbs_\ff(\Wbb))\rightarrow \ST^*_{\ff}(\boxtimes_\fq(\Vbb)\otimes\Wbb)\qquad T\mapsto \gimel\circ(T\otimes\id_{\Wbb})
\end{align*}
is an isomorpism. Therefore, to prove that \eqref{eq44} is an isomorphism, it suffices to prove that the linear map
\begin{gather*}
\Hom_{\Vbb^{\otimes 2}}(\boxtimes_\fq(\Vbb),\bbs_\ff(\Wbb))\rightarrow \ST^*_{\MS(\ff\sqcup\fn)_{p_1,p_2}}(\Wbb)\\
T\mapsto \MS((\gimel\circ (T\otimes\id_\Wbb))\otimes\upomega)\big|_{p_1,p_2}=\MS(\gimel\otimes (\upomega\circ T^\tr))
\end{gather*}
is an isomorphism. By Thm. \ref{lb23}, it suffices to prove that
\begin{gather*}
\Hom_{\Vbb^{\otimes 2}}(\boxtimes_\fq(\Vbb),\bbs_\ff(\Wbb))\rightarrow \ST^*_\fn(\boxtimes_\ff(\Wbb))\\
T\mapsto \upomega\circ T^\tr=\aleph \circ (T^\tr\otimes\idt)
\end{gather*}
But this follows from the universal property for $(\bbs_\fq(\Vbb),\aleph)$ (i.e., $T\mapsto \aleph\circ(T^\tr\otimes\id_\Vbb)$ is an isomorphism) and the propagation of conformal blocks, which says that
\begin{gather*}
\ST^*_\fq(\boxtimes_\ff(\Wbb)\otimes\Vbb)\rightarrow\ST^*_\fn(\boxtimes_\ff(\Wbb))\qquad \uplambda\mapsto \uplambda (-\otimes\idt)
\end{gather*}
is a linear isomorphism, cf. \cite[Cor. 2.44]{GZ1}.
\end{proof}

\begin{rem}
Assume that $\Vbb$ is rational, and let $\Irr$ be a set of representatives of isomorphism classes of simple objects of $\Mod(\Vbb)$. Let
\begin{gather*}
\bbs_\fq(\Vbb)=\bigoplus_{\Mbb\in\Irr}\Mbb'\otimes\Mbb\\
\gimel:\bbs_\fq(\Vbb)\otimes\Vbb\rightarrow\Cbb\qquad m'\otimes m\otimes v\mapsto\bk{m',Y(v,1)m}
\end{gather*}
It is easy to show that $(\bbs_\fq(\Vbb),\gimel)$ is a dual fusion product of $\Vbb$ along $\fq$, and $\upomega|_{\Mbb'\otimes\Mbb}$ is the evaluation pairing. Given $p_0\in\MD_{r\rho}^\times$, we write $p_0=p_1p_2$ where $p_1\in\MD_r^\times,p_2\in\MD_\rho^\times$, and recall the isomorphism \eqref{eq61}. Then  Cor. \ref{lb32} asserts that
\begin{gather}\label{eq71}
\bigoplus_{\Mbb\in\Irr}\ST_{\wtd\fx}^*(\Mbb'\otimes\Mbb\otimes\Wbb)\longrightarrow\ST_{\fx_{p_0}}^*(\Wbb)\qquad \oplus_\Mbb\upphi_\Mbb\mapsto \sum_\Mbb \MS\upphi_\Mbb\big|_{p_0}
\end{gather}
is a linear isomorphism, where
\begin{gather*}
\MS\upphi_\Mbb\big|_{p_0}:\Wbb\rightarrow\Cbb\qquad w\mapsto \sum_{\lambda\in\Cbb}\wick{\upphi\big(q^{L(0)}P_\lambda\c1-\otimes P_\lambda\c1-\otimes w\big)}
\end{gather*}
In particular, we obtain the \textbf{factorization formula}
\begin{align}\label{eq72}
\dim\ST_{\fx}^*(\Wbb)\big|_{p_0}=\sum_{\Mbb\in\Irr}\dim\ST_{\wtd\fx}^*(\Mbb'\otimes\Mbb\otimes\Wbb)
\end{align}
originally proved in \cite{DGT2} using algebro-geometric methods. Therefore, the present paper may also be regarded as providing an alternative, complex-analytic proof of the factorization formula for rational $C_2$-cofinite VOAs.
\end{rem}

\subsection{The lift $\widetilde{\mathfrak{y}}$ of $q\partial_q$ and its associated connection $\kabla$}

The main challenge in Thm. \ref{lb23} is proving that the sewing map \eqref{eq60} is surjective. Our strategy is as follows. Let $p_0=p_1p_2$, and choose $\uppsi_{p_0}\in\ST^*_{\MS(\ff\sqcup\fn)_{p_1,p_2}}(\Wbb)$. To show that \eqref{eq60} maps some $\upchi\in\ST^*_\fn(\boxtimes_\ff(\Wbb))$ to $\uppsi_{p_0}$, we first extend $\uppsi_{p_0}$ to a multivalued parallel section $\uppsi$ of the conformal block bundle $\ST^*_\fx(\Wbb)|_{\MD_{r\rho}^\times}$. We then show that $\uppsi$ has a logarithmic $q$-expansion and use the coefficients in this expansion to define $\upchi$.

To implement this strategy, we begin by reviewing the definition of (logarithmic) connections on sheaves of conformal blocks.

Since $\MB=\MD_{r\rho}$ is a Stein manifold, by \cite[Rem. 2.19]{GZ2}, the differential map $d\pi$ in \eqref{eq14} gives rise to a surjective map 
\begin{align}
   d\pi: H^0\big(\MC,\Theta_\MC(-\log \MC_\Delta+\blt S_\fx)\big)\twoheadrightarrow H^0\big(\MC,\pi^*\Theta_\MB(-\log \Delta)(\blt S_\fx)\big)
\end{align}
Therefore, for the element $\yk=q\partial_q$ of $H^0\big(\MB,\Theta_\MB(-\log \Delta)\big)$, we can find a \textbf{lift \pmb{$\wtd\yk$} of \pmb{$q\partial_q$}}, i.e., and element $\wtd \yk\in H^0\big(\MC,\Theta_\MC(-\log \MC_\Delta+\blt S_\fx)\big)$ satisfying $d\pi(\wtd \yk)=\pi^*(q\partial_q)$. (In the following, we abbreviate $\pi^*(q\partial_q)$ to $q\partial_q$ as usual.)

Throughout this chapter, we fix a lift $\wtd \yk$ of $q\partial_q$. A detailed description of $\wtd\yk$ is given in the next subsection.

\subsubsection{The $q$-expansion of the vertical part of $\wtd\yk$}

Let us expand the ``vertical part" of $\wtd\yk$ into a $q$-power series $\sum_{n\in\Nbb}\wtd\yk_n^\perp q^n$ as in Subsec. \ref{lb18}. See the proof of \cite[Thm. 11.4]{Gui-sewingconvergence} or \cite[Thm. 4.9]{GZ2} for more discussions.

Choose any precompact open subset $\wtd U\subset \wtd \MC-Z'_\fxtd$ equipped with a univalent $\eta\in \MO(\wtd U)$. Choose $0<\delta<r\rho$ small enough such that $\wtd U\times \MD_\delta$ can be viewed as an open subset of $\MC$. After extending $\eta$ constantly to a fiberwise univalent function on $U:=\wtd U\times\MD_\delta$, we have
\begin{subequations}\label{eq35}
\begin{align}\label{eq35a}
    \wtd \yk |_U=h(\eta,q)\partial_\eta +q\partial_{q}
\end{align}
where $h\in\mc O\big((\eta,q)(U-Z_\fx)\big)$ has finite poles at $(\eta,q)(Z_\fx)$. Write
\begin{align}\label{eq35b}
h=\sum_{n \in \Nbb} h_{n}(\eta)q^{n}
\end{align}
where each $h_{n}\in \mc O\big(\eta(\wtd U-Z_\fxtd)\big)$ has finite poles at $\eta(Z_\fxtd)$, and set
\begin{align}\label{eq35c}
    \wtd \yk_{n}^\perp =h_{n}(\eta)\partial_\eta\qquad\in H^0(\wtd U-\{\sgm',\sgm''\},\Theta_{\wtd \MC}(\blt Z_\fxtd))
\end{align}
\end{subequations}
It is easy to see that $\yk_{n}^\perp$ is independent of the choice of $\eta$. Hence one obtains $\wtd\yk_n^\perp\in H^0(\wtd\MC-Z'_\fxtd,\Theta_{\wtd \MC}(\blt Z_\fxtd))$ whose local expression is given by \eqref{eq35}.

Let us show that $\wtd\yk_n^\perp$ has finite poles at $Z'_\fxtd$. Recall Subsec. \ref{lb3} for the meanings of $W,W',W''$. Due to \eqref{eq34}, we can write 
\begin{gather*}
      \wtd \yk|_W=a(\xi,\varpi)\xi\partial_\xi+b(\xi,\varpi)\varpi\partial_\varpi
\end{gather*}
where $a,b\in \MO(\MD_r\times \MD_\rho)$. Since  $d\pi(\wtd\yk)=q\partial_q$, by \eqref{eq73}, we have
\begin{align}\label{eq53}
a+b=1
\end{align}
We take power series expansions
\begin{gather*}
a(\xi,\varpi)=\sum_{m,n\in \Nbb}a_{m,n}\xi^m\varpi^n\qquad
b(\xi,\varpi)=\sum_{m,n\in \Nbb}b_{m,n}\xi^m\varpi^n
\end{gather*}
where $a_{m,n},b_{m,n}\in\Cbb$. Under the sets of coordinates $(\xi,q)$ and $(q,\varpi)$ respectively, we can write\begin{subequations}
\begin{gather}
\wtd\yk\big|_{W'}=a(\xi,q/\xi)\xi\partial_\xi+q\partial_q=\sum_{n\geq0,l\geq-n}a_{l+n,n}\xi^{l+1}q^n\partial_\xi+q\partial_q\\
\wtd\yk\big|_{W''}=b(q/\varpi,\varpi)\varpi\partial_\varpi+q\partial_q=\sum_{m\geq0,l\geq-m}b_{m,l+m}\varpi^{l+1}q^m\partial_\varpi+q\partial_q
\end{gather}
\end{subequations}
in the spirit of \eqref{eq35}. Hence
\begin{subequations}\label{eq39}
\begin{gather}
\wtd\yk^\perp_n\big|_{V'-\{\sgm'\}}=\sum_{l\geq-n}a_{l+n,n}\xi^{l+1}\partial_\xi\\
\wtd\yk_n^\perp\big|_{V''-\{\sgm''\}}=\sum_{l\geq-n} b_{n,l+n}\varpi^{l+1}\partial_{\varpi}
\end{gather}
\end{subequations}
This proves that $\wtd\yk^\perp_n\in H^0(\wtd\MC,\Theta_\MCtd(\blt\SXtd))$.

\subsubsection{The connection $\nabla$ and its shifted connection $\nabla^\varkappa$}

For each $1\leq i\leq N$, recall that the local coordinate $\eta_i\in\MO(\wtd U_i)$ at $\sgm_i$ is extended constantly to $\eta_i\in\MO(U_i)$ at $\sgm_i(\MB)$. Therefore, $U_i$ has a set of coordinates $(\eta_i,q)$. By \eqref{eq35a}, we can find $h_i\in \MO\big((\eta_i,q)(U_i-S_\fx)\big)$ such that 
\begin{align}\label{geometry2}
    \wtd \yk|_{U_i}=h^i(\eta_i,q)\partial_{\eta_i}+q\partial_q
\end{align}
where $\eta_i^k h^i(\eta_i,q)$ is holomorphic on $U_i$ for some $k\in \Nbb$. Define 
\begin{gather}\label{eq37}
    \begin{gathered}
    \upnu(\wtd \yk)\in H^0\big(U_1\cup \cdots \cup U_N,\SV_\fx\otimes \omega_{\MC/\MB}(\blt S_\fx)\big)\\
     \MU_\varrho(\eta_i)\upnu(\wtd \yk)\vert_{U_i}=h^i(\eta_i,q)\cbf d\eta_i
    \end{gathered}
    \end{gather}
(Recall that $\cbf$ is the conformal vector of $\Vbb$.) The sheaf map $\nabla_{q\partial_q}:\Wbb\otimes_\Cbb \MO_\MB\rightarrow \Wbb\otimes_\Cbb \MO_\MB$ is defined as follows. For each open $V\subset \MB$ and $w\in \Wbb\otimes_\Cbb \MO(V)$, we set 
\begin{align}\label{eq38}
    \nabla_{q\partial_q} w=q\partial_q w-\upnu(\wtd \yk)\cdot w\qquad\in\Wbb\otimes_\Cbb\MO(V)
\end{align}
where $\upnu(\wtd \yk)\cdot w$ is the residue action of $\upnu(\wtd \yk)$ on $w$, cf. \eqref{eq33}. By \cite[Thm. 2.23]{GZ2},
\begin{align*}
\nabla_{\partial_q}=q^{-1}\nabla_{q\partial_q}
\end{align*}
descends to a sheaf map on $\ST_\fx(\Wbb)|_{\MD_{r\rho}^\times}$. Moreover, if we set $\nabla_{g\partial_q}=g\nabla_{\partial_q}$ for any $g\in\MO_{\MD^\times_{r\rho}}$, then $\nabla$ is a connection on the vector bundle $\ST_\fx(\Wbb)|_{\MD_{r\rho}^\times}$. See \cite[Thm. 2.23]{GZ2} for details.

To describe $\nabla_{q\partial_q}$ in more detail, we define
\begin{gather}
    \upnu(\wtd \yk_{n}^\perp)\in H^0\big(\wtd U_1\cup \cdots \cup \wtd U_N\cup V'\cup V'',\SV_{\wtd\fx}\otimes \omega_{\wtd\MC}(\blt S_{\wtd \fx})\big)
\end{gather}
in a similar way to \eqref{eq37}. Namely, in view of \eqref{eq35c} and \eqref{eq39}, let
\begin{subequations}\label{eq74}
\begin{gather}
    \MU_\varrho(\eta_i)\upnu(\wtd \yk_{n}^\perp)|_{\wtd U_i}=h_n(\eta_i)\cbf d\eta_i\label{eq74a}\\
    \MU_\varrho(\xi)\upnu(\wtd \yk_{n}^\perp)|_{V'}=\sum_{l\geq -n} a_{l+n,n}\xi^{l+1}\cbf d\xi\label{eq74b}\\
    \MU_\varrho(\varpi)\upnu(\wtd \yk_{n}^\perp)|_{V''}=\sum_{l\geq -n} b_{n,l+n}\varpi^{l+1}\cbf d\varpi\label{eq74c}
\end{gather}
\end{subequations}
Thus \eqref{eq38} becomes
\begin{align}\label{geometry3}
    \nabla_{q\partial_q} w=q\partial_qw-\sum_{n\in\Nbb} \upnu(\wtd\yk_n^\perp)w\cdot q^n=q\partial_q w-\sum_{i=1}^N\sum_{n\in\Nbb}\Res_{\eta_i=0}~h_n(\eta_i)Y_i(\cbf,\eta_i)wd\eta_i\cdot q^n
\end{align}

\begin{pp}\label{geometry6}
For each $n\in \Nbb$, there exists $\#(\wtd \yk_{n}^\perp)\in \Cbb$  independent of the choice of $\Wbb$ such that for each $\Mbb\in\Mod(\Vbb^{\otimes 2})$ associated to $\sgm',\sgm''$, the residue action of $\upnu(\wtd \yk_{n}^\perp)$ on $\ST_{\wtd \fx}(\Mbb\otimes \Wbb)$ equals the multiplication by $\#(\wtd \yk_{n}^\perp)$, i.e., for each $w\in\Wbb,m\in\Mbb$ we have
\begin{align}
\upnu(\wtd \yk_{n}^\perp) m\otimes w+m\otimes \upnu(\wtd \yk_{n}^\perp) w=\#(\wtd\yk^\perp_n)\cdot m\otimes w\qquad\text{in }\ST_\fxtd(\Mbb\otimes\Wbb)
\end{align}
Moreover, the following power series converges a.l.u. to an element in $\MO(\MB)$.
    \begin{align}\label{eq51}
\varkappa:=\sum_{n\in \Nbb}\#(\wtd \yk_{n}^\perp) q^n
    \end{align}
\end{pp}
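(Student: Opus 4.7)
The plan is to construct, for each $n\in\Nbb$, a global meromorphic section $\wtd\tau_n$ of $\scr V_\fxtd\otimes\omega_{\wtd\MC}(\blt \SXtd)$ which agrees with $\upnu(\wtd\yk_n^\perp)$ near $\SXtd$ up to a ``vacuum-valued'' correction, and to identify $\#(\wtd\yk_n^\perp)$ as the total residue of that correction.

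Although $\upnu(\wtd\yk_n^\perp)$ is defined in \eqref{eq74} only on neighborhoods of $\SXtd$, the recipe $h_n(\eta)\MU_\varrho(\eta)^{-1}\cbf\,d\eta$ makes sense on any coordinate chart $\eta$ on $\wtd\MC-\SXtd$. By Huang's change-of-coordinate formula, under $\mu=\mu(\eta)$ the element $\MU_\varrho(\eta)^{-1}\cbf$ differs from the analogous $\mu$-expression by a scalar multiple of $\ibf$ involving the Schwarzian derivative $\{\mu;\eta\}$ and the central charge of $\Vbb$. Hence these local sections glue to a well-defined global section $\bar\tau_n$ of the quotient sheaf $(\scr V_\fxtd/\Cbb\ibf)\otimes\omega_{\wtd\MC}(\blt \SXtd)$. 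I would then lift $\bar\tau_n$ to a global section $\wtd\tau_n$ of $\scr V_\fxtd\otimes\omega_{\wtd\MC}(\blt \SXtd)$; the obstruction to the existence of such a lift sits in $H^1\bigl(\wtd\MC,\omega_{\wtd\MC}(\blt \SXtd)\bigr)$, which vanishes once we allow sufficiently high-order poles along $\SXtd$ by Serre vanishing (as in the proof of \cite[Thm.~2.3]{Gui-sewingconvergence}). After any such lift, Prop.~\ref{geometry10} is applied to further adjust the low-order Laurent coefficients of $\wtd\tau_n$ at $\sgm'$ and $\sgm''$ so that the difference $\upnu(\wtd\yk_n^\perp)-\wtd\tau_n$, restricted to a neighborhood of each marked point, takes the form $g\cdot\ibf\otimes d\eta_i$ (respectively $g\cdot\ibf\otimes d\xi$ or $g\cdot\ibf\otimes d\varpi$) for certain meromorphic scalar functions $g$.

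Once this is in place, the scalar character of the residue action is immediate. Since $\wtd\tau_n$ is a global section, its residue action on $\ST_\fxtd(\Mbb\otimes\Wbb)$ vanishes by the conformal block condition. On the other hand $Y(\ibf,z)=\id$, so the residue action of $g\cdot\ibf\otimes d\eta_i$ on any module is multiplication by $\Res_{\eta_i=0}g\,d\eta_i$. Summing these scalars over all marked points $\sgm_1,\dots,\sgm_N,\sgm',\sgm''$ produces a number $\#(\wtd\yk_n^\perp)\in\Cbb$, manifestly independent of $\Mbb$ and $\Wbb$, which realizes the residue action of $\upnu(\wtd\yk_n^\perp)$. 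The a.l.u.\ convergence of $\varkappa=\sum_n\#(\wtd\yk_n^\perp)q^n$ would then be obtained by running exactly the same construction in families: producing a global section $\wtd\tau$ of $\scr V_\fx\otimes\omega_{\MC/\MB}(\blt \SX)$ matching $\upnu(\wtd\yk)$ modulo $\ibf$, so that the residue action of $\upnu(\wtd\yk)-\wtd\tau$ is multiplication by a function in $\MO(\MB)$ whose $q$-expansion is exactly $\varkappa$.

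The main obstacle will be the lifting step. One must simultaneously organize the Laurent-expansion bookkeeping at $\sgm'$ and $\sgm''$, reconciling the explicit data \eqref{eq74b} and \eqref{eq74c} with the scalar ambiguity introduced by the Schwarzian cocycle so that the residual discrepancy lives entirely in the $\ibf$-direction, and allow sufficiently high-order poles at $\SXtd$ to ensure the cohomological vanishing, while keeping the construction compatible with base change to the total family so that the families version of the argument goes through uniformly in $n$.
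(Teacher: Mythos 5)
Your argument for the first part (existence of the scalar $\#(\wtd \yk_{n}^\perp)$) is correct and is essentially the mechanism behind the results the paper cites instead of reproving: \cite[Thm. 2.29]{GZ2} and \cite[Prop. 9.2]{Gui-sewingconvergence} proceed exactly as you describe, gluing the local data $h_n(\eta)\,\MU_\varrho(\eta)^{-1}\cbf\, d\eta$ modulo the vacuum line, lifting via the vanishing of $H^1(\wtd\MC,\omega_{\wtd\MC}(\blt\SXtd))$, and reading off the scalar as the total residue of the $\ibf$-valued discrepancy. (Your appeal to Prop.~\ref{geometry10} is superfluous: near $\sgm',\sgm''$ the discrepancy is automatically $\ibf$-valued, since $\upnu(\wtd\yk_n^\perp)$ and any lift project to the same section of the quotient sheaf.)

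The gap is in your treatment of the convergence of $\varkappa$. A section $\wtd\tau\in H^0\big(\pi^{-1}(\MD_\eps),\SV_\fx\otimes\omega_{\MC/\MB}(\blt S_\fx)\big)$ agreeing with $\upnu(\wtd\yk)$ modulo $\Cbb\ibf$ near $S_\fx$ cannot exist in general. Concretely, the local representatives $h(\eta,q)\cbf\,d\eta$ of the ``vertical part'' of $\wtd\yk$ do not glue modulo $\ibf$: under a change of fiberwise coordinate $\eta\mapsto\mu$ the vertical component changes by $q(\partial_q\mu)\partial_\mu$, and the corresponding term $q(\partial_q\mu)\cbf\,d\mu$ is not vacuum-valued; this non-vacuum cocycle is precisely what makes $\nabla$ nontrivial. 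More decisively: if such a $\wtd\tau$ existed, then for every $w\in\Wbb$ one would get $\upnu(\wtd\yk)\cdot w=\kappa(q)\,w$ in $\ST_\fx(\Wbb)$ with $\kappa\in\MO(\MB)$, hence $\nabla_{q\partial_q}w=q\partial_q w-\kappa(q)w$, and every $\nabla$-parallel (or $\kabla$-parallel) section of $\ST^*_\fx(\Wbb)$ would be a scalar multiple of $e^{-\int\kappa\,dq/q}$ times a constant functional. This contradicts the logarithmic expansion \eqref{eq78} with several exponents (already visible in the rational factorization \eqref{eq71}, where distinct modules with distinct conformal weights contribute). The fiberwise construction is fine for each fixed $n$ because $\wtd\yk_n^\perp$ is an honest global tangent field on the fixed surface $\wtd\MC$, but the pole orders of $\wtd\yk_n^\perp$ at $\sgm',\sgm''$ grow with $n$, so the lifts cannot be summed inside a fixed $\SV^{\leq M}_{\wtd\fx}\otimes\omega_{\wtd\MC}(kS_{\wtd\fx})$ and no uniform bound on the correction residues comes for free. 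The paper avoids all of this by using the first part: since $\#(\wtd\yk_n^\perp)$ is independent of $\Mbb$ and $\Wbb$, it suffices to compute the scalars for $\Mbb=\Vbb\otimes\Vbb$ and $\Wbb=\Vbb^{\otimes N}$, where the a.l.u. convergence of $\sum_n\#(\wtd\yk_n^\perp)q^n$ follows from the already-established convergence of sewing (\cite[Prop. 11.12]{Gui-sewingconvergence}). Some reduction of this kind is the ingredient your proposal is missing.
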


We are mainly interested in the case that $\Mbb=\bbs_\fx(\Wbb)$.

\begin{proof}
The first part (about $\#(\wtd\yk_n^\perp)$) holds more generally when $\wtd\yk_n^\perp$ is replaced by any element of $H^0(\wtd\MC,\Theta_\MCtd(\blt\SXtd))$. Cf. \cite[Thm. 2.29]{GZ2} or  \cite[Prop. 9.2]{Gui-sewingconvergence}. Since $\#(\wtd\yk_n^\perp)$ is independent of the choice of $\Mbb$, to prove that $\varkappa$ converges, one can choose $\Mbb$ to be the tensor product of two objects of $\Mod(\Vbb)$, e.g., $\Mbb=\Vbb\otimes\Vbb$. Similarly, one can choose $\Wbb$ to be $\Vbb^{\otimes N}$. Then the convergence of $\varkappa$ follows from \cite[Prop. 11.12]{Gui-sewingconvergence}. (See also Step 5 of the proof of \cite[Thm. 4.9]{GZ2}.)
\end{proof}

Unfortunately, the sewing of a conformal block is not parallel under $\nabla$. To fix this issue, we consider the \textbf{shifted (logarithmic) connection} \pmb{$\nabla^\varkappa$} on the $\MO_\MB$-module $\ST_\fx(\Wbb)$ defined by
\begin{align}\label{eq52}
\kabla_{q\partial q}w=\nabla_{q\partial q}w+\varkappa\cdot w
\end{align}
for all $w\in \Wbb\otimes_\Cbb\MO_\MB$. When restricted to $\MD_{r\rho}^\times$, the \textbf{dual connection} $\kabla$ on the vector bundle $\ST^*_\fx(\Wbb)|_{\MD^\times_{r\rho}}$ is given by
\begin{gather}\label{eq75}
\bk{\kabla_{q\partial_q}\uppsi,w}=q\partial_q\bk{\uppsi,w}-\bk{\uppsi,\kabla_{q\partial_q}w}
\end{gather}
for all $\uppsi\in \ST^*_\fx(\Wbb)|_{\MD^\times_{r\rho}}$.

\subsection{The parallel section $\uppsi$ and its logarithmic $q$-expansion}

Fix $p_0\in \MB-\Delta=\MD_{r\rho}^\times$ and a conformal block $\uppsi_{p_0}\in \ST_{\fx_q}^*(\Wbb)$. Fix an argument $\arg p_0$. Recall \eqref{eq127} for the meaning of $\wht\MD_{r\rho}$. Then, pulling back the vector bundle $\ST^*_\fx(\Wbb)|_{\MD^\times_{r\rho}}$ and its connection $\kabla$ to $\wht\MD_{r\rho}^\times$, and using the isomorphism \eqref{eq104} in Rem. \ref{lb49}, we obtain a $\kabla$-parallel section $\uppsi\in H^0(\wht\MD_{r\rho}^\times,\ST^*_\fx(\Wbb))$ whose initial value at $p_0$ (with argument $\arg p_0$) is $\uppsi_{p_0}$. Thus, by \eqref{eq75}, for any $w\in\Wbb\otimes_\Cbb\MO_\MB$ we have
\begin{align}\label{eq40}
q\partial_q\bk{\uppsi,w}=\bk{\uppsi,\kabla_{q\partial_q}w} \qquad \uppsi|_{p_0}=\uppsi_{p_0}
\end{align}
where the first relation holds in $\MO_{\wht\MD_{r\rho}^\times}$.

We view $\Wbb$ as a linear subspace of $\Wbb\otimes_\Cbb\MO(\MB)$ by viewing each $w\in\Wbb$ as $w\otimes 1$. Then $\uppsi$ gives a linear map
\begin{align*}
\uppsi:\Wbb\rightarrow\MO(\wht\MD_{r\rho}^\times)\qquad w\mapsto \uppsi(w)\equiv\bk{\upphi,w}
\end{align*}

\begin{lm}\label{lb21}
There exist $L\in\Nbb$, a finite subset $E\subset\Cbb$, and a unique collection of linear functionals
\begin{align*}
\uppsi_{n,l}:\Wbb\rightarrow\Cbb\qquad\text{where }l\in\{0,1,\dots,L\}\text{, }n\in \Cbb\text{, and }\uppsi_{n,l}=0\text{ if }n\notin E+\Nbb
\end{align*}
such that for each $w\in\Wbb$, the element $\uppsi(w)\in\MO(\wht\MD_{r\rho}^\times)$ equals
\begin{align}\label{eq78}
    \uppsi(w)=\sum_{l=0}^L\sum_{n\in \Cbb} \uppsi_{n,l}(w)\cdot q^n (\log q)^l
\end{align}
where the RHS of \eqref{eq78} converges a.l.u. on $\MD^\times_{r\rho}$ in the sense of Def. \ref{lb19}.

\end{lm}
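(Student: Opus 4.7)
The plan is to recognize $\uppsi$ as a $\kabla$-parallel section of a finite rank holomorphic vector bundle equipped with a logarithmic connection at $q=0$, and then invoke the classical Frobenius theorem for linear ODEs at a regular singular point.

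By construction \eqref{eq52}, $\kabla_{q\partial_q}$ is an $\MO_\MB$-linear endomorphism of $\ST_\fx(\Wbb)$ over the whole disk $\MB = \MD_{r\rho}$, that is, $\kabla$ is a logarithmic connection on the finite rank holomorphic vector bundle $\ST_\fx(\Wbb)$ (cf. Def. \ref{lb48}). I would choose a holomorphic frame $e_1, \ldots, e_k$ for $\ST_\fx(\Wbb)|_{\MD_\eps}$ for some $\eps > 0$ and write $\kabla_{q\partial_q} e_j = \sum_i A_{ij}(q) e_i$ with $A_{ij} \in \MO(\MD_\eps)$. Setting $u_i(q) := \bk{\uppsi, e_i}$, the parallel condition \eqref{eq40} translates to the regular singular linear system $q\partial_q u_i = \sum_j A_{ji}(q) u_j$. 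The Frobenius theorem then yields a convergent expansion $u_i(q) = \sum_{l=0}^L \sum_{n \in E + \Nbb} c_{i,n,l}\, q^n (\log q)^l$ on $\wht\MD_\eps^\times$, where $E \subset \Cbb$ is a finite set of representatives (modulo $\Nbb$-shifts) of the spectrum of $A(0)$, and $L+1$ bounds the maximal Jordan block size of $A(0)$. For arbitrary $w \in \Wbb$, expanding its image $[w] \in \ST_\fx(\Wbb)$ as $\sum_j f_j(q) e_j$ with $f_j \in \MO(\MD_\eps)$ gives $\uppsi(w) = \sum_j f_j(q) u_j(q)$, producing the desired expansion \eqref{eq78} with coefficients $\uppsi_{n,l}(w)$ linear in $w$ and with both $L$ and $E$ uniform in $w$.

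To upgrade the a.l.u.\ convergence from $\wht\MD_\eps^\times$ to $\wht\MD_{r\rho}^\times$, I would use that $\ST^*_\fx(\Wbb)|_{\MD_{r\rho}^\times}$ admits a finite-dimensional space of global flat sections on its universal cover, so the local Frobenius ansatz extends uniquely to the whole of $\wht\MD_{r\rho}^\times$. Each coefficient $\uppsi_{n,l}(w)$ can then be extracted from the globally holomorphic function $\uppsi(w) \in \MO(\wht\MD_{r\rho}^\times)$ by linear-algebraic operations involving monodromy around $q=0$ and contour integrals on a fixed annulus, whose estimates are controlled by $\sup$-norms of $\uppsi(w)$ on compact subsets of $\wht\MD_{r\rho}^\times$. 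Uniqueness of the $\uppsi_{n,l}$ is then immediate from Prop. \ref{geometry11}, applied to the difference of two candidate expansions.

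The main obstacle I anticipate is precisely this convergence upgrade from $\wht\MD_\eps^\times$ to the full $\wht\MD_{r\rho}^\times$: while the Frobenius expansion is standard on a small punctured neighborhood of the singular point, extending it to the maximal punctured disk requires some care, though it ultimately reduces to the finite-dimensionality of the space of global flat sections of $\ST^*_\fx(\Wbb)|_{\MD_{r\rho}^\times}$ together with the holomorphy of $\uppsi(w)$ on all of $\wht\MD_{r\rho}^\times$.
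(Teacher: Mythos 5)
Your proposal is correct and follows essentially the same route as the paper: reduce to a finite linear system $q\partial_q u=A(q)u$ with a regular singular point at $q=0$, invoke the Frobenius theory of such systems, extend to all of $\Wbb$ by $\MO_\MB$-linearity, and get uniqueness from Prop.~\ref{geometry11}. The only (cosmetic) difference is that the paper does not pass to a frame of the quotient bundle; it picks finitely many generators of $\SW_\fx(\Wbb)/\mc J$ lying in a truncation $\Wbb_{[\leq\tipar_\blt]}$ (possible because this quotient is a finitely generated $\MO(\MB)$-module) and writes the ODE satisfied by $\uppsi$ on those generators, with coefficient matrix in $\MO(\MD_{r\rho})$. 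The obstacle you anticipate is not actually there: $\MD_{r\rho}$ is a disk, so the bundle $\ST_\fx(\Wbb)$ is trivial over all of it (equivalently, the generators can be chosen globally, as the paper does); hence $A(q)$ is holomorphic on the entire disk and the Frobenius series converges on all of $\MD_{r\rho}^\times$ by the standard theory — no monodromy or contour-integral patching is required. One small imprecision: $\kabla_{q\partial_q}$ is a derivation, not an $\MO_\MB$-linear endomorphism; the point you actually need, which holds by \eqref{geometry3}, is that it maps $\Wbb\otimes_\Cbb\MO_\MB$ into itself without introducing poles at $q=0$, and this is precisely what makes the singularity regular.
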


\begin{proof}
The uniqueness follows from Prop. \ref{geometry11}. Then for each $\tipar_1,\dots,\tipar_N\in\Rbb$, $\uppsi$ restricts to a linear map $\uppsi^{\leq \tipar_\blt}:\Wbb_{[\leq \tipar_\blt]}\rightarrow\MO(\wht\MD^\times_{r\rho})$, i.e., a $\Wbb_{[\leq \tipar_\blt]}$-valued multivalued holomorphic function on $\MD^\times_{r\rho}$. Let
\begin{align*}
\mc J=H^0(\MC,\SV_\fx\otimes\omega_{\MC/\MB}(\blt\SX))\cdot (\Wbb\otimes_\Cbb\MO(\MB))
\end{align*}
where $\Span_\Cbb$ has been suppressed. Then $\mc J$ is an $\MO(\MB)$-submodule of $\SW_\fx(\Wbb):=\Wbb\otimes_\Cbb\MO(\MB)$. By \cite[Thm. 1.16]{GZ2}, the quotient $\MO(\MB)$-module $\SW_\fx(\Wbb)/\mc J$ is finitely generated. Therefore, there exist $\tipar_1,\dots,\tipar_N\in\Rbb$ such that $\SW_\fx(\Wbb)/\mc J$ is $\MO(\MB)$-generated by the elements of $\Wbb_{[\leq \tipar_\blt]}$.  In the following, we fix these $\tipar_\blt$.

Now let $(e_i)_{i\in I}$ be a (finite) basis of $\Wbb_{[\leq \tipar_\blt]}$. Since $\kabla_{q\partial_q}e_i\in\SW_\fx(\Wbb)$ (cf. \eqref{eq38}), there exists a family $(\Omega_{i,j})_{i,j\in I}$ in $\MO(\MB)$ such that
\begin{align*}
\kabla_{q\partial_q}e_i=\sum_{i,j\in I}\Omega_{i,j}e_j\quad\mod\quad\mc J
\end{align*}
for all $i\in I$. Note that $\uppsi$ is a conformal block and hence is vanishing on $\mc J$, cf. \eqref{eq41}. Therefore, by \eqref{eq40}, we have
\begin{align*}
q\partial_q \uppsi^{\leq \tipar_\blt}(e_i)=\sum_{j\in I}\Omega_{i,j}\uppsi^{\leq \tipar_\blt}(e_j)
\end{align*}
for all $i\in I$. By the basic theory of linear differential equations with simple poles (cf. \cite[Thm. 4.7]{Tes-ODE}, for example), we can find $L\in\Nbb$ and a finite set $E\subset\Cbb$ such that
\begin{align}\label{eq42}
\uppsi^{\leq \tipar_\blt}(w)=\sum_{l=0}^L\sum_{n\in E+\Nbb} \uppsi_{n,l}^{\leq \tipar_\blt}(w)\cdot q^n (\log q)^l\qquad\text{for all }w\in\Wbb_{[\leq \tipar_\blt]}
\end{align}
where each $\uppsi_{n,l}^{\leq \tipar_\blt}:\Wbb_{[\leq \tipar_\blt]}\rightarrow\Cbb$ is a linear functional.

Let $(m_j)_{j\in J}$ be a basis of $\Wbb$. For each $j\in J$, choose $w_j\in\Wbb_{[\leq \tipar_\blt]}\otimes_\Cbb\MO(\MB)$ such that $m_j-w_j\in\mc J$. So $\uppsi(m_j)$ equals $\uppsi(w_j)$ as an element of $\MO(\wht\MD_{r\rho}^\times)$. By \eqref{eq42}, we can write
\begin{align*}
\uppsi(w_j)=\sum_{l=0}^L\sum_{n\in E+\Nbb}\lambda_{n,l,j}\cdot q^n(\log q)^l
\end{align*}
where $\lambda_{n,l,j}\in\Cbb$. For each $0\leq l\leq L$ and $n\in E+\Nbb$, define $\uppsi_{n,l}:\Wbb\rightarrow\Cbb$ to be the unique linear functional satisfying $\uppsi_{n,l}(m_j)=\lambda_{n,l,j}$ for all $j\in J$. Then \eqref{eq78} is satisfied.
\end{proof}

\subsection{Important properties of the logarithmic $q$-expansion of $\uppsi$}

Let $(\uppsi_{n,l})$ be as in Lem. \ref{lb21}. From now until the end of this chapter, we view $\bbs_\ff(\Wbb)$ as a linear subspace of $\Wbb^*$ and view $\gimel:\bbs_\ff(\Wbb)\otimes\Wbb\rightarrow\Cbb$ as the restriction of the evaluation pairing $\Wbb^*\otimes\Wbb\rightarrow\Cbb$, cf. Thm. \ref{lb14}. Moreover, we assume, without loss of generality, that the set $E$ in Lem. \ref{lb21} is chosen in such a way that $E+\Nbb$ equals the disjoint union $\bigsqcup_{e\in E}(e+\Nbb)$. 

\begin{lm}\label{lb24}
Let $0<\eps<r\rho$ and $v\in H^0\big(\pi^{-1}(\MD_\eps),\SV_\fx\otimes \omega_{\MC/\MB}(\blt S_\fx)\big)$ with $q$-expansion $v=\sum_{k\in\Nbb}v_kq^k$ (cf. Def. \ref{lb20}). Then for each $w\in\Wbb$, $0\leq l\leq L$, and $m\in\Cbb$, we have
\begin{align}\label{eq77}
    \sum_{k\in\Nbb}\uppsi_{m-k,l}(v_k\cdot w)=0
\end{align}
\end{lm}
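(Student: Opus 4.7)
The plan is to exploit two ingredients: the fact that $\uppsi$, as a parallel section built from a conformal block, vanishes on the residue-action submodule, and the uniqueness of logarithmic $q$-expansions (Prop.~\ref{geometry11}). The strategy is to compute $\bk{\uppsi,v\cdot w}$ in two ways and compare coefficients.

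First, I would observe that since the restriction $\uppsi|_p$ for each $p\in\MD_{r\rho}^\times$ lies in $\ST^*_{\fx_p}(\Wbb)$ (this is how $\uppsi$ was constructed from $\uppsi_{p_0}$), and since the element $v\cdot w$ lies in $\pi_*\bigl(\SV_\fx\otimes\omega_{\MC/\MB}(\blt S_\fx)\bigr)\cdot\SW_\fx(\Wbb)$ when restricted to $\MD_\eps$, the defining invariance \eqref{eq41} forces
\begin{align*}
\bk{\uppsi,v\cdot w}=0\quad\text{in}\quad\MO(\wht\MD_\eps^\times).
\end{align*}

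Next, I would unpack both sides. Since $v\cdot w$ is a section of $\SW_\fx(\Wbb)|_{\MD_\eps}=\Wbb\otimes_\Cbb\MO(\MD_\eps)$, it can be written as a \emph{finite} sum $v\cdot w=\sum_{j\in J}u_j\otimes f_j$ with $u_j\in\Wbb$ (chosen in a fixed finite-dimensional subspace $\Wbb_{[\leq\tipar_\blt]}$, e.g.\ by decomposing along generalized $L_i(0)$-eigenspaces) and $f_j\in\MO(\MD_\eps)$. Expanding $f_j(q)=\sum_{k\in\Nbb}c_{j,k}q^k$ and using that the residue action is $\MO_\MB$-linear (so $v\cdot w=\sum_k(v_k\cdot w)q^k$ as formal series, by the very definition of the $q$-expansion $v=\sum_k v_k q^k$), one reads off $v_k\cdot w=\sum_{j\in J}c_{j,k}u_j$.

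Then I would substitute the logarithmic $q$-expansion \eqref{eq78} of each $\uppsi(u_j)$ and multiply out:
\begin{align*}
0=\bk{\uppsi,v\cdot w}=\sum_{j\in J}\uppsi(u_j)f_j(q)
=\sum_{l=0}^{L}\sum_{m\in\Cbb}\Bigl(\sum_{k\in\Nbb}\uppsi_{m-k,l}(v_k\cdot w)\Bigr)q^m(\log q)^l.
\end{align*}
The rearrangement is legitimate because $|J|<\infty$, and because for each fixed $(m,l)$ only finitely many $k$ contribute: $\uppsi_{m-k,l}\neq 0$ forces $m-k\in E+\Nbb$, which bounds $k$ from above (and $k\in\Nbb$ bounds it from below).

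Finally, I would invoke Prop.~\ref{geometry11}, applied to the resulting series in $q$ on (say) the real interval $(0,\eps)$, to conclude that each coefficient $\sum_{k\in\Nbb}\uppsi_{m-k,l}(v_k\cdot w)$ vanishes, which is exactly \eqref{eq77}. (If $m\notin E+\Nbb$, the identity holds trivially, since then $m-k\notin E+\Nbb$ for every $k\in\Nbb$.) There is no serious obstacle here; the only delicate point is the bookkeeping that ensures the rearrangement of sums is genuinely finite in the relevant indices, and this is handled by the finiteness of $J$ together with the standing hypothesis on the exponents $E$.
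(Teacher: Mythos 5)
Your proposal is correct, and its skeleton coincides with the paper's proof: use that $\uppsi$ vanishes on $v\cdot w$ (since each $\uppsi|_p$ is a conformal block and $v\cdot w$ restricts fiberwise to the residue-action submodule), substitute the two expansions, and kill the coefficients with Prop.~\ref{geometry11}. Where you diverge is precisely at the step the paper itself flags as needing care, namely why the term-by-term substitution of \eqref{eq78} and \eqref{eq76} into $\uppsi(v\cdot w)=0$ is legitimate, given that $\uppsi$ is only defined on $\Wbb\otimes_\Cbb\MO_\MB$ by $\MO_\MB$-linear extension. The paper handles this by packaging the coefficients into $\MO_\MB$-module morphisms $\uplambda_{e,l}$, invoking continuity of such morphisms under locally uniform convergence, and extracting coefficients by contour integrals $\Res_{q=0}(\cdot)q^{-m-1}dq$. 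You instead observe that $v\cdot w$ is literally a \emph{finite} sum $\sum_j u_j\otimes f_j$ in $\Wbb\otimes_\Cbb\MO(\MD_\eps)$ (with the $u_j$ spanning a finite-dimensional subspace, and $\sum_j c_{j,k}u_j=v_k\cdot w$ by \eqref{eq76} and uniqueness of Taylor coefficients), so that $\MO_\MB$-linearity applies verbatim and the rest is a Cauchy product of absolutely convergent series. This is a more elementary route through the same bottleneck, and it is sound: the Cauchy product preserves a.l.u.\ convergence, the exponent set $E+\Nbb$ ensures each coefficient $\sum_k\uppsi_{m-k,l}(v_k\cdot w)$ is a finite sum, and Prop.~\ref{geometry11} then applies exactly as in the paper. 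The one place where you are slightly breezy is the identity $v\cdot w=\sum_k(v_k\cdot w)q^k$, which is not quite "the very definition" of the $q$-expansion but follows from it by commuting $\Res_{\eta_i=0}$ with the a.l.u.\ convergent $q$-series of $v$ near each $U_i$; this is the paper's \eqref{eq76} and is asserted there with the same brevity, so it is not a gap.
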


Recall that $v_k\in H^0\big(\wtd\MC,\scr V_{\wtd\fx}\otimes\omega_{\wtd\MC}(\blt\SXtd)\big)$, and $v_k\cdot w=\sum_{i=1}^N v_k*_iw\in\Wbb$ is the residue action of $v_k$ on $w$ (cf. \eqref{eq33}). Note also that the LHS of \eqref{eq77} is a finite sum, because $\uppsi_{n,l}=0$ when $n\in\Cbb$ and $\Re(n)\ll0$.

\begin{proof}
Step 1. Consider the residue action $v\cdot w\in\Wbb\otimes_\Cbb\MO(\MB)$ defined by \eqref{eq33}. Since $\uppsi$ is a conformal block associated to $\fx|_{\MD_{r\rho}^\times}$, it vanishes at
\begin{align}
v\cdot w=\sum_{k\in\Nbb}(v_k\cdot w)\cdot q^k  \label{eq76}
\end{align}
Here, if we choose $\tipar_1,\dots,\tipar_N\in\Rbb$ such that $w\in\Wbb_{[\leq \tipar_\blt]}$, then the series on the RHS above converges a.l.u. to a $\Wbb_{[\leq \tipar_\blt]}$-valued holomorphic function on $\MD_\eps$.

Roughly speaking, we can obtain \eqref{eq77} by substituting the series \eqref{eq78} and \eqref{eq76} into the equation $\uppsi(v\cdot w)=0$. However, this formal manipulation requires justification, as the evaluation of $\uppsi$ on a section of $\Wbb\otimes_\Cbb\MO_\MB$ is defined in a sheaf-theoretic manner rather than using formal series. (That is, we first define $\uppsi$ on $\Wbb$ by solving a differential equation, and then extend it to $\Wbb\otimes_\Cbb\MO_\MB$ by $\MO_\MB$-linearity.) In the following, we provide a justification for this.\\[-1ex]

Step 2. Note that if $X$ is a complex manifold and $\upphi:\Wbb\otimes_\Cbb\MO_X\rightarrow\MO_X$ is an $\MO_X$-module morphism, then for any sequence  $f_n$ in $\Wbb_{[\leq\tipar_\blt]}\otimes_\Cbb\MO(X)$ (viewed as holomorphic functions $X\rightarrow\Wbb_{[\leq\tipar_\blt]}$) converging locally uniformly to $f\in\Wbb_{[\leq\tipar_\blt]}\otimes_\Cbb\MO(X)$, the sequence $\upphi(f_n)$ (in $\MO(X)$) clearly converges locally uniformly on $X$ to $\upphi(f)$.

Now, for each $0\leq l\leq L$, $e\in E$, let $\uplambda_{e,l}:\Wbb\otimes_\Cbb\MO_\MB\rightarrow\MO_\MB$ be the $\MO_\MB$-module morphism determined by the fact that for each $\fk w\in\Wbb$,
\begin{align*}
\uplambda_{e,l}(\fk w)=\sum_{n\in\Nbb}\uppsi_{e+n,l}(\fk w)q^n
\end{align*}
where the RHS converges a.l.u. on $\MD_{r\rho}$. By the previous paragraph, for each $m\in\Zbb$ we have
\begin{align*}
\Res_{q=0}~\uplambda_{e,l}(v\cdot w)\cdot q^{-m-1}dq=\Res_{q=0}~\sum_{k\in\Nbb}\uplambda_{e,l}(v_k\cdot w)\cdot q^{k-m-1}dq
\end{align*}
By the a.l.u. convergence, the residue on the RHS (viewed as a contour integral) commutes with $\sum_k$. Therefore
\begin{align*}
&\Res_{q=0}~\uplambda_{e,l}(v\cdot w)\cdot q^{-m-1}dq=\sum_{k\in\Nbb}\Res_{q=0}~\uplambda_{e,l}(v_k\cdot w)\cdot q^{k-m-1}dq\\
=&\sum_{k\in\Nbb}\Res_{q=0}~\sum_{n\in\Nbb}\uppsi_{e+n,l}(v_k\cdot w)q^{n+k-m-1}dq=\sum_{k\in\Nbb} \uppsi_{e+m-k,l}(v_k\cdot w)
\end{align*}
where the last term is a finite sum. Therefore
\begin{align*}
\uplambda_{e,l}(v\cdot w)=\sum_{m\in\Nbb} \sum_{k\in\Nbb} \uppsi_{e+m-k,l}(v_k\cdot w)q^m
\end{align*}
Here, the outer sum $\sum_m$ converges a.l.u. on $\MD_\eps$, and the inner sum $\sum_k$ is finite. Since
\begin{align*}
\uppsi=\sum_{0\leq l\leq L}\sum_{e\in E}\uplambda_{e,l}\cdot q^e(\log q)^l
\end{align*}
holds on $\Wbb$ and hence (by $\MO_\MB$-linearity) on $\Wbb\otimes_\Cbb\MO(\MD_\eps^\times)$, we obtain in $\MO(\wht\MD_\eps^\times)$ that
\begin{align*}
\uppsi(v\cdot w)=\sum_{0\leq l\leq L}\sum_{m\in\Cbb}\sum_{k\in\Nbb}\uppsi_{m-k,l}(v_k\cdot w)\cdot q^m(\log q)^l
\end{align*}
Since the $\uppsi(v\cdot w)$ is zero for all $q\in\wht\MD_\eps^\times$, by Prop. \ref{geometry11}, we obtain \eqref{eq77}.
\end{proof}

\begin{lm}\label{lb25}
    For all $n\in \Cbb,0\leq l\leq L$, we have $\uppsi_{n,l}\in \bbs_\ff(\Wbb)$.
\end{lm}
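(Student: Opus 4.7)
My plan is to exhibit natural numbers $a_1,a_2\in\Nbb$ for which $\uppsi_{n,l}$ lies in $\ST^*_{\ff,a_\star}(\Wbb)$, since $\bbs_\ff(\Wbb)=\bigcup_{a_1,a_2\in\Nbb}\ST^*_{\ff,a_\star}(\Wbb)$ by Thm. \ref{lb14}. Equivalently, I must show that $\uppsi_{n,l}$ annihilates the residue action of every $v_0\in H^0(\wtd\MC,\SV_{\ff,a_\star}\otimes\omega_{\wtd\MC}(\blt Z_\ff))$ on $\Wbb$. The strategy is to combine Lem. \ref{lb24} with the extension criterion of Thm. \ref{geometry1}: given such a $v_0$, I shall build an extension $v\in H^0(\pi^{-1}(\MD_\eps),\SV_\fx\otimes\omega_{\MC/\MB}(\blt\SX))$ whose $q$-expansion begins with $v_0$ followed by zeros up to some high order in $q$, and then read off $\uppsi_{n,l}(v_0\cdot w)=0$ from Lem. \ref{lb24}.

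If $n\notin E+\Nbb$ then $\uppsi_{n,l}=0$ and the claim is trivial; otherwise, since $E$ is finite and $n$ is fixed, the set $\{k\in\Nbb:n-k\in E+\Nbb\}$ is finite, and I choose $\hbar\in\Nbb$ strictly larger than its maximum, so that $\uppsi_{n-k,l}=0$ for every integer $k>\hbar$. Set $a_1=a_2=\hbar$. The key claim to verify is that for any $v_0\in H^0(\wtd\MC,\SV_{\ff,a_\star}\otimes\omega_{\wtd\MC}(\blt Z_\ff))$, the polynomial $v_0+0\cdot q+\cdots+0\cdot q^\hbar$ is $\hbar$-compatible in the sense of Def. \ref{lb7}. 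One defines the required family $(c^\alpha_{m,n})$ by letting $c^\alpha_{m,0}$ and $c^\alpha_{0,n}$ record the Laurent coefficients of $v_0$ at $\sgm'$ and $\sgm''$ (with the sign convention of \eqref{eq22}) and setting $c^\alpha_{m,n}=0$ whenever $\min(m,n)\geq 1$. The delicate point is consistency at overlapping positions: at indices $(m,0)$ with $1\leq m\leq\hbar$ the $\sgm''$-expansion at level $m$ contributes $0$ (since $v_m=0$), which matches the $\sgm'$-side precisely because $a_1\geq\hbar$ forces the relevant Laurent coefficient of $v_0$ at $\sgm'$ to vanish; the symmetric argument with $a_2\geq\hbar$ handles $(0,n)$, and the overlaps at $(m,n)$ with $1\leq m,n\leq\hbar$ trivially match $0=0$.

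Granted this, Thm. \ref{geometry1} produces $\eps>0$ and $v\in H^0(\pi^{-1}(\MD_\eps),\SV_\fx\otimes\omega_{\MC/\MB}(\blt\SX))$ with $q$-expansion $v=v_0+\sum_{k>\hbar}v_k q^k$. Applying Lem. \ref{lb24} to $v$ with $m=n$ and arbitrary $w\in\Wbb$ yields $\sum_{k\in\Nbb}\uppsi_{n-k,l}(v_k\cdot w)=0$; the contributions with $1\leq k\leq\hbar$ vanish because $v_k=0$, while those with $k>\hbar$ vanish by the choice of $\hbar$ since $\uppsi_{n-k,l}=0$. Hence $\uppsi_{n,l}(v_0\cdot w)=0$, as required. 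The principal obstacle will be the bookkeeping in the $\hbar$-compatibility verification—tracking which indices $(m,n)$ are governed by the $\sgm'$- versus $\sgm''$-sides of Def. \ref{lb7} and why the vanishing conditions built into $\SV_{\ff,a_\star}$ precisely ensure the needed consistency—but once this is in place, the remainder is a one-line deduction from Lem. \ref{lb24}.
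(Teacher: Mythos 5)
Your proposal is correct and follows essentially the same route as the paper's proof: choose $\hbar$ so large that $\uppsi_{n-k,l}=0$ for $k>\hbar$, verify that $v_0+0\cdot q+\cdots+0\cdot q^\hbar$ is $\hbar$-compatible for $v_0\in H^0(\wtd\MC,\SV_{\ff,\hbar,\hbar}\otimes\omega_{\wtd\MC}(\blt Z_\ff))$ (the vanishing to order $\hbar$ at $\sgm',\sgm''$ built into $\SV_{\ff,\hbar,\hbar}$ giving exactly the needed consistency of the $c^\alpha_{m,0}$ and $c^\alpha_{0,n}$), extend via Thm. \ref{geometry1}, and conclude with Lem. \ref{lb24}. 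The bookkeeping you flag as the delicate point is handled correctly.
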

\begin{proof}
It suffices to assume that $n\in E+\Nbb$. Recall Def. \ref{lb22}. Then, for each $\hbar\in \Nbb$ and $\sigma\in H^0\big(\wtd \MC,\SV_{\ff,\hbar,\hbar}\otimes \omega_{\wtd\MC}(\blt Z_{\wtd \fx})\big)$, we have that $\sigma+0\cdot q+\cdots +0\cdot q^\hbar$ is $\hbar$-compatible, cf. Def. \ref{lb7}. (Indeed, by setting $v_0=\sigma,v_1=\cdots=v_\hbar=0$, we obtain \eqref{eq22} with the family $(c^\alpha_{m,n})$, where the only potentially nonzero terms are $c_{m,0}^\alpha$ for $m>\hbar$ and $c_{0,n}^\alpha$ for $n>\hbar$. In fact, $c^\alpha_{m,0},c^\alpha_{0,n}$ are determined respectively by the Laurent series expansions of $\MU_\varrho(\xi)\sigma,\MU_\varrho(\varpi)\sigma$ near $\sgm',\sgm''$.)

Thus, by Thm. \ref{geometry1}, there exist $0<\eps<r\rho$ and $v\in H^0\big(\pi^{-1}(\MD_\eps),\SV_\fx\otimes \omega_{\MC/\MB}(\blt S_\fx)\big)$ with $q$-expansion $v=\sigma+\star q^{\hbar+1}+\star q^{\hbar+2}+\cdots$. Let us assume, at the beginning of the proof, that $\hbar\in\Nbb$ is large enough such that $n-k\notin E+\Nbb$ for any integer $k>\hbar$. Then Lem. \ref{lb24} implies $\uppsi_{n,l}(\sigma\cdot w)=0$ for each $w\in \Wbb$. So $\uppsi_{n,l}\in \ST_{\ff,\hbar,\hbar}^*(\Wbb)\subset \bbs_\ff(\Wbb)$.
\end{proof}

\begin{pp}\label{lb34}
    Assume that $f(\xi,\varpi)\in \Cbb[[\xi,\varpi]]$ and $0\leq \ell\leq L$. Then, for each $u\in \Vbb$, the following equation holds in $\bbs_\ff(\Wbb)\{q\}$.
    \begin{equation}\label{eq80}
    \begin{aligned}
        &\Res_{\xi=0}\sum_{n\in \Cbb} f(\xi,q/\xi)Y_+(\xi^{L(0)-1}u,\xi)\uppsi_{n,\ell}q^n d\xi\\
        =&\Res_{\varpi=0}\sum_{n\in \Cbb}f(q/\varpi,\varpi)Y_-(\varpi^{L(0)-1}\MU(\upgamma_1)u,\varpi)\uppsi_{n,\ell}q^n d\varpi
    \end{aligned}
\end{equation}
\end{pp}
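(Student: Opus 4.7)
The plan is to apply Lem.~\ref{lb24} to suitably constructed global sections $v \in H^0(\pi^{-1}(\MD_\eps), \SV_\fx \otimes \omega_{\MC/\MB}(\blt \SX))$, and then convert the resulting vanishing into \eqref{eq80} using the crucial fact that each $\uppsi_{n,\ell}$ belongs to $\bbs_\ff(\Wbb)$ (Lem.~\ref{lb25}). Since the canonical conformal block $\gimel$ is invariant under the residue action of any global section of $\SV_\ff \otimes \omega_{\wtd\MC}(\blt Z_\fxtd + \blt Z'_\fxtd)$, the residue action of $v_k$ on $w \in \Wbb$ at the incoming points $\sgm_1,\dots,\sgm_N$ equals, up to a sign, the sum of the residue actions of $v_k$ at $\sgm',\sgm''$ on $\uppsi_{n,\ell}$, evaluated at $w$.

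To implement this, fix $m \in \Cbb$ and aim to verify the equality of the $q^m$-coefficients of the two sides of \eqref{eq80}. The support condition $\uppsi_{n,\ell} = 0$ for $n \notin E+\Nbb$ (Lem.~\ref{lb21}) together with the grading-restriction of $\bbs_\ff(\Wbb)$ applied to $u \in \Vbb^{\leq M}$ ensures that each such $q^m$-coefficient is a finite sum in $\bbs_\ff(\Wbb)$. Choose integers $\hbar \leq Q$ large enough that every index pair $(m_0,n_0)$ contributing to either side lies in $\Nbb^2_{\leq Q} \setminus \Nbb^2_{>\hbar}$, and moreover such that $\uppsi_{m-k,\ell} = 0$ for all $k > \hbar$. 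Expanding $f(\xi,\varpi) = \sum_{m_0,n_0}f_{m_0,n_0}\xi^{m_0}\varpi^{n_0}$ and choosing a basis of $\Vbb^{\leq M}$ containing $u$, Prop.~\ref{geometry10} produces $v_0,\dots,v_\hbar \in H^0(\wtd\MC,\SV_\fxtd^{\leq M}\otimes\omega_{\wtd\MC}(\blt\SXtd))$ whose Laurent expansions at $\sgm',\sgm''$ agree with those of $f(\xi,q/\xi)\xi^{L(0)}u\cdot d\xi/\xi$ and $-f(q/\varpi,\varpi)\varpi^{L(0)}\MU(\upgamma_1)u\cdot d\varpi/\varpi$, modulo $\xi^{Q+M}$ and $\varpi^{Q+M}$ respectively. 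By Prop.~\ref{lb33}, the series $v_0 + v_1 q + \cdots + v_\hbar q^\hbar$ is $\hbar$-compatible; so by Thm.~\ref{geometry1}, it is the $\hbar$-truncated $q$-expansion of a genuine $v \in H^0(\pi^{-1}(\MD_\eps),\SV_\fx\otimes\omega_{\MC/\MB}(\blt\SX))$ for some $\eps > 0$.

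Lem.~\ref{lb24} now yields $\sum_{k\in\Nbb}\uppsi_{m-k,\ell}(v_k\cdot w) = 0$ for every $w\in\Wbb$; by the choice of $\hbar$, the sum effectively runs over $k \leq \hbar$ and uses only the prescribed $v_k$, the higher-order terms in the lift being annihilated by the vanishing of $\uppsi_{m-k,\ell}$ for $k > \hbar$. Since $\uppsi_{m-k,\ell}\in\bbs_\ff(\Wbb)$ and $\gimel$ is a conformal block on $\ff$, each term satisfies $\uppsi_{m-k,\ell}(v_k\cdot w) = -\bigbk{v_k *_{\sgm'}\uppsi_{m-k,\ell} + v_k *_{\sgm''}\uppsi_{m-k,\ell},\ w}$, where $*_{\sgm'}$ and $*_{\sgm''}$ denote the residue actions at $\sgm'$ (via $Y_+$) and $\sgm''$ (via $Y_-$). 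Invoking the partial injectivity of $\gimel$ (Rem.~\ref{lb15}) yields
\begin{align*}
\sum_{k=0}^{\hbar}v_k *_{\sgm'}\uppsi_{m-k,\ell} + \sum_{k=0}^{\hbar}v_k *_{\sgm''}\uppsi_{m-k,\ell} = 0 \quad \text{in}~\bbs_\ff(\Wbb).
\end{align*}
A direct computation using the Laurent expansions \eqref{eq22} then identifies the first sum with the $q^m$-coefficient of the LHS of \eqref{eq80}, and the second with the negative of the $q^m$-coefficient of the RHS; since $m$ was arbitrary, \eqref{eq80} follows. The main obstacle will be the bookkeeping in this last identification: one must verify the consistency of all signs (including the minus in \eqref{geometry8}), the $\MU(\upgamma_1)$ factor, and the index shifts, and confirm that the truncation errors modulo $\xi^{Q+M}, \varpi^{Q+M}$ annihilate every relevant $\uppsi_{m-k,\ell}$ via the grading-restriction of $\bbs_\ff(\Wbb)$ for sufficiently large $Q$.
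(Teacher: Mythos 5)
Your proposal is correct and follows essentially the same route as the paper's proof: both use Prop.~\ref{geometry10}, Prop.~\ref{lb33} and Thm.~\ref{geometry1} to realize the truncated coefficients of $f$ as the $\hbar$-truncated $q$-expansion of a global section $v$, then combine Lem.~\ref{lb24} with the invariance of $\gimel$ under the residue action of each $v_k$ to transfer the vanishing to the residue actions at $\sgm',\sgm''$ on the $\uppsi_{n,\ell}$. The only difference is organizational (you work on a fixed $q^m$-coefficient, the paper works modulo $\mbb J_\hbar$), and your handling of the truncation error via lower-truncation on $\bbs_\ff(\Wbb)$ matches the paper's condition \eqref{eq85}.
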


This proposition can be viewed as an inverse of \cite[Prop. 4.6]{GZ2}. Thus, \cite[Rem. 4.7]{GZ2} can also help the reader understand the two residues in \eqref{eq80}.

\begin{proof}

It suffices to prove that for each $\hbar\in\Nbb$, Eq. \eqref{eq80} holds true mod $\mbb J_\hbar$, where
\begin{align*}
\mbb J_\hbar=\sum_{e\in E}q^{e+\hbar+1}\bbs_\ff(\Wbb)[[q]]
\end{align*}
Fix $\hbar\in\Nbb$. Write $D=E+\Nbb$ and $D_\hbar=E+\Nbb_{\leq\hbar}$.  Write 
    \begin{align*}
        f(\xi,\varpi)=\sum_{m,n=0}^\infty c_{m,n}\xi^m\varpi^n
    \end{align*}    
where each $c_{m,n}\in\Cbb$. Then, the LHS of \eqref{eq80},  which equals
\begin{align*}
  &\Res_{\xi=0}\sum_{n\in D} \sum_{k=0}^{+\infty} \sum_{t=-k}^{+\infty}c_{t+k,k}Y_+(\xi^{t+L(0)-1}u,\xi)\uppsi_{n,\ell}q^{k+n}d\xi
\end{align*}
is mod $\mbb J_\hbar$ equal to
\begin{align}\label{eq46}
  &\Res_{\xi=0}\sum_{n\in D_\hbar} \sum_{k=0}^\hbar \sum_{t=-k}^{+\infty}c_{t+k,k} Y_+(\xi^{t+L(0)-1}u,\xi)\uppsi_{n,\ell}q^{k+n}d\xi
\end{align}

Choose $Q\in\Nbb_{\geq\hbar}$ such that
\begin{align}\label{eq85}
\Res_{z=0}~Y_\pm(z^{Q-\hbar+L(0)+j}\fk v,z)\uppsi_{n,\ell}dz=0\qquad \forall n\in D_\hbar,0\leq \ell\leq L,\fk v\in\Vbb^{\leq M},j\in\Nbb
\end{align}
By Prop. \ref{geometry10}, we can find $v_0,\dots,v_\hbar\in H^0\big(\wtd \MC,\SV_{\wtd\fx}\otimes \omega_{\wtd \MC}(\blt S_{\wtd \fx})\big)$ satisfying \eqref{eq45}. Combining \eqref{eq45} (or \eqref{eq84}) with \eqref{eq85}, we see that
\begin{align*}
\eqref{eq46}=\sum_{n\in D_\hbar} \sum_{k=0}^\hbar\Res_{\xi=0}~ Y_+(\MU_\varrho(\xi)v_k,\xi)\uppsi_{n,\ell}q^{k+n}
\end{align*}
So the LHS of \eqref{eq80} is mod $\mbb J_\hbar$ equal to the RHS of the above equation. By a similar argument, the RHS of \eqref{eq80} is mod $\mbb J_\hbar$ equal to
\begin{align*}
-\sum_{n\in D_\hbar}\sum_{k=0}^\hbar \Res_{\varpi=0}~Y_-(\MU_\varrho(\varpi)v_k,\varpi)\uppsi_{n,\ell}q^{k+n}
\end{align*}

Recall that the evaluation pairing $\bk{\cdot,\cdot}:\bbs_\ff(\Wbb)\otimes \Wbb\rightarrow\Cbb$ is just the canonical conformal block $\gimel\in\ST^*_\ff(\bbs_\ff(\Wbb)\otimes\Wbb)$. So it vanishes on the residue action $v_k\cdot(\uppsi_{n,\ell}\otimes w)$ for each $w\in \Wbb$. In other words,
   \begin{align*}
    \Res_{\xi=0}\bk{Y_+(\MU_\varrho(\xi)v_k,\xi)\uppsi_{n,\ell},w}+\Res_{\varpi=0}\bk{Y_-(\MU_\varrho(\varpi)v_k,\varpi)\uppsi_{n,\ell},w}=-\uppsi_{n,\ell}(v_k\cdot w)
   \end{align*}
Therefore, when evaluated with $w$, the LHS minus the RHS of \eqref{eq80} is mod $\mbb J_\hbar$ equal to
\begin{align}\label{eq47}
-\sum_{n\in D_\hbar}\sum_{k=0}^\hbar \uppsi_{n,\ell}(v_k\cdot w)q^{k+n}
\end{align}
By Thm. \ref{geometry1} and Prop. \ref{lb33}, there exist $0<\eps<r\rho$ and $v\in H^0\big(\pi^{-1}(\MD_\eps),\SV_\fx\otimes \omega_{\MC/\MB}(\blt S_\fx)\big)$ whose $\hbar$-truncated $q$-expansion is $v_0+v_1q+\cdots+v_\hbar q^\hbar$. Write $v=\sum_{k=0}^\infty v_kq^k$. Then, by Lem. \ref{lb24}, we have
\begin{align*}
\sum_{n\in D}\sum_{k=0}^{+\infty}\uppsi_{n,\ell}(v_k\cdot w)q^{k+n}=0
\end{align*}
Therefore \eqref{eq47} equals zero mod $\mbb J_\hbar$. This proves that \eqref{eq80} holds mod $\mbb J_\hbar$.
\end{proof}

Recall \eqref{eq50} for the meaning of $Y_\pm(u)_{(k)}$ and $Y'_\pm(u)_{(k)}$.

\begin{co}\label{lb29}
    For each $u\in \Vbb,k\in \Zbb,0\leq l\leq L$, the following equation holds in $\bbs_\ff(\Wbb)\{q\}$.
    \begin{align}\label{eq79}
        \sum_{n\in \Cbb} Y_+(u)_{(k)}\uppsi_{n,l}q^n =\sum_{n\in \Cbb}Y_-'(u)_{(k)}\uppsi_{n,l}q^{n+k}
    \end{align}
    In particular, we have $Y_+(u)_{(k)}\uppsi_{n,l}=Y_-'(u)_{(k)}\uppsi_{n-k,l}$ and $L_+(k)\uppsi_{n,l}=L_-(-k)\uppsi_{n-k,l}$.
\end{co}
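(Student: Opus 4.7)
The plan is to derive \eqref{eq79} as a specialization of Proposition~\ref{lb34}, taking $f(\xi,\varpi) = \xi^a\varpi^b$ with $a,b\in\Nbb$ chosen so that $a - b = k$ (for instance, $a=\max(k,0)$ and $b=\max(-k,0)$). Then $f(\xi,q/\xi) = \xi^k q^b$ and $f(q/\varpi,\varpi) = q^a \varpi^{-k}$, so the two sides of \eqref{eq80} simplify, by absorbing $\xi^k$ or $\varpi^{-k}$ into the argument of $Y_\pm$ and applying the definition \eqref{eq50} of $Y_i(v)_{(k)}$, to
\begin{align*}
\sum_{n\in\Cbb} q^{n+b}\,Y_+(u)_{(k)}\uppsi_{n,\ell}\qquad\text{and}\qquad\sum_{n\in\Cbb} q^{n+a}\,Y_-(\MU(\upgamma_1)u)_{(-k)}\uppsi_{n,\ell}
\end{align*}
respectively.

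The next step is the formal identity $Y_-'(u)_{(k)} = Y_-(\MU(\upgamma_1)u)_{(-k)}$ (valid for all $u\in\Vbb$ and $k\in\Zbb$), which follows by a direct unfolding of $Y_-'(u,z) = Y_-(\MU(\upgamma_z)u,z^{-1})$ using $\MU(\upgamma_z) = e^{zL(1)}(-z^{-2})^{L(0)}$: taking $\Res_{z=0}\, z^{k+L(0)-1}\,dz$ causes the $z$-dependence of $\MU(\upgamma_z)$ to absorb exactly against the residue, producing the same modes as $\Res_{\varpi=0}\,\varpi^{-k+L(0)-1}\,Y_-(\MU(\upgamma_1)u,\varpi)\,d\varpi$. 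Combining this identity with the previous display and dividing both sides by $q^b$ (legitimate since $a-b=k$) yields \eqref{eq79} in $\bbs_\ff(\Wbb)\{q\}$.

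Finally, extracting the coefficient of $q^n$ in \eqref{eq79}---valid because elements of $\bbs_\ff(\Wbb)\{q\}$ admit unique expansions in complex powers of $q$---gives $Y_+(u)_{(k)}\uppsi_{n,l}=Y_-'(u)_{(k)}\uppsi_{n-k,l}$. For the Virasoro version, specialize to $u=\cbf$: since $L(1)\cbf=0$ one has $\MU(\upgamma_1)\cbf=\cbf$, so the identity of the previous paragraph degenerates to $Y_-'(\cbf)_{(k)} = Y_-(\cbf)_{(-k)}$, and translating $Y_\pm(\cbf)_{(k)}$ into Virasoro modes produces $L_+(k)\uppsi_{n,l}=L_-(-k)\uppsi_{n-k,l}$. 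Given Proposition~\ref{lb34}, no genuine obstacle remains; the main content of the argument lies upstream in Prop.~\ref{lb34} itself, and the only minor subtlety here is the residue computation establishing $Y_-'(u)_{(k)} = Y_-(\MU(\upgamma_1)u)_{(-k)}$.
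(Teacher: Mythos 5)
Your proposal is correct and is essentially the paper's own argument: the paper likewise specializes $f$ in Prop.~\ref{lb34} to the monomial $\xi^k$ (for $k\geq 0$) or $\varpi^{-k}$ (for $k<0$), i.e.\ exactly your $\xi^{\max(k,0)}\varpi^{\max(-k,0)}$, and the mode identity $Y_-'(u)_{(k)}=Y_-(\MU(\upgamma_1)u)_{(-k)}$ you verify is precisely the paper's change of variable $z=\varpi^{-1}$ together with $\MU(\upgamma_z)z^{L(0)}=z^{-L(0)}\MU(\upgamma_1)$, merely applied after rather than before specializing $f$.
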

\begin{proof}
    Recall that $\MU(\upgamma_z)z^{L(0)}=z^{-L(0)}\MU(\upgamma_1)$. Setting $z=\varpi^{-1}$ and using the identity $\Res_{\varpi=0}g(\varpi)d\varpi=-\Res_{z=0}g(z^{-1})d(z^{-1})$ for any formal Laurent series $g(\varpi)$, we find that \eqref{eq80} becomes 
    \begin{equation}\label{eq81}
        \begin{aligned}
            &\Res_{\xi=0}\sum_{n\in \Cbb} f(\xi,q/\xi)Y_+(\xi^{L(0)-1}u,\xi)\uppsi_{n,l}q^n d\xi\\
            =&\Res_{z=0}\sum_{n\in \Cbb}f(qz,z^{-1})Y_-'(z^{L(0)-1}u,z)\uppsi_{n,l}q^n dz
        \end{aligned}
    \end{equation}
    When $k\geq 0$ (resp. $k<0$), we choose $f(\xi,\varpi)=\xi^k$ (resp. $\varpi^{-k}$). Then \eqref{eq81} becomes 
    \begin{align*}
        &\sum_{n\in \Cbb} Y_+(u)_{(k)}\uppsi_{n,l}q^n =\sum_{n\in \Cbb}Y_-'(u)_{(k)}\uppsi_{n,l}q^{n+k}\\
        \text{resp. }&\sum_{n\in \Cbb} Y_+(u)_{(k)}\uppsi_{n,l}q^{n-k} =\sum_{n\in \Cbb}Y_-'(u)_{(k)}\uppsi_{n,l}q^{n}
    \end{align*}
    So \eqref{eq79} is true. The rest of our corollary is obvious.
\end{proof}

\begin{pp}\label{lb27}
For each $0\leq l\leq L$, the following relations hold in $\bbs_\ff(\Wbb)\{q\}[\log q]$.
    \begin{align}\label{eq57}
        q\partial_q \uppsi=\sum_{l=0}^L\sum_{n\in \Cbb}L_+(0)\uppsi_{n,l}q^n (\log q)^l=\sum_{l=0}^L\sum_{n\in \Cbb}L_-(0)\uppsi_{n,l}q^n (\log q)^l
    \end{align}
In other words, we have $q\partial_q\uppsi=L_+(0)\uppsi=L_-(0)\uppsi$.
\end{pp}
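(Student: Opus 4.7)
My strategy is to extract the identity termwise by combining the parallel transport equation $\kabla_{q\partial_q}\uppsi = 0$ with the Schwarzian identity of Prop. \ref{geometry6}.

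First, treating $w\in\Wbb$ as $q$-independent, so that $q\partial_q w = 0$, the parallel equation \eqref{eq75} together with \eqref{eq52}, \eqref{geometry3}, \eqref{eq51}, and the log-expansion of Lem. \ref{lb21} yields, upon matching the coefficient of $q^N(\log q)^\ell$, a master relation
\begin{align*}
N\uppsi_{N,\ell}(w) + (\ell+1)\uppsi_{N,\ell+1}(w) = -\sum_{m+n=N}\uppsi_{m,\ell}(\upnu(\wtd\yk_n^\perp)w) + \sum_{m+n=N}\#(\wtd\yk_n^\perp)\uppsi_{m,\ell}(w).
\end{align*}

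Next, using Lem. \ref{lb25} (so each $\uppsi_{m,\ell}$ lies in $\bbs_\ff(\Wbb)$), I would apply Prop. \ref{geometry6} with $\Mbb=\bbs_\ff(\Wbb)$ to the canonical conformal block $\gimel$; note that $\gimel\in\ST^*_\ff(\bbs_\ff(\Wbb)\otimes\Wbb)$ also belongs to $\ST^*_{\wtd\fx}(\bbs_\ff(\Wbb)\otimes\Wbb)$, since both spaces enforce invariance under $H^0(C,\SV_\ff\otimes\omega_C(\blt Z_\ff+\blt Z'_\ff))$. Combined with the explicit Laurent expansions \eqref{eq74b}--\eqref{eq74c} of $\upnu(\wtd\yk_n^\perp)$ at $\sgm',\sgm''$, this gives
\begin{align*}
\langle\uppsi_{m,\ell},\upnu(\wtd\yk_n^\perp)w\rangle &= \#(\wtd\yk_n^\perp)\langle\uppsi_{m,\ell},w\rangle \\
&\quad - \sum_{k\geq 0}a_{k,n}\langle L_+(k-n)\uppsi_{m,\ell},w\rangle - \sum_{k\geq 0}b_{n,k}\langle L_-(k-n)\uppsi_{m,\ell},w\rangle.
\end{align*}
Substituting back into the master relation, the Schwarzian contributions $\#(\wtd\yk_n^\perp)$ cancel exactly, leaving a pure Virasoro double sum on the right.

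Finally, I would split this double sum by whether $(k,n)=(0,0)$ or not. The diagonal term yields $(a_{0,0}+b_{0,0})L_+(0)\uppsi_{N,\ell}=L_+(0)\uppsi_{N,\ell}$, via $a+b=1$ (Eq. \eqref{eq53}) together with $L_+(0)\uppsi_{N,\ell}=L_-(0)\uppsi_{N,\ell}$ (Cor. \ref{lb29} at $k=0$). For the off-diagonal contributions, I would use $b_{k,n}=-a_{k,n}$ (from \eqref{eq53} for $(k,n)\neq(0,0)$) together with Cor. \ref{lb29}---specifically the index-shift identity $L_-(k-n)\uppsi_{N-n,\ell}=L_+(n-k)\uppsi_{N-k,\ell}$---and then relabel $k\leftrightarrow n$ in the transformed $b$-sum; the $a$-sum and relabeled $b$-sum cancel pairwise. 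This yields $N\uppsi_{N,\ell}+(\ell+1)\uppsi_{N,\ell+1}=L_+(0)\uppsi_{N,\ell}$, which is precisely the coefficient of $q^N(\log q)^\ell$ in $q\partial_q\uppsi=L_+(0)\uppsi$. The second equality $L_+(0)\uppsi=L_-(0)\uppsi$ follows immediately from Cor. \ref{lb29} applied termwise at $k=0$. The principal obstacle is the index bookkeeping in the off-diagonal cancellation: the coefficients $a_{k,n}$ and $b_{n,k}$ carry swapped indices at the two sewing points, so the cancellation only works after simultaneously invoking Cor. \ref{lb29} and a symmetrization via dummy-index relabeling.
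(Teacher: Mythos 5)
Your proposal is correct and follows essentially the same route as the paper's proof: both combine the parallel-transport equation with Prop.~\ref{geometry6} applied to the canonical conformal block $\gimel$ to trade the residue action on $w$ for one on $\uppsi_{n,l}$, cancel the $\varkappa$-terms, and then use $a+b=1$ together with the $+/-$ exchange identity to collapse the Virasoro contributions to $L_+(0)$. The only difference is presentational: the paper works with generating functions and invokes Prop.~\ref{lb34} directly to convert the $\varpi$-residue into a $\xi$-residue, whereas you extract coefficients of $q^N(\log q)^\ell$ and perform the equivalent index-shift/relabeling via Cor.~\ref{lb29}, so your ``off-diagonal bookkeeping'' is just the modewise shadow of the paper's cleaner residue identity.
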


\begin{proof}
    The second equality of \eqref{eq57} follows from Cor. \ref{lb29}. Choose any $w\in \Wbb$. By Prop. \ref{geometry6}, for each $n\in \Cbb,0\leq l\leq L,m\in\Nbb$, we have in $\ST_{\wtd\fx}(\bbs_\fxtd(\Wbb)\otimes\Wbb)$ that
    \begin{align*}
\upnu(\wtd\yk_m^\perp)\uppsi_{n,l}\otimes w+\uppsi_{n,l}\otimes \upnu(\wtd \yk_m^\perp)w=\#(\wtd\yk_m^\perp)\cdot \uppsi_{n,l}\otimes w
    \end{align*}
Applying the canonical conformal block $\gimel:\Wbb\otimes \bbs_\ff(\Wbb)\rightarrow \Cbb$ (i.e., the evaluation pairing) to the above equation yields
    \begin{align*}
\bk{\upnu(\wtd\yk_m^\perp)\uppsi_{n,l},w}+\bk{\uppsi_{n,l},\upnu(\wtd \yk_m^\perp)w}=\#(\wtd\yk_m^\perp)\cdot\uppsi_{n,l}(w)
    \end{align*}
Hence, by \eqref{eq51}, we have in $\Cbb[[q]]$ that
\begin{align*}
\sum_{m\in\Nbb}\bk{\upnu(\wtd\yk_m^\perp)\uppsi_{n,l},w}q^m+\sum_{m\in\Nbb}\bk{\uppsi_{n,l},\upnu(\wtd \yk_m^\perp)w}q^m=\varkappa\bk{\uppsi_{n,l},w}
\end{align*}
It follows that in $\Cbb\{q\}[\log q]$ we have
\begin{align}\label{eq62}
\sum_{m\in\Nbb}\bk{\upnu(\wtd\yk_m^\perp)\uppsi,w}q^m+\sum_{m\in\Nbb}\bk{\uppsi,\upnu(\wtd \yk_m^\perp)w}q^m=\varkappa\bk{\uppsi,w}
\end{align}

On the other hand, by \eqref{eq40} and \eqref{eq52}, we have in $\MO(\wht\MD_{r\rho}^\times)$ that
\begin{align*}
q\partial_q\bk{\uppsi,w}=\bk{\uppsi,\nabla_{q\partial_q}w+\varkappa w}
\end{align*}
Applying \eqref{geometry3} and using an argument similar to that in the proof of Lem. \ref{lb24} to transition from the sheaf-theoretic equation to the formal equation, we obtain the following equation in $\Cbb\{q\}[\log q]$.
\begin{align*}
q\partial_q\bk{\uppsi,w}=-\sum_{m\in\Nbb}\bk{\uppsi,\upnu(\wtd\yk_m^\perp)w}q^m+\varkappa\bk{\uppsi,w}
\end{align*}
Combining this equation with \eqref{eq62}, we get $q\partial_q\bk{\uppsi,w}=\sum_{m\in\Nbb}\bk{\upnu(\wtd\yk_m^\perp)\uppsi,w}q^m$. Therefore
\begin{align}\label{eq54}
q\partial_q\uppsi=\sum_{m\in\Nbb}\upnu(\wtd\yk_m^\perp)\uppsi\cdot q^m=\sum_{m\in\Nbb}\upnu(\wtd\yk_m^\perp)_+\uppsi\cdot q^m+\sum_{m\in\Nbb}\upnu(\wtd\yk_m^\perp)_-\uppsi\cdot q^m
\end{align}
holds in $\bbs_\ff(\Wbb)\{q\}[\log q]$, where $\upnu(\wtd\yk_m^\perp)_+\uppsi=\upnu(\wtd\yk_m^\perp)*_1\uppsi$ and $\upnu(\wtd\yk_m^\perp)_-\uppsi=\upnu(\wtd\yk_m^\perp)*_2\uppsi$, cf. \eqref{eq33} for the meaning of $i$-th residue action $*_i$.

Let us compute the RHS of \eqref{eq54} using an argument similar to that in the proof of \cite[Lem. 11.8]{Gui-sewingconvergence}. Due to \eqref{eq74}, we have
\begin{gather*}
\upnu(\wtd\yk_m^\perp)_+\uppsi=\sum_{t\geq -m}\Res_{\xi=0}~ a_{t+m,m}\xi^{t+1}Y_+(\cbf,\xi)\uppsi d\xi
\end{gather*}
where the RHS is a finite sum. Hence, in $\bbs_\ff(\Wbb)\{q\}[\log q]$ we have
\begin{align}\label{eq55}
\begin{aligned}
&\sum_{m\in\Nbb}\upnu(\wtd\yk_m^\perp)_+\uppsi\cdot q^m=\sum_{m\in\Nbb}\sum_{t\geq -m}\Res_{\xi=0}~ a_{t+m,m}\xi^{t+1}q^mY_+(\cbf,\xi)\uppsi d\xi\\
=&\Res_{\xi=0} ~a(\xi,q/\xi)Y_+(\xi^{L_0}\cbf,\xi)\uppsi\frac{d\xi}\xi
\end{aligned}
\end{align}
Similarly, noting $\MU(\upgamma_1)\cbf=\cbf$, we have
\begin{align*}
\sum_{m\in\Nbb}\upnu(\wtd\yk_m^\perp)_-\uppsi\cdot q^m=\Res_{\varpi=0}~b(q/\varpi,\varpi)Y_-(\varpi^{L(0)}\MU(\upgamma_1)\cbf,\varpi)\uppsi\frac{d\varpi}\varpi
\end{align*}
By Prop. \ref{lb34}, we have
\begin{align}\label{eq56}
\sum_{m\in\Nbb}\upnu(\wtd\yk_m^\perp)_-\uppsi\cdot q^m=\Res_{\xi=0} ~b(\xi,q/\xi)Y_+(\xi^{L_0}\cbf,\xi)\uppsi\frac{d\xi}\xi
\end{align}
Combining \eqref{eq54}, \eqref{eq55}, \eqref{eq56} with the fact that $a+b=1$ (cf. \eqref{eq53}), we obtain
\begin{align*}
q\partial_q\uppsi=\Res_{\xi=0} ~Y_+(\xi^{L_0}\cbf,\xi)\uppsi\frac{d\xi}\xi=L_+(0)\uppsi
\end{align*}
This finishes the proof.
\end{proof}

\begin{co}\label{lb31}
Let $L_\pm(0)=L_\pm(0)_\ssp+L_\pm(0)_\nil$ be the Jordan-Chevalley decomposition of $L_\pm(0)$ where $L_\pm(0)_\ssp$ is the semisimple part and $L_\pm(0)_\nil$ is the nilpotent part. Then for each $n\in\Cbb$ and $0\leq l\leq L$ we have
\begin{subequations}\label{eq58}
\begin{gather}
L_+(0)_{\srm}\uppsi_{n,l}=L_-(0)_{\srm}\uppsi_{n,l}=n\uppsi_{n,l}\\
L_+(0)_{\nrm}\uppsi_{n,l}=L_-(0)_{\nrm}\uppsi_{n,l}=(l+1)\uppsi_{n,l+1}
\end{gather}
\end{subequations}
(where $\uppsi_{n,l}=0$ if $l>L$.) In particular, we have $\uppsi_{n,l}\in\bbs_\ff(\Wbb)_{[n,n]}$ for all $n\in\Cbb,0\leq l\leq L$.
\end{co}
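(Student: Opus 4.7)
The strategy is to extract the Jordan-Chevalley decomposition of $L_\pm(0)$ on each generalized eigenspace of $\bbs_\ff(\Wbb)$ directly from the logarithmic $q$-expansion of $\uppsi$ together with Prop.~\ref{lb27}. First, I would compute $q\partial_q\uppsi$ by differentiating the series $\uppsi=\sum_{l=0}^L\sum_{n\in\Cbb}\uppsi_{n,l}q^n(\log q)^l$ termwise, using the identity $q\partial_q(q^n(\log q)^l)=nq^n(\log q)^l+lq^n(\log q)^{l-1}$. After a shift of the inner summation index, this yields
\begin{align*}
q\partial_q\uppsi=\sum_{l=0}^L\sum_{n\in\Cbb}\bigl[n\uppsi_{n,l}+(l+1)\uppsi_{n,l+1}\bigr]q^n(\log q)^l
\end{align*}
with the convention $\uppsi_{n,L+1}=0$. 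On the other hand, Prop.~\ref{lb27} says $q\partial_q\uppsi=L_+(0)\uppsi=L_-(0)\uppsi$, whose coefficient of $q^n(\log q)^l$ is just $L_\pm(0)\uppsi_{n,l}$. Comparing coefficients (which is legitimate since the monomials $q^n(\log q)^l$ are formally linearly independent in $\bbs_\ff(\Wbb)\{q\}[\log q]$) gives
\begin{align*}
L_+(0)\uppsi_{n,l}=L_-(0)\uppsi_{n,l}=n\uppsi_{n,l}+(l+1)\uppsi_{n,l+1}
\end{align*}

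Next, I would read this relation as saying that $L_\pm(0)-n\cdot\id$ acts as a ``$l$-raising'' operator on the finite chain $\uppsi_{n,0},\uppsi_{n,1},\dots,\uppsi_{n,L}$. Iterating the relation gives $(L_\pm(0)-n)^k\uppsi_{n,l}=(l+1)(l+2)\cdots(l+k)\uppsi_{n,l+k}$, which vanishes as soon as $k\geq L+1-l$. Consequently each $\uppsi_{n,l}$ lies in the generalized eigenspace of $L_+(0)$ (resp. $L_-(0)$) with eigenvalue $n$; intersecting these conditions gives $\uppsi_{n,l}\in\bbs_\ff(\Wbb)_{[n,n]}$, which establishes the last assertion of the corollary.

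Finally, by the defining property of the Jordan-Chevalley decomposition, $L_\pm(0)_\srm$ acts on the generalized eigenspace with eigenvalue $n$ by the scalar $n$, so $L_\pm(0)_\srm\uppsi_{n,l}=n\uppsi_{n,l}$, and then $L_\pm(0)_\nil\uppsi_{n,l}=L_\pm(0)\uppsi_{n,l}-n\uppsi_{n,l}=(l+1)\uppsi_{n,l+1}$, which are precisely the formulas in \eqref{eq58}. There is no genuine obstacle here: once Prop.~\ref{lb27} is granted, the argument reduces to a routine coefficient extraction combined with the uniqueness of Jordan-Chevalley decomposition on finite-dimensional generalized eigenspaces; all of the analytic work has been absorbed into Prop.~\ref{lb27} and Lem.~\ref{lb25}.
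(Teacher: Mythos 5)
Your proposal is correct and follows essentially the same route as the paper: the paper likewise extracts $(L_\pm(0)-n)\uppsi_{n,l}=(l+1)\uppsi_{n,l+1}$ from Prop.~\ref{lb27} by coefficient comparison, observes that $\Span\{\uppsi_{n,0},\dots,\uppsi_{n,L}\}$ is $L_\pm(0)$-invariant with $(L_\pm(0)-n)$ nilpotent on it (your explicit iteration $(L_\pm(0)-n)^k\uppsi_{n,l}=(l+1)\cdots(l+k)\uppsi_{n,l+k}$ is just a spelled-out version of this), and concludes via the uniqueness of the Jordan--Chevalley decomposition on the generalized eigenspace. No gaps.
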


\begin{proof}
For each $n\in\Cbb$, let $G_n\subset\bbs_\ff(\Wbb)$ be the generalized eigenspace of $L_+(0)$ with eigenvalue $n$. Then $L_+(0)_\ssp|_{G_n}=n$. By \eqref{eq57}, for each $n,l$ we have
\begin{align}\label{eq59}
(L_\pm(0)-n)\uppsi_{n,l}=(l+1)\uppsi_{n,l+1}
\end{align}
Therefore, $H_n:=\Span\{\uppsi_{n,0},\dots,\uppsi_{n,L}\}$ is invariant under $L_+(0)$, and $(L_+(0)-n)|_{H_n}$ is nilpotent. Thus $H_n\subset G_n$. Therefore, we have $L_+(0)_\ssp|_{H_n}=n$, and hence $L_+(0)_\nil|_{H_n}=(L_+(0)-n)|_{H_n}$. Similarly, we have  $L_-(0)_\ssp|_{H_n}=n$ and $L_-(0)_\nil|_{H_n}=(L_-(0)-n)|_{H_n}$. Combining these results with \eqref{eq59}, we get \eqref{eq58}.
\end{proof}

\subsection{Proof of the SF Theorem \ref{lb23}}\label{lb36}

In this section, we prove Thm. \ref{lb23}, which means proving that the linear map $\Gamma:=\eqref{eq60}$ is bijective.

\subsubsection{Proof that $\Gamma$ is surjective}

Let $p_0=p_1p_2$ with $\arg p_0=\arg p_1+\arg p_2$. Recall the equivalence $\fx_{p_0}\simeq \MS(\ff\sqcup\fn)_{p_1,p_2}$ (cf. \eqref{eq61}). Let $\uppsi_{p_0}$ be an element of $\ST^*_{\MS(\ff\sqcup\fn)_{p_1,p_2}}(\Wbb)$, equivalently, an element of $\ST^*_{\fx_{p_0}}(\Wbb)$. Let $\uppsi\in H^0(\wht\MD_{r\rho}^\times,\ST^*_\fx(\Wbb))$ satisfy \eqref{eq40}. Recall from Lem. \ref{lb25} that each $\uppsi_{n,l}$ is an element of $\bbs_\ff(\Wbb)$. In particular, it is a linear functional $\uppsi_{n,l}:\boxtimes_\ff(\Wbb)\rightarrow\Cbb$. Thus, we can define a linear functional
\begin{gather}\label{eq66}
\upchi:\boxtimes_\ff(\Wbb)\rightarrow\Cbb\qquad \bk{\upchi,\fk w}= \sum_{n\in\Cbb}\bk{\uppsi_{n,0},\fk w}
\end{gather}
Note that the above sum is finite because $L_+(0)_\ssp\uppsi_{n,0}=n\uppsi_{n,0}$ (cf. Cor. \ref{lb31}) and $\fk w$ is a finite sum of eigenvectors of $L_+(0)_\ssp$. We write $\upchi=\sum_{n\in\Cbb}\uppsi_{n,0}$.

By Cor. \ref{lb29} and \eqref{eq49}, for each $\fk w\in\boxtimes_\ff(\Wbb),u\in\Vbb,k\in\Zbb$, we have
\begin{align*}
&\bk{\upchi,Y'_+(u)_{(k)}\fk w}=\sum_{n\in\Cbb}\bk{\uppsi_{n,0},Y'_+(u)_{(k)}\fk w}=\sum_{n\in\Cbb}\bk{Y_+(u)_{(k)}\uppsi_{n,0},\fk w}\\
=&\sum_{n\in\Cbb}\bk{Y'_-(u)_{(k)}\uppsi_{n,0},\fk w}=\sum_{n\in\Cbb}\bk{\uppsi_{n,0},Y_-(u)_{(k)}\fk w}=\bk{\upchi,Y_-(u)_{(k)}\fk w}
\end{align*}
Therefore, by Rem. \ref{lb39}, we have  $\upchi\in\ST^*_\fn(\boxtimes_\ff(\Wbb))$. 

Let us prove $\Gamma(\upchi)=\uppsi_{p_0}$. By Cor. \ref{lb31}, we have
\begin{align*}
&q_1^{L_+(0)}q_2^{L_-(0)}\uppsi_{n,0}=(q_1q_2)^{L_+(0)}\uppsi_{n,0}=(q_1q_2)^{L_+(0)_\ssp+L_+(0)_\nil}\uppsi_{n,0}\\
=&(q_1q_2)^n\cdot \sum_{l\in\Nbb} \frac1{l!}(L_+(0)_\nil\log(q_1q_2))^l\uppsi_{n,0}=\sum_{l=0}^L(q_1q_2)^n(\log(q_1q_2))^l\uppsi_{n,l}
\end{align*}
in $\bbs_\ff(\Wbb)_{[n,n]}\{q_\blt\}[\log q_\blt]$. Combining this result with \eqref{eq64a}, we get
\begin{align*}
&\bk{\MS(\gimel\otimes\upchi),w}\xlongequal{\eqref{eq66}}\sum_{\lambda_1,\lambda_2\in\Cbb}\sum_{\alpha\in\fk A_{\lambda_\blt}}\sum_{n\in\Cbb}\gimel(\wch e_{\lambda_\blt}(\alpha)\otimes w)\cdot\bigbk{q_1^{L_+(0)}q_2^{L_-(0)}\uppsi_{n,0},e_{\lambda_\blt}(\alpha)}\\
=&\sum_{n\in\Cbb}\sum_{l=0}^L \sum_{\alpha\in\fk A_{n,n}} (q_1q_2)^n (\log(q_1q_2))^l  \cdot \gimel(\wch e_{n,n}(\alpha)\otimes w)\cdot\bk{\uppsi_{n,l},e_{n,n}(\alpha)}\\
=&\sum_{n\in\Cbb}\sum_{l=0}^L (q_1q_2)^n (\log(q_1q_2))^l  \cdot\gimel(\uppsi_{n,l}\otimes w)
\end{align*}
where the last equation is due to the easy fact that  $\uptheta=\sum_{\alpha\in\fk A_{n,n}}\bk{\uptheta,e_{n,n}(\alpha)}\cdot \wch e_{n,n}(\alpha)$ for any $\uptheta\in\bbs_\ff(\Wbb)_{n,n}$. Recall again that $\gimel:\bbs_\ff(\Wbb)\otimes\Wbb\rightarrow\Cbb$ is the restriction of the evaluation pairing $\Wbb^*\otimes\Wbb\rightarrow\Cbb$. Therefore
\begin{align*}
\bk{\MS(\gimel\otimes\upchi),w}=\sum_{n\in\Cbb}\sum_{l=0}^L  (q_1q_2)^n (\log(q_1q_2))^l  \bk{\uppsi_{n,l},w}=\bk{\uppsi,w}\big|_{q=q_1q_2}
\end{align*}
We conclude that $\MS(\gimel\otimes\upchi)=\uppsi|_{q=q_1q_2}$, and hence $\MS(\gimel\otimes\upchi)|_{p_1,p_2}=\uppsi|_{p_0}=\uppsi_{p_0}$. This proves that $\Gamma(\upchi)=\uppsi_{p_0}$.

\subsubsection{Proof that $\Gamma$ is injective}

For each $\upchi\in\ST^*_\fn(\boxtimes_\ff(\Wbb))$, recall that $\MS(\gimel\otimes\upchi)$ can be viewed as a section of the vector bundle $\ST^*_{\MS(\ff\sqcup\fn)}(\Wbb)$ on $\MD^\times_r\times\MD^\times_\rho$, cf. \eqref{eq65}. By \cite[Thm. 4.11]{GZ2}, there exists a connection on $\ST^*_{\MS(\ff\sqcup\fn)}(\Wbb)|_{\MD^\times_r\times\MD^\times_\rho}$ such that for any $\upchi\in\ST^*_\fn(\boxtimes_\ff(\Wbb))$, the section $\MS(\gimel\otimes\upchi)$ is parallel under this connection. 

Recall that parallel sections are determined by their values at a given point. Therefore, if we assume that $\Gamma(\chi)=0$ (which means $\MS(\gimel\otimes\upchi)|_{p_1,p_2}=0$), then  $\MS(\gimel\otimes\upchi)$ is the zero section. Applying Prop. \ref{geometry11} first to the variable $q_1$ (for each fixed $q_2\in\MD_\rho^\times$ and $\arg q_2$) and then to the variable $q_2$, we conclude that for each $w\in\Wbb$ and $\lambda_1,\lambda_2\in\Cbb$ and $l_1,l_2\in\Nbb$, the coefficient of $q_1^{\lambda_1}q_2^{\lambda_2}(\log q_1)^{l_1}(\log q_2)^{l_2}$ in \eqref{eq64a} is zero. When $l_1=l_2=0$, this means that
\begin{align*}
\sum_{\alpha\in\fk A_{\lambda_\blt}}\gimel(\wch e_{\lambda_\blt}(\alpha)\otimes w)\cdot\bk{\upchi,e_{\lambda_\blt}(\alpha)}=0
\end{align*}
Since $\upchi$ is a linear functional $\boxtimes_\ff(\Wbb)\rightarrow\Cbb$, we can define its restriction $\upchi_{\lambda_\blt}:\boxtimes_\ff(\Wbb)_{[\lambda_\blt]}\rightarrow\Cbb$. Then $\upchi_{\lambda_\blt}$ is an element of $\bbs_\ff(\Wbb)_{[\lambda_\blt]}$, and we clearly have
\begin{align*}
\upchi_{\lambda_\blt}=\sum_{\alpha\in\fk A_{\lambda_\blt}}\bk{\upchi,e_{\lambda_\blt}(\alpha)}\cdot \wch e_{\lambda_\blt}(\alpha)
\end{align*}
It follows that $\gimel(\upchi_{\lambda_\blt}\otimes w)=0$ for all $w\in\Wbb$. Since $\gimel$ is partially injective (cf. Rem. \ref{lb15}), we must have $\upchi_{\lambda_\blt}=0$ for all $\lambda_\blt$. Hence $\upchi=0$. This finishes the proof that $\Gamma$ is injective.

\section{The general sewing-factorization theorems}

\subsection{The SF theorems for compact Riemann surfaces}

The goal of this section is to prove Thm. \ref{lb40} and \ref{lb43}, two nearly equivalent versions of the sewing-factorization theorem.

\subsubsection{The setting}\label{lb44}

Let $N,K,R\in\Nbb$. In this section, we let 
\begin{gather*}
\fk F=\big(x_1',\dots,x_R'; \theta_1',\dots,\theta_R'\big|C_1\big|x_1,\dots,x_N;\theta_1,\dots,\theta_N\big)\\
\fk G=\big(y_1',\dots,y_R';\mu_1',\dots,\mu_R'\big|C_2\big| y_1,\dots,y_K;\mu_1,\dots,\mu_K \big)
    \end{gather*}
be respectively $(R,N)$-pointed and $(R,K)$-pointed compact Riemann surfaces with local coordinates, cf. Def. \ref{lb12}. Recall that $\ff\sqcup\fg$ is the disjoint union of $\ff,\fg$, cf. \eqref{eq86}. For each $1\leq j\leq R$, let $V'_j$ (resp. $V''_j$) be a neighborhood on which $\theta_j'$ (resp. $\mu_j'$) is defined. Assume that $V'_1,\dots,V_R',x_1,\dots,x_N$ are mutually disjoint, and $V''_1,\dots,V_R'',y_1,\dots,y_K$ are mutually disjoint. We also assume that for each $j$ there exist $r_j,\rho_j\in(0,+\infty]$ such that
\begin{align}
\theta_j'(V_j')=\MD_{r_j}\qquad \mu_j'(V_j'')=\MD_{\rho_j}
\end{align} 

\begin{df}
Let $\MS(\ff\sqcup\fg)$ be the sewing of $\ff\sqcup\fg$ along the pairs of points $(x_j',y_j')$ (for all $1\leq j\leq R$) with sewing radii $(r_j,\rho_j)$ using the local coordinates $(\theta_j',\mu_j')$. Cf. Subsec. \ref{lb9}.  Then $\MS(\ff\sqcup\fg)$ has base manifold $\MD_{r_\blt\rho_\blt}=\MD_{r_1\rho_1}\times\cdots\times\MD_{r_R\rho_R}$, and the fibers over $\MD_{r_\blt\rho_\blt}^\times=\MD_{r_1\rho_1}^\times\times\cdots\times\MD_{r_R\rho_R}^\times$ are smooth. Recall that the marked points $x_\blt,y_\blt$ and their local coordinates $\theta_\blt,\mu_\blt$ are extended constantly to $\MS(\ff\sqcup\fg)$, and we continue to denote them by the same symbol. So we can write
\begin{align}\label{eq105}
\MS(\ff\sqcup\fg)=\big(\pi:\MC\rightarrow\MD_{r_\blt\rho_\blt}\big|x_1,\dots,x_N,y_1,\dots,y_K;\theta_1,\dots,\theta_N,\mu_1,\dots,\mu_K  \big)
\end{align}
We still let $q_j$ be the standard coordinate of $\MD_{r_j\rho_j}$. Then $q_\blt=(q_1,\dots,q_R)$ is a set of coordinates of $\MD_{r_\blt\rho_\blt}$.
\end{df}

We assume that each component of $C_1$ contains one of $x_1,\dots,x_N$ (so that (dual) fusion products along $\ff$ can be defined), and each component of $C_2$ contains one of $y_1,\dots,y_K,y_1',\dots,y_R'$ (so that conformal blocks associated to $\fg$ can be defined).

Associate $\Wbb\in\Mod(\Vbb^{\otimes N})$ to the ordered incoming marked points $x_1,\dots,x_N$ of $\ff$. Similarly, associate $\Xbb\in\Mod(\Vbb^{\otimes K})$ to $y_1,\dots,y_K$. Associate $\Mbb\in\Mod(\Vbb^{\otimes R})$ to $x_1',\dots,x_R'$, and associate its contragredient $\Mbb'$ to $y_1',\dots,y_R'$. 

We view $\Mbb\otimes\Wbb$ as associated to $x_\blt',x_\blt$, and $\Mbb'\otimes\Xbb$ as associated to $y_\blt',y_\blt$. Associate $\Wbb\otimes\Xbb$ to the ordered marked points $x_1,\dots,x_N,y_1,\dots,y_K$ of $\MS(\ff\sqcup\fg)$. Let
\begin{align*}
\upphi\in\ST_\ff^*(\Mbb\otimes\Wbb)\qquad\upchi\in\ST^*_\fg(\Mbb'\otimes\Xbb)
\end{align*}

\begin{df}\label{lb42}
The \textbf{sewing of $\upphi$ and $\upchi$} is defined by contracting the $\Mbb$-component of $\upphi$ with the $\Mbb'$-component of $\upchi$. More precisely,
\begin{gather}
\begin{gathered}
\MS(\upphi\otimes\upchi):\Wbb\otimes\Xbb\rightarrow\Cbb\{q_\blt\}[\log q_\blt]\\[0.5ex]
\bigbk{\MS(\upphi\otimes\upchi),w\otimes \tipaz}=\sum_{\lambda_\blt\in\Cbb^R}\sum_{\alpha\in\fk A_{\lambda_\blt}}\bigbk{\upphi,q_\blt^{L_\blt(0)}e_{\lambda_\blt}(\alpha)\otimes w}\cdot \bigbk{\upchi,\wch e_{\lambda_\blt}(\alpha)\otimes\tipaz}
\end{gathered}
\end{gather}
(where $w\in\Wbb,\tipaz\in\Xbb$). Here, $(e_{\lambda_\blt}(\alpha))_{\alpha\in\fk A_{\lambda_\blt}}$ is a basis of $\Mbb_{[\lambda_\blt]}$ with dual basis $(\wch e_{\lambda_\blt}(\alpha))_{\alpha\in\fk A_{\lambda_\blt}}$ in $\Mbb'_{[\lambda_\blt]}$, and
\begin{align}\label{eq102}
q_\blt^{L_\blt(0)}=q_1^{L_1(0)}\cdots q_R^{L_R(0)}
\end{align}
See \cite[Sec. 4.1]{GZ2} for more explanations. Clearly we also have
\begin{gather}
\bigbk{\MS(\upphi\otimes\upchi),w\otimes \tipaz}=\sum_{\lambda_\blt\in\Cbb^R}\sum_{\alpha\in\fk A_{\lambda_\blt}}\bigbk{\upphi,e_{\lambda_\blt}(\alpha)\otimes w}\cdot \bigbk{\upchi,q_\blt^{L_\blt(0)}\wch e_{\lambda_\blt}(\alpha)\otimes \tipaz}
\end{gather}
\end{df}

\begin{rem}
By \cite[Thm. 4.9]{GZ2}, the formal series $\MS(\upphi\otimes\upchi)$ converges a.l.u. on $\MD_{r_\blt\rho_\blt}^\times$ in the sense of Def. \ref{lb19}, and
\begin{align}\label{eq112}
\MS(\upphi\otimes\upchi)\big|_{\MD_{r_\blt\rho_\blt}^\times}\in H^0\big(\wht\MD_{r_\blt\rho_\blt}^\times,\ST^*_{\MS(\ff\sqcup\fg)}(\Wbb\otimes\Xbb)\big)
\end{align}
where $\wht\MD_{r_\blt\rho_\blt}^\times$ is the universal cover of $\MD_{r_\blt\rho_\blt}^\times$. By \cite[Rem. 3.1]{GZ2}, \eqref{eq112} is equivalent to that  for each $p_\blt=(p_1,\dots,p_R)\in\MD_{r_\blt\rho_\blt}^\times$ with fixed $\arg p_1,\dots,\arg p_R$, we have
\begin{align*}
\MS(\upphi\otimes\upchi)\big|_{p_\blt}\in \ST^*_{\MS(\ff\sqcup\fg)_{p_\blt}}(\Wbb\otimes\Xbb)
\end{align*}
\end{rem}

\subsubsection{The sewing-factorization theorems}

Let $(\bbs_\ff(\Wbb),\gimel)$ be a dual fusion product of $\Wbb$ along $\ff$. If each component of $C_2$ contains one of $y_\blt$, we have a dual fusion product $(\bbs_\fg(\Xbb),\daleth)$ of $\Xbb$ along $\fg$. So
\begin{align*}
\gimel:\bbs_\ff(\Wbb)\otimes\Wbb\rightarrow\Cbb\qquad\daleth:\bbs_\fg(\Xbb)\otimes\Xbb\rightarrow\Cbb
\end{align*}
are the canonical conformal blocks.

\begin{thm}\label{lb40}
Assume that each component of $C_1$ contains one of $x_1,\dots,x_N$, and each component of $C_2$ contains one of $y_1,\dots,y_K$.  Let $p_\blt\in\MD_{r_\blt\rho_\blt}^\times$ with fixed $\arg p_1,\dots,\arg p_R$. Then the following linear map is an isomorphism.
\begin{gather}\label{eq95}
\begin{gathered}
\Hom_{\Vbb^{\otimes R}}\big(\boxtimes_\fg(\Xbb),\bbs_\ff(\Wbb)\big)\xlongrightarrow{\simeq} \ST_{\MS(\fk F\sqcup\fk G)_{p_\blt}}^*(\Wbb\otimes \Xbb)\\[0.5ex]
T\mapsto \MS\big((\gimel\circ(T\otimes \id_\Wbb))\otimes \daleth\big)\big|_{p_\blt}
\end{gathered}
\end{gather}
\end{thm}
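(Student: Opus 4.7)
The plan is to reduce Theorem \ref{lb40} to the key special case Theorem \ref{lb23} via two main stages. First, the universal property of the dual fusion product $(\bbs_\ff(\Wbb), \gimel)$ in Definition \ref{lb13} factors the map in Theorem \ref{lb40} as $\Phi_2 \circ \Phi_1$, where
\begin{align*}
\Phi_1: \Hom_{\Vbb^{\otimes R}}(\boxtimes_\fg(\Xbb), \bbs_\ff(\Wbb)) \xrightarrow{\cong} \ST^*_\ff(\boxtimes_\fg(\Xbb) \otimes \Wbb),\quad T \mapsto \gimel \circ (T \otimes \id_\Wbb)
\end{align*}
is an isomorphism, and $\Phi_2(\upphi) := \MS(\upphi \otimes \daleth)|_{p_\blt}$ is the sewing against the canonical conformal block $\daleth$. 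Hence it suffices to show that $\Phi_2$ is an isomorphism.

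For this, I would view the disjoint union $\fk H := \ff \sqcup \fg$ as a single $(2R, N+K)$-pointed compact Riemann surface whose $2R$ outgoing marked points are organized into the $R$ pairs $(x_j', y_j')_{j=1}^R$. After factoring each modulus as $p_j = p_j' p_j''$, the sewing $\MS(\ff \sqcup \fg)_{p_\blt}$ is biholomorphic to $\MS(\fk H \sqcup \fn^{\sqcup R})$, obtained by sewing each outgoing pair of $\fk H$ through an intermediate copy of the sphere $\fn$. By inductive application of Theorem \ref{lb23}---once per copy of $\fn$, at each iteration treating the remaining outgoing pairs as incoming marked points with abstract modules attached and invoking the left exactness of $\bbs$ (Theorem \ref{lb11}) to pass back to the universal setting---one obtains an isomorphism
\begin{align*}
\ST^*_{\fn^{\sqcup R}}\big(\boxtimes_{\fk H}(\Wbb \otimes \Xbb)\big) \simeq \ST^*_{\MS(\ff \sqcup \fg)_{p_\blt}}(\Wbb \otimes \Xbb).
\end{align*}
Applying Theorem \ref{lb16} then decomposes $\boxtimes_{\fk H}(\Wbb \otimes \Xbb) \simeq \boxtimes_\ff(\Wbb) \otimes \boxtimes_\fg(\Xbb)$ and factors $\ST^*_{\fn^{\sqcup R}}(\cdot)$ as a tensor product of single-$\fn$ conformal blocks; meanwhile Remark \ref{lb39} identifies $\ST^*_\fn(\Mbb_1 \otimes \Mbb_2) \simeq \Hom_\Vbb(\Mbb_1, \Mbb_2')$ for $\Mbb_1, \Mbb_2 \in \Mod(\Vbb)$. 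Combining these with the transposition $\Hom_{\Vbb^{\otimes R}}(\boxtimes_\ff(\Wbb), \bbs_\fg(\Xbb)) \simeq \Hom_{\Vbb^{\otimes R}}(\boxtimes_\fg(\Xbb), \bbs_\ff(\Wbb))$ and the universal property of $\bbs_\ff$ yields the required isomorphism for $\Phi_2$.

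The main obstacle will be the inductive step that produces the multi-pair version of Theorem \ref{lb23}. At the $k$-th iteration, one must view the current surface as having only two outgoing points by treating the $2(R-k)$ remaining outgoing points as incoming decorated with arbitrary modules, which relies on naturality and the left-exactness of $\bbs$ (Theorem \ref{lb11}) to return to the universal setting. In addition, verifying that the composite of the iterated isomorphisms reproduces the explicit formula $T \mapsto \MS((\gimel \circ (T \otimes \id_\Wbb)) \otimes \daleth)|_{p_\blt}$ in Theorem \ref{lb40} requires carefully tracking the canonical conformal blocks $\gimel, \daleth$ through each sewing step and invoking the associativity of the sewing construction---that performing all $R$ sewings simultaneously is equivalent to performing them in sequence.
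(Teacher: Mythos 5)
Your overall architecture matches the paper's: factor out the universal property of $(\bbs_\ff(\Wbb),\gimel)$, reduce to an iteration of the one-pair special case, and finish with Thm.~\ref{lb16} and the characterization of $\ST^*_{\fn}$-blocks as module homomorphisms. The factorization $\Phi_2\circ\Phi_1$ is correct, and the final identification $\ST^*_{\fn^{\sqcup R}}\big(\boxtimes_\ff(\Wbb)\otimes\boxtimes_\fg(\Xbb)\big)\simeq\Hom_{\Vbb^{\otimes R}}\big(\boxtimes_\fg(\Xbb),\bbs_\ff(\Wbb)\big)$ is essentially what the paper proves (though it is not literally a ``tensor product of single-$\fn$ blocks,'' since the module $\boxtimes_\ff(\Wbb)\otimes\boxtimes_\fg(\Xbb)$ does not split across the $R$ spheres; one must argue directly, as in Step~3 of the paper's proof, that an invariant functional $\uptau$ induces a map $\uptau^\sharp$ landing in $\bbs_\ff(\Wbb)$ rather than its algebraic completion).

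The genuine gap is in the inductive step. Theorem~\ref{lb23} in its stated form does not iterate: one application converts $\ST^*_{\MS(\ff\sqcup\fn)_{p_1,p_2}}(\Wbb)$ into $\ST^*_\fn(\boxtimes_\ff(\Wbb))$, i.e.\ an $\fn$-conformal block of a \emph{fusion product along the partially sewn surface}, which is not of the form ``conformal block of that surface with modules at its remaining outgoing pairs'' that the next application needs. Your proposed remedy --- treat the remaining pairs as incoming with abstract modules $\Mbb$ and use left exactness of $\bbs$ to ``return to the universal setting'' --- would force you to identify the representing object of $\Mbb\mapsto\ST^*_\fn\big(\boxtimes_{\fz^k}(\Mbb\otimes\Wbb\otimes\Xbb)\big)$ with a dual fusion product along the next partially sewn surface; that is precisely a transitivity-of-fusion-products statement, which in this paper is a \emph{consequence} of the SF theorems (Thm.~\ref{lb51}), so the argument as sketched is circular or at least leaves the hardest compatibility unproved. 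The device the paper uses to make the iteration compose is Cor.~\ref{lb32}: reformulate the one-pair isomorphism as
\begin{align*}
\ST^*_{\ff}\big(\boxtimes_\fq(\Vbb)\otimes\Wbb\big)\;\xrightarrow{\;\simeq\;}\;\ST^*_{\MS(\ff\sqcup\fn)_{p_1,p_2}}(\Wbb),\qquad \upphi\mapsto\MS(\upphi\otimes\upomega)\big|_{p_1,p_2},
\end{align*}
where $\boxtimes_\fq(\Vbb)$ is attached to the sewn pair viewed as \emph{incoming} and $\upomega=\aleph(-\otimes\idt)$ comes from propagation. In this form both sides are ordinary conformal-block spaces with concrete module insertions, so the statement iterates by simply replacing the auxiliary module $\Wbb$ with $\boxtimes_\fq(\Vbb)^{\otimes(R-\ell-1)}\otimes\Wbb\otimes\Xbb$ at stage $\ell$; no fusion product along any intermediate surface is ever formed. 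You would need to prove this corollary (or an equivalent reformulation) before your induction can run; once you have it, the rest of your outline goes through and agrees with the paper's Steps~1--4.
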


Following Def. \ref{lb42}, the sewing in \eqref{eq95} is defined by contracting the $\boxtimes_\fg(\Xbb)$-component of $\gimel\circ(T\otimes\id_\Wbb):\boxtimes_\fg(\Xbb)\otimes\Wbb\rightarrow\Cbb$ with the $\bbs_\fg(\Xbb)$-component of $\daleth$. Moreover, note that by taking transpose, \eqref{eq95} becomes an isomorphism
\begin{gather}\label{eq100}
\begin{gathered}
\Hom_{\Vbb^{\otimes R}}\big(\boxtimes_\ff(\Wbb),\bbs_\fg(\Xbb)\big)\xlongrightarrow{\simeq} \ST_{\MS(\fk F\sqcup\fk G)_{p_\blt}}^*(\Wbb\otimes \Xbb)\\[0.5ex]
T\mapsto \MS\big(\gimel\otimes (\daleth\circ(T\otimes \id_\Xbb))\big)\big|_{p_\blt}
\end{gathered}
\end{gather}
where the sewing is defined by contracting the $\bbs_\ff(\Wbb)$-component of $\gimel$ with the $\boxtimes_\ff(\Wbb)$-component of $\daleth\circ(T\otimes\id_\Xbb):\boxtimes_\ff(\Wbb)\otimes\Xbb\rightarrow\Cbb$.

\begin{proof}
Step 1. Recall \eqref{eq87} for the $(2,0)$-pointed surface $\fn=\big(\infty,0;1/\zeta,\zeta\big|\Pbb^1\big)$. Let $\upomega\in\ST^*_\fn(\bbs_\fq(\Vbb))$ be defined by \eqref{eq88}, where $\fq=\big(\infty,0;1/\zeta,\zeta\big|\Pbb^1\big|1;\zeta-1\big)$. The goal of this step is to construct  the linear isomorphism \eqref{eq91} by applying the sewing-factorization Cor. \ref{lb32} iteratively $R$ times.

For each $1\leq \ell\leq R$, choose $p^+_\ell\in\MD^\times_{r_\ell}$ and $p^-_\ell\in\MD^\times_{\rho_\ell}$ such that $p_\ell=p^+_\ell p^-_\ell$. Let $\fz^\ell$ be the family obtained by sewing $\ff\sqcup\fg$ along $(x_1',y_1'),\dots,(x_\ell',y_\ell')$ with sewing radii $(r_1,\rho_1),\dots,(r_\ell,\rho_\ell)$. In particular, we have $\fz^0=\ff\sqcup\fg$ and $\fz^R=\MS(\ff\sqcup\fg)$. The base manifold of $\fz^\ell$ is $\MD_{r_1\rho_1}\times\cdots\times\MD_{r_\ell\rho_\ell}$. Note that the fiber $\fz^\ell_{p_1,\dots,p_\ell}$ has marked points
\begin{align*}
x_{\ell+1}',y_{\ell+1}',\dots,x_R',y_R',x_1,\dots,x_N,y_1,\dots,y_K
\end{align*}
Assuming this order, we associate $(\boxtimes_\fq(\Vbb))^{\otimes(R-\ell)}\otimes\Wbb\otimes\Xbb$ to these marked points.

For each $0\leq\ell<R$, consider the sewing
\begin{align*}
\MS(\fz^\ell_{p_1,\dots,p_\ell}\sqcup\fn)
\end{align*}
of $\fz^\ell_{p_1,\dots,p_\ell}\sqcup\fn$ along the pairs of points $(x_{\ell+1}',\infty)$ and $(y'_{\ell+1},0)$ with sewing radii $r_{\ell+1},1$ and $\rho_{\ell+1},1$. This family has base manifold $\MD_{r_{\ell+1}\rho_{\ell+1}}$.  Cor. \ref{lb32} yields an isomorphism
\begin{gather*}
\ST^*_{\fz^\ell_{p_1,\dots,p_\ell}}\big(\boxtimes_\fq(\Vbb)^{\otimes(R-\ell)}\otimes\Wbb\otimes\Xbb\big)\rightarrow \ST^*_{\MS(\fz^\ell_{p_1,\dots,p_\ell}\sqcup\fn)_{p^+_{\ell+1},p^-_{\ell+1}}}\big(\boxtimes_\fq(\Vbb)^{\otimes(R-\ell-1)}\otimes\Wbb\otimes\Xbb\big)\\
\uppsi\mapsto \MS(\uppsi\otimes\upomega)\big|_{p^+_{\ell+1},p^-_{\ell+1}}
\end{gather*}
where the sewing is defined by contracting the first tensor component $\boxtimes_\fq(\Vbb)$ of $\uppsi$ with $\upomega:\bbs_\fq(\Vbb)\rightarrow\Cbb$. By \eqref{eq61}, we have a canonical isomorphism $\fz^{\ell+1}_{p_1,\dots,p_{\ell+1}}\simeq\MS(\fz^\ell_{p_1,\dots,p_\ell}\sqcup\fn)_{p^+_{\ell+1},p^-_{\ell+1}}$. Thus, the above linear isomorphism becomes
\begin{gather*}
\ST^*_{\fz^\ell_{p_1,\dots,p_\ell}}\big(\boxtimes_\fq(\Vbb)^{\otimes(R-\ell)}\otimes\Wbb\otimes\Xbb\big)\longrightarrow \ST^*_{\fz^{\ell+1}_{p_1,\dots,p_{\ell+1}}}\big(\boxtimes_\fq(\Vbb)^{\otimes(R-\ell-1)}\otimes\Wbb\otimes\Xbb\big)\\
\uppsi\mapsto \MS(\uppsi\otimes\upomega)\big|_{p^+_{\ell+1},p^-_{\ell+1}}
\end{gather*}

Therefore, by induction on $\ell$, we have a linear isomorphism
\begin{gather}\label{eq91}
\begin{gathered}
\ST^*_{\ff\sqcup\fg}\big(\boxtimes_\fq(\Vbb)^{\otimes R}\otimes\Wbb\otimes\Xbb\big)\longrightarrow \ST^*_{\MS(\ff\sqcup\fg)_{p_\blt}}\big(\Wbb\otimes\Xbb\big)\\
\uppsi\mapsto \MS(\uppsi\otimes\upomega^{\otimes R})\big|_{p^+_1,p^-_1,\dots,p^+_R,p^-_R}\equiv \MS(\uppsi\otimes\upomega^{\otimes R})\big|_{p^\pm_\blt}
\end{gathered}
\end{gather}
where $\MS(\uppsi\otimes\upomega^{\otimes R})$ is defined by contracting the component $\boxtimes_\fq(\Vbb)^{\otimes R}$ of $\uppsi$ with $\upomega^{\otimes R}:\bbs_\fq(\Vbb)^{\otimes R}\rightarrow\Cbb$.\\[-1ex]

Step 2. The goal of this step is to obtain the isomorphism \eqref{eq92}. By Thm. \ref{lb16} and the universal property for dual fusion products, we have an isomorphism
\begin{gather}\label{eq98}
\begin{gathered}
\Hom_{\Vbb^{\otimes(2R)}}\big(\boxtimes_\fq(\Vbb)^{\otimes R},\bbs_\ff(\Wbb)\otimes\bbs_\fg(\Xbb) \big)\longrightarrow \ST^*_{\ff\sqcup\fg}\big(\boxtimes_\fq(\Vbb)^{\otimes R}\otimes\Wbb\otimes\Xbb\big)\\
A\mapsto (\gimel\otimes\daleth)\circ (A\otimes\id_\Wbb\otimes\id_\Xbb)
\end{gathered}
\end{gather}
Here, by reordering the tensor components in the domain, we view $\gimel\otimes\daleth$ as a linear functional
\begin{align*}
\gimel\otimes\daleth:\bbs_\ff(\Wbb)\otimes\bbs_\fg(\Xbb)\otimes\Wbb\otimes\Xbb\rightarrow\Cbb
\end{align*}
Moreover, $A$ is understood to be a linear map $\boxtimes_\fq(\Vbb)^{\otimes R}\rightarrow\bbs_\ff(\Wbb)\otimes\bbs_\fg(\Xbb)$ satisfying
\begin{gather*}
A\big(\fk v_1\otimes\cdots\otimes Y_+(u,z)\fk v_j\otimes\cdots\otimes\fk v_R\big)=\big(Y_j(u,z)\otimes\id_{\bbs_\fg(\Xbb)}\big)A\big(\fk v_1\otimes\cdots\otimes\fk v_R\big)\\
A\big(\fk v_1\otimes\cdots\otimes Y_-(u,z)\fk v_j\otimes\cdots\otimes\fk v_R\big)=\big(\id_{\bbs_\ff(\Wbb)}\otimes Y_j(u,z)\big)A\big(\fk v_1\otimes\cdots\otimes\fk v_R\big)
\end{gather*}
for all $\fk v_1,\dots,\fk v_R\in\boxtimes_\fq(\Vbb)$ and $u\in\Vbb$.

Combining \eqref{eq98} with \eqref{eq91}, we obtain a linear isomorphism
\begin{gather*}
\Hom_{\Vbb^{\otimes(2R)}}\big(\boxtimes_\fq(\Vbb)^{\otimes R},\bbs_\ff(\Wbb)\otimes\bbs_\fg(\Xbb) \big)\longrightarrow\ST^*_{\MS(\ff\sqcup\fg)_{p_\blt}}\big(\Wbb\otimes\Xbb\big)\\
A\mapsto \MS\big(((\gimel\otimes\daleth)\circ (A\otimes\id_\Wbb\otimes\id_\Xbb))\otimes\upomega^{\otimes R}\big)\big|_{p^\pm_\blt}=\MS\big((\gimel\otimes\daleth)\otimes(\upomega^{\otimes R}\circ A^\tr)\big)\big|_{p^\pm_\blt}
\end{gather*}
and hence a linear isomorphism
\begin{gather}\label{eq92}
\begin{gathered}
\Hom_{\Vbb^{\otimes(2R)}}\big(\boxtimes_\ff(\Wbb)\otimes\boxtimes_\fg(\Xbb),\bbs_\fq(\Vbb)^{\otimes R}\big)\longrightarrow\ST^*_{\MS(\ff\sqcup\fg)_{p_\blt}}\big(\Wbb\otimes\Xbb\big)\\
B\mapsto \MS\big((\gimel\otimes\daleth)\otimes(\upomega^{\otimes R}\circ B)\big)\big|_{p^\pm_\blt}
\end{gathered}
\end{gather}
where the sewing is defined by contracting the component $\bbs_\ff(\Wbb)\otimes\bbs_\fg(\Xbb)$ of  $\gimel\otimes\daleth$ with $\upomega^{\otimes R}\circ B:\boxtimes_\ff(\Wbb)\otimes\boxtimes_\fg(\Xbb)\rightarrow\Cbb$.\\[-1ex]

Step 3. Let $\fn^R=\fn_1\sqcup\cdots\sqcup\fn_R$ be the disjoint union of $R$ pieces of $\fn$, where $\fn_j\simeq\fn$ is written as
\begin{align*}
\fn_j=\big(\infty_j,0_j;1/\zeta,\zeta|\Pbb^1_j\big)
\end{align*}
Note that $\upomega^{\otimes R}:\bbs_\fq(\Vbb)^{\otimes R}\rightarrow\Cbb$ is a conformal block associated to $\fn^R$. (Here, we associate $\bbs_\fq(\Vbb)^{\otimes R}$ to the ordered marked points $\infty_1',0_1',\dots,\infty_R',0_R'$.) Then we have a linear map
\begin{gather}\label{eq90}
\begin{gathered}
\Hom_{\Vbb^{\otimes(2R)}}\big(\boxtimes_\ff(\Wbb)\otimes\boxtimes_\fg(\Xbb),\bbs_\fq(\Vbb)^{\otimes R}\big)\longrightarrow \ST^*_{\fn^R}\big(\boxtimes_\ff(\Wbb)\otimes\boxtimes_\fg(\Xbb)\big)\\
B\mapsto \upomega^{\otimes R}\circ B
\end{gathered}
\end{gather}
where $\boxtimes_\ff(\Wbb)\otimes\boxtimes_\fg(\Xbb)$ is associated to the ordered marked points $\infty_1',\dots,\infty_R',0_1',\dots,0_R'$ of $\fn^R$.

We claim that \eqref{eq90} is an isomorphism. Suppose this is true. Then, combining \eqref{eq90} with \eqref{eq92}, we get a linear isomorphism
\begin{gather}\label{eq93}
\begin{gathered}
\ST^*_{\fn^R}(\boxtimes_\ff(\Wbb)\otimes\boxtimes_\fg(\Xbb))\longrightarrow\ST^*_{\MS(\ff\sqcup\fg)_{p_\blt}}\big(\Wbb\otimes\Xbb\big)\\
\uptau\mapsto \MS\big(\gimel\otimes\daleth\otimes\uptau\big)\big|_{p^\pm_\blt}
\end{gathered}
\end{gather}
By Rem. \ref{lb39}, an element of $\ST^*_{\fn^R}(\boxtimes_\ff(\Wbb)\otimes\boxtimes_\fg(\Xbb))$ is precisely a linear functional $\uptau:\boxtimes_\ff(\Wbb)\otimes\boxtimes_\fg(\Xbb)\rightarrow\Cbb$ such that $\uptau(Y_j'(u,z)\fk w\otimes\fk z)=\uptau(\fk w\otimes Y_j(u,z)\fk z)$ holds in $\Cbb[[z^{\pm1}]]$ for all $1\leq j\leq R$, equivalently, a linear map $\uptau^\sharp:\boxtimes_\fg(\Xbb)\rightarrow \ovl{\bbs_\ff(\Wbb)}$ satisfying $\uptau^\sharp\circ Y_j(u)_n=Y_j(u)_n\circ\uptau^\sharp$ for all $u\in\Vbb,j$ and $n\in\Zbb$. In particular, $[L_j(0),\uptau^\sharp]=0$. 

Choose any $\lambda_\blt,\lambda'_\blt\in\Cbb^N$. Since $L_j(0)-\lambda_j$ is nilpotent on $\boxtimes_\fg(\Xbb)_{[\lambda_\blt]}$, there exists $a\in\Zbb_+$ such that $(L_j(0)-\lambda_j)^a\boxtimes_\fg(\Xbb)_{[\lambda_\blt]}=0$. Since $L_j(0)$ preserves each generalized eigenspace of $L(0)$, we have $[L_j(0),P_{\lambda_\blt}]=[L_j(0),P_{\lambda'_\blt}]=0$. Therefore $(L_j(0)-\lambda_j)^a P_{\lambda'_\blt}\uptau^\sharp P_{\lambda_\blt}=P_{\lambda'_\blt}\uptau^\sharp P_{\lambda_\blt}(L_j(0)-\lambda_j)^a=0$. Thus $P_{\lambda'_\blt}\uptau^\sharp P_{\lambda_\blt}$ has range in $\bbs_\ff(\Wbb)_{[\lambda_\blt]}$. This proves that $\uptau^\sharp$ sends $\boxtimes_\fg(\Xbb)_{[\lambda_\blt]}$ into $\bbs_\ff(\Wbb)_{[\lambda_\blt]}$. In particular, $\tau^\sharp$ has range in $\bbs_\ff(\Wbb)$.

Therefore, we have a linear isomorphism
\begin{gather}\label{eq94}
\begin{gathered}
\Hom_{\Vbb^{\otimes R}}(\boxtimes_\fg(\Xbb),\bbs_\ff(\Wbb))\longrightarrow\ST^*_{\fn^R}(\boxtimes_\ff(\Wbb)\otimes\boxtimes_\fg(\Xbb))\\
T\mapsto \Big(\fk w\otimes\fk z\mapsto\bk{\fk w,T\fk z}  \Big)
\end{gathered}
\end{gather}
The composition of \eqref{eq93} with \eqref{eq94} yields the isomorphism \eqref{eq95}, thereby completing the proof.\\[-1ex]

Step 4. It remains to prove that \eqref{eq90} is an isomorphism. Let $\fq^R=\fq_1\sqcup\cdots\sqcup\fq_R$ be a disjoint union of $\fq$, where $\fq_j\simeq\fq$ is 
\begin{align*}
\fq_j=\big(\infty_j,0_j;1/\zeta,\zeta\big|\Pbb^1_j\big|1_j,\zeta-1\big)
\end{align*}
Recall that $(\bbs_\fq(\Vbb),\aleph)$ is a dual fusion product of $\Vbb$ along $\fq$. By Thm. \ref{lb16}, $(\bbs_\fq(\Vbb)^{\otimes R},\aleph^{\otimes R})$ is a dual fusion product of $\Vbb^{\otimes R}$ along $\fq^R$. In particular,
\begin{align*}
\aleph^{\otimes R}:\bbs_\fq(\Vbb)^{\otimes R}\otimes\Vbb^{\otimes R}\rightarrow\Cbb
\end{align*}
is the canonical conformal block, where $\bbs_\fq(\Vbb)^{\otimes R}\otimes\Vbb^{\otimes R}$ is associated to the marked points of $\fq^R$ in the order $\infty_1,0_1,\dots,\infty_R,0_R,1_1,\dots,1_R$. Therefore, by the universal property,
\begin{gather}\label{eq96}
\begin{gathered}
\Hom_{\Vbb^{\otimes(2R)}}\big(\boxtimes_\ff(\Wbb)\otimes\boxtimes_\fg(\Xbb),\bbs_\fq(\Vbb)^{\otimes R}\big)\longrightarrow \ST^*_{\fq^R}\big(\boxtimes_\ff(\Wbb)\otimes\boxtimes_\fg(\Xbb)\otimes\Vbb^{\otimes R}\big)\\
B\mapsto \aleph^{\otimes R}\circ (B\otimes\id_{\Vbb^{\otimes R}})
\end{gathered}
\end{gather}
is a linear isomorphism, where $\boxtimes_\ff(\Wbb)\otimes\boxtimes_\fg(\Xbb)\otimes\Vbb^{\otimes R}$ is associated to the marked points of $\fq^R$ in the order $\infty_1,\dots,\infty_R,0_1,\dots,0_R,1_1,\dots,1_R$.

The propagation of conformal blocks (cf. \cite[Cor. 2.44]{GZ1}) gives an isomorphism
\begin{gather}\label{eq97}
\begin{gathered}
\ST^*_{\fq^R}\big(\boxtimes_\ff(\Wbb)\otimes\boxtimes_\fg(\Xbb)\otimes\Vbb^{\otimes R}\big)\longrightarrow \ST^*_{\fn^R}\big(\boxtimes_\ff(\Wbb)\otimes\boxtimes_\fg(\Xbb)\big)\\
\tipxphi\mapsto \tipxphi(-\otimes\idt^{\otimes R})
\end{gathered}
\end{gather}
where we note that $\idt^{\otimes R}\in\Vbb^{\otimes R}$. Since $\upomega^{\otimes R}$ equals $\aleph^{\otimes R}(-\otimes\idt^{\otimes R})$ (cf. \eqref{eq88}), the composition of \eqref{eq97} with \eqref{eq96} equals \eqref{eq90}. Therefore \eqref{eq90} is an isomorphism.
\end{proof}

Note that the assumption on the marked points of $C_2$ in the following sewing-factorization theorem is weaker than that in Thm. \ref{lb40}. 

\begin{thm}\label{lb43}
Assume that each component of $C_1$ contains one of $x_1,\dots,x_N$, and each component of $C_2$ contains one of $y_1,\dots,y_K,y_1',\dots,y_R'$.   Let $p_\blt\in\MD_{r_\blt\rho_\blt}^\times$ with fixed $\arg p_1,\dots,\arg p_R$. Then the following linear map is an isomorphism.
\begin{gather}\label{eq99}
\begin{gathered}
\ST_{\fg}^*\big(\boxtimes_\ff(\Wbb)\otimes \Xbb\big)\xlongrightarrow{\simeq} \ST_{\MS(\fk F\sqcup\fk G)_{p_\blt}}^*\big(\Wbb\otimes \Xbb\big)\\[0.5ex]
\upchi \mapsto \MS(\gimel\otimes \upchi)\big|_{p_\blt}
\end{gathered}
\end{gather}
\end{thm}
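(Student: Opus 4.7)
The plan is to reduce Theorem \ref{lb43} to Theorem \ref{lb40} via propagation of conformal blocks. The two hypotheses differ only in that Thm.\ \ref{lb43} allows a component of $C_2$ to contain only outgoing points $y_j'$ and no incoming $y_j$. The remedy is to augment $\fg$ with auxiliary incoming marked points carrying the vacuum module $\Vbb$, bringing the data under the scope of Thm.\ \ref{lb40}, and then to identify the resulting conformal block spaces with those for the original $\fg$ via propagation. Concretely, for each connected component of $C_2$ containing no $y_j$ (and so, by hypothesis, some $y_j'$), I would choose a smooth point $z_\alpha$ distinct from all $y_j'$, equipped with a local coordinate. Let $\fg^\sharp$ denote the resulting $(R,K+s)$-pointed surface, where $s$ is the total number of added points, with $\Vbb$ associated to each $z_\alpha$. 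Every component of $C_2$ now contains an incoming point, so $\fg^\sharp$ satisfies the hypothesis of Thm.\ \ref{lb40}.

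Propagation of conformal blocks \cite[Cor.~2.44]{GZ1}, applied both to $\fg^\sharp$ and to the sewn family $\MS(\ff\sqcup\fg^\sharp)_{p_\bullet}$ (whose underlying surface is $\MS(\ff\sqcup\fg)_{p_\bullet}$ with the $z_\alpha$ extended constantly over the base), yields canonical isomorphisms
\begin{align*}
P_1 &: \ST^*_{\fg^\sharp}\big(\boxtimes_\ff(\Wbb)\otimes\Xbb\otimes\Vbb^{\otimes s}\big)\xlongrightarrow{\simeq}\ST^*_\fg\big(\boxtimes_\ff(\Wbb)\otimes\Xbb\big), \\
P_2 &: \ST^*_{\MS(\ff\sqcup\fg^\sharp)_{p_\bullet}}\big(\Wbb\otimes\Xbb\otimes\Vbb^{\otimes s}\big)\xlongrightarrow{\simeq}\ST^*_{\MS(\ff\sqcup\fg)_{p_\bullet}}\big(\Wbb\otimes\Xbb\big),
\end{align*}
each given by evaluation at $\idt^{\otimes s}$ in the auxiliary tensor factors. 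Theorem \ref{lb40}, in its transpose form \eqref{eq100}, combined with the universal property of the dual fusion product $\bbs_{\fg^\sharp}(\Xbb\otimes\Vbb^{\otimes s})$, then produces an isomorphism
\begin{align*}
\Phi^\sharp : \ST^*_{\fg^\sharp}\big(\boxtimes_\ff(\Wbb)\otimes\Xbb\otimes\Vbb^{\otimes s}\big)\xlongrightarrow{\simeq}\ST^*_{\MS(\ff\sqcup\fg^\sharp)_{p_\bullet}}\big(\Wbb\otimes\Xbb\otimes\Vbb^{\otimes s}\big),\quad \upchi^\sharp\mapsto\MS(\gimel\otimes\upchi^\sharp)\big|_{p_\bullet}.
\end{align*}
Hence $P_2\circ\Phi^\sharp\circ P_1^{-1}$ is a linear isomorphism from $\ST^*_\fg(\boxtimes_\ff(\Wbb)\otimes\Xbb)$ to $\ST^*_{\MS(\ff\sqcup\fg)_{p_\bullet}}(\Wbb\otimes\Xbb)$.

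The remaining task is to show that this composite coincides with the sewing map \eqref{eq99}, which reduces to the commutation of sewing with propagation. If $\upchi^\sharp:=P_1^{-1}(\upchi)$ is the propagation-extension of $\upchi$, then by the definition of $P_1$,
\begin{align*}
\upchi^\sharp\big(q_\bullet^{L_\bullet(0)}e_{\lambda_\bullet}(\alpha)\otimes\tipaz\otimes\idt^{\otimes s}\big)=\upchi\big(q_\bullet^{L_\bullet(0)}e_{\lambda_\bullet}(\alpha)\otimes\tipaz\big).
\end{align*}
Substituting into the explicit sewing formula of Def.\ \ref{lb42} (cf.\ \eqref{eq63})—whose contraction runs over the $\bbs_\ff/\boxtimes_\ff$ factors indexed by $L_\bullet(0)$-generalized weights, disjoint from the auxiliary $\Vbb^{\otimes s}$ factor—one obtains $P_2\big(\MS(\gimel\otimes\upchi^\sharp)\big|_{p_\bullet}\big)=\MS(\gimel\otimes\upchi)\big|_{p_\bullet}$. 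Since Thm.\ \ref{lb40} has already been established, the only genuinely new content of Thm.\ \ref{lb43} is this compatibility check; it is essentially bookkeeping, and I expect no substantive obstacle beyond keeping the propagation conventions and the contraction indices consistently aligned.
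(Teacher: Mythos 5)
Your proposal is correct and follows essentially the same route as the paper: augment $\fg$ with vacuum-labelled incoming points on the deficient components, apply Thm.~\ref{lb40} in its transposed form \eqref{eq100} together with the universal property of the dual fusion product of $\Xbb\otimes\Vbb^{\otimes s}$, and then strip off the auxiliary points via propagation of conformal blocks on both $\fg^\sharp$ and the sewn family. Your explicit verification that vacuum insertion commutes with the contraction in Def.~\ref{lb42} is exactly the (implicit) final step of the paper's argument, so there is nothing to add.
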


The sewing in \eqref{eq99} is defined by contracting the $\bbs_\ff(\Wbb)$-component of $\gimel:\bbs_\ff(\Wbb)\otimes\Wbb\rightarrow\Cbb$ with the $\boxtimes_\ff(\Wbb)$-component of $\upchi:\boxtimes_\ff(\Wbb)\otimes\Xbb\rightarrow\Cbb$.

\begin{proof}
Since we are not assuming that each component of $C_2$ contains one of the incoming marked points $y_\blt$, we may not be able to define the dual fusion product $\bbs_\fg(\Xbb)$. However, we can remedy this by adding additional incoming marked points $y_{K+1},\dots,y_{K+L}$ (with local coordinates $\mu_{K+1},\dots,\mu_{K+L}$) to $\fg$. More precisely, let
\begin{align*}
\wht\fg=\big(y_1',\dots,y_R';\mu_1',\dots,\mu_R'\big|C_2\big| y_1,\dots,y_{K+L};\mu_1,\dots,\mu_{K+L} \big)
\end{align*}
where $y_1,\dots,y_{K+L}\in C_2$ are mutually disjoint, and each component of $C_2$ contains one of $y_1,\dots,y_{K+L}$. Associate $\Xbb\otimes\Vbb^{\otimes L}$ to the ordered marked points $y_1,\dots,y_{K+L}$. Then we have a dual fusion product $(\bbs_{\wht\fg}(\Xbb\otimes\Vbb^{\otimes L}),\tsadi)$ where $\bbs_{\wht\fg}(\Xbb\otimes\Vbb^{\otimes L})\in\Mod(\Vbb^{\otimes R})$ is associated to the ordered marked points $y_1',\dots,y_R'$, and
\begin{align*}
\tsadi:\bbs_{\wht\fg}(\Xbb\otimes\Vbb^{\otimes L})\otimes\Xbb\otimes\Vbb^{\otimes L}\rightarrow\Cbb
\end{align*}
is the canonical conformal block.

By Thm. \ref{lb40} (more precisely, by Eq. \eqref{eq100}), we have an isomorphism
\begin{gather*}
\begin{gathered}
\Hom_{\Vbb^{\otimes R}}\big(\boxtimes_\ff(\Wbb),\bbs_{\wht\fg}(\Xbb\otimes\Vbb^{\otimes L})\big)\longrightarrow \ST_{\MS(\ff\sqcup\wht\fg)_{p_\blt}}^*(\Wbb\otimes \Xbb\otimes\Vbb^{\otimes L})\\[0.5ex]
T\mapsto \MS\big(\gimel\otimes (\tsadi\circ(T\otimes \id_\Xbb\otimes\id_{\Vbb^{\otimes L}}))\big)\big|_{p_\blt}
\end{gathered}
\end{gather*}
where the sewing is defined by contracting the $\bbs_\ff(\Wbb)$-component of $\gimel$ with the $\boxtimes_\ff(\Wbb)$-component of $\tsadi\circ(T\otimes \id_\Xbb\otimes\id_{\Vbb^{\otimes L}})$. The universal property of $(\bbs_{\wht\fg}(\Xbb\otimes\Vbb^{\otimes L}),\tsadi)$ yields the isomorphism
\begin{gather*}
\Hom_{\Vbb^{\otimes R}}\big(\boxtimes_\ff(\Wbb),\bbs_{\wht\fg}(\Xbb\otimes\Vbb^{\otimes L})\big)\longrightarrow\ST^*_{\wht\fg}\big(\boxtimes_\ff(\Wbb)\otimes\Xbb\otimes\Vbb^{\otimes L}\big)\\
T\mapsto \tsadi\circ(T\otimes\id_\Xbb\otimes\id_{\Vbb^{\otimes L}})
\end{gather*}
Therefore, we obtain the isomorphism
\begin{gather}\label{eq101}
\begin{gathered}
\ST^*_{\wht\fg}\big(\boxtimes_\ff(\Wbb)\otimes\Xbb\otimes\Vbb^{\otimes L}\big)\longrightarrow\ST_{\MS(\ff\sqcup\wht\fg)_{p_\blt}}^*(\Wbb\otimes \Xbb\otimes\Vbb^{\otimes L})\\
\uppsi\mapsto\MS(\gimel\otimes\uppsi)\big|_{p_\blt}
\end{gathered}
\end{gather}

The propagation of conformal blocks (cf. \cite[Cor. 2.44]{GZ1}) gives isomorphisms
\begin{gather*}
\ST^*_{\wht\fg}\big(\boxtimes_\ff(\Wbb)\otimes\Xbb\otimes\Vbb^{\otimes L}\big)\simeq\ST^*_\fg\big(\boxtimes_\ff(\Wbb)\otimes\Xbb\big)\\
\ST_{\MS(\ff\sqcup\wht\fg)_{p_\blt}}^*(\Wbb\otimes \Xbb\otimes\Vbb^{\otimes L})\simeq \ST_{\MS(\ff\sqcup\fg)_{p_\blt}}^*(\Wbb\otimes \Xbb)
\end{gather*}
defined by inserting $\idt^{\otimes L}$ into the $\Vbb^{\otimes L}$-components of the conformal blocks. With the help of these two isomorphisms, the map \eqref{eq101} becomes \eqref{eq99}. Therefore \eqref{eq99} is an isomorphism.
\end{proof}

\subsection{The coend version of the SF theorem}

In this section, we continue to assume the setting in Subsec. \ref{lb44}. Moreover, as in Thm. \ref{lb43}, we assume that each component of $C_1$ contains one of $x_1,\dots,x_N$, and each component of $C_2$ contains one of $y_1,\dots,y_K,y_1',\dots,y_R'$.   Let $p_\blt\in\MD_{r_\blt\rho_\blt}^\times$ with fixed $\arg p_1,\dots,\arg p_R$. 

We refer the reader to Sec. \ref{lb56} or \cite[Sec. 2]{SF-Hopf-Frob} for the definition of coends. Let $\Vect$ be the category of finite-dimensional $\Cbb$-linear spaces.

\begin{lm}\label{lb53}
The family of linear maps
\begin{gather}\label{eq109}
\begin{gathered}
\Hom_{\Vbb^{{\otimes R}}}\big(\Mbb,\bbs_\ff(\Wbb)\big)\otimes_\Cbb\ST^*_\fg\big(\Mbb'\otimes\Xbb\big)\longrightarrow \ST^*_\fg\big(\boxtimes_\ff(\Wbb)\otimes\Xbb\big)\\
T\otimes\upchi\mapsto \upchi\circ(T^\tr\otimes\id_\Xbb)
\end{gathered}
\end{gather}
for $\Mbb\in\Mod(\Vbb^{\otimes R})$ is a coend in $\Vect$.
\end{lm}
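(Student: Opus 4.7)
The plan is to recognize \eqref{eq109} as an instance of the co-Yoneda lemma (density theorem) in $\Vect$. First I would introduce the covariant functor
$$F: \Mod(\Vbb^{\otimes R}) \longrightarrow \Vect, \qquad F(\Mbb) := \ST^*_\fg(\Mbb' \otimes \Xbb),$$
whose covariance results from composing two contravariant functors: the contragredient anti-equivalence $\Mbb \mapsto \Mbb'$ and the contravariant pullback $\Nbb \mapsto \ST^*_\fg(\Nbb \otimes \Xbb)$ on conformal blocks. The $C_2$-cofiniteness of $\Vbb$ guarantees that $F$ is $\Vect$-valued. Under this identification, the family \eqref{eq109} rewrites as $T \otimes \upchi \mapsto F(T)(\upchi)$, with codomain $F(\bbs_\ff(\Wbb)) = \ST^*_\fg(\boxtimes_\ff(\Wbb) \otimes \Xbb)$, so the lemma becomes the assertion that the canonical family
$$\Hom_{\Vbb^{\otimes R}}(\Mbb, \bbs_\ff(\Wbb)) \otimes_\Cbb F(\Mbb) \longrightarrow F(\bbs_\ff(\Wbb)), \qquad T \otimes \upchi \mapsto F(T)(\upchi)$$
is a coend.

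Next I would check dinaturality directly, which is just the functoriality of $F$: for any $T: \Mbb \to \Nbb$, $S: \Nbb \to \bbs_\ff(\Wbb)$, and $\upchi \in F(\Mbb)$, the two ways around the dinatural square coincide via
$$\upchi \circ \big((ST)^\tr \otimes \id_\Xbb\big) = \big(\upchi \circ (T^\tr \otimes \id_\Xbb)\big) \circ (S^\tr \otimes \id_\Xbb).$$

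For the universal property, given any dinatural family $(\psi_\Mbb)$ into some $A \in \Vect$, I would define the candidate factorization
$$\Phi: \ST^*_\fg(\boxtimes_\ff(\Wbb) \otimes \Xbb) \longrightarrow A, \qquad \Phi(\upeta) := \psi_{\bbs_\ff(\Wbb)}(\id_{\bbs_\ff(\Wbb)} \otimes \upeta).$$
Applying dinaturality of $(\psi_\Mbb)$ to an arbitrary $T: \Mbb \to \bbs_\ff(\Wbb)$ then yields $\psi_\Mbb(T \otimes \upchi) = \psi_{\bbs_\ff(\Wbb)}(\id \otimes F(T)(\upchi)) = \Phi(F(T)(\upchi))$, verifying that $\Phi$ factorizes the given family. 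Uniqueness of $\Phi$ follows by evaluating any such factorization at $T = \id_{\bbs_\ff(\Wbb)}$.

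There is no substantial obstacle here: the argument is purely categorical once the identification $F(\bbs_\ff(\Wbb)) = \ST^*_\fg(\boxtimes_\ff(\Wbb) \otimes \Xbb)$ is recognized. The only points requiring mild care are confirming that $F$ takes values in $\Vect$ (which the finite-dimensionality of conformal block spaces ensures) and that the orientations in the paper's dinaturality convention (with $F(\Wbb',\Wbb)$ contragredient in the first slot) match the standard co-Yoneda setup, which is routine after unwinding $\boxtimes_\ff(\Wbb) = \bbs_\ff(\Wbb)'$.
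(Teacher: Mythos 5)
Your proposal is correct and follows essentially the same route as the paper: the paper also identifies \eqref{eq109} as the canonical co-Yoneda family for the covariant functor $\Mbb\mapsto\ST^*_\fg(\Mbb'\otimes\Xbb)$ evaluated at $b=\bbs_\ff(\Wbb)$, the only difference being that it cites \cite[Prop.\ 4]{FS-coends-CFT} for the density statement whereas you prove it inline (correctly). Your explicit verification of dinaturality, the factorization $\Phi(\upeta)=\psi_{\bbs_\ff(\Wbb)}(\id\otimes\upeta)$, and uniqueness is exactly the standard argument behind that citation.
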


In other words, \eqref{eq109} realizes a linear isomorphism
\begin{align*}
\int^{\Mbb\in\Mod(\Vbb^{\otimes R})}\Hom_{\Vbb^{{\otimes R}}}\big(\Mbb,\bbs_\ff(\Wbb)\big)\otimes_\Cbb\ST^*_\fg\big(\Mbb'\otimes\Xbb\big)\simeq\ST^*_\fg\big(\boxtimes_\ff(\Wbb)\otimes\Xbb\big)
\end{align*}
Note that the linear functional $\upchi:\Mbb'\otimes\Xbb\rightarrow\Cbb$ in \eqref{eq109} is a conformal block, where $\Mbb'\otimes\Xbb$ is associated to the ordered marked points $y_1',\dots,y_R',y_1,\dots,y_K$ of $\fg$. 

\begin{proof}
The map \eqref{eq109} is clearly dinatural. By \cite[Prop. 4]{FS-coends-CFT}, if $\scr D$ is a $\Cbb$-linear category, and if $G:\scr D\rightarrow\Vect$ is a $\Cbb$-linear functor, then for any $b\in\scr D$, the family of linear maps
\begin{gather*}
\Hom_{\scr D}(d,b)\otimes_\Cbb G(d)\rightarrow G(b)\qquad T\otimes \xi\mapsto G(T)\xi
\end{gather*}
for $d\in\scr D$ is a coend. Apply this result to $\scr D=\Mod(\Vbb^{\otimes R})$, $b=\bbs_\ff(\Wbb)$, and
\begin{align*}
G:\Mod(\Vbb^{\otimes R})\rightarrow\Vect\qquad \Mbb\mapsto \ST^*_\fg\big(\Mbb'\otimes\Xbb\big)
\end{align*}
Then the proof is complete.
\end{proof}

The following theorem is the coend version of the sewing-factorization theorem. In the case where $\ff$, $\fg$, and $\MS(\ff\sqcup\fg)|_{p_\blt}$ are all genus $0$ surfaces, a similar theorem was obtained in \cite{Moriwaki22-CB}.

\begin{thm}\label{lb54}
The family of linear maps
\begin{gather}\label{eq111}
\begin{gathered}
\ST^*_\ff\big(\Mbb\otimes\Wbb\big)\otimes_\Cbb\ST^*_\fg\big(\Mbb'\otimes\Xbb\big)\longrightarrow \ST_{\MS(\fk F\sqcup\fk G)_{p_\blt}}^*\big(\Wbb\otimes \Xbb\big)\\
\uppsi\otimes\upchi\mapsto  \MS(\uppsi\otimes \upchi)\big|_{p_\blt}
\end{gathered}
\end{gather}
for $\Mbb\in\Mod(\Vbb^{\otimes R})$ is a coend in $\Vect$.
\end{thm}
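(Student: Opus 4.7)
The plan is to deduce Theorem \ref{lb54} from Theorem \ref{lb43} by combining Lem.~\ref{lb53} with the universal property of the dual fusion product $(\bbs_\ff(\Wbb),\gimel)$. First I would check that \eqref{eq111} is dinatural in $\Mbb$: for $T\in\Hom_{\Vbb^{\otimes R}}(\Mbb,\Nbb)$ with transpose $T^\tr:\Nbb'\to\Mbb'$, and for $\uppsi\in\ST^*_\ff(\Nbb\otimes\Wbb)$, $\upchi\in\ST^*_\fg(\Mbb'\otimes\Xbb)$, the identity
\[
\MS\big((\uppsi\circ(T\otimes\id_\Wbb))\otimes\upchi\big)\big|_{p_\blt}=\MS\big(\uppsi\otimes(\upchi\circ(T^\tr\otimes\id_\Xbb))\big)\big|_{p_\blt}
\]
must hold. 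Unfolding Def.~\ref{lb42}, both sides are contractions over the generalized eigenspaces $\Mbb_{[\lambda_\blt]}$ and $\Nbb_{[\lambda_\blt]}$ with the insertion $q_\blt^{L_\blt(0)}$; dinaturality follows from the fact that $T$ intertwines $L_j(0)$, so $T$ preserves the generalized-eigenspace decomposition, and the standard identity $\sum_{\alpha}T(e_{\lambda_\blt}(\alpha))\otimes\wch e_{\lambda_\blt}(\alpha)=\sum_{\beta}e_{\lambda_\blt}(\beta)\otimes T^\tr(\wch e_{\lambda_\blt}(\beta))$ allows one to shift the action of $T$ from the $\uppsi$-side to the $\upchi$-side inside the contraction.

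Next, I would invoke Lem.~\ref{lb53} together with the natural isomorphism of Def.~\ref{lb13},
\[
\Hom_{\Vbb^{\otimes R}}(\Mbb,\bbs_\ff(\Wbb))\xlongrightarrow{\simeq}\ST^*_\ff(\Mbb\otimes\Wbb),\qquad T\mapsto\gimel\circ(T\otimes\id_\Wbb),
\]
which lets us rewrite the coend of Lem.~\ref{lb53} as the family \eqref{eq120}, namely
\[
\ST^*_\ff(\Mbb\otimes\Wbb)\otimes_\Cbb\ST^*_\fg(\Mbb'\otimes\Xbb)\longrightarrow\ST^*_\fg(\boxtimes_\ff(\Wbb)\otimes\Xbb),\quad \uppsi\otimes\upchi\mapsto\upchi\circ(T_\uppsi^\tr\otimes\id_\Xbb),
\]
where $T_\uppsi$ is the unique module morphism with $\uppsi=\gimel\circ(T_\uppsi\otimes\id_\Wbb)$. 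By Lem.~\ref{lb53} this family is a coend in $\Vect$.

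The universal property of this coend then produces a unique linear map $\Phi:\ST^*_\fg(\boxtimes_\ff(\Wbb)\otimes\Xbb)\to\ST^*_{\MS(\ff\sqcup\fg)_{p_\blt}}(\Wbb\otimes\Xbb)$ making the dinatural family \eqref{eq111} factor through \eqref{eq120}. I would verify that the Thm.~\ref{lb43} map $\eqref{eq121}$, i.e.\ $\upchi\mapsto\MS(\gimel\otimes\upchi)|_{p_\blt}$, realizes this $\Phi$. This comes down to checking
\[
\MS\big((\gimel\circ(T_\uppsi\otimes\id_\Wbb))\otimes\upchi\big)\big|_{p_\blt}=\MS\big(\gimel\otimes(\upchi\circ(T_\uppsi^\tr\otimes\id_\Xbb))\big)\big|_{p_\blt},
\]
which is the same basis-shuffling identity used in the dinaturality check, applied with $\bbs_\ff(\Wbb)$ in place of $\Nbb$. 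By Thm.~\ref{lb43}, the map $\Phi$ is a linear isomorphism, and from a general lemma about coends — if a dinatural family factors through a coend via an isomorphism, it is itself a coend — we conclude that \eqref{eq111} is a coend in $\Vect$.

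The genuine mathematical content is already carried by Thm.~\ref{lb43}; the remaining obstacle is the careful bookkeeping needed to push $T_\uppsi$ across the contraction while handling the generalized (not genuine) eigenspace decomposition of $L_\blt(0)$. Once this compatibility is established, the reduction to Thm.~\ref{lb43} is formal.
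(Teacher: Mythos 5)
Your proposal is correct and follows essentially the same route as the paper: identify $\ST^*_\ff(\Mbb\otimes\Wbb)$ with $\Hom_{\Vbb^{\otimes R}}(\Mbb,\bbs_\ff(\Wbb))$ via the universal property of $(\bbs_\ff(\Wbb),\gimel)$, invoke Lem.~\ref{lb53} to get the coend \eqref{eq110}, compose with the isomorphism \eqref{eq99} of Thm.~\ref{lb43}, and check via the contraction identity that the composite equals \eqref{eq111}. The paper simply composes the coend with the isomorphism directly rather than first establishing dinaturality of \eqref{eq111} and then factoring through the universal property, but this is the same argument in a slightly different order.
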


In other words, the sewing map realizes a linear isomorphism
\begin{align}
\int^{\Mbb\in\Mod(\Vbb^{\otimes R})}\ST^*_\ff\big(\Mbb\otimes\Wbb\big)\otimes_\Cbb\ST^*_\fg\big(\Mbb'\otimes\Xbb\big)\simeq\ST_{\MS(\fk F\sqcup\fk G)_{p_\blt}}^*\big(\Wbb\otimes \Xbb\big)
\end{align}

\begin{proof}
By the universal property for dual fusion products, the linear map
\begin{gather*}
\Hom_{\Vbb^{{\otimes R}}}\big(\Mbb,\bbs_\ff(\Wbb)\big)\rightarrow\ST^*_\ff\big(\Mbb\otimes\Wbb\big)\qquad T\mapsto \gimel\circ(T\otimes\id_\Wbb)
\end{gather*}
implements a natural equivalence between the contravariant functors $\Mbb\in\Mod(\Vbb^{\otimes R})\mapsto\Hom_{\Vbb^{\otimes R}}(\Mbb,\bbs_\ff(\Wbb))$ and $\Mbb\in\Mod(\Vbb^{\otimes R})\mapsto\ST^*_\ff\big(\Mbb\otimes\Wbb\big)$. Combining this fact with Lem. \ref{lb53}, we see that the family of linear maps
\begin{gather}\label{eq110}
\begin{gathered}
\ST^*_\ff\big(\Mbb\otimes\Wbb\big)\otimes_\Cbb\ST^*_\fg\big(\Mbb'\otimes\Xbb\big)\rightarrow\ST^*_\fg\big(\boxtimes_\ff(\Wbb)\otimes\Xbb\big)\\ \uppsi\otimes\upchi\mapsto \upchi\circ(T_\uppsi^\tr\otimes\id_\Xbb)
\end{gathered}
\end{gather}
for $\Mbb\in\Mod(\Vbb^{\otimes R})$ is a coend, where $T_\uppsi:\Mbb\rightarrow\bbs_\ff(\Wbb)$ is the unique homomorphism such that $\gimel\circ(T_\uppsi\otimes\id_\Wbb)=\uppsi$. 

By the sewing-factorization Thm. \ref{lb43}, the linear map \eqref{eq99} is an isomorphism. Hence $\eqref{eq99}\circ\eqref{eq110}$ is also a coend. We compute that $\eqref{eq99}\circ\eqref{eq110}$ sends $\uppsi\otimes\upchi$ to
\begin{align*}
\MS\big(\gimel\otimes(\upchi\circ (T_\uppsi^\tr\otimes\id_\Xbb))\big)\big|_{p_\blt}=\MS\big((\gimel\circ (T_\uppsi\otimes\id_\Wbb))\otimes\upchi\big)\big|_{p_\blt}=\MS(\uppsi\otimes\upchi)\big|_{p_\blt}
\end{align*}
Therefore, \eqref{eq111} is a coend.
\end{proof}

\subsection{The SF theorems for families of compact Riemann surfaces}

\subsubsection{The setting}\label{lb50}

In this section, we generalize the sewing-factorization theorems \ref{lb40} and \ref{lb43} to families of compact Riemann surfaces. Let $R,N,K\in\Nbb$. Let
\begin{gather*}
\fk F=\big(x_1',\dots,x_R'; \theta_1',\dots,\theta_R'\big|\wtd\pi_1:\wtd\MC_1\rightarrow\wtd\MB\big|x_1,\dots,x_N;\theta_1,\dots,\theta_N\big)\\
\fk G=\big(y_1',\dots,y_R';\mu_1',\dots,\mu_R'\big|\wtd\pi_2:\wtd\MC_2\rightarrow\wtd\MB\big| y_1,\dots,y_K;\mu_1,\dots,\mu_K \big)
\end{gather*}
be $(R,N)$-pointed and $(R,K)$-pointed \textbf{families of compact Riemann surfaces} with common base manifold $\wtd\MB$. Therefore, $\wtd\MC_1,\wtd\MB$ are complex manifolds, and $\wtd\pi_1$ is a proper holomorphic submersion  such that for each $b\in\wtd\MB$, the fiber $\wtd\pi_1^{-1}(b)$ has pure dimension $1$. Each $x_i:\wtd\MB\rightarrow\wtd\MC_1$ is a holomorphic section, i.e., a holomorphic map such that $\wtd\pi_1\circ x_i=\id_{\wtd\MB}$. Moreover, $\theta_i$ is a \textbf{local coordinate} at $x_i(\wtd\MB)$, i.e., a holomorphic map $\theta_i:\wtd U_i\rightarrow\Cbb$ (where $\wtd U_i$ is an open set containing $x_i(\wtd\MB)$) which is univalent on  $\wtd U_{i,b}=\wtd U_i\cap\wtd\pi_1^{-1}(b)$ for each $b\in\wtd\MB$, and which maps $x_i(\wtd\MB)$ to $0$. Similarly, each $x_j'$ is a holomorphic section, and $\theta_j'$ is a local coordinate at $x_j'(\wtd\MB)$. We assume that $x_1(\wtd\MB),\dots,x_N(\wtd\MB),x_1'(\wtd\MB),\dots,x_R'(\wtd\MB)$ are mutually disjoint. A similar description applies to $\fg$.

For each $1\leq j\leq R$, let $V_j'$ be a neighborhood of $x_j'(\wtd\MB)$ on which $\theta_j'$ is defined (and univalent), and let $V_j''$ be a neighborhood of $x_j''(\wtd\MB)$ on which $\theta_j''$ is defined. We assume that
\begin{gather*}
x_1(\wtd\MB),\dots,x_N(\wtd\MB),V_1',\dots,V_R'\quad\text{are mutually disjoint}\\
y_1(\wtd\MB),\dots,y_K(\wtd\MB),V_1'',\dots,V_R''\quad\text{are mutually disjoint}
\end{gather*}
We also assume that there are $0<r_1,\dots,r_R,\rho_1,\dots,\rho_R\leq+\infty$ such that
\begin{align}
(\wtd\pi_1,\theta_j')(V_j')=\wtd\MB\times\MD_{r_j} \qquad (\wtd\pi_2,\mu_j')(V_j'')=\wtd\MB\times\MD_{\rho_j}
\end{align}

\begin{df}
Let $\MS(\ff\sqcup\fg)$ be the family obtained by sewing $\ff\sqcup\fg$ along the pairs of marked points $(x_j',y_j')$ (for all $1\leq j\leq R$) with sewing radii $(r_j,\rho_j)$ using their local coordinates $\theta_j',\mu_j'$. The sections $x_i,y_j$ and the local coordinates $\theta_i,\mu_j$ of $\ff,\fg$ are extended constantly to those of $\MS(\ff\sqcup\fg)$, and we continue to denote them by the same symbols. See \cite[Subsec. 1.2.1]{GZ2} for the detailed construction. Let
\begin{gather}
\begin{gathered}
\MB=\wtd\MB\times\MD_{r_\blt\rho_\blt}=\wtd\MB\times\MD_{r_1\rho_1}\times\cdots\times\MD_{r_R\rho_R}\\
\MB^\times=\wtd\MB\times\MD_{r_\blt\rho_\blt}^\times=\wtd\MB\times\MD^\times_{r_1\rho_1}\times\cdots\times\MD^\times_{r_R\rho_R}
\end{gathered}
\end{gather}
Then $\MS(\ff\sqcup\fg)$ has base manifold $\MB$ and has smooth fibers over $\MB^\times$. We write
\begin{align}\label{eq106}
\MS(\ff\sqcup\fg)=\big(\pi:\MC\rightarrow\MB\big|x_1,\dots,x_N,y_1,\dots,y_K;\theta_1,\dots,\theta_N,\mu_1,\dots,\mu_K  \big)
\end{align}
As in Subsec. \ref{lb44}, we let $q_j$ be the standard coordinate of $\MD_{r_j\rho_j}$.
\end{df}

We assume that each component of each fiber $\wtd\pi_1^{-1}(b)$ intersects $x_1(\wtd\MB)\cup\cdots\cup x_N(\wtd\MB)$, and each component of each fiber $\wtd\pi_2^{-1}(b)$ intersects $y_1(\wtd\MB)\cup\cdots\cup y_K(\wtd\MB)\cup y_1'(\wtd\MB)\cup\dots\cup y_R'(\wtd\MB)$.

As in Subsec. \ref{lb44}, we associate $\Wbb\in\Mod(\Vbb^{\otimes N})$ to the ordered incoming marked points $x_1,\dots,x_N$ of $\ff$. Similarly, associate $\Xbb\in\Mod(\Vbb^{\otimes K})$ to $y_1,\dots,y_K$. Associate $\Mbb\in\Mod(\Vbb^{\otimes R})$ to $x_1',\dots,x_R'$, and associate its contragredient $\Mbb'$ to $y_1',\dots,y_R'$. 

We view $\Mbb\otimes\Wbb$ as associated to $x_\blt',x_\blt$, and $\Mbb'\otimes\Xbb$ as associated to $y_\blt',y_\blt$. Associate $\Wbb\otimes\Xbb$ to the ordered marked points $x_1(\MB),\dots,x_N(\MB),y_1(\MB),\dots,y_K(\MB)$ of $\MS(\ff\sqcup\fg)$.

\begin{df}
Suppose that
\begin{gather*}
\upphi\in H^0\big(\wtd\MB,\ST^*_\ff(\Mbb\otimes\Wbb)\big)\qquad \upchi\in H^0\big(\wtd\MB,\ST^*_\fg(\Mbb'\otimes\Xbb)\big)
\end{gather*}
The \textbf{sewing of $\upphi$ and $\upchi$} is defined by contracting the $\Mbb$-component of $\upphi$ with the $\Mbb'$-component of $\upchi$. More precisely,
\begin{subequations}
\begin{gather}
\begin{gathered}
\MS(\upphi\otimes\upchi):\Wbb\otimes\Xbb\rightarrow\MO(\wtd\MB)\{q_\blt\}[\log q_\blt]\\[0.5ex]
\bigbk{\MS(\upphi\otimes\upchi),w\otimes \tipaz}=\sum_{\lambda_\blt\in\Cbb^R}\sum_{\alpha\in\fk A_{\lambda_\blt}}\bigbk{\upphi,q_\blt^{L_\blt(0)}e_{\lambda_\blt}(\alpha)\otimes w}\cdot \bigbk{\upchi,\wch e_{\lambda_\blt}(\alpha)\otimes\tipaz}
\end{gathered}
\end{gather}
(where $w\in\Wbb,\tipaz\in\Xbb$). Here, $(e_{\lambda_\blt}(\alpha))_{\alpha\in\fk A_{\lambda_\blt}}$ is a basis of $\Mbb_{[\lambda_\blt]}$ with dual basis $(\wch e_{\lambda_\blt}(\alpha))_{\alpha\in\fk A_{\lambda_\blt}}$ in $\Mbb'_{[\lambda_\blt]}$, and $q_\blt^{L_\blt(0)}=\eqref{eq102}$. See \cite[Sec. 4.1]{GZ2} for more explanations. Clearly we also have
\begin{gather}
\bigbk{\MS(\upphi\otimes\upchi),w\otimes \tipaz}=\sum_{\lambda_\blt\in\Cbb^R}\sum_{\alpha\in\fk A_{\lambda_\blt}}\bigbk{\upphi,e_{\lambda_\blt}(\alpha)\otimes w}\cdot \bigbk{\upchi,q_\blt^{L_\blt(0)}\wch e_{\lambda_\blt}(\alpha)\otimes \tipaz}
\end{gather}
\end{subequations}
\end{df}

\begin{rem}\label{lb45}
By \cite[Thm. 4.9]{GZ2}, $\MS(\upphi\otimes\upchi)$ converges a.l.u. on $\MB^\times$ in the sense of Def. \ref{lb19}, and
\begin{align}\label{eq113}
\MS(\upphi\otimes\upchi)\big|_{\MB^\times}\in H^0\big(\wtd\MB\times\wht\MD_{r_\blt\rho_\blt}^\times,\ST^*_{\MS(\ff\sqcup\fg)}(\Wbb\otimes\Xbb)\big)
\end{align}
where $\wht\MD_{r_\blt\rho_\blt}^\times$ is the universal cover of $\MD_{r_\blt\rho_\blt}^\times$. By \cite[Rem. 3.1]{GZ2}, \eqref{eq113} is equivalent to that for each $b=(b_0,p_1,\dots,p_R)\in\MB^\times$ with chosen $\arg p_1,\dots,\arg p_R$, we have
\begin{align*}
\MS(\upphi\otimes\upchi)\big|_b\in \ST^*_{\MS(\ff\sqcup\fg)_b}(\Wbb\otimes\Xbb)
\end{align*}
\end{rem}

\begin{df}[\cite{GZ2} Def. 3.18]\label{lb52}
A \textbf{dual fusion product} of $\Wbb$ along $\ff$ is a pair $(\bbs_\ff(\Wbb),\gimel)$ where $\bbs_\ff(\Wbb)\in\Mod(\Vbb^{\otimes R})$, and $\gimel\in H^0(\wtd\MB,\scr T^*_\ff(\bbs_\fx(\Wbb)\otimes\Wbb))$ satisfies that for each $b\in\wtd\MB$, the pair $(\bbs_\ff(\Wbb),\gimel|_b)$ is a dual fusion product of $\Wbb$ along $\ff_b$. The contragredient $\Vbb^{\otimes R}$-module of $\bbs_\ff(\Wbb)$ is denoted by $\boxtimes_\ff(\Wbb)$ and called the \textbf{fusion product} of $\Wbb$ along $\ff$.
\end{df}

\begin{rem}\label{lb106}
Dual fusion products, when they exist, are unique up to unique holomorphic isomorphisms. See \cite[Cor. 3.21]{GZ2} for a detailed description. Moreover, by \cite[Cor. 3.23]{GZ2}, a dual fusion product $(\bbs_\ff(\Wbb),\gimel)$ exists provided that $\wtd\MB$ is a \textbf{polydisk}, i.e., $\wtd\MB=\MD_{\eps_1}\times\cdots\times\MD_{\eps_m}$ for some $m\in\Nbb$ and $\eps_1,\dots,\eps_m\in(0,+\infty]$. 
\end{rem}

\subsubsection{The sewing-factorization theorems}

For each $p_\blt\in\MD_{r_\blt\rho_\blt}^\times$, we let $\MS(\ff\sqcup\fg)_{\wtd\MB\times p_\blt}$ be the restriction of $\MS(\ff\sqcup\fg)$ to the submanifold $\wtd\MB\times p_\blt$. Equivalently, it is the pullback of $\MS(\ff\sqcup\fg)$ along the map $b\in\wtd\MB\mapsto (b,p_\blt)\in\MB$. In this way, we may regard $\MS(\ff\sqcup\fg)_{\wtd\MB\times p_\blt}$ as a family with base manifold $\wtd\MB$. We view
\begin{align*}
\Hom_{\Vbb^{\otimes R}}\big(\boxtimes_\fg(\Xbb),\bbs_\ff(\Wbb)\big)\otimes_\Cbb \MO_{\wtd\MB}
\end{align*}
as the sheaf of (germs of) holomorphic functions on $\wtd\MB$ whose values are homomorphisms $\boxtimes_\fg(\Xbb)\rightarrow\bbs_\ff(\Wbb)$.

We now prove the sewing-factorization theorems for families, generalizing Thm. \ref{lb40} and Thm. \ref{lb43}. We emphasize that the dual fusion products mentioned in the following theorems exist whenever $\wtd\MB$ is a polydisk.

\begin{thm}\label{lb46}
For each $b\in\wtd\MB$, assume that every component of $\wtd\pi_1^{-1}(b)$ intersects $x_1(\wtd\MB)\cup\cdots\cup x_N(\wtd\MB)$, and every component of $\wtd\pi_2^{-1}(b)$ intersects $y_1(\wtd\MB)\cup\cdots \cup y_K(\wtd\MB)$. Assume that $(\bbs_\ff(\Wbb),\gimel)$ is a dual fusion product of $\Wbb$ along $\ff$, and $(\bbs_\fg(\Xbb),\daleth)$ is a dual fusion product of $\Xbb$ along $\fg$. Let $p_\blt\in\MD_{r_\blt\rho_\blt}^\times$ with fixed $\arg p_1,\dots,\arg p_R$. Then the following sheaf map is an $\MO_{\wtd\MB}$-module isomorphism.
\begin{gather}\label{eq103}
\begin{gathered}
\Hom_{\Vbb^{\otimes R}}\big(\boxtimes_\fg(\Xbb),\bbs_\ff(\Wbb)\big)\otimes_\Cbb\MO_{\wtd\MB}\xlongrightarrow{\simeq} \ST_{\MS(\fk F\sqcup\fk G)_{\wtd\MB\times p_\blt}}^*(\Wbb\otimes \Xbb)\\[0.5ex]
T\mapsto \MS\big((\gimel\circ(T\otimes \id_\Wbb))\otimes \daleth\big)\big|_{\wtd\MB\times p_\blt}
\end{gathered}
\end{gather}
\end{thm}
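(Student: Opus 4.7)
The plan is to reduce Thm.~\ref{lb46} to its fiberwise version Thm.~\ref{lb40} via a Nakayama-type argument. Concretely, I would argue that both sides of \eqref{eq103} are locally free $\MO_{\wtd\MB}$-modules of the same finite rank, that the prescribed assignment is $\MO_{\wtd\MB}$-linear, and that its fiber at each $b\in\wtd\MB$ is precisely the isomorphism supplied by Thm.~\ref{lb40} applied to the pointed Riemann surfaces $\ff_b$ and $\fg_b$.

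First, I would verify that $T\mapsto \MS\big((\gimel\circ(T\otimes\id_\Wbb))\otimes\daleth\big)\big|_{\wtd\MB\times p_\blt}$ is a well-defined sheaf morphism. For a local section $T$ on some open $U\subset\wtd\MB$, the composition $\gimel\circ(T\otimes\id_\Wbb)$ is a holomorphic section of $\ST^*_\ff(\boxtimes_\fg(\Xbb)\otimes\Wbb)$ over $U$ because $(\bbs_\ff(\Wbb),\gimel)$ is a family-level dual fusion product in the sense of Def.~\ref{lb52}; sewing it with the family $\daleth$ then produces a holomorphic section of $\ST^*_{\MS(\ff\sqcup\fg)}(\Wbb\otimes\Xbb)$ over $U\times\wht\MD_{r_\blt\rho_\blt}^\times$ by the a.l.u.~convergence of \cite[Thm.~4.9]{GZ2} (cf.\ Rem.~\ref{lb45}), whose restriction to $U\times p_\blt$ is the desired image. $\MO_{\wtd\MB}$-linearity in $T$ is immediate from the contraction formula in Def.~\ref{lb42}.

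Next I would establish local freeness and the fiber identification. Since $\bbs_\ff(\Wbb)$ and $\bbs_\fg(\Xbb)$ are $\Vbb^{\otimes R}$-modules independent of $b\in\wtd\MB$, the LHS of \eqref{eq103} is the trivial bundle on $\wtd\MB$ with fiber $\Hom_{\Vbb^{\otimes R}}(\boxtimes_\fg(\Xbb),\bbs_\ff(\Wbb))$; this fiber is finite-dimensional since, by Thm.~\ref{lb40} applied at any single fiber, it is linearly isomorphic to $\ST^*_{\MS(\ff_b\sqcup\fg_b)_{p_\blt}}(\Wbb\otimes\Xbb)$. The RHS is locally free on $\wtd\MB$ by the local freeness theorem of \cite[Thm.~3.13]{GZ2} (recalled in Def.~\ref{lb48}) applied to the smooth family $\MS(\ff\sqcup\fg)_{\wtd\MB\times p_\blt}$, with fiber at $b$ canonically identified with $\ST^*_{\MS(\ff\sqcup\fg)_{(b,p_\blt)}}(\Wbb\otimes\Xbb)$ via Rem.~\ref{lb49}. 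Under these identifications, the fiber of \eqref{eq103} at $b$ becomes the linear map
\begin{align*}
T\mapsto \MS\big((\gimel|_b\circ(T\otimes\id_\Wbb))\otimes\daleth|_b\big)\big|_{p_\blt},
\end{align*}
which is an isomorphism by Thm.~\ref{lb40}, since $(\bbs_\ff(\Wbb),\gimel|_b)$ and $(\bbs_\fg(\Xbb),\daleth|_b)$ are dual fusion products of $\Wbb,\Xbb$ along $\ff_b,\fg_b$ respectively by the very definition of a family of dual fusion products.

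With this in place, the proof finishes by invoking Nakayama's lemma: an $\MO_{\wtd\MB}$-linear morphism between locally free sheaves of the same finite rank that is fiberwise an isomorphism is itself an isomorphism. I do not expect any serious obstacle here; the only subtle piece of bookkeeping is checking that the formation of the sheaf map commutes with restriction to fibers, which ultimately reduces to compatibility of the a.l.u.-convergent sewing with pointwise evaluation, already packaged in \cite[Thm.~4.9]{GZ2}.
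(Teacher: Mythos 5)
Your proposal is correct and follows essentially the same route as the paper: check that the map is a well-defined $\MO_{\wtd\MB}$-module morphism of locally free sheaves (via Rem.~\ref{lb45} and \cite[Thm.~3.13]{GZ2}), identify its fiber at each $b$ with the isomorphism of Thm.~\ref{lb40} using Rem.~\ref{lb49}, and conclude because a morphism of vector bundles that is fiberwise an isomorphism is an isomorphism (the paper phrases this via the invertibility of the local matrix-valued function rather than Nakayama, but it is the same argument).
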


Note that \eqref{eq103} is equivalent to
\begin{align}
\begin{gathered}
\Hom_{\Vbb^{\otimes R}}\big(\boxtimes_\ff(\Wbb),\bbs_\fg(\Xbb)\big)\otimes_\Cbb\MO_{\wtd\MB}\xlongrightarrow{\simeq} \ST_{\MS(\fk F\sqcup\fk G)_{\wtd\MB\times p_\blt}}^*(\Wbb\otimes \Xbb)\\[0.5ex]
T\mapsto \MS\big(\gimel\otimes (\daleth\circ(T\otimes \id_\Xbb))\big)\big|_{\wtd\MB\times p_\blt}
\end{gathered}
\end{align}

\begin{proof}
By Rem. \ref{lb45}, the map $\Phi:=$\eqref{eq103} is well-defined and is clearly an $\MO_{\wtd\MB}$-module morphism. Denote the source and the target of $\Phi$ by $\scr E$ and $\scr F$, which are locally free. Thus, $\Phi$ can be viewed as a morphism of vector bundles. In particular, each $b_0\in\wtd B$ is contained in a neighborhood $\Omega\subset\wtd\MB$ such that $\Phi|_\Omega$ can be viewed as a holomorphic matrix-valued function.  For each $b\in\Omega$, by \cite[Thm. 3.13]{GZ2} (see also Rem. \ref{lb49}), we have a linear isomorphism
\begin{align*}
\scr F_b/\mk_{\wtd\MB,b}\scr F_b\xlongrightarrow{\simeq}\ST^*_{\MS(\ff\sqcup\fg)_{b\times p_\blt}}(\Wbb\otimes\Xbb)
\end{align*}
defined by sending each $\uppsi$ of the stalk $\scr F_b$ to $\uppsi|_b$. Therefore, by Thm. \ref{lb40}, the restriction of $\Phi$ to the fiber at $b$ is a linear isomorphism, i.e., the induced linear map $\scr E/\mk_{\wtd\MB,b}\scr E\rightarrow \scr F/\mk_{\wtd\MB,b}\scr F$ is an isomorphism. Therefore, if we view $\Phi|_\Omega$ as  a matrix-valued function, then for each $b\in\Omega$, the matrix $\Phi|_b$ is invertible. This proves that $\Phi$ is an $\MO_{\wtd\MB}$-module isomorphism.
\end{proof}

\begin{thm}\label{lb47}
For each $b\in\wtd\MB$, assume that every component of $\wtd\pi_1^{-1}(b)$ intersects $x_1(\wtd\MB)\cup\cdots\cup x_N(\wtd\MB)$, and every component of $\wtd\pi_2^{-1}(b)$ intersects $y_1(\wtd\MB)\cup\cdots \cup y_K(\wtd\MB)\cup y_1'(\wtd\MB)\cup\cdots\cup y_R'(\wtd\MB)$. Assume that $(\bbs_\ff(\Wbb),\gimel)$ is a dual fusion product of $\Wbb$ along $\ff$. Let $p_\blt\in\MD_{r_\blt\rho_\blt}^\times$ with fixed $\arg p_1,\dots,\arg p_R$. Then the following sheaf map is an $\MO_{\wtd\MB}$-module isomorphism.
\begin{gather}
\begin{gathered}
\ST_{\fg}^*\big(\boxtimes_\ff(\Wbb)\otimes \Xbb\big)\xlongrightarrow{\simeq} \ST_{\MS(\fk F\sqcup\fk G)_{\wtd\MB\times p_\blt}}^*\big(\Wbb\otimes \Xbb\big)\\[0.5ex]
\upchi \mapsto \MS(\gimel\otimes \upchi)\big|_{\wtd\MB\times p_\blt}
\end{gathered}
\end{gather}
\end{thm}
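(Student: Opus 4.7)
The plan is to mimic the proof of Theorem \ref{lb46} verbatim, replacing the pointwise input Theorem \ref{lb40} by the pointwise input Theorem \ref{lb43}, and using that both the source and target are locally free $\MO_{\wtd\MB}$-modules of the same rank whose fibers are identified, via restriction, with the corresponding pointwise spaces of conformal blocks.

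First I would verify that the map $\Phi:\ST^*_\fg(\boxtimes_\ff(\Wbb)\otimes\Xbb)\to \ST^*_{\MS(\ff\sqcup\fg)_{\wtd\MB\times p_\blt}}(\Wbb\otimes\Xbb)$ sending $\upchi\mapsto\MS(\gimel\otimes\upchi)|_{\wtd\MB\times p_\blt}$ is a well-defined $\MO_{\wtd\MB}$-module morphism. Well-definedness follows from Rem.~\ref{lb45} (based on \cite[Thm.~4.9]{GZ2}): the series $\MS(\gimel\otimes\upchi)$ converges a.l.u.\ on $\MB^\times=\wtd\MB\times\MD^\times_{r_\blt\rho_\blt}$ to a conformal block section, whose restriction to the closed submanifold $\wtd\MB\times p_\blt$ (with chosen arguments for $p_\blt$) is a genuine section of $\ST^*_{\MS(\ff\sqcup\fg)_{\wtd\MB\times p_\blt}}(\Wbb\otimes\Xbb)$. $\MO_{\wtd\MB}$-linearity is immediate from the definition of the contraction $\MS(\gimel\otimes\upchi)$.

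Next I would invoke \cite[Thm.~3.13]{GZ2} to conclude that both $\ST^*_\fg(\boxtimes_\ff(\Wbb)\otimes\Xbb)$ and $\ST^*_{\MS(\ff\sqcup\fg)_{\wtd\MB\times p_\blt}}(\Wbb\otimes\Xbb)$ are locally free $\MO_{\wtd\MB}$-modules, and hence may be viewed as holomorphic vector bundles on $\wtd\MB$. The local-freeness hypotheses are guaranteed by the assumptions of the theorem: each component of each fiber $\wtd\pi_2^{-1}(b)$ meets some marked point among $y_1(\wtd\MB),\dots,y_K(\wtd\MB),y_1'(\wtd\MB),\dots,y_R'(\wtd\MB)$ (used for $\fg$), and the analogous condition for the sewn family $\MS(\ff\sqcup\fg)_{\wtd\MB\times p_\blt}$ follows from the component-meets-marked-point assumption on $\ff$ together with the one on $\fg$. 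Thus $\Phi$ is a morphism of vector bundles on $\wtd\MB$, and in a neighborhood $\Omega$ of any $b_0\in\wtd\MB$ it can be represented by a holomorphic matrix-valued function.

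For each $b\in\wtd\MB$, by Rem.~\ref{lb49} the fiber of the source at $b$ is canonically $\ST^*_{\fg_b}(\boxtimes_\ff(\Wbb)\otimes\Xbb)$ and the fiber of the target at $b$ is canonically $\ST^*_{\MS(\ff\sqcup\fg)_{b\times p_\blt}}(\Wbb\otimes\Xbb)$. Moreover, by the definition of sewing and of the canonical conformal block, the fiber map $\Phi|_b$ is identified with the pointwise sewing map $\upchi\mapsto\MS(\gimel|_b\otimes\upchi)|_{p_\blt}$ associated to the fibers $\ff_b,\fg_b$ and the dual fusion product $(\bbs_\ff(\Wbb),\gimel|_b)$ (using Def.~\ref{lb52}). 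Therefore, the already-proved Theorem \ref{lb43} applies fiberwise to give that $\Phi|_b$ is a linear isomorphism for every $b\in\wtd\MB$. A vector-bundle morphism on $\wtd\MB$ that is a linear isomorphism at every fiber is an isomorphism of $\MO_{\wtd\MB}$-modules, which completes the proof.

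I do not anticipate a substantial obstacle; the only point requiring a little care is the compatibility of sewing with restriction to the subfamily $\wtd\MB\times p_\blt\hookrightarrow\MB$ and with restriction to fibers $b\in\wtd\MB$, so that the fiberwise statement really reduces to Theorem \ref{lb43}. This, however, is tautological from the constructions in \cite[Sec.~4.1]{GZ2}, since the contraction defining $\MS(\gimel\otimes\upchi)$ is performed tensor component by tensor component and commutes with base change in $\wtd\MB$.
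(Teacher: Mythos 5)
Your proposal is correct and follows essentially the same route as the paper, which proves Theorem \ref{lb47} precisely by repeating the argument for Theorem \ref{lb46} with the fiberwise input Theorem \ref{lb40} replaced by Theorem \ref{lb43} (local freeness from \cite[Thm.~3.13]{GZ2}, fiber identification via Rem.~\ref{lb49}, and the fact that a bundle morphism that is an isomorphism on every fiber is an $\MO_{\wtd\MB}$-module isomorphism). No gaps.
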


\begin{proof}
This theorem follows from Thm. \ref{lb43} by the same reasoning as the derivation of Thm. \ref{lb46} from Thm. \ref{lb40}.
\end{proof}

\subsection{Transitivity of fusion products as an SF theorem}

We continue to work in the setting of Subsec. \ref{lb50}. Let $\fh$ denote the same surface $\fg$ but with incoming and outgoing marked points exchanged. Namely,
\begin{gather*}
\fk F=\big(x_1',\dots,x_R'; \theta_1',\dots,\theta_R'\big|\wtd\pi_1:\wtd\MC_1\rightarrow\wtd\MB\big|x_1,\dots,x_N;\theta_1,\dots,\theta_N\big)\\
\fh=\big(y_1,\dots,y_K;\mu_1,\dots,\mu_K\big|\wtd\pi_2:\wtd\MC_2\rightarrow\wtd\MB\big|y_1',\dots,y_R';\mu_1',\dots,\mu_R'  \big)
\end{gather*}
Let $\fh\circ\ff$ denote the same family as $\MS(\ff\sqcup\fg)=\eqref{eq106}$, except with the marked points $y_1,\dots,y_K$ reinterpreted as outgoing marked points. Namely,
\begin{align*}
\fh\circ\ff=\big(y_1,\dots,y_K;\mu_1,\dots,\mu_K\big|\pi:\MC\rightarrow\MB\big|x_1,\dots,x_N;\theta_1,\dots,\theta_N  \big)
\end{align*}
is a family of $(K,N)$-pointed compact Riemann surfaces with local coordinates. For each $p_\blt\in\MD^\times_{r_\blt\rho_\blt}$, define
\begin{align*}
\fh\circ_{p_\blt}\ff~=\text{ the restriction of $\fh\circ\ff$ to $\wtd\MB\times p_\blt$}
\end{align*}
Equivalently, $\fh\circ_{p_\blt}\ff$ is the pullback of $\fh\circ\ff$ along the map $b\in\wtd\MB\mapsto (b,p_\blt)\in\MB$. Our notation suggests that we are viewing $\fh,\ff$ as morphisms in a cobordism category, with $\fh\circ_{p_\blt}\ff$ representing their composition.

\subsubsection{Transitivity of fusion products}

We now prove the transitivity of fusion products, which can be viewed as another version of the sewing-factorization theorem. Roughly speaking, this property says that
\begin{align}
\boxtimes_{\fh\circ_{p_\blt}\ff}(\Wbb)\simeq\boxtimes_{\fh}(\boxtimes_\ff(\Wbb))
\end{align}
Note that by Rem. \ref{lb106}, the dual fusion products in the following theorem exist whenever $\wtd\MB$ is a polydisk.

\begin{thm}\label{lb51}
For each $b\in\wtd\MB$, assume that every component of $\wtd\pi_1^{-1}(b)$ intersects $x_1(\wtd\MB)\cup\cdots\cup x_N(\wtd\MB)$, and every component of $\wtd\pi_2^{-1}(b)$ intersects $y_1'(\wtd\MB)\cup\cdots\cup y_R'(\wtd\MB)$. 

Associate $\Wbb\in\Mod(\Vbb^{\otimes N})$ to the ordered marked points $x_1(\wtd\MB),\dots,x_N(\wtd\MB)$. Assume that $(\bbs_\ff(\Wbb),\gimel)$ is a dual fusion product of $\Wbb$ along $\ff$. Associate $\boxtimes_\ff(\Wbb)$ to the ordered marked points $y_1'(\wtd\MB),\cdots,y_R'(\wtd\MB)$. Assume that $\big({\bbs_{\fh}(\boxtimes_\ff(\Wbb))},\mem\big)$ is a dual fusion product of $\boxtimes_\ff(\Wbb)$ along $\fh$. Choose $p_\blt\in\MD_{r_\blt\rho_\blt}^\times$ with fixed $\arg p_1,\dots,\arg p_R$. Then
\begin{align}
\big(~{\bbs_{\fh}(\boxtimes_\ff(\Wbb))},\MS(\mem\otimes\gimel)\big|_{\wtd\MB\times p_\blt}~\big)
\end{align}
is a dual fusion product of $\Wbb$ along $\fh\circ_{p_\blt}\ff$.
\end{thm}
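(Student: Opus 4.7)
By Definition \ref{lb52}, a dual fusion product for a family of compact Riemann surfaces is defined fiberwise; thus it suffices to check, for each $b\in\wtd\MB$, that $\big(\bbs_\fh(\boxtimes_\ff(\Wbb)),\MS(\mem\otimes\gimel)|_{b\times p_\blt}\big)$ is a dual fusion product of $\Wbb$ along $(\fh\circ_{p_\blt}\ff)_b$. That $\MS(\mem\otimes\gimel)|_{\wtd\MB\times p_\blt}$ is a (holomorphic family of) conformal block on $\fh\circ_{p_\blt}\ff$ follows from the sewing-convergence results of \cite{GZ2} recalled in Remark \ref{lb45}, since the contraction used in $\MS(\mem\otimes\gimel)$ sews the $\bbs_\ff(\Wbb)$-slot of $\gimel$ with the $\boxtimes_\ff(\Wbb)$-slot of $\mem$ along exactly the pairs of marked points $(x_j',y_j')$ used to form $\fh\circ_{p_\blt}\ff$.

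The plan is to factor the sought universal property as a composition of two previously established isomorphisms. Fix $\Mbb\in\Mod(\Vbb^{\otimes K})$ associated to the outgoing marked points $y_1,\dots,y_K$ of $\fh\circ_{p_\blt}\ff$. The universal property of the dual fusion product $(\bbs_\fh(\boxtimes_\ff(\Wbb)),\mem)$ provides the $\MO_{\wtd\MB}$-module isomorphism
\begin{align*}
\Hom_{\Vbb^{\otimes K}}\!\big(\Mbb,\bbs_\fh(\boxtimes_\ff(\Wbb))\big)\otimes_\Cbb\MO_{\wtd\MB}\xrightarrow{\;\simeq\;}\ST^*_\fh\!\big(\Mbb\otimes\boxtimes_\ff(\Wbb)\big),\qquad T\mapsto \mem\circ(T\otimes\id_{\boxtimes_\ff(\Wbb)}).
\end{align*}
Next, since the distinction between incoming and outgoing marked points affects only the formalism of fusion products and not the conformal block spaces themselves, we may identify $\ST^*_\fh\!\big(\Mbb\otimes\boxtimes_\ff(\Wbb)\big)=\ST^*_\fg\!\big(\boxtimes_\ff(\Wbb)\otimes\Mbb\big)$. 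The hypothesis that each component of $\wtd\pi_2^{-1}(b)$ meets $y_1'(\wtd\MB)\cup\cdots\cup y_R'(\wtd\MB)$ is stronger than the hypothesis of Theorem \ref{lb47}; applying that theorem with its $\Xbb$ taken to be our $\Mbb$ yields the isomorphism
\begin{align*}
\ST^*_\fg\!\big(\boxtimes_\ff(\Wbb)\otimes\Mbb\big)\xrightarrow{\;\simeq\;}\ST^*_{\MS(\ff\sqcup\fg)_{\wtd\MB\times p_\blt}}\!\big(\Wbb\otimes\Mbb\big)=\ST^*_{\fh\circ_{p_\blt}\ff}\!\big(\Mbb\otimes\Wbb\big),\qquad\upchi\mapsto\MS(\gimel\otimes\upchi)\big|_{\wtd\MB\times p_\blt}.
\end{align*}

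Composing the two isomorphisms sends $T$ to $\MS\big(\gimel\otimes(\mem\circ(T\otimes\id_{\boxtimes_\ff(\Wbb)}))\big)\big|_{\wtd\MB\times p_\blt}$. The final step is to verify that this equals $\MS(\mem\otimes\gimel)\big|_{\wtd\MB\times p_\blt}\circ(T\otimes\id_\Wbb)$, which is a routine associativity check for the contraction defining sewing (Definition \ref{lb42}): applied to $m\otimes w\in\Mbb\otimes\Wbb$, both expressions expand to
\begin{align*}
\sum_{\lambda_\blt\in\Cbb^R}\sum_{\alpha\in\fk A_{\lambda_\blt}}\mem\big(T(m)\otimes q_\blt^{L_\blt(0)}e_{\lambda_\blt}(\alpha)\big)\cdot\gimel\big(\wch e_{\lambda_\blt}(\alpha)\otimes w\big)\bigg|_{\wtd\MB\times p_\blt},
\end{align*}
where $\{e_{\lambda_\blt}(\alpha)\}$ and $\{\wch e_{\lambda_\blt}(\alpha)\}$ are dual bases of $\boxtimes_\ff(\Wbb)_{[\lambda_\blt]}$ and $\bbs_\ff(\Wbb)_{[\lambda_\blt]}$. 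The only genuine obstacle is the bookkeeping transition between $\fh$ and $\fg$ to align Theorem \ref{lb47} with the present setup; once this is handled, all analytic content has already been established, and the theorem follows.
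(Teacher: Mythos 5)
Your proposal is correct and follows essentially the same route as the paper: reduce via Definition \ref{lb52} to a universal-property check for each test module, factor the map in question as the universal-property isomorphism for $(\bbs_\fh(\boxtimes_\ff(\Wbb)),\mem)$ composed with the sewing-factorization isomorphism, and identify the composite with $T\mapsto\MS(\mem\otimes\gimel)\circ(T\otimes\id_\Wbb)$. The only cosmetic differences are that the paper first reduces to $\wtd\MB=\{0\}$ and cites Theorem \ref{lb43} directly rather than its family version Theorem \ref{lb47}, and that it asserts without expansion the dual-basis identity you verify explicitly at the end.
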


Note that $\mem:{\bbs_{\fh}(\boxtimes_\ff(\Wbb))}\otimes \boxtimes_\ff(\Wbb)\rightarrow\Cbb$ and $\gimel:\bbs_\ff(\Wbb)\otimes\Wbb\rightarrow\Cbb$ are canonical conformal blocks, and the sewing
\begin{align*}
\MS(\mem\otimes\gimel):{\bbs_{\fh}(\boxtimes_\ff(\Wbb))}\otimes\Wbb\rightarrow\MO(\wtd\MB)\{q_\blt\}[\log q_\blt]
\end{align*}
is defined by contracting the $\boxtimes_\ff(\Wbb)$-component of $\mem$ with the $\bbs_\ff(\Wbb)$-component of $\gimel$.

\begin{proof}
By Def. \ref{lb52}, it suffices to prove that for each $b\in\wtd\MB$, the $\Vbb^{\otimes K}$-module ${\bbs_{\fh}(\boxtimes_\ff(\Wbb))}$, together with the conformal block $\MS(\mem\otimes\gimel)|_{b\times p_\blt}=\MS(\mem|_b\otimes\gimel|_b)|_{p_\blt}$, is a dual fusion product of $\Wbb$ along $(\fh\circ_{p_\blt}\ff)_b=(\fh\circ\ff)_{b\times p_\blt}$. Therefore, we may assume without loss of generality that $\wtd\MB=\{0\}$.

Let $\Xbb\in\Mod(\Vbb^{\otimes K})$. We need to show that the linear map
\begin{gather}\label{eq107}
\begin{gathered}
\Hom_{\Vbb^{\otimes K}}\big(\Xbb,\bbs_\fh(\boxtimes_\ff(\Wbb))\big)\rightarrow \ST^*_{\fh\circ_{p_\blt}\ff}(\Xbb\otimes\Wbb)\qquad
T\mapsto \MS(\mem\otimes\gimel)\big|_{p_\blt}\circ(T\otimes\id_\Wbb)
\end{gathered}
\end{gather}
is an isomorphism. Note that
\begin{align*}
\MS(\mem\otimes\gimel)\big|_{p_\blt}\circ(T\otimes\id_\Wbb)=\MS\big((\mem\circ (T\otimes\id_{\boxtimes_\ff(\Wbb)}))\otimes\gimel\big)\big|_{p_\blt}
\end{align*}
where $\mem\circ (T\otimes\id_{\boxtimes_\ff(\Wbb)}):\Xbb\otimes\boxtimes_\ff(\Wbb)\rightarrow\Cbb$ is a conformal block associated to $\fh$. Therefore, \eqref{eq107} is the composition of
\begin{gather*}
\ST^*_\fh(\Xbb\otimes\boxtimes_\ff(\Wbb))\rightarrow\ST^*_{\fh\circ_{p_\blt}\ff}(\Xbb\otimes\Wbb)\qquad \upchi\mapsto \MS(\upchi\otimes\gimel)\big|_{p_\blt}
\end{gather*}
(which is an isomorphism by the sewing-factorization Thm. \ref{lb43}) with
\begin{gather*}
\Hom_{\Vbb^{\otimes K}}\big(\Xbb,\bbs_\fh(\boxtimes_\ff(\Wbb))\big)\rightarrow\ST^*_\fh(\Xbb\otimes\boxtimes_\ff(\Wbb))\qquad T\mapsto \mem\circ(T\otimes\id_{\boxtimes_\ff(\Wbb)})
\end{gather*}
(which is an isomorphism by the universal property of $\mem$). Therefore, \eqref{eq107} is an isomorphism.
\end{proof}

\subsubsection{Example: The associativity isomorphisms}\label{lb55}

In this subsection, we briefly explain how the associativity isomorphisms in the theory of Huang-Lepowsky-Zhang \cite{HLZ1,HLZ2}-\cite{HLZ8} can be constructed using our sewing-factorization Thm. \ref{lb51}.

Let $\zeta$ be the standard coordinate of $\Cbb$. For $z,z_1,z_2\in\Cbb^\times=\Cbb-\{0\}$, let
\begin{gather*}
\fk A=\big(\infty;1/\zeta\big|\Pbb^1\big|0;\zeta\big)\qquad \fk P_z=\big(\infty;1/\zeta\big|\Pbb^1\big|z,0;\zeta-z,\zeta \big)\\
\fk P_{z_1,z_2}=\big(\infty;1/\zeta\big|\Pbb^1\big|z_1,z_2,0;\zeta-z_1,\zeta-z_2,\zeta \big)
\end{gather*}
which are $(1,1),(1,2),(1,3)$-pointed spheres with local coordinates. For each $\Wbb,\Mbb\in\Mod(\Vbb)$, we associate $\Wbb\otimes\Mbb$ to the ordered marked points $z,0$ of $\fp_z$. Let
\begin{gather*}
(\Wbb\boxtimes_{\fp_z}\Mbb,\mc Y_z)
\end{gather*}
be a  fusion product of $\Wbb\otimes\Mbb$ along $\fp_z$, where $\mc Y_z:\Wbb\otimes\Mbb\rightarrow \ovl{\Wbb\boxtimes_{\fp_z}\Mbb}$ is the linear map such that
\begin{align*}
\fk w\otimes w_1\otimes w_2\in(\Wbb\bbs_{\fp_z}\Mbb)\otimes\Wbb\otimes\Mbb\mapsto \bk{\fk w,\mc Y_z(w_1,w_2)}
\end{align*}
is the canonical conformal block. (Here, $\Wbb\bbs_{\fp_z}\Mbb$ is the contragredient of $\Wbb\boxtimes_{\fp_z}\Mbb$.) 

In what follows, all canonical conformal blocks will be expressed as linear maps in this form. For example, $(\Wbb,\id_\Wbb)$ is a  fusion product of $\Wbb$ along $\fk A$.

Now, we assume that $0<|z_1-z_2|<|z_2|<|z_1|$. Then we have a canonical equivalence
\begin{align*}
\fp_{z_1}\circ_{1,1} (\fk A\sqcup\fp_{z_2})\simeq \fp_{z_1,z_2}
\end{align*}
Let $\Wbb_1,\Wbb_2,\Wbb_3\in\Mod(\Vbb)$. By Thm. \ref{lb16},
\begin{gather*}
\big(\Wbb_1\otimes(\Wbb_2\boxtimes_{\fp_{z_2}}\Wbb_3),\id_{\Wbb_1}\otimes\mc Y_{z_2} \big)
\end{gather*}
is a  fusion product of $\Wbb_1\otimes\Wbb_2\otimes\Wbb_3$ along $\fk A\sqcup\fp_{z_1}$. Therefore, by the sewing-factorization Thm. \ref{lb51},
\begin{align*}
\big(\Wbb_1\boxtimes_{\fp_{z_1}}(\Wbb_2\boxtimes_{\fp_{z_2}}\Wbb_3),\mc Y_{z_1}\circ(\id_{\Wbb_1}\otimes\mc Y_{z_2}) \big)
\end{align*}
is a  fusion product of $\Wbb_1\otimes\Wbb_2\otimes\Wbb_3$ along $\fp_{z_1,z_2}$. 

Similarly, we have a canonical equivalence
\begin{gather*}
\fp_{z_2}\circ_{1,1}(\fp_{z_1-z_2}\sqcup\fk A)\simeq\fp_{z_1,z_2}
\end{gather*}
By Thm. \ref{lb16},
\begin{align*}
\big((\Wbb_1\boxtimes_{\fp_{z_1-z_2}}\Wbb_2)\otimes\Wbb_3,\mc Y_{z_1-z_2}\otimes\id_{\Wbb_3}\big)
\end{align*}
is a  fusion product of $\Wbb_1\otimes\Wbb_2\otimes\Wbb_3$ along $\fp_{z_1-z_2}\sqcup\fk A$. Thus, by Thm. \ref{lb51},
\begin{align*}
\big((\Wbb_1\boxtimes_{\fp_{z_1-z_2}}\Wbb_2)\boxtimes_{\fp_{z_2}}\Wbb_3,\mc Y_{z_2}\circ(\mc Y_{z_1-z_2}\otimes\id_{\Wbb_3})\big)
\end{align*}
is a  fusion product of $\Wbb_1\otimes\Wbb_2\otimes\Wbb_3$ along $\fp_{z_1,z_2}$. Therefore, by the uniqueness of (dual) fusion products, there is a (unique) isomorphism 
\begin{gather*}
\Phi_{z_1,z_2}:\Wbb_1\boxtimes_{\fp_{z_1}}(\Wbb_2\boxtimes_{\fp_{z_2}}\Wbb_3)\xlongrightarrow{\simeq}  (\Wbb_1\boxtimes_{\fp_{z_1-z_2}}\Wbb_2)\boxtimes_{\fp_{z_2}}\Wbb_3
\end{gather*}
which, upon extension to a linear map between the algebraic completions of the respective modules, satisfies for all $w_1\in\Wbb_1,w_2\in\Wbb_2,w_3\in\Wbb_3$ the equality
\begin{gather}
\mc Y_{z_2}\circ(\mc Y_{z_1-z_2}(w_1\otimes w_2)\otimes w_3)=\Phi_{z_1,z_2}\circ \mc Y_{z_1}(w_1\otimes\mc Y_{z_2}(w_2\otimes w_3))\label{eq108}
\end{gather}

We assume that $z\mapsto\mc Y_z$ is holomorphic. Therefore, by \cite[Prop. 3.20]{GZ2}, if we vary $z_1,z_2$, then $\Phi_{z_1,z_2}$ is holomorphic with respect to $z_1,z_2$. Moreover, note that $\Pbb^1$ has a canonical projective structure, i.e., the one consisting of M\"obius transformations. Therefore, by \cite[Cor. 2.32]{GZ2}, for any family of spheres with marked points and local coordinates such that the restriction of each coordinate to each fiber is a M\"obius transformation, there is a unique connection $\nabla$ on the conformal block bundle that is compatible with the projective structure. If we further require that $z\mapsto\mc Y_z$ is parallel under this connection, then by \cite[Thm. 4.11]{GZ2}, both $\mc Y_{z_2}\circ(\mc Y_{z_1-z_2}\otimes\id_{\Wbb_3})$ and $\mc Y_{z_1}\circ(\id_{\Wbb_1}\otimes\mc Y_{z_2})$ are parallel sections of conformal blocks as $z_1,z_2$ vary. Therefore, by \eqref{eq108}, $(z_1,z_2)\mapsto\Phi_{z_1,z_2}$ must be a constant isomorphism $\Phi$. This yields the \textbf{associativity isomorphism} $\Phi$.

\footnotesize
	\bibliographystyle{alpha}

\noindent {\small \sc Yau Mathematical Sciences Center, Tsinghua University, Beijing, China.}

\noindent {\textit{E-mail}}: binguimath@gmail.com\qquad bingui@tsinghua.edu.cn\\

\noindent {\small \sc Yau Mathematical Sciences Center and Department of Mathematics, Tsinghua University, Beijing, China.}

\noindent {\textit{E-mail}}: zhanghao1999math@gmail.com \qquad h-zhang21@mails.tsinghua.edu.cn
\end{document}